\documentclass[reqno]{article}

\usepackage{mathrsfs}
\usepackage[utf8]{inputenc}
\usepackage{authblk}
\usepackage{amsthm}
\usepackage{amsmath}
\usepackage{amssymb}
\usepackage{graphicx}
\usepackage{xcolor}
\definecolor{dblue}{rgb}{0,0,0.45}
\usepackage{bbm}
\usepackage[top=2.5cm, bottom=2.5cm, left=2.5cm, right=2.5cm]{geometry}
\usepackage{enumerate}
\usepackage{faktor}
\usepackage{nicefrac}
\usepackage[nottoc]{tocbibind}
\usepackage{slashed}
\usepackage{authblk}
\usepackage[normalem]{ulem}
\usepackage{bm}
\usepackage{comment}
\usepackage{todonotes}
\usepackage{ulem}
\usepackage[colorlinks=true, allcolors=dblue, breaklinks=true]{hyperref}
\usepackage{upgreek}

\usepackage{upgreek}
\usepackage{tensor}
\usepackage{cancel}
\usepackage[capitalise]{cleveref}

\usepackage[
  backend=biber,
  style=numeric,
  sorting=none,
  doi=true,
  maxnames= 6,
  giveninits=true,
  citestyle=numeric-comp,
  url=false,
  isbn=false
]{biblatex}

\renewbibmacro{in:}{}
\DeclareFieldFormat[article]{volume}{\mkbibbold{#1}}

\DeclareFieldFormat[article]{title}{#1}

\DeclareFieldFormat{journaltitle}{#1}

\DeclareFieldFormat{volume}{\mkbibbold{#1}}

\DeclareFieldFormat{pages}{#1}
\DeclareFieldFormat{eid}{#1}

\renewbibmacro*{note+pages}{%
  \printfield{note}%
}

\newbibmacro*{linkedjournalblock}{%
  \printfield{journaltitle}%
  \setunit{\addspace}%
  \printfield{volume}%
  \setunit{\addcomma\space}%
  \iffieldundef{eid}
    {\printfield{pages}}
    {\printfield{eid}}%
  \setunit{\addspace}%
  \printtext{\mkbibparens{\printfield{year}}}%
}

\renewbibmacro*{journal+issuetitle}{%
  \iffieldundef{journaltitle}
    {}
    {%
      \iffieldundef{doi}
        {\usebibmacro{linkedjournalblock}}
        {\href{https://doi.org/\thefield{doi}}{\usebibmacro{linkedjournalblock}}}%
      \newunit
    }%
}

\DeclareFieldFormat[book]{title}{%
  \iffieldundef{doi}
    {#1}
    {\href{https://doi.org/\thefield{doi}}{#1}}%
}

\DeclareFieldFormat[inbook]{title}{%
  \iffieldundef{doi}
    {#1}
    {\href{https://doi.org/\thefield{doi}}{#1}}%
}
\DeclareFieldFormat[incollection]{title}{%
  \iffieldundef{doi}
    {#1}
    {\href{https://doi.org/\thefield{doi}}{#1}}%
}

\newcommand{\N}{\mathbb{N}}
\newcommand{\R}{\mathbb{R}}
\newcommand{\C}{\mathbb{C}}
\newcommand{\rd}{\partial}
\newcommand{\bD}{{D}}
\newcommand{\bB}{{B}}
\newcommand{\bE}{{E}}
\newcommand{\bH}{{H}}
\newcommand{\cE}{\mathcal{E}}
\newcommand{\be}{{e}}
\newcommand{\bh}{{h}}
\newcommand{\bF}{{F}}
\newcommand{\bG}{{G}}
\newcommand{\bS}{\mathcal{S}}
\newcommand{\bbE}{\mathbb{E}}
\newcommand{\bbX}{\mathbb{X}}
\newcommand{\bbP}{\mathbb{P}}
\newcommand{\bbJ}{\mathbb{J}}
\newcommand{\bbQ}{\mathbb{Q}}
\newcommand{\n}{\mathfrak{n}}

\def\ii{\mathrm i}

\newcommand{\phu}{\bm{\upphi}}
\newcommand{\eu}{\mathbf{e}}
\newcommand{\hu}{\mathbf{h}}
\newcommand{\Eu}{\mathbf{E}}
\newcommand{\Hu}{\mathbf{H}}
\newcommand{\x}{x}

\newcommand{\gammu}{\underline{\gamma}}

\renewcommand{\baselinestretch}{1.2}

\numberwithin{equation}{section}
\newtheorem{theorem}[equation]{Theorem}
\newtheorem{remark}[equation]{Remark}

\newtheorem{lemma}[equation]{Lemma}
\newtheorem{definition}[equation]{Definition}
\newtheorem{corollary}[equation]{Corollary}
\newtheorem{proposition}[equation]{Proposition}

\renewcommand{\div}{\mathrm{div}}

\newcommand{\eps}{\varepsilon}

\newcommand{\E}{{E}}
\renewcommand{\H}{{H}}
\newcommand{\Eh}{\hat{\E}}
\newcommand{\Hh}{\hat{\H}}
\newcommand{\e}{e}
\newcommand{\h}{h}
\renewcommand{\Im}{\mathfrak{Im}}
\renewcommand{\Re}{\mathfrak{Re}}
\newcommand{\bC}{{C}}
\newcommand{\bK}{{K}}
\newcommand{\cl}{\mathrm{cl}}
\newcommand{\uu}{\mathbf{u}}
\newcommand{\vu}{\mathbf{v}}
\newcommand{\Ab}{\mathbf{A}}

\title{A mathematical description of the spin Hall effect of light in inhomogeneous media}

\usepackage{authblk}

\newcommand{\mailto}[1]{\thanks{\href{mailto:#1}{#1}}}

\author[1]{Sam C. Collingbourne\mailto{sc.collingbourne@ed.ac.uk}}
\author[2]{Marius A. Oancea\mailto{marius.oancea@univie.ac.at}}
\author[1]{Jan Sbierski\mailto{jan.sbierski@ed.ac.uk}}
\affil[1]{School of Mathematics and Maxwell Institute for Mathematical Sciences, University of Edinburgh, James Clerk Maxwell Building, Peter Guthrie Tait Road, Edinburgh, EH9 3FD, United Kingdom}
\affil[2]{University of Vienna, Faculty of Physics, Boltzmanngasse~5, 1090 Vienna, Austria}

\date{}

\begin{document}

\maketitle

\begin{abstract}
We study Gaussian wave packet solutions for Maxwell's equations in an isotropic, inhomogeneous medium and derive a system of ordinary differential equations that captures the leading-order correction to geodesic motion. The dynamical quantities in this system are the energy centroid, the linear and angular momentum, and the quadrupole moment. Furthermore, the system is closed to first order in the inverse frequency. As an immediate consequence, the energy centroids of Gaussian wave packets with opposite circular polarisations generally propagate in different directions, thereby providing a mathematical proof of the spin Hall effect of light in an inhomogeneous medium.
\end{abstract}

\begingroup
\small
\renewcommand{\baselinestretch}{0.90}\normalsize
\tableofcontents
\endgroup

\section{Introduction}

Spin Hall effects are present in many areas of physics, such as condensed matter physics \cite{SHE_review,RevModPhys.82.1959,PhysRevLett.92.126603,originalSHE1,originalSHE2,originalSHE3,originalSHE4,PhysRevLett.95.137204}, optics \cite{SOI_review,SHEL_review,SHE-L_original,SHE_original,Bliokh2009,Duval2006,Duval2007,Bliokh2008,Hosten2008,deNittis2017,OpticalMagnus}, general relativity \cite{GSHE_reviewCQG,SHE_QM1,SHE_Dirac,Oancea_2020,O2,PhysRevD.102.084013,PhysRevD.110.064020,PhysRevD.109.064020,PhysRevD.111.024034,SHE_GW,GSHE_GW,Frolov_2024,GSHE_lensing,GSHE_lensing2,GSHE_Dirac,rudiger,audretsch}, and high energy physics \cite{PhysRevD.104.054043,HIDAKA2022103989,KHARZEEV20161}. The characteristic property of these effects is that localised wave packets carrying spin angular momentum are scattered or propagated in a spin-dependent way. This behaviour is due to spin-orbit coupling mechanisms \cite{SHE_review,SOI_review} represented by mutual interactions between the external (average position and average momentum) and the internal (spin angular momentum) degrees of freedom of a wave packet. While on a fundamental level the description of such wave packets is given by a partial differential equation (such as Schrödinger, Dirac, Maxwell, linearised gravity), spin Hall effects are usually derived using semiclassical methods. This leads to an approximate description in terms of a system of ordinary differential equations, where the propagation of the wave packet is approximately represented as a point particle that follows a spin-dependent trajectory. 

\begin{figure}[t!]
    \centering
    \includegraphics[width=.8\textwidth]{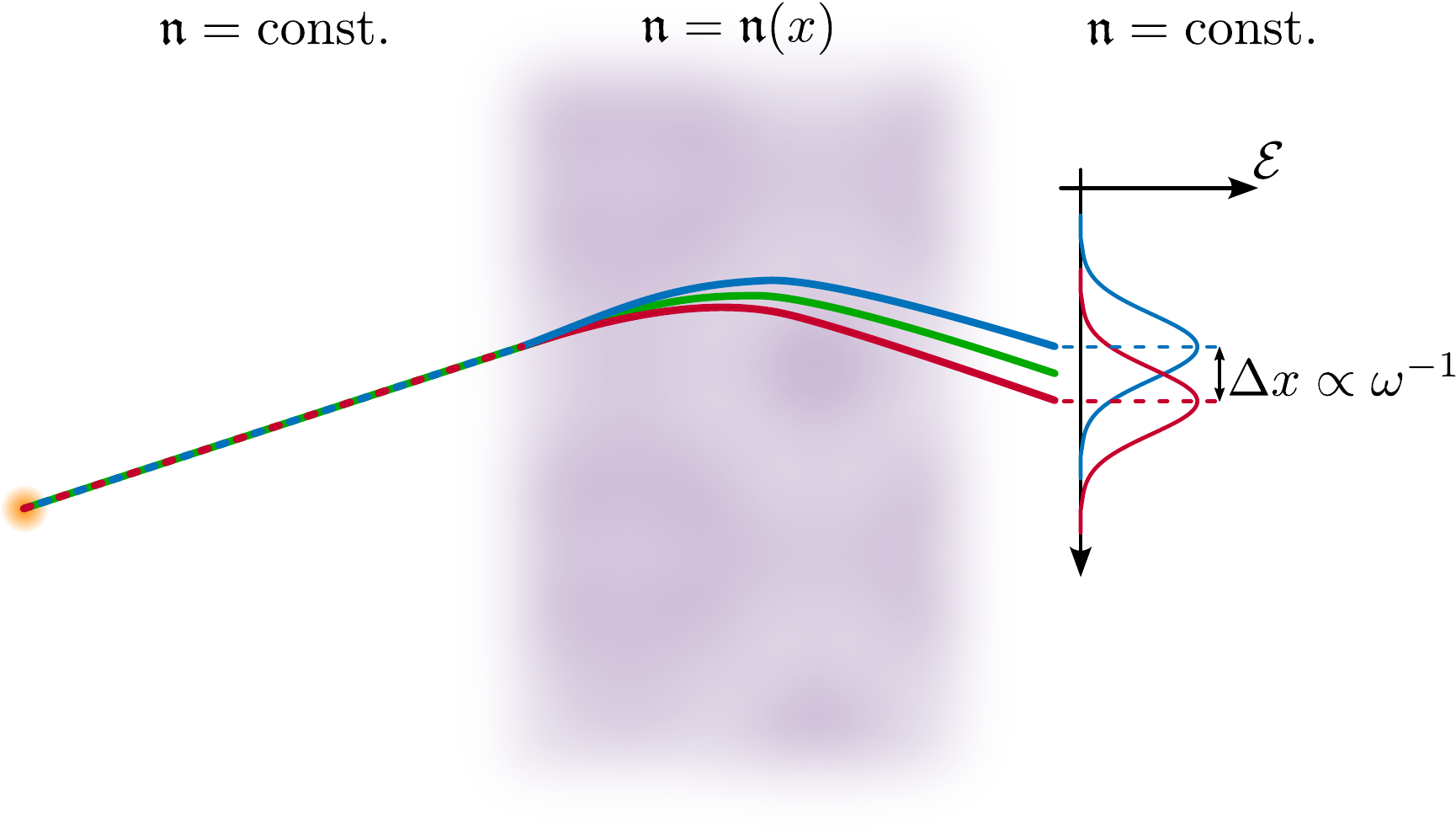}
    \label{fig:SHE}
    \caption{A sketch of the spin Hall effect of light in an inhomogeneous medium. The geometric optics geodesic is represented by the green ray, which is independent of the frequency and of the polarisation. When higher-order spin Hall corrections to the geometric optics approximation are included, we obtain frequency- and polarisation-dependent rays. This leads to the spin Hall effect of light, where the propagation of the energy centroids of wave packets with opposite circular polarisation is then represented by the blue and red rays. On the right part of the figure, the spin Hall ray separation is also represented in terms of the shifted energy densities of the two wave packets with opposite circular polarisations.}
\end{figure}

In optics, the spin Hall effect of light typically arises during the propagation of electromagnetic wave packets in inhomogeneous media with a smoothly varying refractive index \cite{SHE-L_original,SHE_original,BLIOKH2004181,Duval2006,PhysRevE.70.026605,Bliokh2009,PhysRevA.92.043805}, or in association with reflection and refraction processes at the interface between two distinct media where there is a discontinuous jump of the refractive index \cite{Fedorov_2013,PhysRevD.5.787,PhysRevE.75.066609,PhysRevLett.96.073903,Aiello:08,Bliokh_2013} (similar effects are also present for light propagating in optical fibres \cite{OpticalMagnus,PhysRevResearch.5.023140,PhysRevResearch.7.013162}). Most importantly, spin Hall effects of light have been observed in many experiments, where polarisation-dependent shifts of the energy centroids of electromagnetic wave packets have been measured \cite{Hosten2008,Bliokh2008,Qin:09,PradeepChakravarthy:19,LOFFLER20103367}. An overview of these effects and their applications can be found in \cite{SOI_review,SHEL_review}. 

A sketch of the spin Hall effect of light in an inhomogeneous medium is presented in Figure \ref{fig:SHE} (see also \cite{SOI_review} for other similar illustrations of the effect). Here,   we consider a medium with a smoothly varying refractive index, where a central region is sandwiched between two regions of constant refractive index. In particular, we assume that there are no sharp interfaces and that the refractive index varies smoothly between these regions. In the region of constant refractive index to the left of the figure, we prescribe initial data representing localised wave packets, where the only difference between the considered wave packets is the state of circular polarisation. For reference, we include the geometric optics geodesic ray represented in green, which is independent of the state of polarisation and the frequency. However, if we include
higher-order spin Hall corrections to the geometric optics approximation, the propagation of the wave packets becomes frequency- and polarisation-dependent. In this case, the rays followed by the centre of energy of the wave packets of opposite circular polarisation (blue and red rays in Figure \ref{fig:SHE}) coincide with the geometric optics ray (green ray in Figure \ref{fig:SHE}) in the initial region of constant refractive index but drift apart in a frequency- and polarisation-dependent way as soon as the inhomogeneous region is reached. This polarisation-dependent propagation of electromagnetic wave packets represents the spin Hall effect of light. In comparison to the geometric optics ray, the spin Hall rays will generally drift in a direction orthogonal to the direction of propagation and to the gradient of the refractive index, and the magnitude of the drift is proportional to the wavelength. The geometric optics rays are recovered in the limit of zero wavelength, or equivalently, infinitely high frequency. In other words, the spin Hall correction term for geometric optics geodesics is proportional to \cite{SOI_review}
\begin{equation} \label{EqDisplacementSpinHall}
    \frac{s}{\omega} p \times \nabla \n,
\end{equation}
where $s = \pm 1$ is a constant determined by the state of circular polarisation, $\omega$ is the frequency of the wave, $p$ is the linear momentum which represents (to leading order in $\frac{1}{\omega}$) the direction of propagation of the wave packet and $\n$ is the refractive index of the medium. One may call the correction \eqref{EqDisplacementSpinHall} to  geodesic motion due to different circular polarisations the `classical' spin Hall effect. Already here it is worthwhile to point out that further correction terms to  geodesic motion which are proportional to $\frac{1}{\omega}$ are in general present (see Section \ref{SecDisc}).

The derivations of the (classical) spin Hall effect of light present in the physics literature generally start by considering Maxwell's equations and then applying certain approximations or high-frequency asymptotic expansions. A common route is represented by extensions of geometric optics and WKB-type expansions \cite{PhysRevE.70.026605,PhysRevA.92.043805}, sometimes combined with paraxial approximations \cite{SHE-L_original,Bliokh2009}. In this case, a geometric optics ansatz of the form $\psi e^{\ii \omega S}$, where $S$ is a real phase function and $\psi$ is a vector-valued amplitude, is inserted into Maxwell's equations and the resulting equations at each order in $\omega$ are individually set to zero. This recovers the well-known geometric optics results represented by a Hamilton-Jacobi equation for $S$ at the leading order in $\omega$ and a transport equation for the amplitude $\psi$ at the next-to-leading order in $\omega$. The Hamilton-Jacobi equation can be solved by the method of characteristics, leading to the geodesic rays of geometric optics. The transport equation determines the evolution of the shape of the wave packet, as well as the evolution of the polarisation, along the geometric optics rays. In particular, it is convenient to express the part of the transport equation for the polarisation in terms of a Berry connection, which can be integrated to represent the evolution of the polarisation in terms of a Berry phase (see \cite{Emmrich1996,QGT2026} for a geometric definition of transport equations in terms of Berry connections). Within this framework, the spin Hall equations can be obtained by noting that the total phase of the ansatz $\psi e^{\ii \omega S}$ is not only given by the eikonal phase $S$, but that there is also a sub-leading Berry phase contribution $S_B$ that comes from the complex vector amplitude $\psi$. Thus, the total phase function is $S + \omega^{-1} S_B$, and a modified Hamilton-Jacobi equation for it can be derived by combining the Hamilton-Jacobi equation for $S$ and the transport equation for $\psi$. The spin Hall equations represented by the leading-order geodesic rays of geometric optics together with the spin-dependent correction term in \cref{EqDisplacementSpinHall}, are then obtained by applying the method of characteristics to the modified Hamilton-Jacobi equation. We emphasise here that this approach only focuses on the sub-leading correction originating from the dynamics of the polarisation, through the Berry phase and the Berry connection. In particular, there are no contributions related to the shape of the wave packet (e.g., general angular momentum or quadrupole moments). Furthermore, we note that the correction to the geometrical optics rays is captured at the level of `corrected characteristics' instead of the level of the trajectory of the energy centroid. Those aspects will be important for comparison with the results that we present in the following.

Different derivations of the spin Hall equations, based on other methods, have also been given. For example, in Refs. \cite{SHE_original,PhysRevE.74.066610} the authors used semiclassical methods (adapted from quantum mechanics and condensed matter physics \cite{PhysRevA.44.5239,PhysRevB.53.7010,PhysRevB.59.14915}) to describe the dynamics of wave packets with Berry curvature corrections. Here also, similar to the previously discussed approach, the main geometric objects describing the effect are the Berry connection and the associated Berry curvature. Another derivation has also been given in \cite{Duval2006,Duval2007}, where instead of starting from Maxwell's equations the authors introduce a geometrically motivated formulation of photons as classical particles. On a more mathematical level, a spin Hall effect of light has been described in \cite{deNittis2017}, where the authors considered electromagnetic wave packets in photonic crystals (optical materials with periodic structure). This result uses semiclassical methods based on the theory of pseudodifferential operators \cite{teufel2003adiabatic,Panati2003,Teufel2013} to prove Egorov-type theorems for the dynamics of certain observables.

\subsection{Discussion of main results} \label{SecDisc}

In this paper, we present a precise mathematical theory of the propagation of Gaussian beam\footnote{We emphasise here a difference in terminology compared to the optics literature. In our work, a Gaussian beam represents a wave packet of finite energy that, at each time $t$, is localised in space in the sense that it decays exponentially in all three spatial directions away from a reference point. On the other hand, in the optics literature the term Gaussian beam is generally used to describe exact or approximate solutions of a wave equation or a paraxial equation that are exponentially localised only in two spatial direction transverse to the spatial direction of propagation and which have infinite energy \cite{siegman1986lasers,Kiselev2007,Levy:19}.} solutions to Maxwell's equations in an inhomogeneous medium with refractive index $\n$. This theory is based on the Gaussian beam approximation\footnote{In the literature, this is also known as the complex WKB approximation \cite{maslov2012}.} for hyperbolic partial differential equations, as, for example, presented in \cite{ralston1982,Sbierski2013}: profile functions for the Gaussian beam can be freely chosen as part of the initial data, which then determines a one-parameter family of Gaussian beam solutions, where the parameter is the frequency $\omega$ of the beam. As the frequency $\omega$ goes to infinity, the spatial width of  the Gaussian envelope of the beam scales to zero like $\omega^{- \frac{1}{2}}$. We show that for any given time $T>0$, the following ODE system is satisfied by this one-parameter family of Gaussian beam solutions for times $0 \leq t \leq T$ (see \cref{MainThm}):
\begin{subequations} \label{EqODESystemIntro}
\begin{align}
    \dot{\mathbb{X}}^i  &= \frac{1}{\mathbb{E} \n^2}
    \mathbb{P}^i - \frac{1}{\mathbb{E} \n^2} \epsilon^{i j k} \mathbb{J}_j \nabla_k \ln \n - \frac{1}{\mathbb{E}} \dot{\mathbb{Q}}^{i j} \nabla_j \ln \n \nonumber \\
    &\qquad- \frac{1}{\mathbb{E}^2 \n^2} \Big[ \mathbb{P}^i \mathbb{Q}^{j k} \nabla_j \nabla_k \ln \n + 2 \mathbb{P}^j \mathbb{Q}^{i k} (\nabla_j \ln \n) (\nabla_k \ln \n)  \Big] + \mathcal{O}(\omega^{-2}),  \label{EqIntroXDot}\\
    \dot{\mathbb{P}}_i &= \mathbb{E} \nabla_i \ln \n + \bbQ^{j k} \nabla_i \nabla_j \nabla_k \ln \n + \mathcal{O}(\omega^{-2}),    \label{EqIntroPDot}\\
    \dot{\mathbb{J}}_i &= \epsilon_{i j k} \Big( \mathbb{P}^{j} \dot{\mathbb{X}}^k + \bbQ^{j l} \nabla_l \nabla^k \ln \n \Big) + \mathcal{O}(\omega^{-2}),   \label{EqIntroJDot} \\
    \dot{\bbQ}^{i j} &= \ii \omega^{-1} \mathbb{E} \frac{d}{dt} (A^{-1})^{i j} + \mathcal{O}(\omega^{-2}).  \label{EqIntroQDot}  
\end{align}    
\end{subequations}
Here, the constants implicit in the $\mathcal{O}$-notation depend in particular on the chosen profile functions and the time of approximation $T$. In principle, these constants can be computed explicitly. Furthermore, $\mathbb{E}$ denotes the total energy (which is conserved in time), $\mathbb{X}$ the centre of energy, $\mathbb{P}$ the Minkowski linear momentum, $\mathbb{J}$ the Minkowski angular momentum, and $\mathbb{Q}$ the quadrupole moment. Both the angular momentum and the quadrupole moment are defined with respect to the energy centroid $\mathbb{X}$. The definitions are given in \cref{sec:def_average_quantities}. Furthermore, in the above system, $\n$ and its derivatives are all evaluated at $\mathbb{X}(t)$ and $A(t)$ denotes an invertible matrix which is purely imaginary and is uniquely determined for all time by the chosen profile functions. In particular, this closes the ODE system, modulo the error terms $\mathcal{O}(\omega^{-2})$. If one normalises the energy so that it is of order $1$ (with respect to $\omega \to \infty$), then one can show that $\mathbb{P}$ is also of order $1$ (see \cref{RemarkNullGeodesicMotion}) and $\mathbb{J}$ and $\mathbb{Q}$ are of order $\omega^{-1}$ (see \cref{PropJQLateTime}). As a consequence, the error terms are indeed negligible for large $\omega$ and the ODE system determines the evolution of $\mathbb{X}$, $\mathbb{P}$, $\mathbb{J}$, $\mathbb{Q}$ up to and including order $\omega^{-1}$. At leading order $\sim 1$, the solution $(\mathbb{X}(t), \mathbb{P}(t))$ of the above ODE system is determined by geodesic motion with respect to the optical metric, as discussed in \cref{RemarkNullGeodesicMotion}. This recovers the propagation of the energy centroid according to the laws of geometric optics in the limit $\omega \to \infty$, and is represented by the green ray in Figure \ref{fig:SHE}. 

In addition to the precise mathematical theory, our approach directly gives a system involving the energy centroid as a variable (which is accessible to experiments \cite{Hosten2008,Bliokh2008,PradeepChakravarthy:19}) and, moreover, the system captures \emph{all} $\omega^{-1}$ corrections to null geodesic motion: not just the internal spin angular momentum, but indeed the total angular momentum and also the quadrupole moment. The displacement \eqref{EqDisplacementSpinHall} of the `classical' spin Hall effect arises from the second term on the right-hand side of \eqref{EqIntroXDot} if one makes the idealised assumption that the wave only carries internal spin angular momentum (which is proportional to $\mathbb{P}$). This is presented in \cref{PropJQLateTime} and \cref{def:circ_polarisation} with \cref{eq:ID_circ_pol}.

To the best of our knowledge, this work represents the first mathematical theory of the spin Hall effect of light in an inhomogeneous medium which is based on the Gaussian beam approximation. However, for the Schr\"odinger equation the Gaussian beam approximation has already been used to capture subleading effects on the propagation depending on the shape of the wave packet \cite{Ohsawa_2013} as well as the anomalous Hall effect in inhomogeneous periodic media \cite{Watson2017}, which has structural similarities to the spin Hall effect of light. Note that in \cite{Watson2017} an effective particle-\emph{field} system is derived, while our ODE system \eqref{EqODESystemIntro} corresponds to an effective particle system.

\subsection{Overview of the proof}

We work at the level of the electric field $\bE$ and the magnetising field $\bH$ and construct approximate Gaussian beam solutions of the form
\begin{align} 
        &\hat{\bE} = \omega^{\nicefrac{3}{4}} \Re \big[(\be_0 + \omega^{-1} \be_1  + \omega^{-2} \be_{2} )e^{\ii \omega \phi} \big], \qquad\hat{\bH} = \omega^{\nicefrac{3}{4}} \Re \big[(\bh_0 + \omega^{-1} \bh_1  + \omega^{-2} \bh_{2} )e^{\ii \omega \phi} \big].
\end{align}
Here, $\be_j$, $\bh_j$, and $\phi$ are the profile functions of the beam. In contrast to unconstrained hyperbolic PDEs (see, for example, \cite{ralston1982,Sbierski2013}), one cannot just take the induced initial data of the approximate solution as initial data for Maxwell's equations, since in general it does not satisfy the constraint equations. Here, we use Bogovskii's operator \cite{Bogovskii,Galdi} to solve for a compactly supported error term, which we add to the approximate initial data to obtain exact Gaussian beam initial data for Maxwell's equations. Our main theorem is phrased in terms of such compactly supported exact Gaussian beam initial data. 

Using energy estimates, we get quantitative upper bounds on the difference between exact and approximate solutions in a finite time range. This gives us on the one hand the a priori estimate 
\begin{equation} \label{EqAPrioriIntro}
|\mathbb{X}(t) - \underline{\gamma}(t)|_{\R^3} = \mathcal{O}(\omega^{-1}),
\end{equation}
where $\underline{\gamma}$ is the geodesic determined by the optical geometry and $\underline{\gamma}(t)$ is the point of stationary phase for the energy and momentum density of the approximate solution at time $t$. On the other hand, we obtain that the energy centroid  $\hat{\mathbb{X}}$ of the approximate solution is $\mathcal{O}(\omega^{-2})$-close to $\mathbb{X}$ -- and similarly for the linear and angular momentum and the quadrupole moment. In principle $\hat{\mathbb{X}}(t)$ can be computed to order $\omega^{-1}$ directly from the profile functions of the approximate solution (see \cref{prop:approx_average_quantities}), which may be of use for numerical evaluation in concrete situations. However, theoretically it hides the explicit dynamical dependence of the  energy centroid on the first few multipole moments. To derive the closed ODE system \eqref{EqODESystemIntro} for these moments, we proceed as follows: Maxwell's equations give us
\begin{equation}
    \dot{\mathbb{X}}^i = \frac{1}{\mathbb{E}} \int_{\R^3} \n^{-2} \bS^i \, d^3 x,
\end{equation}
where $\bS$ is the momentum density and $\mathbb{P}_i(t) := \int_{\R^3} \bS_i(t,x) \, d^3x $. We now Taylor-expand $\n^{-2}(x)$ around $\mathbb{X}(t)$. The first (constant) term can be pulled out of the integral so that we end up with $\frac{1}{\mathbb{E}} \n^{-2}|_{\mathbb{X}(t)} \mathbb{P}^i(t)$, which is the first term on the right-hand side of \eqref{EqIntroXDot}. The remaining terms in the Taylor expansion yield higher multipole moments. The antisymmetric first moment of $\bS$ gives us the angular momentum term in \cref{EqIntroXDot}, while the symmetric first moment can be related to the time derivative of the quadrupole moment. To treat the second moments, we first use the fact that they are $\mathcal{O}(\omega^{-2})$-close to those of the approximate solution. We then use the structure of the approximate solution together with a stationary phase expansion and \cref{EqAPrioriIntro} to write them, to leading order, in terms of linear momentum and quadrupole moment. The third moments can be shown to be negligible -- again using a stationary phase expansion for the approximate solution. 

The evolution equations for $\bbP$, $\bbJ$, and $\bbQ$ can be dealt with in a similar way.
Finally, the determination of the correct order in $\omega$ of the different moments again follows from the stationary phase expansion.

\subsection{Outline of the paper}

We start with some preliminaries in \cref{SecPrelim}: in \cref{SecConventions} we lay out conventions used throughout this paper, in \cref{SecMaxwell} we recall Maxwell's equations in an inhomogeneous medium and define the notions of total energy, linear momentum, angular momentum, dipole moment, and quadrupole moment. The evolution equations of those quantities are also collated here. In \cref{SecOpticalGeom} the optical geometry is defined and the equations of ray optics (geodesics) recalled. Our main results are stated and discussed in detail in \cref{SecMainResults}. The proof of our main results is spread across \cref{SecApproxSol,SecConstructionID,SecEnergyEst,sec:approx_exact_and_main_results}: in \cref{SecApproxSol} the construction of approximate Gaussian beam solutions is carried out, and in \cref{SecConstructionID} exact Maxwell initial data satisfying the constraint equations is constructed from the induced initial data of the approximate solution by adding a suitable small perturbation. The fundamental energy estimate for Maxwell's equations is recalled in \cref{SecEnergyEst}, which is used to control the error between the exact and approximate Gaussian beam solution. The derivation of our ODE system is then concluded in \cref{sec:approx_exact_and_main_results}. Four appendices are provided: \cref{app:Stationary} recalls the stationary phase approximation for the convenience of the reader, while \cref{AppAdditionalResults,AppDerGB,app:D} provide more details on and auxiliary computations for the Gaussian beam approximation for Maxwell's equations.

\section*{Acknowledgements}
Sam C. Collingbourne and Jan Sbierski acknowledge support through the Royal Society University Research Fellowship URF\textbackslash R1\textbackslash 211216. This research was funded in whole or in part by the Austrian Science Fund (FWF) \href{https://doi.org/10.55776/PIN9589124}{10.55776/PIN9589124}. Jan Sbierski also thanks Sung-Jin Oh for discussions about Bogovskii's operator.

\section{Preliminaries} \label{SecPrelim}

In this section, we review our notation and introduce the basic definitions to be used in the rest of the paper. The starting point for the formulation of our results is represented by Maxwell's equations in an inhomogeneous medium, together with the associated observable quantities, such as total energy and centre of energy, linear and angular momentum, and quadrupole moment. We also review the optical geometry associated with Gordon's optical metric, which serves as a geodesic formulation of ray optics.

\subsection{Notation and conventions} \label{SecConventions}

\begin{itemize}
    \item We work in Minkowski spacetime $\R^{1+3}$ using global Cartesian coordinates $(t, x)$, where the spatial coordinates are denoted as $x = x^i = (x^1, x^2, x^3)$. 
    \item We use lowercase Latin indices  that range from 1 to 3 to denote coordinate components $x^i$, as well as vector components $v^i$. The summation convention $v^i w_i = \sum_{i=1}^3 v^i w_i$ is used for repeated indices. To avoid possible confusion between indices and the imaginary unit, we use the notation $\ii^2 = -1$. We also fix the complex square root by imposing a branch cut along the negative real axis.
    \item We denote spatial partial derivatives by $\nabla_i$ and spacetime partial derivatives by $\partial_\nu$. Indices $\nu, \kappa, \ldots$ denote spacetime indices running from $0$ to $3$.
    \item The electric permittivity $\varepsilon:\R^3\rightarrow \R$ and the magnetic permeability $\mu :\R^3\rightarrow \R$ are positive smooth real scalar functions on $\R^3$ with $0 < c_m \leq \varepsilon \leq C_m$ and $0 < c_m \leq \mu \leq C_m$ for some positive constants $c_m$, $C_m$. The refractive index of the inhomogeneous medium is defined as $\mathfrak{n} = \sqrt{ \varepsilon \mu }$, and  we assume that $\n \geq 1$. 
    \item For $v\in \R^3$, define $(\star v)_{ij}=\epsilon_{ijk}v^k$, where $\epsilon_{ijk}$ is the Levi-Civita symbol. 
    \item Given a spacetime vector or vector field $v$, then $\underline{v}$ denotes the purely spatial part with respect to the standard basis.
    \item Our convention for raising and lowering spatial indices is with respect to the Euclidean metric $\delta$. If a raising or lowering of an index is accompanied by $\sharp$ or $\flat$, then the raising or lowering is with respect to $\underline{g} =\n^2 \delta$. 
    \item Similarly, the norm $| X | := \sqrt{X^iX_i}$ of a vector in $\R^3$ is always with respect to the Euclidean metric. For a complex vector $X \in \C^3$ the norm is defined by $|X| := \sqrt{X^i\overline{X_i}}$. The dot product for real as well as for complex vectors $X,Y$ is defined by $X \cdot Y = X^iY_i$. Note that for a complex vector  $X$ we have $|X|^2 = X \cdot \overline{X}$ and $X \cdot X$ is not necessarily real. 
    \item We denote the closure of a set $K$ by $\cl(K)$.
    \item Given a quantity $\mathcal{Q}$, which in particular depends on $\omega >1$, we employ the notation $\mathcal{Q}=\mathcal{O}(\omega^{-p})$ if there exists a positive constant $C$, whose dependence will be made explicit, such that $|\mathcal{Q}|\leq C\omega^{-p}$.
    \item Given a function $\mathcal{Q}:\R^3\rightarrow \C$, which furthermore depends on $\omega >1$, we employ the notation $\mathcal{Q}(\x)=\mathcal{O}_{L^q(\R^3)}(\omega^{-p})$ if there exists a positive constant $C$, whose dependence will be made explicit, such that $\|\mathcal{Q}\|_{L^q(\R^3)}\leq C\omega^{-p}$.
    \item Let $f:\R^{1+3}\rightarrow \C$ or $f:\R\rightarrow \C$ be a smooth function. We use the notation $\dot{f}=\partial_tf$ and $\ddot{f}=\partial_t^2f$.
    \item Let $\alpha=(\alpha_1,\alpha_2,\alpha_3)\in \N^3$ and $D^{\alpha}$ be defined by $D^{\alpha}:= (\nabla_{x_1})^{\alpha_1}(\nabla_{x_2})^{\alpha_2}(\nabla_{x_3})^{\alpha_3}$.
\end{itemize}

\subsection{Maxwell's equations in an inhomogeneous medium} \label{SecMaxwell}

This section recalls the basic theory of Maxwell's equations in an inhomogeneous medium, which is at rest in Minkowski spacetime with respect to the timelike Killing vector field $\partial_t$. In a general medium, the equations can be written as \cite{born1980}
\begin{subequations}
\begin{align}
    \nabla \cdot \bD &= 4 \pi \rho, \\
    \nabla \cdot \bB &=0, \\
    \nabla \times \bE + \frac{1}{c} \dot{\bB} &= 0, \\
    \nabla \times \bH - \frac{1}{c} \dot{\bD} &= \frac{4 \pi}{c} j,
\end{align}
\end{subequations}
where $\bE$ is the electric field, $\bD$ the displacement field, $\bB$ the magnetic field, and $\bH$ the magnetising field. All these fields are smooth functions from (subsets of) $\R^{1+3}$ to $\R^3$.
We work in units where the speed of light is $c = 1$, and we restrict our attention to the case where there are no charges or currents ($\rho = 0$ and $j = 0$). Furthermore, we consider inhomogeneous media described by the constitutive relations
\begin{equation} \label{eq:constitutive}
    \bD = \varepsilon \bE, \qquad \bB = \mu \bH,
\end{equation}
where $\varepsilon = \varepsilon(x)$ and $\mu = \mu(x)$ are positive smooth real scalar functions on $\R^3$ with $0 < c_m \leq \varepsilon \leq C_m$ and $0 < c_m \leq \mu \leq C_m$ for some positive constants $c_m$, $C_m$. The refractive index is then defined as $\mathfrak{n} = \sqrt{ \varepsilon \mu }$, and to simplify the presentation, we also assume the physically realistic assumption that $\n \geq 1$. In this case, Maxwell's equations can be written as
\begin{subequations} \label{EqMax}
\begin{align}
    \nabla \cdot \bE + \bE \cdot \nabla \ln \varepsilon &= 0, \label{EqMaxCE}\\
    \nabla \cdot \bH + \bH \cdot \nabla \ln \mu &= 0, \label{EqMaxCH} \\
    \nabla \times \bE + \mu \dot{\bH} &= 0,  \label{EqMaxH}\\
    \nabla \times \bH - \varepsilon \dot{\bE} &= 0. \label{EqMaxE}
\end{align}
\end{subequations}
All solutions to \eqref{EqMax} considered in this paper will have compact spatial support.

\subsection{Energy, momentum, and multipole moments}
\label{sec:def_average_quantities}

The energy density of the electromagnetic field is defined as
\begin{equation}
    \mathcal{E} := \frac{1}{2} (\bE \cdot \bD + \bH \cdot \bB) = \frac{1}{2} (\varepsilon \bE \cdot \bE + \mu \bH \cdot \bH),
\end{equation}
and the momentum density (Minkowski's Poynting vector) is defined as
\begin{equation}
    \bS := \bD \times \bB = \n^2 \bE \times \bH .
\end{equation} 
Using Maxwell's equations \eqref{EqMax}, the energy density and momentum density are related through the continuity equation
\begin{equation}
    \dot{\mathcal{E}} + \nabla \cdot \left( \n^{-2} \bS \right) = 0.
\end{equation}
The total energy of the electromagnetic field is defined as
\begin{equation}
    \mathbb{E}(t) := \int_{\R^3} \cE(t,x) \, d^3x.
\end{equation}
If we assume that the electromagnetic field vanishes sufficiently fast at infinity -- for example, if the field is of compact spatial support, as is the case in this paper -- it follows from the continuity equation that the total energy is conserved:
\begin{equation}
    \dot{\mathbb{E}} = 0.
\end{equation}
We define the energy centroid (or also centre of energy) of the electromagnetic field as 
\begin{equation} \label{DefX}
    \mathbb{X}^i(t) := \frac{1}{\mathbb{E}} \int_{\R^3} x^i \cE(t,x) \, d^3x.
\end{equation}
Taking the time derivative of the above equation and using Maxwell's equations \eqref{EqMax} in combination with sufficiently fast decay towards infinity, we obtain
\begin{equation} \label{EqXDot}
    \dot{\mathbb{X}}^i = \frac{1}{\mathbb{E}} \int_{\R^3} \n^{-2} \bS^i \, d^3 x.
\end{equation}
The total linear momentum is defined as
\begin{equation} \label{DefP}
    \mathbb{P}_i(t) := \int_{\R^3} \bS_i(t,x) \, d^3x.
\end{equation}
Taking the time derivative and using Maxwell's equations \eqref{EqMax} in combination with sufficiently fast decay towards infinity we obtain
\begin{equation} \label{EqPDot}
    \dot{\mathbb{P}}_i = \frac{1}{2} \int_{\R^3} \left( \varepsilon \bE \cdot \bE \nabla_i \ln \varepsilon + \mu \bH \cdot \bH \nabla_i \ln \mu \right)  \, d^3x.
\end{equation}
The total angular momentum with respect to the energy centroid $\bbX(t)$ is defined as
\begin{equation} \label{DefJ}
    \mathbb{J}_i(t) := \int_{\R^3} \epsilon_{ijk} r^j(t,x) \bS^k(t,x) \, d^3x,
\end{equation}
where $r^j(t,x) := x^j - \mathbb{X}^j(t)$. Taking the time derivative, we obtain
\begin{align} \label{EqJDot}
    \dot{\mathbb{J}}_i &= \epsilon_{i j k} \mathbb{P}^j \dot{\mathbb{X}}^k + \int_{\R^3} \epsilon_{ijk} r^j \dot{\bS}^k \, d^3x \nonumber \\
    &= \epsilon_{i j k} \mathbb{P}^j \dot{\mathbb{X}}^k + \frac{1}{2} \int_{\R^3} \epsilon_{ijk} r^j \left( \varepsilon \bE \cdot \bE \nabla^k \ln \varepsilon + \mu \bH \cdot \bH \nabla^k \ln \mu \right) \, d^3x .
\end{align}
The dipole and quadrupole moments of the energy density with respect to the energy centroid $\bbX(t)$ are defined as
\begin{subequations}
\begin{align}
    \mathbb{D}^i(t) &:= \int_{\R^3} r^i(t,x) \cE(t,x) \, d^3x , \\
    \mathbb{Q}^{i j}(t) &:= \int_{\R^3} r^i(t,x) r^j(t,x) \cE(t,x) \, d^3x .
\end{align}
\end{subequations}
It follows from the definition \eqref{DefX} of the energy centroid that 
\begin{equation}
    \mathbb{D}^i = \int_{\R^3} x^i \cE \, d^3x\; - \mathbb{X}^i \int_{\R^3}  \cE \, d^3x = 0.
\end{equation}
Using Maxwell's equations \eqref{EqMax} and again sufficiently fast decay at infinity, the time derivative of the quadrupole moment is
\begin{equation} \label{eq:dotQ}
    \dot{\mathbb{Q}}^{i j} = 2 \int_{\R^3} \n^{-2} r^{(i} \bS^{j)}  \, d^3x.
\end{equation}

\subsection{Optical geometry} \label{SecOpticalGeom}

Recall that the optical metric on $\R \times \R^3= \R^{1+3}$ as defined by Gordon \cite{1923AnP...377..421G} is given by $-\n^{-2} dt^2 + \delta$, where $\delta = dx^1 \otimes dx^1 + dx^2 \otimes dx^2 + dx^3 \otimes dx^3$ is the Euclidean metric on $\mathbb{R}^3$. In this paper, it will be useful to work with the conformally rescaled optical metric
\begin{align} \label{eq:optical_metric}
    g:=-dt^2+ \n^2\delta =: -dt \otimes dt + \underline{g}
\end{align}
on $\R^{1+3}$, where we have defined the Riemannian metric $\underline{g} := \n^2 \delta$ on $\R^3$. Note that $g$ is a Lorentzian metric.

We denote the Christoffel symbols with respect to $g$ by $\Gamma$ and those with respect to $\underline{g}$ by $\underline{\Gamma}$. A straightforward investigation then gives $\Gamma^{i}_{jk} = \underline{\Gamma}^i_{jk}$ for $i,j,k \in \{1,2,3\}$ and all other Christoffel symbols of $g$ with at least one time-component vanish. Thus, the geodesic equation on $(\R^{1+3}, g)$ takes the form
\begin{subequations}
\begin{align}
    \ddot{\gamma}^t &= 0, \\
    \ddot{\gamma}^i &= - \underline{\Gamma}^i_{jk} \dot{\gamma}^j \dot{\gamma}^k.
\end{align}
\end{subequations}
Thus, it follows that $t \mapsto \gamma(t) = \big(t, \underline{\gamma}(t)\big)$ is a $g$-null geodesic, if and only if, $t \mapsto \underline{\gamma}(t)$ is a  geodesic in $(\R^3, \underline{g})$ parametrised by $\underline{g}$-arclength.

We now focus on the spatial geometry of $(\R^3, \underline{g})$.
Consider the Hamiltonian $\mathcal{H}(x,p) := \frac{1}{2} \underline{g}^{-1}|_x(p,p)$ on phase space $T^*\R^3 \simeq \R^6$. Then the Hamiltonian equations
\begin{subequations}
\begin{align}
    \dot{x}^i &= \frac{\rd \mathcal{H}}{\rd p_i} = \underline{g}^{ij} p_j, \\
    \dot{p}_i &= - \frac{\rd \mathcal{H}}{\rd x^i} = 2 \mathcal{H} \nabla_i \ln \n ,
\end{align}
\end{subequations}
generate the geodesic flow on phase space. Consider now a geodesic $\underline{\gamma}$ on $(\R^3, \underline{g})$ which is parametrised by $\underline{g}$-arclength. If we set $p_i := \dot{\underline{\gamma}}^\flat_i := \underline{g}_{ij} \dot{\underline{\gamma}}^j$,  we then have $H(\underline{\gamma}, \dot{\underline{\gamma}}^\flat) \equiv \frac{1}{2}$ and thus the equations
\begin{subequations} \label{EqHamiltonianEq}
\begin{align}
    \dot{\underline{\gamma}}^i &=  \underline{g}^{ij} p_j = \frac{1}{\n^2} \delta^{ij} p_j, \\
    \dot{p}_i &=  \nabla_i \ln \n 
\end{align}
\end{subequations}
are satisfied.

\section{Main results} \label{SecMainResults}

There are various localised high-frequency solutions of Maxwell's equations whose energy centroids propagate, to leading order in one over frequency, according to null geodesic motion. In this paper, we restrict ourselves to a special class of such localised high-frequency solutions, namely those arising from Gaussian beam initial data (defined below). For such solutions, we describe the sub-leading correction to the equation of motion of the energy centroid. We begin by defining the class of initial data that we consider in this paper.

\begin{definition} \label{DefGBID}
    $\mathcal{K}$-supported Gaussian beam initial data of order $2$ for Maxwell's equations \eqref{EqMax} is a one-parameter family $\big(\Eu( \cdot; \omega), \Hu( \cdot; \omega)\big) \in C^\infty_0(\mathcal{K}, \R^3) \times C^\infty_0(\mathcal{K}, \R^3)$ with $\omega >1$ of the form
    \begin{subequations} \label{EqDefID}
    \begin{align}
        \Eu(x; \omega) &= \omega^{\nicefrac{3}{4}} \Re \Big\{ \big[\eu_0(x) + \omega^{-1} \eu_1(x)  \big] e^{\ii \omega \phu(x)} \Big\} + \mathcal{O}_{L^2(\R^3)}(\omega^{-2}), \\
        \Hu(x; \omega) &= \omega^{\nicefrac{3}{4}} \Re \Big\{ \big[\hu_0(x) + \omega^{-1} \hu_1(x)  \big] e^{\ii \omega \phu(x)} \Big\} + \mathcal{O}_{L^2(\R^3)}(\omega^{-2}),
        \end{align}
    \end{subequations}
   such that for $\x_0 \in \R^3$ and a pre-compact open neighbourhood $\mathcal{K} \subseteq \R^3$ of $\x_0$ we have
    \begin{enumerate}
        \item \label{ID_def_point1} $\phu \in C^\infty (\R^3, \C)$ with $\Im \phu \geq 0$ and $\Im \phu|_{\x_0} = 0$, $\nabla \Im\phu|_{\x_0} = 0$, $\nabla \Re \phu|_{\x_0} \neq 0$, $ \Im \nabla_i \nabla_j \phu|_{\x_0}$ is a positive definite matrix\footnote{Note that this implies that $\Im \nabla_i \nabla_j \phu|_{\x_0}$  is invertible.}, and $ \nabla \Im  \phu \neq 0$ in $\cl(\mathcal{K})\setminus \{\x_0\}$.
        \item \label{ID_def_point2} $\eu_A$, $\hu_A \in C^\infty_0(\mathcal{K}, \C^3)$ for $A \in \{0,1\}$ with $\eu_0|_{\x_0} \neq 0$ and 
        \begin{subequations} \label{eq:ID_constraints}
        \begin{align}
            D^{\alpha} \Big( \eu_0^k \nabla_k\phu \Big) \Big|_{\x_0} &= 0 \qquad \forall |\alpha|\leq 5, \\
            D^{\alpha} \Big( \eu_1^k \nabla_k\phu - \ii \div\eu_0 -\ii \eu_0^k\nabla_k\ln\eps \Big) \Big|_{\x_0} &= 0 \qquad \forall |\alpha|\leq 3.
        \end{align}
        \end{subequations}       
        Moreover, the first terms in the Taylor expansion of $\hu_A$ around $\x_0$ are related to those of $\eu_A$ and $\phu$ by 
        \begin{subequations} \label{eq:hdefID}
        \begin{align}
            D^{\alpha}\hu^i_{0}\bigg|_{\x_0} &= D^{\alpha}\bigg(-\frac{1}{\mu \dot{\phu}} \epsilon\indices{^{i j}_k} \eu^k_{0}\nabla_j\phu  \bigg)\bigg|_{\x_0} &\forall |\alpha|\leq 5,\label{eq:h0defID}\\ 
            D^{\alpha}\hu^i_{1}\bigg|_{\x_0} &= D^{\alpha} \bigg[ -\frac{1}{\mu \dot{\phu}} \epsilon\indices{^{i j}_k} \eu^k_{1} \nabla_j\phu + \frac{\ii}{\mu \dot{\phu}}\epsilon\indices{^{i j}_k} \nabla_j\eu^k_{0} + \frac{\ii}{\n^2\dot{\phu}^2} \nabla^j\phu \nabla_{j}\hu^i_{0} \nonumber\\
            &\qquad+\frac{\ii}{2\n^2 \dot{\phu}^2} \Big( \hu^i_{0}\Delta\phu-\n^2 \hu^i_{0}\ddot{\phu} + \nabla^{i}\phu \hu^{m}_{0} \nabla_m \ln\n^2 - \hu^{i}_{0} \nabla^{m}\phu \nabla_m\ln\eps \Big) \bigg]\bigg|_{\x_0} &\forall |\alpha|\leq 3, \label{eq:h1defID}   
        \end{align}
        \end{subequations}
        where $\dot{\phu}$ and $\ddot{\phu}$ can be computed naively at $\x_0$, up to order $5$ and $3$ respectively, from the formulas\footnote{Recall that $\sqrt{{\quad}}$ denotes the complex square root (with a branch cut along the negative real axis).}
        \begin{align}
            \dot{\phu}=-\frac{1}{\n}\sqrt{\nabla_i\phu\nabla^i\phu},\qquad \ddot{\phu} =\frac{\nabla^j\phu}{\n \sqrt{\nabla_i\phu\nabla^i\phu}}\nabla_j\left( \frac{\sqrt{\nabla_i\phu\nabla^i\phu}}{\n} \right).
        \end{align}
        \item \label{ID_def_point3} $\Eu(\cdot; \omega)$ and $\Hu(\cdot ; \omega)$ satisfy the constraint equations \eqref{EqMaxCE} and \eqref{EqMaxCH} for each $\omega >1$.
    \end{enumerate}
\end{definition}
A priori it might not be obvious that the class of $\mathcal{K}$-supported Gaussian beam initial data of order $2$ is non-empty. That it is indeed not just non-empty, but a very rich class of initial data is shown in \cref{thm:construction_theorem,ThmConstructionMaxwellID}. The construction of approximate Gaussian beam solutions to hyperbolic PDEs is well known and is given, for example, in \cite{ralston1982}, \cite{Sbierski2013}. Here, we carry out this construction in \cref{thm:construction_theorem} for Maxwell's equations \eqref{EqMax}, which constitute a \emph{constrained} hyperbolic system. As a consequence, it remains to show that one can perturb the compactly supported approximate initial data to obtain compactly supported data which identically satisfies the constraint equations. This is done in \cref{ThmConstructionMaxwellID}.

Given such  $\mathcal{K}$-supported Gaussian beam initial data of order $2$ together with the point $\x_0 \in \R^3$, there are the following associated dynamical structures which naturally enter into the ODE system described in the main \cref{MainThm}. These structures are determined purely by the optical geometry and the Gaussian beam initial data. For this we define the constant 
\begin{equation}
   c_\gamma := \bigg( \frac{1}{\n} \big|\nabla \phu|\bigg)\bigg|_{\x_0} = - \dot{\phu}|_{\x_0} >0\;. 
\end{equation} Firstly, we consider the (real and null) spacetime vector $\rd_t + \frac{\nabla^i \phu}{\n |\nabla\phu|} \rd_i = \rd_t + \frac{1}{c_\gamma} \frac{\nabla^i \phu}{\n^2} \in T_{(0, \x_0)}\R^{1+3}$ and the affinely parametrised $g$-null geodesic $\gamma$ generated by these initial data, which is of the form $t \overset{\gamma}{\mapsto} (t, \underline{\gamma}(t))$. Recall from \cref{SecOpticalGeom} that $\underline{\gamma}$ is a Riemannian geodesic in $(\R^3, \underline{g})$ emanating from $x_0$ with tangent $\frac{\nabla^i \phu}{\n |\nabla\phu|} \rd_i$ that is parametrised by $\underline{g}$-arclength and satisfies \cref{EqHamiltonianEq}. 

Secondly, we then consider the following time-dependent real $3\times 3$-matrices along $\gammu$
\begin{subequations}
\begin{align}
    L_{i j}(t) &= -\frac{1}{\n^2 c_\gamma } \Big( \n^2 \dot{\underline{\gamma}}^i \dot{\gammu}^j  -  \delta^{i j} \Big) \Big|_{\gammu(t)}, \\
    N_{i j}(t) &= -\big( \nabla_i \ln\n \big) \dot{\gammu}^j \Big|_{\gammu(t)},\\
    R_{i j}(t) &= -c_\gamma \Big[ \nabla_i \nabla_j \ln\n - \big(\nabla_i \ln\n \big) \big( \nabla_j \ln\n \big) \Big] \Big|_{\gammu(t)}
\end{align}
\end{subequations}
and we solve the following matrix Riccati equation\footnote{Here, $\cdot$ stands for matrix multiplication and ${}^T$ for matrix transpose.}
\begin{equation}
    \frac{d}{dt}M+M\cdot L\cdot M+N\cdot M+M\cdot N^T+R=0
\end{equation}
with initial data $M_{ij}(0) := \nabla_i \nabla_j \phu|_{\x_0}$. In the proof of \cref{prop:construction_phase_function} it is shown that due to $\Im \nabla_i \nabla_j \phu|_{\x_0}$ being positive definite, this ODE has a unique smooth solution $t \mapsto M(t) \in Mat(3 \times 3; \C)$ for all $t \geq 0$ and 
\begin{equation} \label{EqDefA}
A_{ij}(t) := 2 \ii\cdot  \Im M_{ij}(t)
\end{equation}
is invertible for all $t \geq 0$. The inverse of the matrix $A$ is the dynamical structure that enters into the ODE system below.

The following is the main theorem of this paper:
\begin{theorem} \label{MainThm}
    Let $\mathcal{K}$-supported Gaussian beam initial data of order $2$ as in \cref{DefGBID} be given and consider the corresponding solution $(\bE, \bH)$ to Maxwell's equations \eqref{EqMax}. Construct the null geodesic $\gamma$ and time-dependent purely imaginary and invertible matrix $A_{ij}(t)$ as defined above. Let $T>0$ be given.
    Then for all $0 \leq t \leq T$ the following system of ODEs is satisfied  by the solution $(\bE, \bH)$:
\begin{subequations} \label{EqMainThm}
\begin{align}
    \dot{\mathbb{X}}^i  &= \frac{1}{\mathbb{E} \n^2}
    \mathbb{P}^i - \frac{1}{\mathbb{E} \n^2} \epsilon^{i j k} \mathbb{J}_j \nabla_k \ln \n - \frac{1}{\mathbb{E}} \dot{\mathbb{Q}}^{i j} \nabla_j \ln \n \nonumber \\
    &\qquad- \frac{1}{\mathbb{E}^2 \n^2} \Big[ \mathbb{P}^i \mathbb{Q}^{j k} \nabla_j \nabla_k \ln \n + 2 \mathbb{P}^j \mathbb{Q}^{i k} (\nabla_j \ln \n) (\nabla_k \ln \n)  \Big] + \mathcal{O}(\omega^{-2}),  \label{EqMainThmXDot}\\
    \dot{\mathbb{P}}_i &= \mathbb{E} \nabla_i \ln \n + \bbQ^{j k} \nabla_i \nabla_j \nabla_k \ln \n + \mathcal{O}(\omega^{-2}),    \label{EqMainThmPDot}\\
    \dot{\mathbb{J}}_i &= \epsilon_{i j k} \Big( \mathbb{P}^{j} \dot{\mathbb{X}}^k + \bbQ^{j l} \nabla_l \nabla^k \ln \n \Big) + \mathcal{O}(\omega^{-2}),   \label{EqMainThmJDot} \\
    \dot{\bbQ}^{i j} &= \ii \omega^{-1} \mathbb{E} \frac{d}{dt} (A^{-1})^{i j} + \mathcal{O}(\omega^{-2}).  \label{EqMainThmQDot}  
\end{align}    
\end{subequations}
Here, $\n$ and its derivatives are all evaluated at $\mathbb{X}(t)$.
The constant implicit in the $\mathcal{O}$-notation depends only on the constants implicit in the $\mathcal{O}_{L^2(\R^3)}$-terms in \cref{EqDefID}, the profile functions $\phu$, $\eu_A$, and $\hu_A$ for $A = 0,1$, the functions $\varepsilon$ and $\mu$, and the time of approximation $T$.
\end{theorem}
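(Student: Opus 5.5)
The plan is to derive each line of \eqref{EqMainThm} from the exact identities \eqref{EqXDot}, \eqref{EqPDot}, \eqref{EqJDot} and \eqref{eq:dotQ}. We Taylor-expand the medium coefficients around $\bbX(t)$ and control the resulting higher moments through comparison with an approximate Gaussian beam solution. The first step is to construct the approximate solution $(\hat\bE,\hat\bH)$ of order $2$, with profiles $\be_j,\bh_j,\phi$ and phase stationary along $\gammu$. Its Hessian is the solution $M(t)$ of the Riccati equation, so that $\Im\nabla\nabla\phi|_{\gammu(t)}=\tfrac{1}{2\ii}A(t)$. The approximate solution agrees with the given data up to $\mathcal{O}_{L^2}(\omega^{-2})$. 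Next, the energy estimate for \eqref{EqMax} gives $\|(\bE,\bH)-(\hat\bE,\hat\bH)\|_{L^2}=\mathcal{O}(\omega^{-2})$ uniformly on $[0,T]$. Together with the $\mathcal{O}(1)$ $L^2$-norm of the fields, this makes every quadratic moment $\int w(x)\,(\text{bilinear in fields})$ with bounded weight $w$ on the support agree with its hatted counterpart up to $\mathcal{O}(\omega^{-2})$. Finite speed of propagation keeps the supports in a fixed compact set, so polynomial weights are bounded there.

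Stationary phase (\cref{app:Stationary}) applied to $\hat{\cE}$ and $\hat\bS$, whose amplitudes carry the factor $\omega^{3/2}e^{-2\omega\Im\phi}$, then gives the a priori bound \eqref{EqAPrioriIntro} $|\bbX-\gammu|=\mathcal{O}(\omega^{-1})$. It also yields the scaling laws: a $k$-th moment about $\gammu(t)$, hence about $\bbX(t)$, is $\mathcal{O}(\omega^{-\lceil k/2\rceil})$. Thus second moments are $\mathcal{O}(\omega^{-1})$, and third and fourth moments are $\mathcal{O}(\omega^{-2})$. The leading terms are explicit: $\hat\bS\approx \cE\,\n\,\dot\gammu^\flat$ at leading order, and $\bbQ^{ij}=\ii\omega^{-1}\bbE(A^{-1})^{ij}+\mathcal{O}(\omega^{-2})$, using that the Gaussian $e^{-2\omega\Im\phi}$ has covariance $\ii(\omega A)^{-1}$ in the appropriate normalisation. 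Differentiating this expression in $t$ is not directly legitimate. Instead I would compute the right side of \eqref{eq:dotQ} by stationary phase, Taylor-expanding $\n^{-2}$ to zeroth order, and identify it with $\ii\omega^{-1}\bbE\frac{d}{dt}A^{-1}$ using the Riccati equation. This gives \eqref{EqMainThmQDot}.

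For \eqref{EqMainThmXDot}, we expand $\n^{-2}(x)=\n^{-2}-2\n^{-2}r^k\nabla_k\ln\n+\tfrac12 r^jr^k\nabla_j\nabla_k\n^{-2}+\mathcal{O}(r^3)$ in \eqref{EqXDot}. The first-moment term $\int r^k\bS^i$ splits into an antisymmetric part, which gives the $\bbJ$ term, and a symmetric part, which by \eqref{eq:dotQ} equals $\tfrac12\n^2\dot\bbQ^{ik}$ up to a correction. That correction comes from the gradient of $\n^{-2}$ inside \eqref{eq:dotQ}, and it produces the term $2\bbP^j\bbQ^{ik}(\nabla_j\ln\n)(\nabla_k\ln\n)$. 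The second-moment term $\int r^jr^k\bS^i$ is replaced by its hatted version and, via stationary phase, by $\bbQ^{jk}\bbP^i/\bbE$ up to $\mathcal{O}(\omega^{-2})$. The third moments are dropped. Equation \eqref{EqMainThmPDot} is handled in the same way: we write $\varepsilon \bE^2\nabla\ln\varepsilon+\mu\bH^2\nabla\ln\mu$ using the approximate equipartition $\varepsilon\hat\bE^2\approx\mu\hat\bH^2$ (mod oscillating terms $e^{2\ii\omega\phi}$, which are $\mathcal{O}(\omega^{-\infty})$) and expand. The zeroth moment gives $\bbE\nabla\ln\n$, the first moment vanishes since $\mathbb{D}=0$, and the second moment gives $\bbQ$. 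Equipartition errors at order $\omega^{-1}$ need care; I expect them to combine exactly into $\nabla\ln\n$ only after using the transport equations for $\be_1,\bh_1$. Equation \eqref{EqMainThmJDot} follows similarly from \eqref{EqJDot}.

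The main obstacle is the bookkeeping at order $\omega^{-1}$ in the second-moment identifications. Expressing $\int r^jr^k\bS^i$ and the $\varepsilon$ versus $\mu$ split purely in terms of $\bbP$, $\bbQ$ requires the subleading amplitudes $\be_1$, $\bh_1$ and the $\mathcal{O}(\omega^{-1})$ shift $\bbX-\gammu$. Since these multiply terms that are already $\mathcal{O}(\omega^{-1})$, I would first verify that all such contributions are $\mathcal{O}(\omega^{-2})$, which reduces everything to leading-order Gaussian integrals.
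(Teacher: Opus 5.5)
Your architecture (approximate beam, energy estimate, transfer to hatted quantities, the a priori bound $|\bbX-\gammu|=\mathcal{O}(\omega^{-1})$, Taylor expansion about $\bbX$ plus stationary phase) is the paper's, and your $\dot{\bbX}$ step is its Step~1. The genuine gap is in $\dot{\bbQ}$. You evaluate \eqref{eq:dotQ} with $\n^{-2}$ Taylor-expanded to zeroth order. By your own scaling law, however, the first-order term
\[
2\nabla_k\n^{-2}\int_{\R^3} r^k r^{(i}\bS^{j)}\,d^3x=-\frac{4}{\n^2\bbE}\,(\nabla_k\ln\n)\,\bbQ^{k(i}\bbP^{j)}+\mathcal{O}(\omega^{-2})
\]
is of order $\omega^{-1}$, the same as the zeroth-order term. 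You in fact use exactly this term in your $\dot{\bbX}$ step. Without it the identification with the Riccati equation fails. Stationary phase gives
\[
2\n^{-2}\int_{\R^3} r^{(i}\bS^{j)}\,d^3x=-\frac{2\ii\omega^{-1}\bbE}{\n^2\dot{\phi}}\,(A^{-1})^{a(i}\Big[B_a{}^{j)}-\frac{\nabla^{j)}\phi\,\nabla^k\phi}{|\nabla\phi|^2}\,B_{ak}+\nabla^{j)}\phi\,\nabla_a\ln\n\Big]+\mathcal{O}(\omega^{-2}),
\]
with $B=\Re M$; the spin part of $\nabla_a v$ drops out under symmetrisation. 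On the other hand, the imaginary part of the Riccati equation gives $\frac{d}{dt}A^{-1}=A^{-1}(BL+N)+(LB+N^T)A^{-1}$. Hence $\ii\omega^{-1}\bbE\frac{d}{dt}(A^{-1})^{ij}$ is the same expression but with $-\nabla^{j)}\phi\,\nabla_a\ln\n$ in the bracket. The discrepancy is precisely the dropped term. Equivalently, you can keep $\n^{-2}$ inside the stationary-phase amplitude, so that $L_1$ sees $\nabla_a(\n^{-2}v^j)$.

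With this repair your route closes, and it is genuinely different from the paper's Step~4, which never Taylor-expands. The paper uses the approximate Maxwell equations to write $\dot{\bbQ}^{ij}=\int\hat{r}^i\hat{r}^j\partial_t\hat{\cE}\,d^3x+\mathcal{O}(\omega^{-2})$ and expands up to $L_2$ because of the factor $\omega\Im\dot{\phi}$. It then combines \cref{Prop:conservation} with $(\partial_t+\dot{\gamma}^a\nabla_a)\nabla_b\nabla_c(2\ii\Im\phi)|_\gamma=\frac{d}{dt}A_{bc}$, so $\frac{d}{dt}A^{-1}$ appears without the Riccati equation or $\nabla v$. Your route instead trades the $L_2$ bookkeeping for an explicit computation of $\nabla_a v$ and $\nabla_a u$ on $\gamma$.

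Two smaller points concern $\dot{\bbP}$ and $\dot{\bbJ}$. First, $\mathbb{D}=0$ only gives $\int r^j\varepsilon\bE\cdot\bE\,d^3x=-\int r^j\mu\bH\cdot\bH\,d^3x$. The zeroth moments are weighted by $\nabla_i\ln\varepsilon$ and $\nabla_i\ln\mu$ separately, and the first moments by $\nabla_j\nabla_i\ln(\varepsilon/\mu)$. You therefore need $\int\frac{\varepsilon}{2}\hat{\bE}\cdot\hat{\bE}\,d^3x=\frac12\bbE+\mathcal{O}(\omega^{-2})$ and $\int r^j\frac{\varepsilon}{2}\hat{\bE}\cdot\hat{\bE}\,d^3x=\mathcal{O}(\omega^{-2})$. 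That is equipartition inside the stationary-phase expansion at order $\omega^{-1}$, including $\nabla_a(\mu\bh_0\cdot\overline{\bh}_0)=\nabla_a(\varepsilon\be_0\cdot\overline{\be}_0)$ on $\gamma$. So, contrary to your last paragraph, these terms do not reduce to leading-order Gaussian integrals. This is \cref{Lemma_E/2}. Its input is the relation \eqref{eq:h1def} fixing $\bh_1$ on $\gamma$ (i.e.\ $\bF_1|_\gamma=0$), together with the $\be_0$-constraints and the $\be_0$-transport equation, not the $\be_1$, $\bh_1$ transport equations.

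Second, keep the Taylor factor $\frac12$ on the quadratic term of $\nabla_i\ln\varepsilon$ and $\nabla_i\ln\mu$. Since $\int r^jr^k\frac{\varepsilon}{2}\hat{\bE}\cdot\hat{\bE}\,d^3x=\frac12\bbQ^{jk}+\mathcal{O}(\omega^{-2})$, and likewise for $\mu$, your plan yields $\frac12\bbQ^{jk}\nabla_i\nabla_j\nabla_k\ln\n$ in $\dot{\bbP}_i$. The paper's Step~2 omits this $\frac12$, although its Step~1 keeps $\frac12\nabla_j\nabla_k\n^{-2}$. That omission is where the coefficient $1$ in \eqref{EqMainThmPDot} comes from. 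The $\bbQ$-term of \eqref{EqMainThmJDot} comes from a first-order Taylor term and is unaffected.
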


Note that by definition \eqref{EqDefA}, the matrix $A$ is purely imaginary, so the first term on the right-hand side of \eqref{EqMainThmQDot} is indeed real.

We emphasise that in this theorem, if $T>0$ is fixed, $\omega>1$ has to be chosen large enough for the error terms to be small enough. Our proof relies on the validity of the Gaussian beam approximation. If the electric permittivity $\varepsilon$ and magnetic permeability $\mu$ are given, and if the profile functions $\phu$, $\eu_A$, and $\hu_A$ for $A = 0,1$ of the initial data are known together with a bound on the error terms in \cref{EqDefID}, then an explicit, $T$-dependent bound on the error terms in \cref{EqMainThm} is, in principle, computable from our proof, and thus an estimate on how big $\omega$ has to be chosen for a satisfactory approximation.

To complement \cref{MainThm}, the initial values of the average quantities and multipole moments can be computed directly from the Gaussian beam initial data.

\begin{proposition} \label{PropInitialValueAveragedQuantities}
Consider initial data as in \cref{DefGBID}. Then, the corresponding energy $\bbE$ and initial data for the system of ODEs \eqref{EqMainThm} are
\begin{subequations} \label{eq:initial_quantities}
\begin{align}
    \bbE &= \frac{1}{\sqrt{\det\left( \frac{1}{2 \pi \ii} \Ab  \right)}} \Big( \uu + \omega^{-1} L_1 \uu \Big) \Big|_{\x_0} + \mathcal{O}(\omega^{-2}),  \\ 
    \bbX^i(0) &= \x_0^i + \frac{\ii \omega^{-1}}{\mathbb{E} \sqrt{\det\left( \frac{1}{2 \pi \ii} \Ab  \right)}} (\Ab ^{-1})^{i a} \Big[ \nabla_a \uu  - \ii \uu  (\Ab ^{-1})^{b c} \nabla_a \nabla_b \nabla_c \Im \phu \Big] \Big|_{\x_0} + \mathcal{O}(\omega^{-2}), \label{EqInCondX}\\
    \bbP_i(0) &= \frac{1}{\sqrt{\det\left( \frac{1}{2 \pi \ii} \Ab  \right)}} \Big( \vu _i + \omega^{-1} L_1 \vu _i \Big) \Big|_{\x_0} + \mathcal{O}(\omega^{-2}), \label{EqInCondP}\\
    \bbJ_i (0) &= \epsilon_{i j k} r^j \bbP^k(0) + \frac{\ii \omega^{-1}}{\sqrt{\det\left( \frac{1}{2 \pi \ii} \Ab  \right)}} \epsilon_{i j k} (\Ab ^{-1})^{j a} \Big[ \nabla_a \vu ^k - \ii \vu ^k (\Ab ^{-1})^{b c} \nabla_a \nabla_b \nabla_c \Im \phu \Big] \Big|_{\x_0} + \mathcal{O}(\omega^{-2}), \label{EqInCondJ} \\
    \bbQ^{i j}(0) &= \ii \omega^{-1} \mathbb{E} (\Ab ^{-1})^{i j} + \mathcal{O}(\omega^{-2}), \label{EqInCondQ}
\end{align}
\end{subequations}
where $L_1$ is the differential operator defined in \cref{eq:L_def} with $f(x) = 2\ii \Im \phu$, $\Ab _{i j} = A_{i j}(0) = 2\ii \nabla_i \nabla_j \Im \phu|_{\x_0}$, $r^j = \x_0^j - \bbX^j(0)$ and  
\begin{subequations}
\begin{align}
    \uu  &= \frac{1}{4} \big( \varepsilon \eu_0 \cdot \overline{\eu}_0 + \mu \hu_0 \cdot \overline{\hu}_0 \big) + \frac{\omega^{-1}}{2} \Re \big(\varepsilon \eu_0 \cdot \overline{\eu}_1 + \mu \hu_0 \cdot \overline{\hu}_1 \big)  , \\
    \vu  &= \frac{\n^2}{2} \Re \big( \eu_0 \times \overline{\hu}_0 \big) + \frac{\omega^{-1} \n^2}{2} \Re \big( \eu_0 \times \overline{\hu}_1 + \eu_1 \times \overline{\hu}_0 \big) .
\end{align}
\end{subequations}
\end{proposition}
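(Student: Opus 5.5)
We compute each quantity at $t=0$ by inserting the initial data \eqref{EqDefID} into the definitions of $\bbE$, $\bbX$, $\bbP$, $\bbJ$, $\bbQ$ and evaluating the resulting oscillatory integrals with the stationary phase expansion of \cref{app:Stationary}.

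\textbf{Step 1: reduce to the leading part of the data.} Write $\Eu = \hat{\Eu} + \mathcal{O}_{L^2}(\omega^{-2})$, where $\hat{\Eu} = \omega^{3/4}\Re\{(\eu_0+\omega^{-1}\eu_1)e^{\ii\omega\phu}\}$, and similarly for $\Hu$. Since $\|\hat{\Eu}\|_{L^2}=\mathcal{O}(1)$ by the Gaussian scaling, Cauchy--Schwarz shows that replacing $\Eu,\Hu$ by $\hat{\Eu},\hat{\Hu}$ changes quadratic integrals such as $\int \cE\,d^3x$ and $\int \bS\,d^3x$ only by $\mathcal{O}(\omega^{-2})$. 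All data are supported in the bounded set $\mathcal{K}$, so the weights $x^i$, $r^j$, $r^ir^j$ are bounded there. Hence the same reduction applies to the moment integrals.

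\textbf{Step 2: discard the oscillating terms.} Expand $\Re(a e^{\ii\omega\phu}) = \frac12(ae^{\ii\omega\phu}+\bar a e^{-\ii\omega\bar\phu})$. Each quadratic density then contains non-oscillatory cross terms with phase $e^{\ii\omega(\phu-\bar\phu)} = e^{-2\omega\Im\phu}$, together with terms carrying $e^{\pm 2\ii\omega\Re\phu}$. Because $\nabla\Re\phu\neq0$ near $\x_0$ and $\Im\phu>0$ away from $\x_0$, non-stationary phase (repeated integration by parts) makes the latter terms $\mathcal{O}(\omega^{-\infty})$. The remaining cross terms give
\begin{equation*}
\cE \simeq \omega^{3/2}\,\uu\, e^{-2\omega\Im\phu}, \qquad \bS \simeq \omega^{3/2}\,\vu\, e^{-2\omega\Im\phu},
\end{equation*}
up to $\mathcal{O}(\omega^{-2})$ relative contributions, with $\uu$ and $\vu$ exactly as in the statement.

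\textbf{Step 3: stationary phase.} Apply the expansion with phase $f = 2\ii\Im\phu$, whose Hessian at $\x_0$ is $\Ab$:
\begin{equation*}
\int g\,e^{\ii\omega f}\,d^3x = \Big(\tfrac{\omega}{2\pi}\Big)^{-3/2}\det\!\Big(\tfrac{\Ab}{\ii}\Big)^{-1/2}\big(g+\omega^{-1}L_1 g\big)\big|_{\x_0} + \mathcal{O}(\omega^{-5/2}).
\end{equation*}
The factor $\omega^{3/2}$ cancels, which gives $\bbE$ and $\bbP(0)$ directly; here I must check that the normalisations combine to $\det(\frac{1}{2\pi\ii}\Ab)^{-1/2}$. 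For $\bbX(0)$, take $g = x^i\uu$. Because $x^i-\x_0^i$ vanishes at $\x_0$, only the $L_1$ term contributes at order $\omega^{-1}$. In $L_1$, the terms involving second derivatives of $g$ produce $\ii(\Ab^{-1})^{ia}\nabla_a\uu$, and the cubic Taylor term of $f$ produces $(\Ab^{-1})^{ia}(\Ab^{-1})^{bc}\nabla_a\nabla_b\nabla_c\Im\phu$. Dividing by $\bbE$, and absorbing corrections into $\mathcal{O}(\omega^{-2})$, yields \eqref{EqInCondX}. $\bbJ(0)$ follows in the same way: write $r^j = (x^j-\x_0^j) + (\x_0^j-\bbX^j(0))$, treat the first part like $\bbX$ with $\vu$ in place of $\uu$, and the second part gives $\epsilon_{ijk} r^j \bbP^k(0)$. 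For $\bbQ(0)$, the weight $r^ir^j$ vanishes to second order up to $\mathcal{O}(\omega^{-1})$ shifts. The quadratic part then gives $\omega^{-1}\cdot\ii(\Ab^{-1})^{ij}$ times the leading energy. The shift $\x_0-\bbX(0)=\mathcal{O}(\omega^{-1})$ enters only quadratically, or linearly against an integral that is already $\mathcal{O}(\omega^{-1})$, so it contributes $\mathcal{O}(\omega^{-2})$.

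\textbf{Main obstacle.} The delicate step is the bookkeeping in $L_1$ for the first-moment integrals. All $\omega^{-1}$ contributions must be captured exactly, including those from $\eu_1,\hu_1$ inside $\uu$ and $\vu$, while every $\omega^{-3/2}$ term must be shown not to arise. Half-integer terms would not appear in the stated result, so I expect they cancel by parity of the Gaussian. I would handle this by recording the explicit $L_1$ formula of \cref{eq:L_def} and checking the odd-moment cancellations.
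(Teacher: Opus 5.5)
Your proposal is correct and follows essentially the same route as the paper. You reduce to the $\eu_0+\omega^{-1}\eu_1$ part of the data by Cauchy--Schwarz, discard the $e^{\pm 2\ii\omega\Re\phu}$ terms by non-stationary phase (H\"ormander's Theorem 7.7.1), and apply \cref{th:stationary} with $f=2\ii\Im\phu$, using the same $L_1$ bookkeeping for $\bbX(0)$, $\bbJ(0)$ and $\bbQ(0)$. Your exact split $r^j=(x^j-\x_0^j)+(\x_0^j-\bbX^j(0))$ for $\bbJ$ is a slightly cleaner variant of the paper's use of $r(0,\x_0)=\mathcal{O}(\omega^{-1})$.

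The one slip is your remainder, and it matters as stated: an absolute error of $\mathcal{O}(\omega^{-5/2})$, multiplied by the $\omega^{3/2}$ prefactor, would only give $\mathcal{O}(\omega^{-1})$. The fix is already inside the theorem you cite. The stationary phase expansion runs in integer powers of $\omega^{-1}$, because the vanishing of odd Gaussian moments that you anticipate is built into the $L_j$. So there are no half-integer terms to cancel, and after the $L_1$ term the error is $\mathcal{O}(\omega^{-3/2-2})=\mathcal{O}(\omega^{-7/2})$. If you use the cruder form $C\omega^{-k}$ of the remainder stated in \cref{th:stationary}, take $k\ge 4$ so that it beats the $\omega^{3/2}$ prefactor.
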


\begin{proof}
    The proof can be found in~\cref{sec:compute_initial_quantities_GB_ID}.
\end{proof}

\begin{proposition} \label{PropJQLateTime}
Given initial data as in \cref{DefGBID}, the total angular momentum and the quadrupole moment for all $t \in [0, T]$ are
\begin{subequations} \label{eq:J_Q}
\begin{align} 
    \bbJ_i(t) &= \omega^{-1} \frac{\bbE}{\dot{\phu} |_{\x_0}} \bigg[ s - \ii \epsilon_{a b c} \frac{\bbP^a(t)}{|\bbP(t)|} (A^{-1})^{b d} (t) B\indices{_d^c}(t) \bigg] \frac{\bbP_i(t)}{|\bbP(t)|} \nonumber \\
    &\qquad + \ii \omega^{-1} \frac{\bbE}{\dot{\phu} |_{\x_0}} \epsilon_{i a b} \bigg[ \frac{\bbP_c(t)}{|\bbP(t)|} (A^{-1})^{c d}(t) B\indices{_d^a}(t) - \frac{\dot{\phu} |_{\x_0}}{\bbE} |\bbP(t)| (A^{-1})^{a c} \nabla_c \ln \n \bigg] \frac{\bbP^b(t)}{|\bbP(t)|}  + \mathcal{O}(\omega^{-2}), \label{EqValueJ} \\ 
    \bbQ^{i j}(t) &= \ii \omega^{-1} \bbE (A^{-1})^{i j}(t) + \mathcal{O}(\omega^{-2}), \label{EqValueQ}
\end{align}    
\end{subequations}
where $B_{i j}(t) = \Re M_{i j}(t)$ and the constant $s \in [-1, 1]$ is determined by the state of polarisation of the initial data (with $s = \pm 1$ for circular polarisation) as 
\begin{equation} \label{EqS}
    s = \frac{\ii \n \eu_0 \cdot \overline{\hu}_0}{\eps \eu_0 \cdot \overline{\eu}_0} \bigg|_{\x_0} .
\end{equation}
\end{proposition}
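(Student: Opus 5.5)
The plan is to compute $\bbJ$ and $\bbQ$ at time $t$ from the approximate Gaussian beam solution and then transfer the result to the exact solution. The transfer rests on the energy estimate of \cref{SecEnergyEst}: the exact and approximate fields differ by $\mathcal{O}_{L^2}(\omega^{-2})$ on $[0,T]$, which makes the moments of the exact solution $\mathcal{O}(\omega^{-2})$-close to those of the approximate one. On the approximate side I take the phase $\phi(t,x)$ of the construction (\cref{prop:construction_phase_function}). Along $\gammu(t)$ we have $\Im\phi=0$ and $\nabla\Im\phi=0$, and the Hessian satisfies $\nabla_i\nabla_j\phi|_{\gammu(t)}=M_{ij}(t)$. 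The energy density of the approximate solution is then $\omega^{3/2}\,\mathbf u(t,x)e^{-2\omega\Im\phi}$ plus terms oscillating like $e^{\pm 2\ii\omega\Re\phi}$. Because $\nabla\Re\phi\neq 0$ near $\gammu$, these oscillating terms contribute only $\mathcal{O}(\omega^{-\infty})$ after integrating by parts. The same decomposition holds for the momentum density, with amplitude $\mathbf v$.

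For \cref{EqValueQ} I apply stationary phase (\cref{app:Stationary}) to $\int r^ir^j\omega^{3/2}\mathbf u e^{-2\omega\Im\phi}\,d^3x$ with $f=2\ii\Im\phi$, whose Hessian at $\gammu(t)$ is $A(t)$. The a priori estimate \eqref{EqAPrioriIntro} gives $r=x-\gammu(t)+\mathcal{O}(\omega^{-1})$. The shift terms $\mathcal{O}(\omega^{-1})$ enter only quadratically, or multiplied by a first moment which is itself $\mathcal{O}(\omega^{-1})$, so they contribute $\mathcal{O}(\omega^{-2})$. The leading term is $\ii\omega^{-1}(A^{-1})^{ij}$ times the zeroth-order integral, and that integral is $\bbE+\mathcal{O}(\omega^{-1})$. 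This yields $\bbQ^{ij}=\ii\omega^{-1}\bbE (A^{-1})^{ij}+\mathcal{O}(\omega^{-2})$.

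For \cref{EqValueJ} I split $\bbJ_i=\epsilon_{ijk}\int (x-\gammu)^j\bS^k+\epsilon_{ijk}(\gammu-\bbX)^j\bbP^k$. I compute the first piece by stationary phase exactly as in \cref{EqInCondJ}, except that the $\nabla_a \vu$ term is replaced by its value at time $t$. This piece produces the $\omega^{-1}$ corrections, coming from $\nabla\mathbf v_0$ and from third derivatives of $\Im\phi$. The second piece requires $\bbX-\gammu$ to order $\omega^{-1}$, which I obtain from the analogue of \cref{EqInCondX} at time $t$; its contribution is combined with the first piece. To evaluate the resulting expressions I use the transport equations of \cref{thm:construction_theorem}. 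These give $\mathbf h_0=-\frac{1}{\mu\dot\phi}\nabla\phi\times \mathbf e_0$ with $\nabla\phi\cdot \mathbf e_0=0$ on $\gammu$. Differentiating these relations produces $\nabla\mathbf v_0$ in terms of $M=B+\tfrac12 A$ contracted with $\mathbf e_0\otimes\bar{\mathbf e}_0$, the $\nabla\ln\n$ terms, and the helicity combination $\ii\n\,\mathbf e_0\cdot\bar{\mathbf h}_0/(\eps|\mathbf e_0|^2)$. The helicity combination appears because, for transverse $\mathbf e_0$, $\Im(\mathbf e_0\times\bar{\mathbf e}_0)$ is parallel to $\nabla\Re\phi\propto\bbP$.

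I also need that the helicity $s$ is conserved along $\gammu$ and that $\eps|\mathbf e_0|^2/\dot\phi$ is normalised by $\bbE$. Both should follow from the transport equation for $\mathbf e_0$, which parallel-transports the polarisation up to a scalar factor; this gives the first term $\omega^{-1}\bbE s\,\bbP_i/(\dot\phu|_{\x_0}|\bbP|)$. The term $-\frac{\dot\phu|_{\x_0}}{\bbE}|\bbP|(A^{-1})^{ac}\nabla_c\ln\n$ should come from the $x$-dependence of $\n^2$ inside $\bS=\n^2 \bE\times\bH$ when it is expanded around $\gammu$.

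The main obstacle is the algebra for $\bbJ$. The delicate part is reorganising the longitudinal and transverse pieces of the gradient of $\mathbf e_0\otimes\bar{\mathbf e}_0$ into the $A^{-1}B$ contractions that appear in the statement, and verifying that every term depending on $\nabla\mathbf e_0$ beyond what the transport equation fixes cancels between the two pieces of the split.
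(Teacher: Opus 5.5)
Your plan is correct and is essentially the paper's own argument. The paper gets \eqref{EqValueQ} and the preliminary form of $\hat{\bbJ}$ from stationary phase in \cref{prop:approx_average_quantities}. It carries out exactly your reduction in \cref{prop:J_decomposition}: it uses $\bh_0=-\frac{1}{\mu\dot\phi}\nabla\phi\times\be_0$, the constraint $\be_0^i\nabla_i\phi=0$ and its first derivative on $\gamma$, and constancy of $s$ from the transport equations, and it works with the combination $\nabla_a v_d-\frac{1}{u}v_d\nabla_a u$, in which the third derivatives of $\Im\phi$ and the unconstrained $\nabla_a|\be_0|^2$ terms cancel. The transfer to the exact solution then uses \cref{cor:approx2exact} together with $\dot\phi|_{\gamma(t)}=\dot{\phu}|_{\x_0}$ and $\hat{\bbP}_i=-\frac{\hat{\bbE}}{\dot\phi}\nabla_i\phi+\mathcal{O}(\omega^{-1})$.

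Two points for when you finish the algebra. First, the $\nabla\ln\n$ term comes from $\nabla_a\dot\phi|_{\gamma}=\frac{\nabla^j\phi\,\nabla_a\nabla_j\phi}{\n^2\dot\phi}-\dot\phi\,\nabla_a\ln\n$, not simply from the $\n^2$ in $\bS$; that factor combines with the $1/\mu$ in $\bh_0$ into $\eps$, whose gradient cancels against $\nabla_a u$. Second, with $|\nabla\phi|=-\dot{\phu}|_{\x_0}|\bbP|/\bbE$ (recall $\dot{\phu}|_{\x_0}<0$), that term comes out as $+\epsilon_{iab}\bbQ^{ac}\nabla_c\n\,\bbP^b/|\bbP|$. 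This is the opposite sign to \eqref{EqValueJ} but agrees with \cref{prop:J_decomposition}, so the mismatch is a sign typo in the displayed statement, not a flaw in your method.
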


\begin{proof}
See Section~\ref{sec:approx_exact_and_main_results}.
\end{proof}

Based on \cref{EqValueJ}, we note that there are several contributions to the total angular momentum carried by the wave packet. The term proportional to $s$ and aligned with the longitudinal direction of $\bbP$ is called spin angular momentum and is determined by the state of polarisation. In other words, this is an intrinsic angular momentum contribution related to the spin-$1$ nature of the electromagnetic field. When combined with \cref{EqMainThmXDot}, the spin angular momentum gives the spin Hall correction term \eqref{EqDisplacementSpinHall} that is commonly discussed in the literature \cite{SOI_review}. The other terms in \cref{EqValueJ} provide additional transverse and longitudinal angular momentum contributions that directly depend on the shape and phase profile of the wave packet through the $A$ and $B$ matrices. We emphasise that these additional terms are not related to any vortex-type structures (such as in the case of Laguerre-Gauss beams) \cite{AM_Light,BLIOKH20151}, but rather to the asymmetry or astigmatism that can be present in the Gaussian profile of the wave packet \cite{PhysRevA.70.013809,Plachenov:23}. In any case, all these additional terms provide contributions to the spin Hall effect that have not been previously discussed in the literature.

\begin{remark}[On the form of the ODE system \eqref{EqMainThm}]
\begin{enumerate}
\item Equation \eqref{EqMainThmXDot} can be inserted into the right-hand side of equation \eqref{EqMainThmJDot} to obtain an ODE system in canonical form.
    \item Equation \eqref{EqMainThmQDot} can be solved directly to give \eqref{EqValueQ}.
     This can be inserted in the first three equations to get a closed system for $\mathbb{X}, \mathbb{P}$, $\mathbb{J}$ alone together with the forcing term $(A^{-1})^{ij}(t)$:
\begin{subequations} \label{EqMainThm2}
\begin{align}
    \dot{\mathbb{X}}^i(t)  &= \frac{1}{\mathbb{E} \n^2} \mathbb{P}^i(t) - \frac{1}{\mathbb{E} \n^2} \epsilon^{i j k} \mathbb{J}_j (t) \nabla_k \ln \n - \ii \omega^{-1} \frac{d}{dt} (A^{-1})^{i j}(t)  \nabla_j \ln \n \nonumber \\
    &\qquad- \frac{\ii \omega^{-1}}{\mathbb{E} \n^2} \Big[ \mathbb{P}^i(t) (A^{-1})^{j k}(t)  \nabla_j \nabla_k \ln \n + 2 \mathbb{P}^j(t) (A^{-1})^{i k}(t)  (\nabla_j \ln \n) (\nabla_k \ln \n) \Big]+ \mathcal{O}(\omega^{-2}), \label{EqNewXDot} \\
    \dot{\mathbb{P}}_i(t) &= \mathbb{E} \nabla_i \ln \n + \ii \omega^{-1}  \mathbb{E} (A^{-1})^{j k}(t) \nabla_i \nabla_j \nabla_k \ln \n + \mathcal{O}(\omega^{-2}),   \label{EqNewPDot} \\
    \dot{\mathbb{J}}_i(t) &= \epsilon_{i j k} \Big[ \mathbb{P}^{j}(t) \dot{\mathbb{X}}^k(t) + \ii \omega^{-1} \mathbb{E} (A^{-1})^{j l}(t)  \nabla_l \nabla^k \ln \n \Big] + \mathcal{O}(\omega^{-2})\,. \label{EqNewJDot}
\end{align}   
\end{subequations}

\end{enumerate}
\end{remark}

\begin{remark}[Null geodesic motion at leading order] \label{RemarkNullGeodesicMotion}
    We investigate the leading order behaviour of the solution to \cref{EqMainThm2}. For $0 \leq t \leq T$  we directly obtain from \cref{EqNewPDot} that 
    \begin{equation} \label{EqAPrioriBoundP}
    \mathbb{P}_i(t) = \mathcal{O}(1).
    \end{equation}
    Inserting \eqref{EqNewXDot} into \eqref{EqNewJDot} and only keeping leading order terms gives
    \begin{equation}
        \dot{\mathbb{J}}_i(t) = \frac{\mathbb{P}^m(t) \mathbb{J}_m(t) }{\mathbb{E} \n^2} \nabla_i \ln \n - \frac{\mathbb{P}^m(t) \nabla_m \ln \n}{\mathbb{E}\n^2} \mathbb{J}_i(t) + \mathcal{O}(\omega^{-1}) .
    \end{equation}
Now, from \cref{EqInCondJ,EqAPrioriBoundP} it follows that $\mathbb{J}_i(t) = \mathcal{O}(\omega^{-1})$ for $0 \leq t \leq T$ (or indeed this follows from \eqref{eq:J_Q}). Putting this information back into \cref{EqNewXDot}, the leading order behaviour of the system \eqref{EqMainThm2} reduces to 
\begin{subequations} \label{Eq_leading_order}
\begin{align}
    \dot{\mathbb{X}}^i(t)  &= \frac{1}{\mathbb{E} \n^2}
    \mathbb{P}^i(t)  + \mathcal{O}(\omega^{-1}), \label{EqXLeadingOrder} \\
    \dot{\mathbb{P}}_i(t) &= \mathbb{E} \nabla_i \ln \n + \mathcal{O}(\omega^{-1}).  \label{EqPLeadingOrder}
\end{align} 
\end{subequations}
Now, with 
\begin{equation} \label{EqSolGuessed}\mathbb{X}^i(t) = \underline{\gamma}^i(t) \qquad  \textnormal{ and } \qquad  \mathbb{P}_i(t) = \mathbb{E} \cdot p_i(t) = \mathbb{E} \cdot \underline{\dot{\gamma}}^\flat_i ,
\end{equation}
we see that \cref{Eq_leading_order} is equivalent to leading order to \cref{EqHamiltonianEq} -- and by \eqref{EqInCondX}, \eqref{EqInCondP} the initial values agree. Hence, due to the uniqueness of the initial value problem, the solution $\big(\mathbb{X}(t), \mathbb{P}(t) \big)$ is given by \eqref{EqSolGuessed} to leading order. We have thus shown that, to leading order, \cref{EqNewJDot} for the angular momentum drops out and the evolution of $\big(\mathbb{X}(t), \mathbb{P}(t)\big)$ is determined by null geodesic motion. 

The content of \cref{MainThm}, or of \cref{EqMainThm2}, is to give an ODE system which determines the correction to null geodesic motion for the energy centroid to the first subleading order in $\omega^{-1}$. This deviation from null geodesic motion depends not only on the initial position and initial momentum, but also on the initial angular momentum and the initial quadrupole moment. However, it can still be described in terms of \emph{ordinary} differential equations as a particle system!
\end{remark}

Finally, we discuss how the ODE system \eqref{EqMainThm} (or \eqref{EqMainThm2}) can be used to describe the spin Hall effect of light in an inhomogeneous medium. For this we consider a point $\x_0 \in \R^3$ that lies outside the inhomogeneous medium or at least in  a region where the medium is nearly homogeneous in the sense that $D^\alpha \varepsilon |_{\x_0} = 0 = D^\alpha \mu |_{\x_0}$ for all $1 \leq |\alpha| \leq 3$. We now prepare left and right circularly polarised Gaussian beam initial data in the vicinity of this point which, to leading order, is `identical up to polarisation'. Mathematically, this is captured as follows:

\begin{definition}[A class of circularly polarised initial data] \label{def:circ_polarisation}
Assume that the medium is nearly homogeneous near $\x_0 \in \R^3$ in the sense that  $D^\alpha \varepsilon |_{\x_0} = 0 = D^\alpha \mu |_{\x_0}$ for all $1 \leq |\alpha| \leq 3$. 

Then $\mathcal{K}$-supported Gaussian beam initial data of order $2$ as in Definition \ref{DefGBID} is called circularly polarised if, for $\Big( \frac{\nabla \phu}{|\nabla \phu|} \Big|_{\x_0}, X, Y \Big)$ being a positively oriented orthonormal frame at $\x_0$, where $X$ and $Y$ are real vectors, we have  
\begin{enumerate}
    \item $\eu_0|_{\x_0} = \frac{\mathfrak{a}}{\sqrt{2}} (X - \ii s Y)$, where $s = \pm 1$ and $\mathfrak{a}$ is a strictly positive real constant.
    \item $\Re \nabla_a \nabla_b \phu |_{\x_0} = 0$ (or equivalently $\nabla_a \nabla_b \phu |_{\x_0} = \frac{1}{2} \Ab _{a b}$) and $D^\alpha \phu |_{\x_0} = 0$ for all $3 \leq |\alpha| \leq 4$.
    \item \begin{subequations} \label{EqIDCircCon}
    \begin{align}
    \nabla_a \eu_0^i \big|_{\x_0} &= - \frac{1}{|\nabla \phu|^2} \Big( \eu_0^b \nabla_a \nabla_b \phu \Big) \nabla^i \phu \Big|_{\x_0},\\
    \nabla_a \nabla_b \eu_0^i \big|_{\x_0} &= - \frac{2}{|\nabla \phu|^2} \Big( \nabla_{(a} \eu_0^c \nabla_{b)} \nabla_c \phu \Big) \nabla^i \phu \Big|_{\x_0},\\
    \eu_1^i \big|_{\x_0} &= \frac{\ii }{|\nabla \phu|^2} \div \eu_0 \nabla^i \phu \Big|_{\x_0}.
    \end{align}
\end{subequations}
    \end{enumerate}
\end{definition}
In the above definition, the sign of $s$ defines the state of circular polarisation and agrees with the value of $s$ from \eqref{EqS}. 

The existence of such circularly polarised initial data follows again from Theorems \ref{thm:construction_theorem} and \ref{ThmConstructionMaxwellID}: in Theorem \ref{thm:construction_theorem} the above initial conditions on $D^\alpha \phu|_{\x_0}$ for $2 \leq |\alpha| \leq 4$ can be freely specified and the value of $\eu_0|_{\x_0}$ can also be freely prescribed. Furthermore, the components of $\nabla_a \eu_0^i |_{\x_0}$, $\nabla_a \nabla_b \eu_0^i |_{\x_0}$, and $\eu_1^i |_{\x_0}$ that are not constrained by \cref{eq:ID_constraints} are set to zero, which directly gives \eqref{EqIDCircCon}. In other words, in our definition we capture that, in particular, $\eu_0$ changes as little as possible compared to its value at $\x_0$. Broader classes of circularly polarised Gaussian beam initial data may be defined. The advantage of the above definition is that the initial data for the ODE system \eqref{EqMainThm} can be relatively easily computed (see \cref{Appendix:ComputePCircID} for the computation of $\bbP_i(0)$, which is more involved):
\begin{subequations} \label{eq:ID_circ_pol}
\begin{align}
    \bbE &= \frac{\varepsilon \mathfrak{a}^2}{ 2 \sqrt{\det\left( \frac{1}{2 \pi \ii} \Ab  \right)}} \bigg[ 1 - \frac{\ii \omega^{-1}}{8 |\nabla \phu|^2} \Ab _{i j} \big(X^i X^j + Y^i Y^j \big)  \bigg] \bigg|_{\x_0} + \mathcal{O}(\omega^{-2}),  \\ 
    \bbX^i(0) &= \x_0^i + \mathcal{O}(\omega^{-2}),\\
    \bbP_i(0) &= \frac{\varepsilon \n \mathfrak{a}^2}{2\sqrt{\det\left( \frac{1}{2 \pi \ii} \Ab  \right)}} \bigg[ \frac{\nabla_i \phu}{|\nabla \phu|} + \frac{\ii \omega^{-1}}{4 |\nabla \phu|^3} \big( X_i X_j + Y_i Y_j \big) \Ab ^{j k} \nabla_k \phu \bigg] \bigg|_{\x_0} + \mathcal{O}(\omega^{-2}), \\
    \bbJ_i (0) &=  \frac{\omega^{-1} s \varepsilon \n \mathfrak{a}^2}{2\sqrt{\det\left( \frac{1}{2 \pi \ii} \Ab  \right)}} \frac{\nabla_i \phu}{|\nabla \phu|^2} \bigg|_{\x_0} + \mathcal{O}(\omega^{-2}), \\
    \bbQ^{i j}(0) &= \ii \omega^{-1} \mathbb{E} (\Ab ^{-1})^{i j} + \mathcal{O}(\omega^{-2}).
\end{align}
\end{subequations}
Note that only the initial angular momentum $\bbJ_i(0)$ depends on $s$, while the other quantities $\bbE$, $\bbX^i(0)$, $\bbP_i(0)$ and $\bbQ^{i j}(0)$ are independent of $s$. 

Now assume that the two circularly polarised Gaussian beams, one with $s=+1$, the other with $s=-1$, propagate into the inhomogeneous medium where in particular $\nabla \ln \n \neq 0$. Since the two angular momenta $\mathbb{J}(t)$ are of order $\omega^{-1}$ and different, it follows from the second term on the right-hand side of \eqref{EqNewXDot} that the two trajectories of the energy centroids in general differ at fixed time $t$ by an amount of order $\omega^{-1}$. Given a particular inhomogeneous medium described by functions $\varepsilon$ and $\mu$ one may solve the ODE system \eqref{EqMainThm} or \eqref{EqMainThm2} to obtain the precise description of the two trajectories.

\section{The approximate solutions} \label{SecApproxSol}

In this section, we use the Gaussian beam approximation \cite{ralston1982,Sbierski2013} to construct high-frequency approximate solutions $(\Eh, \Hh)$ for Maxwell's equations~\eqref{EqMax}. For clarity, and because the geometric optics approximation is more widely used and appears in a much broader body of work, we begin by briefly contrasting it with the Gaussian beam approach. A brief historical account of the Gaussian beam approximation can be found in \cite{Sbierski2013} and references therein. 

The geometric optics and Gaussian beam approximations for Maxwell's equations~\eqref{EqMax} both start from a highly oscillatory ansatz of the form
\begin{align}
    \mathfrak{E}:=\omega^{\nicefrac{3}{4}}\sum_{A=0}^{N-1} \omega^{-A} \e_{A}e^{\ii \omega \phi},\qquad \mathfrak{H}:=\omega^{\nicefrac{3}{4}}\sum_{A=0}^{N-1} \omega^{-A} \h_{A}e^{\ii \omega \phi},\qquad \Eh:= \Re(\mathfrak{E}),\qquad \Hh:= \Re(\mathfrak{H}), \label{eq:series_approx}
\end{align}
where $N\in \N$, $A=0,...,N-1$, $(\e_{A},\h_{A})\in C^{\infty}(\R^4,\C)\times  C^{\infty}(\R^4,\C)$, and $\omega>1$ is large. The function $\phi$ is called the eikonal function and is where the first difference between the approximations lies:
$\phi$ is a smooth real-valued spacetime function in the geometric optics approximation and in the Gaussian beam approximation $\phi$ is a smooth complex-valued spacetime function with the requirement that along a chosen curve $\gamma$ (see below for restrictions):
\begin{enumerate}
     \item $\phi|_{\upgamma}$ and $\nabla_a\phi|_{\upgamma}$ are real-valued,
    \item $\Im\phi$ is chosen so that $\Im \nabla_a \nabla_b \phi \big|_{\gamma}$ is positive-definite.
\end{enumerate}
This means that $ |\Eh(x)|$ and $ |\Hh(x)|$ resemble Gaussian distributions centred on $\gamma$. If we then cut-off with a smooth function, we obtain a localised beam around the curve $\gamma$. 

In both cases, we wish to build approximate solutions to~\eqref{EqMax} with~\eqref{eq:series_approx}. More precisely, if we define
\begin{subequations} \label{eq:GB_approx}
\begin{align}
    C &:= \omega^{-\nicefrac{3}{4}}e^{-\ii \omega \phi} \Big( \div \mathfrak{E} + \mathfrak{E}^i\nabla_i \ln \varepsilon \Big), \label{eq:C_GB_approx} \\
    K  &:=  \omega^{-\nicefrac{3}{4}}e^{-\ii \omega \phi} \Big( \div \mathfrak{H} + \mathfrak{H}^i\nabla_i \ln \mu \Big), \label{eq:K_GB_approx} \\
    \bF &:= \omega^{-\nicefrac{3}{4}}e^{-\ii \omega \phi} \Big( \nabla\times\mathfrak{E} + \mu \dot{\mathfrak{H}} \Big), \label{eq:F_GB_approx} \\
    \bG &:= \omega^{-\nicefrac{3}{4}}e^{-\ii \omega \phi} \Big( \nabla\times\mathfrak{H} - \eps \dot{\mathfrak{E}} \Big), \label{eq:G_GB_approx}
\end{align}   
\end{subequations}
we want 
\begin{subequations}\label{eq:approximate_solution_PDE}
\begin{align}
\omega^{\nicefrac{3}{4}} \bC e^{i\omega\phi}&=\mathcal{O}_{L^2(\R^3)}(\omega^{-M}), \quad \omega^{\nicefrac{3}{4}} \bK e^{i\omega\phi}=\mathcal{O}_{L^2(\R^3)}(\omega^{-M}),\\
\omega^{\nicefrac{3}{4}} \bF e^{i\omega\phi}&=\mathcal{O}_{L^2(\R^3)}(\omega^{-M}),\quad  \omega^{\nicefrac{3}{4}} \bG e^{i\omega\phi}=\mathcal{O}_{L^2(\R^3)}(\omega^{-M}),
\end{align}
\end{subequations}
for some $M\in \N$ and for all $0\leq t\leq T<\infty$. For later use, we also introduce here the following notation:
\begin{align}\label{eq:FGhat}
\hat{\mathscr{F}}:=\Re(\omega^{\nicefrac{3}{4}} \bF e^{i\omega\phi}),\quad  \hat{\mathscr{G}}:=\Re(\omega^{\nicefrac{3}{4}} \bG e^{i\omega\phi}).
\end{align}

The requirement set by \cref{eq:approximate_solution_PDE} yields a sequence of equations for the coefficients $(\e_i,\h_i)$. In particular, we have the following proposition:
\begin{proposition}
The quantities $\bC$, $\bK$, $\bF$ and $\bG$ have the following expansions in terms of $\omega$:
    \begin{align} \label{eq:GBexpMaxwell}
        C=\sum_{A=0}^{N-1}\omega^{1-A}C_A,\quad\
        K=\sum_{A=0}^{N-1}\omega^{1-A}K_A,\quad \bF=\sum_{A=0}^{N-1}\omega^{1-A}\bF_A,\quad
        \bG=\sum_{A=0}^{N-1}\omega^{1-A}\bG_A, 
    \end{align}
    where, for $A\in \N$,
    \begin{subequations}\label{eq:CA--GA}
    \begin{align}
        C_A&:= \ii \e_{A}^i \nabla_i\phi+\div \e_{A-1}+\e_{A-1}^i\nabla_i\ln\eps, \\
        K_A&:= \ii \h_{A}^i \nabla_i\phi+\div \h_{A-1}+\h_{A-1}^i\nabla_i\ln\mu, \\
        \bF_A&:= \ii \Big( {\epsilon^{ij}}_k\e^k_{A}\nabla_j\phi + \mu\dot{\phi} \h^i_{A} \Big)+{\epsilon^{ij}}_k\nabla_j\e^k_{A-1}+\mu\dot{\h}^i_{A-1} \label{eq:F_A}, \\
        \bG_A&:= \ii \Big( {\epsilon^{ij}}_k\h^k_{A}\nabla_j\phi-\eps\dot{\phi} \e^i_{A} \Big)+{\epsilon^{ij}}_k\nabla_j\h^k_{A-1}-\eps\dot{\e}^i_{A-1},
    \end{align}   
    \end{subequations}
with $\e_{-1}=0=\h_{-1}$.
\end{proposition}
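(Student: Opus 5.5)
The plan is a direct substitution of the ansatz \eqref{eq:series_approx} into the definitions \eqref{eq:GB_approx}, followed by collecting powers of $\omega$. Since the prefactor $\omega^{-\nicefrac{3}{4}}e^{-\ii\omega\phi}$ exactly cancels the factor $\omega^{\nicefrac{3}{4}}e^{\ii\omega\phi}$ carried by $\mathfrak{E}$ and $\mathfrak{H}$, everything reduces to the product rule for derivatives of $e^{\ii\omega\phi}$.

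First, we compute the derivatives of a single term $\omega^{-A}\e_A e^{\ii\omega\phi}$. We have
\[
\nabla_i\big(\e^k_A e^{\ii\omega\phi}\big)=\big(\nabla_i\e^k_A+\ii\omega\,\e^k_A\nabla_i\phi\big)e^{\ii\omega\phi},
\qquad
\partial_t\big(\h^k_A e^{\ii\omega\phi}\big)=\big(\dot{\h}^k_A+\ii\omega\,\dot\phi\,\h^k_A\big)e^{\ii\omega\phi}.
\]
From these, $C$ becomes $\sum_{A=0}^{N-1}\omega^{-A}\big(\ii\omega\,\e_A^i\nabla_i\phi+\div\e_A+\e_A^i\nabla_i\ln\eps\big)$. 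The curl and time-derivative terms give the analogous formulas for $\bF$ and $\bG$ (where $\mu$ and $\eps$ depend only on $x$, so they commute with $\partial_t$), and $K$ works the same way as $C$.

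Second, we reindex. The terms with an explicit factor $\omega$ contribute $\omega^{1-A}\,\ii(\cdots)_A$. The terms without a factor $\omega$ contribute $\omega^{-A}(\cdots)_A=\omega^{1-(A+1)}(\cdots)_A$, so we shift $A\mapsto A-1$. Using the convention $\e_{-1}=\h_{-1}=0$, the coefficient of $\omega^{1-A}$ is then exactly $C_A$, $K_A$, $\bF_A$, $\bG_A$ as in \eqref{eq:CA--GA}.

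The only subtle point is the range of summation. After the shift, the non-$\omega$ terms of index $N-1$ land at $\omega^{1-N}$, which corresponds to $A=N$ and lies outside the stated sum. This can be handled in either of two ways. One option is to read \eqref{eq:GBexpMaxwell} with the sum extended to $A=N$. The other is to set $\e_A=\h_A=0$ for $A\geq N$, so that the formula for $C_N$ (and similarly for $K_N$, $\bF_N$, $\bG_N$) consists only of the remainder terms. I would state this convention explicitly. Apart from that, the proof is purely algebraic and presents no real obstacle. For $\bF$, the one point to check is that $\mu\dot{\mathfrak{H}}$ produces $\mu\dot\phi\,\h_A$ with the factor $\ii\omega$ and $\mu\dot{\h}_{A-1}$ at the next order. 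For $\bG$, the sign of $-\eps\dot{\mathfrak{E}}$ has to be carried through.
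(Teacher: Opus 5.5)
Your proposal is correct and follows the paper's proof: substitute the ansatz, apply the product rule to $e^{\ii\omega\phi}$, separate the terms carrying an explicit factor $\omega$ (which give $\omega^{1-A}$) from those that do not (which give $\omega^{-A}$), and shift the index using $\e_{-1}=\h_{-1}=0$. Your remark about the summation range is also right. The paper's proof implicitly sums over all $A\geq 0$, which amounts to setting $\e_A=\h_A=0$ for $A\geq N$. The remainder terms such as $\div\e_{N-1}+\e_{N-1}^i\nabla_i\ln\eps$ therefore sit at order $\omega^{1-N}$, and the stated sum should run to $A=N$.
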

\begin{proof}
We now expand \cref{eq:GB_approx}. This yields: 
\begin{subequations}
\begin{align}
    \nabla_i \mathfrak{E}^i + \mathfrak{E}^i\nabla_i \ln \eps &= \bigg[ \ii \omega \e_0^i\nabla_i\phi +\sum_{A\geq 0} \omega^{-A} \Big(\div \e_{A} + \ii \e_{A+1}^i \nabla_i\phi+\e_{A}^i\nabla_i\ln\eps \Big) \bigg] e^{\ii\omega \phi},\\
    \nabla_i \mathfrak{H}^i+ \mathfrak{H}^i \nabla_i \ln \mu &= \bigg[ \ii\omega \h_0^i\nabla_i\phi +\sum_{A\geq 0} \omega^{-A} \Big( \div \h_{A}+ \ii\h_{A+1}^i \nabla_i\phi+\h_{A}^i\nabla_i\ln\mu \Big) \bigg]e^{\ii\omega \phi},\\
    {\epsilon^{ij}}_k \nabla_j \mathfrak{E}^k + \mu \dot{\mathfrak{H}}^i &= \bigg\{ \sum_{A\geq 0} \omega^{-A} \Big[{\epsilon^{ij}}_k \Big(\nabla_j\e^k_{A} + \ii\e^k_{A+1}\nabla_j\phi \Big) +\mu\dot{\h}^i_{A}+\ii\mu\dot{\phi} \h^i_{A+1}\Big] + \ii\omega\Big({\epsilon^{ij}}_k\e^k_{0}\nabla_j\phi +\mu\dot{\phi} \h_{0} \Big) \bigg\}e^{\ii\omega \phi},\\
    {\epsilon^{ij}}_k \nabla_j \mathfrak{H}^k -\eps \dot{\mathfrak{E}}^i &= \bigg\{\sum_{A\geq 0} \omega^{-A} \Big[{\epsilon^{ij}}_k \Big(\nabla_j\h^k_{A}+\ii \h^k_{A+1}\nabla_j\phi \Big)-\eps\dot{\e}^i_{A}-\ii\eps\dot{\phi} \e^i_{A+1}\Big] + \ii\omega\Big({\epsilon^{ij}}_k\h^k_{0}\nabla_j\phi-\eps\dot{\phi} \e^i_{0} \Big)\bigg\}e^{\ii\omega \phi}.
\end{align}
\end{subequations}
The $\mathcal{O}(\omega)$ contributions are
\begin{subequations}
\begin{align}
    \bC_0&=\e_0^i\nabla_i\phi,\\
    \bK_0&=\h_0^i\nabla_i\phi,\\
    \bF_0^i&= {\epsilon^{ij}}_k\e^k_{0}\nabla_j\phi+\mu \dot{\phi} \h^i_{0}, \\
    \bG_0^i&={\epsilon^{ij}}_k\h^k_{0}\nabla_j\phi-\eps \dot{\phi} \e^i_{0}.
\end{align}
\end{subequations}
The $\mathcal{O}(\omega^{-A})$ for $A\geq 0$ are
\begin{subequations} \label{EqExprCKFG}
\begin{align}
    \bC_{A+1}&=\div \e_{A}+\ii\e_{A+1}^i \nabla_i\phi+\e_{A}^i\nabla_i\ln\eps,\\
    \bK_{A+1}&=\div \h_{A}+\ii\h_{A+1}^i \nabla_i\phi+\h_{A}^i\nabla_i\ln\mu,\\
\bF_{A+1}^i&={\epsilon^{ij}}_k \Big( \nabla_j\e^k_{A}+\ii\e^k_{A+1}\nabla_j\phi \Big)+\mu\dot{\h}^i_{A}+\ii\mu \dot{\phi} \h^i_{A+1}, \\
\bG_{A+1}^i&={\epsilon^{ij}}_k \Big( \nabla_j\h^k_{A}+\ii\h^k_{A+1}\nabla_j\phi \Big) -\eps\dot{\e}^i_{A}-\ii\eps \dot{\phi} \e^i_{A+1}.
\end{align}
\end{subequations}
Using $\e_{-1}=0=\h_{-1}$, these can be written in a unified form of the statement.
\end{proof}

In achieving \cref{eq:approximate_solution_PDE} from \cref{eq:CA--GA} lies the next difference between the two approaches: in the geometric optics approximation we require that, for all $0 \leq A \leq M+1$,
\begin{align}\label{eq:mix-Max}
    \bC_A\equiv 0,\quad \bK_A\equiv 0,\quad \bF_A\equiv 0,\quad\bG_A\equiv 0,
\end{align}
in spacetime to achieve~\eqref{eq:approximate_solution_PDE}. In the Gaussian beam approximation one can show that for~\eqref{eq:approximate_solution_PDE} to hold, it suffices to require that, for all $0 \leq A \leq M+1$, $(\bC_A,\bK_A,\bF_A,\bG_A)$ to vanish on the curve $\gamma$ to some order $S$ (dependent on $A$),\footnote{See already Proposition~\ref{prop:order_2_from spacetime_functions} in the context of Maxwell's equations.} i.e. for all $0 \leq A \leq M+1$,
\begin{align}\label{eq:mix-Max-gamma}
    D^{\alpha}\bC_A|_{\gamma}= 0,\quad D^{\alpha}\bK_A|_{\gamma}= 0,\quad D^{\alpha}\bF_A|_{\gamma}= 0,\quad D^{\alpha}\bG_A|_{\gamma}= 0,\quad \forall |\alpha|\leq S.
\end{align}

Going back to the geometric optics approximation for Maxwell's equations, instead of studying~\cref{eq:mix-Max}, one can study the eikonal equation for $\phi$
\begin{align}\label{eq:eikonalspacetime}
    \nabla\phi \cdot \nabla \phi-\n^2\dot{\phi}^2=0
\end{align}
in combination with transport and constraint equations for $\e_A$:
\begin{subequations}\label{eq:transporteAspacetime}
\begin{align}
    \Big(\partial_t-\frac{\nabla^m\phi}{\n^2\dot{\phi}}\nabla_m\Big) \e^n_{A} &= \frac{1}{2\n^2\dot{\phi}} \bigg[ \e^n_{A} \Big( \Delta\phi - \n^2 \ddot{\phi} \Big) - \ii \Big( \Delta\e^n_{A-1}  - \n^2 \ddot{\e}^n_{A-1} \Big) - \ii \e_{A-1}^m\nabla_n\nabla_m\ln\eps \nonumber\\
    &\qquad+ \Big( \e^{m}_{A}\nabla^{n}\phi - \ii \nabla^n\e^m_{A-1}\Big) \nabla_m \ln \n^2 - \Big( \e^{n}_{A}\nabla^{m}\phi- \ii \nabla^m\e^n_{A-1}\Big) \nabla_m\ln\mu \bigg],\\
    0&=\ii \e_{A}^i \nabla_i\phi+\div \e_{A-1}+\e_{A-1}^i\nabla_i\ln\eps. 
\end{align}   
\end{subequations}
In this case, one \textit{defines} $\h_A$ by the requirement that $\bF_A=0$. We can deduce~\cref{eq:eikonalspacetime,eq:transporteAspacetime} from~\cref{lem:decompGF}. This is the content of~\cref{prop:2sttranstoMax} in the context of the Gaussian beam approximation. However,~\cref{prop:2sttranstoMax} can be adapted straightforwardly to the geometric optics setting.\footnote{Note that~\cref{eq:eikonalspacetime,eq:transporteAspacetime} can also be obtained by plugging the ansatz for $\Eh$ in~\cref{eq:series_approx} into the wave equation for $\bE$, $\nabla( \bE\cdot \nabla\ln \eps)+\nabla \ln\mu \times (\nabla\times \bE)+\Delta \bE-\n^2\partial_t^2 \bE = 0$, which follows from~\cref{EqMax}.} 

In the Gaussian beam approximation we require the same equations to hold on $\gamma$ to some degree and prescribe that $\frac{\nabla^m\phi}{\n^2\dot{\phi}}=-\dot{\gamma}^m$ to obtain ODEs for $\e_A$, where we assume for simplicity that $\gamma$ can be parametrised by $t$ as $(t,\gammu(t))$. Again, one eliminates $\h_A$ (to some degree) on $\gamma$ by the requirement that $\bF_A|_{\gamma}=0$ (to some degree). More precisely,
\begin{definition}\label{def:EikonalandTransporteA}
We say that $\phi:\R^4\rightarrow \C$ satisfies the Eikonal equation on $\gamma$ to degree $j_{\phi}$ if
\begin{align} \label{eq:Eikonal}
    D^{\alpha} \Big( \nabla\phi \cdot \nabla \phi-\n^2\dot{\phi}^2 \Big) \Big|_{\gamma}=0 \qquad \forall |\alpha|\leq j_{\phi}.
\end{align}
We say that $\e_A$ satisfies the $\e_A$-transport equation along $\gamma$ to degree $j_A$ if
\begin{align}  
    \Big( \partial_t+\dot{\gamma}^m\nabla_m \Big)D^{\alpha}\e^n_{A}  \Big|_{\gamma} &= D^{\alpha} \bigg\{ \frac{1}{2\n^2\dot{\phi}} \bigg[ \e^n_{A} \Big( \Delta\phi - \n^2 \ddot{\phi} \Big) - \ii \Big( \Delta\e^n_{A-1}  - \n^2 \ddot{\e}^n_{A-1} \Big) - \ii \e_{A-1}^m\nabla_n\nabla_m\ln\eps \nonumber\\
    &\qquad+ \Big( \e^{m}_{A}\nabla^{n}\phi - \ii \nabla^n\e^m_{A-1}\Big) \nabla_m \ln \n^2 - \Big( \e^{n}_{A}\nabla^{m}\phi- \ii \nabla^m\e^n_{A-1}\Big) \nabla_m\ln\mu \bigg] \bigg\}\bigg|_{\gamma}\nonumber\nonumber\\
    &\qquad+\sum_{0<\beta\leq \alpha}\binom{\alpha}{\beta} D^{\beta} \bigg(\frac{\nabla^m\phi}{\n^2\dot{\phi}} \bigg) \nabla_m D^{\alpha-\beta}\e_A^n \bigg|_{\gamma}\qquad \forall |\alpha|\leq j_A.\label{eq:eAtransport}
    \end{align}
We say that $\e_A$ satisfies the $\e_A$-constraint along $\gamma$ to degree $c_A$ if 
\begin{align} \label{eq:eAconstraint}
    D^{\alpha}\bC_A \big|_{\gamma}=0 \qquad \forall |\alpha|\leq c_A.
\end{align}
\end{definition}

Prescribing that $\frac{\nabla^m\phi}{\n^2\dot{\phi}}=-\dot{\gamma}^m$ to obtain ODEs along $\gamma$ restricts the curve along which one can perform the construction. Requiring that the Eikonal equation holds to degree $0$ means
\begin{align}
    0= \Big( \n^2\dot{\phi}^2-\nabla\phi\cdot\nabla\phi \Big) \Big|_\gamma =\n^2\dot{\phi}^2 \Big(1-\n^2|\dot{\gamma}|^2 \Big) \Big|_\gamma \implies g(\dot{\gamma},\dot{\gamma})=0,
\end{align}
where we use $\dot{\gamma}^t=1$. In other words, $\gamma$ must be a null curve. Requiring that the eikonal equation holds to degree $1$ imposes
\begin{align}
\dot{\gamma}^\nu\partial_{\nu}(\nabla_m\phi)+\dot{\phi}\nabla_m\ln\n \big|_\gamma=0,
\end{align}
which combined with
\begin{align}
    \dot{\gamma}^t=1,\quad\dot{\gamma}^m=-\frac{\nabla^m\phi}{\n^2\dot{\phi}}\bigg|_\gamma
\end{align}
constitutes the equations of geodesic flow on the cotangent bundle. So $\gamma$ must be a null geodesic in this construction.

An advantage of the Gaussian beam approximation is that it does not break down at caustics. The simple ansatz~\eqref{eq:series_approx} remains a valid approximation for all finite time $T$ provided that $\omega$ is chosen sufficiently large. This is in contrast to geometric optics approximation, which breaks down at
caustics. This means that the time $T$, up to which one has good
control over the solution, cannot be taken arbitrarily large by increasing $\omega$. The formation of caustics is not a death sentence for the method, since one can extend the approximate solution through the caustics with Maslov's canonical operator. However, the solution no longer has the simple form~\eqref{eq:series_approx}.

\subsection{Construction of the Gaussian beam approximation}

In this section, we study and construct approximate solutions to Maxwell's equations~\eqref{EqMax} in an inhomogeneous medium. We start with a preparatory lemma that allows us to show that for \cref{eq:approximate_solution_PDE} to hold, it suffices to require each $(\bC_A,\bK_A,\bF_A,\bG_A)$ to vanish on the curve $\gamma$ to some order $S$.

\begin{lemma}\label{lem:degreetoorder}
    Let $\gamma$ be a curve in $\R^{1 + n}$ parametrised by $t\in[0,T]$ such that $\gamma(t) = (t, \gammu(t))$, and $f\in C^{\infty}_c([0,T]\times \R^{n},\C)$ and vanishes along $\gamma$ (up) to (and including) degree $S$:
    \begin{align}
        D^{\alpha}f \big|_{\gamma}=0\quad\forall |\alpha|\leq S.
    \end{align}
    Let $c>0$ be a constant. Then, we have
    \begin{align}
        \sup_{t\in[0,T]}\int_{ \R^n}|f(t, x)|^2e^{-\omega c|x-\gammu(t)|^2} \, d^n x \leq \frac{C[f,T]}{\omega^{S+\frac{n+2}{2}}},
    \end{align}
    where $C$ is a constant depending on $f$ and $T$.
\end{lemma}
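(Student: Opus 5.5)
Fix $t\in[0,T]$ and Taylor-expand $f(t,\cdot)$ in the spatial variables around $\gammu(t)$ to order $S$. Because $D^\alpha f(t,\gammu(t))=0$ for all $|\alpha|\le S$, Taylor's theorem with integral remainder gives
\begin{equation*}
f(t,x)=\sum_{|\alpha|=S+1}\frac{S+1}{\alpha!}\,(x-\gammu(t))^\alpha\int_0^1(1-s)^{S}\,D^\alpha f\big(t,\gammu(t)+s(x-\gammu(t))\big)\,ds .
\end{equation*}
Since $f$ is smooth with compact support in $[0,T]\times\R^n$, the derivatives of order $S+1$ are bounded uniformly in $(t,x)$. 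Hence
\begin{equation*}
|f(t,x)|\le C_1[f]\,|x-\gammu(t)|^{S+1}\quad\text{for all } x\in\R^n,\ t\in[0,T],
\end{equation*}
where $C_1=\sum_{|\alpha|=S+1}\frac{1}{\alpha!}\sup|D^\alpha f|$. The derivatives being controlled are only spatial ones. The $t$-derivative plays no role because the bound is pointwise in $t$.

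Insert this bound into the integral and substitute $y=\sqrt{\omega}\,(x-\gammu(t))$, so that $d^nx=\omega^{-n/2}d^ny$. This gives
\begin{equation*}
\int_{\R^n}|f(t,x)|^2e^{-\omega c|x-\gammu(t)|^2}\,d^nx\le C_1^2\int_{\R^n}|x-\gammu(t)|^{2S+2}e^{-\omega c|x-\gammu(t)|^2}d^nx = C_1^2\,\omega^{-(S+1)-\frac n2}\int_{\R^n}|y|^{2S+2}e^{-c|y|^2}d^ny .
\end{equation*}
The remaining Gaussian moment is a finite constant depending only on $S$, $n$ and $c$. The exponent is $S+1+\frac n2=S+\frac{n+2}{2}$, as claimed. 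The resulting bound does not depend on $t$, so taking the supremum over $t\in[0,T]$ is immediate.

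The main point requiring care is uniformity. The constant in the Taylor remainder must not depend on $t$. This holds because the segment from $\gammu(t)$ to $x$ lies in $\R^n$, where $\sup|D^\alpha f|$ is finite by compact support and smoothness on the compact set $[0,T]\times\R^n\cap\operatorname{supp}f$. No continuity of $\gammu$ is even needed for this step. There are no further obstacles: the argument is just the Taylor bound followed by Gaussian scaling.
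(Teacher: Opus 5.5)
Your proposal is correct and follows the paper's proof. Both arguments get the uniform pointwise bound $|f(t,x)|\le C[f,T]\,|x-\underline{\gamma}(t)|^{S+1}$ from the vanishing spatial jet, smoothness and compact support, and then rescale $y=\sqrt{\omega}\,(x-\underline{\gamma}(t))$ to extract $\omega^{-(S+1)-n/2}$; your explicit integral form of the Taylor remainder just makes visible the $t$-uniformity of the constant, which the paper asserts without comment.
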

\begin{proof}
    Since $f$ is compactly supported and vanishes along $\gamma$, for each $t$,
    \begin{align}
       | f(t,\x)|\leq C[f,T]|\x-\gammu(t)|^{S+1}.
    \end{align}
    This gives
    \begin{align}
        \int_{\R^n}|f(t,\x)|^2e^{-\omega c|\x-\gammu(t)|^2}d^n x \leq C[f,T]\int_{\R^n}|\x-\gammu(t)|^{2(S+1)}e^{-\omega c|x-\gammu(t)|^2}d^n x.
    \end{align}
    We now change variables and define $\hat{\x}=\sqrt{\omega}[\x-\gammu]$. This gives 
    \begin{align}
       \int_{ \R^n}|f(t, \x)|^2e^{-\omega c|\x-\gammu(t)|^2}d^n x \leq C[f,T]\int_{\R^n}\omega^{-S-1}|\hat{\x}|^{2(S+1)}e^{- c|\hat{\x}|^2}\frac{1}{\omega^{\frac{n}{2}}}d^n\hat{x}.
    \end{align}
\end{proof}

\begin{proposition}\label{prop:order_2_from spacetime_functions}
Let $\Eh$ and $\Hh$ as in \cref{eq:series_approx} be given with $N=3$ and let $\gamma$ be a curve in $\R^{1 + n}$ parametrised by $t\in[0,T]$ such that $\gamma(t) = (t, \gammu(t))$. 
Suppose further that
\begin{enumerate}
    \item for $|\alpha|\leq 1$, $D^{\alpha}\phi|_{\gamma}$ are real-valued,
    \item $\e_0|_{\gamma(0)}\neq0$ and for some $\alpha$, with $|\alpha|=1$, $D^{\alpha}\phi|_{\gamma(0)}\neq 0$,
    \item for $|\alpha|=2$, $\Im(D^{\alpha}\phi|_{\gamma})$ is positive-definite,
    \item $\bC_0$, $\bK_0$, $\bF_0$ and $\bG_0$,  vanish on $\gamma$ to degree $5$, \label{cond4}
    \item $\bC_1$, $\bK_1$, $\bF_1$ and $\bG_1$ vanish on $\gamma$ to degree $3$, \label{cond5}
    \item $\bC_2$, $\bK_2$, $\bF_2$ and $\bG_2$ vanish on $\gamma$ to degree $1$. \label{cond6}
\end{enumerate}
 Then, given a $\rho>0$, there exists a smooth function $\chi_\rho : [0,T] \times \R^3 \to [0,1]$ with $\mathrm{supp} \chi_\rho(t, \cdot) \subseteq B_{\rho}(\underline{\gamma}(t)) \subseteq \R^3$ and which is equal to $1$ in a neighbourhood of $\gamma\big([0,T]\big)$, such that $\Eh_{\rho} := \Eh \cdot \chi_\rho$ and $\Hh_{\rho}  \cdot \chi_\rho$ satisfy Maxwell's equations~\eqref{EqMax} to order $2$ by which we mean, there exists a $C(T)>0$ such that
    \begin{align} \label{eq:Max_to_order_two}
       \sup_{t\in[0,T]} \Big\|\omega^{\nicefrac{3}{4}} \bC e^{i\omega\phi}, \omega^{\nicefrac{3}{4}} \bK e^{i\omega\phi}, \omega^{\nicefrac{3}{4}} \bF e^{i\omega\phi}, \omega^{\nicefrac{3}{4}} \bG e^{i\omega\phi} \Big\|_{L^2( \R^3)}\leq C(T)\omega^{-2}.
    \end{align}
\end{proposition}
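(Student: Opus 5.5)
The plan is to expand each of $C,K,F,G$ via \cref{eq:GBexpMaxwell}, multiply by the Gaussian weight, and estimate each order separately with \cref{lem:degreetoorder}. First I construct the cutoff $\chi_\rho$. Take a fixed bump $\chi\in C^\infty_c(B_1(0))$ equal to $1$ on $B_{1/2}(0)$, and set $\chi_\rho(t,x):=\chi\big((x-\gammu(t))/\rho\big)$, shrinking $\rho$ if needed. This function is smooth in $(t,x)$ because $\gammu$ is smooth. When the cut-off fields are inserted into \eqref{eq:GB_approx}, every quantity $C_A,K_A,F_A,G_A$ picks up extra terms. These come from derivatives of $\chi_\rho$: $\nabla\chi_\rho$ and $\dot\chi_\rho$ in the $\omega^{-A}$ terms, since the $\ii\omega$-terms do not differentiate $\chi_\rho$. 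The extra terms are supported in the annulus $\rho/2\le|x-\gammu(t)|\le\rho$, and the coefficients themselves just get multiplied by $\chi_\rho$. Because $\chi_\rho\equiv1$ near $\gamma$, the vanishing to degrees $5,3,1$ along $\gamma$ in conditions \ref{cond4}--\ref{cond6} is unaffected. The cut-off coefficients are compactly supported in $[0,T]\times\R^3$, so \cref{lem:degreetoorder} applies.

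Second, I need a Gaussian bound on $|e^{\ii\omega\phi}|=e^{-\omega\Im\phi}$. On $\gamma$, $\Im\phi$ and $\nabla\Im\phi$ vanish by assumption 1, and $\Im D^2\phi$ is positive definite by assumption 3, uniformly on $[0,T]$ by compactness and continuity. Taylor's theorem then gives $\Im\phi(t,x)\ge c|x-\gammu(t)|^2$ for $|x-\gammu(t)|\le\rho$, with $\rho$ chosen small enough and some $c>0$ uniform in $t$. Hence $|\omega^{3/4}Ce^{\ii\omega\phi}|^2\le\omega^{3/2}|C|^2e^{-2c\omega|x-\gammu|^2}$ on the support.

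Third, I count powers of $\omega$. The term $\omega^{1-A}C_A$ vanishes to degree $S_A$ with $S_0=5$, $S_1=3$, $S_2=1$. By \cref{lem:degreetoorder} with $n=3$ its contribution to the squared norm is bounded by
\[
\omega^{3/2}\,\omega^{2-2A}\,\omega^{-S_A-5/2}.
\]
Taking square roots gives the exponent $\tfrac12-A-\tfrac{S_A}{2}$, which equals $-2$ for $A=0,1,2$ (using $S_A=5-2A$). This is exactly \eqref{eq:Max_to_order_two}.

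Fourth, I handle the leftover terms, and this is the step needing the most care. There are two kinds. One is the top-order term $\omega^{-N}\big(\div \e_2+\e_2^i\nabla_i\ln\eps\big)$ and its analogues, from $\omega^{-A}$ with $A=N-1=2$, which has no cancellation. The other is the cutoff-derivative terms. The former carries $\omega^{-2}$ from the expansion and $\omega^{3/4}\cdot\omega^{-3/4}=1$ from the Gaussian integral, since the $L^2$ norm of $e^{-c\omega|x-\gammu|^2}$ is $\sim\omega^{-3/4}$. So it is $\mathcal{O}(\omega^{-2})$ without any vanishing. The cutoff terms live where $|x-\gammu|\ge\rho/2$, so $e^{-\omega\Im\phi}\le e^{-c\omega\rho^2/4}$ there, and they are $\mathcal{O}(\omega^{-\infty})$. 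One must check that the expansion \eqref{eq:GBexpMaxwell} is truncated so the highest index term is correctly accounted for. I read $C_3$ and its analogues as the residual and estimate them directly as above. Summing all contributions and taking the supremum over $t\in[0,T]$ yields $C(T)$, which concludes the proof.
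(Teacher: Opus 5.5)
Your proposal is correct and follows essentially the same route as the paper. Both arguments use the same steps: a quadratic lower bound $\Im\phi\geq c|x-\gammu(t)|^2$ near $\gamma$ from assumptions 1 and 3; a cut-off that makes the coefficients compactly supported without affecting their vanishing along $\gamma$; \cref{lem:degreetoorder} applied order by order, giving the exponent $\tfrac12-A-\tfrac{S_A}{2}=-2$; and a direct $\mathcal{O}(\omega^{-2})$ bound on the unconstrained top-order residual (the paper's ``$\bF_3$ with $N=4$'' argument).

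Your separate exponential-smallness treatment of the cut-off derivative terms is harmless but not needed, since the paper simply absorbs them into the cut-off coefficients, which still vanish to the required degree. Also, the residual carries $\omega^{1-N}=\omega^{-2}$, not $\omega^{-N}$ as you wrote; $\omega^{-2}$ is in fact the power you use in the estimate.
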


\begin{proof}
    For the purposes of the proof, let $N\in \N$ be free. We compute from equation~\eqref{eq:GBexpMaxwell} that
    \begin{align}
    \Big\| \omega^{\nicefrac{3}{4}}\bF e^{i\omega\phi} \Big\|_{L^2(\R^3)}^2&=\int_{ \R^3}\omega^{\nicefrac{3}{2}}\bF\cdot\overline{\bF} e^{-2\omega\Im\phi} d^3 x \leq C\omega^{\nicefrac{3}{2}}\sum_{A=0}^{N-1}\int_{\R^3}\omega^{2(1-A)}|\bF_A|^2e^{-2\omega\Im\phi} \, d^3 x,\label{eq:L2_F}
\end{align}
where the last inequality follows from Cauchy--Schwarz.
By the assumption that, for $|\alpha|\leq 1$, $D^{\alpha}\phi|_{\gamma}$ are real-valued and, for $|\alpha|=2$, $\Im(D^{\alpha}\phi|_{\gamma})$ is positive-definite,  there exists $c>0$ and $\tilde{\rho} >0$ such that for each $t \in [0,T]$ and $x \in B_{\tilde{\rho}}(\gammu(t))$
\begin{align} \label{eq:imphi_quadratic_lower_bound}
    \Im(\phi(t,\x))\geq \frac{c}{2} |\x-\gammu(t)|^2 \;.
\end{align}
 The monotonicity of the exponential function implies
\begin{align} \label{eq:monotonicity_est}
    e^{-2\omega\Im\phi (t,x)}\leq e^{-c\omega|\x-\gammu(t)|^2}
\end{align}
for $t \in [0,T]$ and $x \in B_{\tilde{\rho}}(\gammu(t))$. We would now like to use the estimate~\eqref{eq:monotonicity_est} in~\eqref{eq:L2_F} and apply \cref{lem:degreetoorder}. However, we need to ensure that $\bF_A$ are compactly supported. Let $\rho_m=\mathrm{min}(\rho,\tilde{\rho})$ and let $\chi_{\rho}(t,\x)$ be a smooth function such that, for each $t$:
\begin{enumerate}
\item $\mathrm{supp}(\chi_{\rho}(t,\cdot))\subseteq B_{\rho_m}(\gammu(t))$,
\item $\chi_\rho(t,\cdot)\equiv 1$ on $B_{\check{\rho}}(\gammu(t))$ for some  $0<\check{\rho}<\rho_m$.
\end{enumerate}
We now define
\begin{align}
    \Eh_{\rho}=\Eh\cdot\chi_{\rho},\qquad \Hh_{\rho}=\Hh\cdot\chi_{\rho}.
\end{align}
We note that since $\chi_{\rho}\equiv 1$ in a neighbourhood of $\gamma$, $\Eh_{\rho}$ and $\Hh_{\rho}$ maintain properties 1-6 in the statement of the proposition, and the associated $\bF_{A}$ now have support in $B_{\rho}(\gammu(t))$. We can now apply the estimate~\eqref{eq:monotonicity_est} in~\eqref{eq:L2_F} to obtain
 \begin{align}
    \Big\| \omega^{\nicefrac{3}{4}}\bF e^{i\omega\phi} \Big\|_{L^2(\R^3)}^2&\leq  C\omega^{\nicefrac{3}{2}} \Bigg[  \sum_{A=0}^{N-2}\int_{ \R^3}\omega^{2(1-A)}|\bF_A|^2 e^{-c\omega|\x-\gammu|^2} \, d^3 x + \int_{\R^3}\frac{\omega^{2}}{\omega^{2(N-1)}}|\bF_{N-1}|^2 e^{-c\omega|\x-\gammu|^2} \, d^3 x \Bigg].
\end{align}
Compact support allows us to apply \cref{lem:degreetoorder}. For the last integral, where we do not assume any vanishing of $\bF_{N-1}$, we obtain
 \begin{align}
    \omega^{\nicefrac{3}{2}}\int_{ \R^3}\frac{\omega^{2}}{\omega^{2(N-1)}}|\bF_{N-1}|^2 e^{-\omega|\x-\gammu|^2} \, d^3 x \leq C[\bF_{N-1},T]\frac{1}{\omega^{2(N-2)}}.
\end{align}
Therefore, if $N=4$ then
\begin{align}
    \sqrt{\omega^{\nicefrac{3}{2}}\int_{\R^3}\frac{\omega^{2}}{\omega^{6}}|\bF_{3}|^2 e^{-\omega|\x-\gammu|^2} \, d^3 x}\leq C[\bF_{3},T]\frac{1}{\omega^{2}}.
\end{align}
So, it suffices to have $N=3$ terms in the expansion, since any higher order contributions incur an error at $\mathcal{O}(\omega^{-2})$. 

Now, let $S_0$, $S_1$ and $S_2$ be the degree to which $\bF_0$, $\bF_1$ and $\bF_2$ vanish, respectively. Then we estimate, using \cref{lem:degreetoorder},
\begin{subequations}
\begin{align}
    \sqrt{\omega^{\nicefrac{3}{2}}\int_{\R^3}\omega^{2}|\bF_0|^2 e^{-\omega|\x-\gammu|^2} \, d^3 x} &\leq  C[\bF_0,T]\frac{\omega}{\omega^\frac{S_0+1}{2}},\\
    \sqrt{\omega^{\nicefrac{3}{2}}\int_{\R^3}|\bF_1|^2 e^{-\omega|\x-\gammu|^2} \, d^3 x} &\leq C[\bF_1,T]\frac{1}{\omega^\frac{S_1+1}{2}},\\
    \sqrt{\omega^{\nicefrac{3}{2}}\int_{ \R^3}\omega^{-2}|\bF_2|^2 e^{-\omega|\x-\gammu|^2} \, d^3 x} &\leq C[\bF_2,T]\frac{1}{\omega^{\frac{S_2+1}{2}+1}}.
\end{align}    
\end{subequations}
Thus, the estimate~\eqref{eq:Max_to_order_two} requires the degree of vanishing to be $S_0=5$, $S_1=3$, and $S_2=1$.

The cases for $C,K,G$ are the same.
\end{proof}

\begin{lemma}[Propagation of Constraints]\label{lem:ConstraintPropagation}
    Suppose $\phi$ satisfies the Eikonal equation~\eqref{eq:Eikonal} to degree $j_{\phi}\geq 3$. Suppose $\e_0$ and $\e_1$ satisfy the respective $\e_A$-constraints~\eqref{eq:eAconstraint} at $t=0$ and the transport equations for all $t$ to degrees $c_0=j_{\phi}-1$, $c_1=j_{\phi}-3$ and $j_0=j_{\phi}-1$ and $j_1=j_{\phi}-3$. Then, the $\e_0$-constraint is satisfied to degree $j_{\phi}-1$ for all $t$ along $\gamma$ and the $\e_1$-constraint is satisfied to degree $j_{\phi}-3$ for all $t$ along $\gamma$. 
\end{lemma}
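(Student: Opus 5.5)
The plan is to derive, for each constraint quantity $\bC_A$, a homogeneous linear transport ODE along $\gamma$ for its Taylor coefficients $D^\alpha \bC_A|_\gamma$. Zero initial data at $t=0$ and ODE uniqueness then give vanishing for all $t$. This is the Gaussian-beam analogue of the classical fact that the divergence constraint is propagated by the evolution equations.

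First I would reformulate the transport equations in a form where their origin is visible. The $\e_A$-transport equation~\eqref{eq:eAtransport} is what one gets from the $\omega$-expansion of the second-order wave equation for $\bE$, namely $\nabla(\bE\cdot\nabla\ln\eps)+\nabla\ln\mu\times(\nabla\times\bE)+\Delta\bE-\n^2\ddot{\bE}=0$, after using the eikonal equation. Let $W_A$ denote the coefficient of $\omega^{2-A}$ in $\omega^{-3/4}e^{-\ii\omega\phi}$ applied to this operator acting on $\mathfrak{E}$. Then $W_0$ vanishes to degree $j_\phi$ by the eikonal equation, and the transport equation for $\e_A$ to degree $j_A$ (with the eikonal equation) gives $W_{A+1}|_\gamma=0$ to degree $j_A$.

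Second, I would take the divergence-type operator $\mathcal{D}(V)=\div V+V\cdot\nabla\ln\eps$ of the wave operator applied to $\mathfrak{E}$. For exact fields there is an identity: applying $\mathcal{D}$ to the wave operator yields a scalar wave-type operator acting on $\mathcal{D}(\mathfrak{E})$, plus first-order terms in $\mathcal{D}(\mathfrak{E})$. This is the standard mechanism of constraint propagation. Expanding both sides in $\omega$ after factoring out $e^{\ii\omega\phi}$ expresses $\ii\nabla_i\phi\,W^i_{A+1}+\mathcal{D}(W_A)$ in terms of the $\bC_B$. At order $\omega^{2}$ one gets $(\nabla\phi\cdot\nabla\phi-\n^2\dot\phi^2)\bC_0$, which vanishes to degree $j_\phi$. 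At order $\omega^{1}$ the leading term is $2\ii(\nabla^m\phi\nabla_m-\n^2\dot\phi\,\partial_t)\bC_0$ plus zeroth-order multiples of $\bC_0$, plus $(\nabla\phi\cdot\nabla\phi-\n^2\dot\phi^2)\bC_1$. Since $\nabla^m\phi=-\n^2\dot\phi\,\dot\gamma^m$ on $\gamma$, the derivative term becomes $-2\ii\n^2\dot\phi(\partial_t+\dot\gamma^m\nabla_m)\bC_0$ along $\gamma$, modulo terms that vanish to high order. The next order similarly gives a transport operator acting on $\bC_1$, with a source made of second derivatives of $\bC_0$.

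Third, I would differentiate these identities with $D^\alpha$ and restrict to $\gamma$. Each differentiation of the left-hand side costs one degree: $W_A$ vanishes to degree $j_{A-1}$, and $\mathcal{D}(W_A)$ only to degree $j_{A-1}-1$. For $A=0$ this gives a linear homogeneous ODE system in $t$ for $\{D^\alpha\bC_0|_\gamma\}_{|\alpha|\le j_\phi-1}$. The commutator terms $D^\beta(\nabla^m\phi/\n^2\dot\phi)$ couple different $\alpha$, but only to lower or equal $|\alpha|$, so the system closes. Zero initial data then give $D^\alpha\bC_0|_\gamma\equiv0$ for $|\alpha|\le j_\phi-1$. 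For $A=1$, the source contains up to second derivatives of $\bC_0$, and the available degrees lose two more; this explains the degree $j_\phi-3$. The ODE for $\{D^\alpha\bC_1|_\gamma\}_{|\alpha|\le j_\phi-3}$ is then homogeneous once the $\bC_0$ vanishing is inserted, and the conclusion follows.

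The main obstacle is the algebraic identity in the second step. One must verify precisely that $\mathcal{D}$ applied to the vector wave operator (with the $\nabla\ln\mu\times\mathrm{curl}$ and $\nabla(\cdot\nabla\ln\eps)$ terms) closes on $\mathcal{D}(\mathfrak{E})$ up to the stated orders, and keep track of how many degrees of vanishing are lost. I would first check it in the exact setting, using $\bE=\eps^{-1}\bD$ with $\div\bD$ propagated. I would then carry out the $\omega$-expansion order by order, counting derivatives to confirm the degrees $j_\phi-1$ and $j_\phi-3$.
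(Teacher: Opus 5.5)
Your route works and differs from the paper's only in how the transport ODE for the constraints is derived. The endgame is the same: commute $D^\alpha$ past $\partial_t-\frac{\nabla^m\phi}{\n^2\dot\phi}\nabla_m$, obtain a triangular linear system for $D^\alpha\bC_A|_\gamma$, and use zero data at $t=0$.

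The paper works directly with the first-order transport equations. For $\bC_0$ it shows $(\partial_t+\dot\gamma^j\nabla_j)(\sqrt{\eps}\,\e_0^i\nabla_i\phi)=\nabla_j\big(\frac{\nabla^j\phi}{2\n^2\dot\phi}\big)\sqrt{\eps}\,\e_0^i\nabla_i\phi$ on $\gamma$. For $\bC_1$, an ``arduous computation'' gives the same transport operator plus a source $\Delta(\e_0\cdot\nabla\phi)-\n^2\partial_t^2(\e_0\cdot\nabla\phi)-\nabla\ln\mu\cdot\nabla(\e_0\cdot\nabla\phi)$, plus terms killed by the eikonal and $\e_0$-transport equations.

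You instead get both ODEs from a single identity for the second-order equation. Its $\omega$-expansion is indeed \eqref{eq:eAtransport}: $W_{A+1}=-(\nabla\phi\cdot\nabla\phi-\n^2\dot\phi^2)\e_{A+1}-2\ii\n^2\dot\phi\,R_A$, where $R_A$ is the spacetime $\e_A$-transport residual. This buys you a reason for the paper's source term (it is a scalar wave operator applied to $\bC_0$), makes the losses to $j_\phi-1$ and $j_\phi-3$ transparent, and extends mechanically to all $A$. The paper's computation needs no second-order formulation, but it hides this structure.

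Two points need fixing.

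\textbf{The identity.} As you state it, the identity is false for general $\mathfrak{E}$. Write $\mathcal{W}\mathfrak{E}=\nabla\mathcal{D}\mathfrak{E}-\mu\nabla\times(\mu^{-1}\nabla\times\mathfrak{E})-\n^2\ddot{\mathfrak{E}}$ and $\square_\mu c:=\Delta c-\nabla\ln\mu\cdot\nabla c-\n^2\ddot c$. The operator that closes is $\mu\div(\mu^{-1}\,\cdot)$, not $\mathcal{D}=\eps^{-1}\div(\eps\,\cdot)$:
\begin{equation*}
\mu\div\big(\mu^{-1}\mathcal{W}\mathfrak{E}\big)=\square_\mu\mathcal{D}\mathfrak{E}.
\end{equation*}
Consequently $\mathcal{D}(\mathcal{W}\mathfrak{E})$ carries an extra term $\nabla\ln\n^2\cdot\mathcal{W}\mathfrak{E}$, which is not a term in $\mathcal{D}\mathfrak{E}$. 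On solutions of the evolution equations one has $\mathcal{W}\bE=\nabla\mathcal{D}\bE$, so there this term looks first order in $\mathcal{D}\bE$. Checking only ``in the exact setting'' would therefore hide the problem.

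The fix is harmless. Either note that $\nabla\ln\n^2\cdot W_A$ vanishes to degree $j_{A-1}$, which is better than $\mathcal{D}(W_A)$, or use $\mu\div(\mu^{-1}\cdot)$ directly. The latter gives
$\ii\nabla_i\phi\,W^i_{A+1}+\div W_A-W_A\cdot\nabla\ln\mu=-(\nabla\phi\cdot\nabla\phi-\n^2\dot\phi^2)\bC_{A+1}+\ii\,T(\bC_A)+\square_\mu\bC_{A-1}$,
with $T(c)=2\nabla^m\phi\nabla_mc-2\n^2\dot\phi\,\dot c+(\Delta\phi-\n^2\ddot\phi-\nabla^m\phi\nabla_m\ln\mu)c$. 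Dividing $T$ by $-2\n^2\dot\phi$ gives exactly the paper's transport operator (for $\bC_0$, up to the weight $\sqrt{\eps}$).

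\textbf{The time derivatives.} The source for $\bC_1$ contains $\partial_t^2\bC_0$. You must convert vanishing of the spatial jets of $\bC_0$ to degree $j_\phi-1$ along $\gamma$ into vanishing of $\partial_t^2D^\beta\bC_0|_\gamma$ for $|\beta|\le j_\phi-3$. Proposition~\ref{prop:timeder} does this, as would a Taylor expansion in $x-\gammu(t)$. This is exactly why the paper needs $\bC_0$ to degree $2$, and hence $j_\phi\ge 3$.
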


\begin{proof}
It is instructive to first do the $j_{\phi}=1$ case. Recall that $\dot{\gamma}^j:=- \frac{\nabla^j\phi}{\n^2\dot{\phi}} \big|_{\gamma }$. We then compute that
\begin{subequations}
\begin{align}
    \bigg(\partial_t-\frac{\nabla^j\phi}{\n^2\dot{\phi}}\nabla_j \bigg) (\sqrt{\eps}\e_0^n) &=-\sqrt{\eps} \e_0^n \frac{\nabla^j\phi}{2\n^2\dot{\phi}}\nabla_j\ln\eps + \sqrt{\eps}  \bigg(\partial_t-\frac{\nabla^j\phi}{\n^2\dot{\phi}}\nabla_j \bigg)\e_0^n,\\
    \bigg(\partial_t-\frac{\nabla^j\phi}{\n^2\dot{\phi}} \nabla_j \bigg) (\nabla_i\phi) &= \nabla_i \dot{\phi} - \frac{1}{2\n^2 \dot{\phi}} \nabla_i[\nabla\phi\cdot\nabla\phi] .
\end{align}
\end{subequations}
Using these identities together with the $\e_0$ transport equation at degree $j_0=0$ and the Eikonal equation~\eqref{eq:Eikonal} to degree $j_{\phi}=1$ gives 
\begin{subequations}
\begin{align}
\Big(\partial_t+\dot{\gamma}^m\nabla_m\Big) (\sqrt{\eps}\e^n_{0})\Big|_{\gamma} &= \frac{1}{2\n^2\dot{\phi}}\sqrt{\eps}\e^n_{0} \Big( \Delta\phi - \n^2 \ddot{\phi} \Big) - \frac{1}{\n^2\dot{\phi}} \Big(\sqrt{\eps}\e^{[n}_{0}\nabla^{m]}\phi\Big) \nabla_m \ln \n^2 \Big|_{\gamma}, \label{eq:e0transport}\\
\Big(\partial_t+\dot{\gamma}^j\nabla_j\Big) (\nabla_i\phi) \Big|_{\gamma } &= \nabla_i\dot{\phi}-\frac{1}{2\n^2 \dot{\phi}}\nabla_i[\nabla\phi\cdot\nabla\phi] \Big|_{\gamma }=-\frac{\dot{\phi}}{2}\nabla_i \ln\n^2 \Big|_{\gamma}.
\end{align}
\end{subequations}
Next, we compute
\begin{align}
    \bigg(\partial_t - \frac{\nabla^j\phi}{\n^2\dot{\phi}}\nabla_j\bigg) (\sqrt{\eps}\e_0^i \nabla_i\phi) &= (\nabla_i\phi) \bigg(\partial_t-\frac{\nabla^j\phi}{\n^2\dot{\phi}}\nabla_j \bigg) (\sqrt{\eps}\e_0^i) + \sqrt{\eps}\e^i_0 \bigg( \partial_t-\frac{\nabla^j\phi}{\n^2\dot{\phi}}\nabla_j \bigg) (\nabla_i\phi).
\end{align}
Evaluating on $\gamma$ and using the Eikonal equation \eqref{eq:Eikonal} gives
\begin{align}
    \Big( \partial_t+\dot{\gamma}^j\nabla_j \Big) (\sqrt{\eps}\e_0^i\nabla_i\phi) \Big|_{\gamma} &= \frac{1}{2\n^2\dot{\phi}} \Big[ \Delta\phi-\n^2 \ddot{\phi} -(\nabla^m\phi) \nabla_m \ln \n^2 \Big]\sqrt{\eps}\e^i_{0}\nabla_i\phi= \nabla_j \bigg( \frac{\nabla^j\phi}{2\n^2\dot{\phi}} \bigg) \sqrt{\eps} \e_0^i\nabla_i\phi \bigg|_{\gamma}.
\end{align}
By the ODE uniqueness, $\bC_0|_{\gamma}=0$ is the unique solution.

We now proceed to general $j_{\phi}\geq 2$. We note that 
\begin{align}
   -\frac{\nabla^j\phi}{\n^2\dot{\phi}} \nabla_jD^{\alpha}(\sqrt{\varepsilon}\e_0^i\nabla_i \phi)= D^{\alpha} \bigg[-\frac{\nabla^j\phi}{\n^2\dot{\phi}} \nabla_j(\sqrt{\varepsilon}\e_0^i\nabla_i \phi) \bigg]   + \sum_{0<\beta\leq \alpha}\binom{\alpha}{\beta}D^{\beta} \bigg(\frac{\nabla^j\phi}{\n^2\dot{\phi}} \bigg) \nabla_j D^{\alpha-\beta} (\sqrt{\varepsilon} \e_0^i \nabla_i\phi ),
\end{align}
which we can use to compute that, if $c_0=j_{\phi}-1\geq1$ and $j_0=j_{\phi}-1\geq 1$, then for $|\alpha|\leq j_{\phi}-1$ we have
\begin{align}
    \Big( \partial_t + \dot{\gamma}^j \nabla_j \Big) D^{\alpha}(\sqrt{\eps} \e_0^i \nabla_i\phi ) \Big|_{\gamma} = D^{\alpha}\bigg[ \nabla_j \bigg(\frac{\nabla^j\phi}{2\n^2\dot{\phi}} \bigg) \sqrt{\eps}\e_0^i \nabla_i\phi \bigg] + \sum_{0<\beta\leq \alpha}\binom{\alpha}{\beta}D^{\beta}\bigg(\frac{\nabla^j\phi}{\n^2\dot{\phi}} \bigg) \nabla_j D^{\alpha-\beta} (\sqrt{\varepsilon}\e_0^i \nabla_i\phi) \bigg|_{\gamma}.
\end{align}
The result now proceeds by induction, using $D^{\beta}\bC_0|_{\gamma}=0$ for $|\beta|\leq |\alpha|-1$, where the base case is proved above.
    
We now proceed to show the propagation of the~$\e_1$-constraint. Let us start with the $j_{\phi}=3$ case. Note that since the $\e_0$-constraint holds initially to degree $2$, the $\e_0$-constraint holds for all $t$ to degree $2$ from above. In propagating the $e_1$-constraint we must use $j_{\phi}=3$ and $j_0=2$. We want to compute
\begin{align}
    \Big( \partial_t+ \dot{\gamma}^m \nabla_m \Big) \bC_1 = \Big( \partial_t + \dot{\gamma}^m \nabla_m \Big) \big(\div\e_{0} + \ii \e_{1}^i\nabla_i\phi + \e_{0}^i\nabla_i\ln\eps \big). 
\end{align}
We recall the $\e_{0}$ and $\e_1$-transport equations~\eqref{eq:eAtransport}:
\begin{subequations}
\begin{align}
    \Big( \partial_t+\dot{\gamma}^m\nabla_m \Big)D^{\alpha}\e^n_{0} \Big|_{\gamma} &= D^{\alpha} \bigg\{ \frac{1}{2\n^2\dot{\phi}} \bigg[ \e^n_{0} \Big( \Delta\phi - \n^2 \ddot{\phi} - \nabla^{m}\phi  \nabla_m\ln\mu \Big) +  \e^{m}_{0} \nabla^{n}\phi \nabla_m \ln \n^2 \bigg] \bigg\}\bigg|_{\gamma}\nonumber\nonumber\\
    &\qquad+\sum_{0<\beta\leq \alpha}\binom{\alpha}{\beta} D^{\beta} \bigg(\frac{\nabla^m\phi}{\n^2\dot{\phi}} \bigg) \nabla_m D^{\alpha-\beta}\e_0^n \bigg|_{\gamma}, \\
    \Big( \partial_t+\dot{\gamma}^m\nabla_m \Big) \e^n_{1} \Big|_{\gamma} &= \frac{1}{2\n^2\dot{\phi}} \bigg[ \e^n_{1} \Big( \Delta\phi - \n^2 \ddot{\phi} \Big) - \ii \Big( \Delta\e^n_{0}  - \n^2 \ddot{\e}^n_{0} \Big) - \ii \e_{0}^m\nabla_n\nabla_m\ln\eps \nonumber\\
    &\qquad+ \Big( \e^{m}_{1}\nabla^{n}\phi - \ii \nabla^n\e^m_{0}\Big) \nabla_m \ln \n^2 - \Big( \e^{n}_{1}\nabla^{m}\phi- \ii \nabla^m\e^n_{0}\Big) \nabla_m\ln\mu \bigg] \bigg|_{\gamma}.
\end{align}
\end{subequations}
Since $j_0=2>1$, the $\e_0$-transport equation holds to degree $1$. Using this and the $\e_1$-transport equation, an arduous computation yields
\begin{align}
    \Big(\partial_t + \dot{\gamma}^m \nabla_m \Big) \bC_1  \Big|_{\gamma} &= \frac{1}{2\n^2 \dot{\phi}} \Big\{ \Big(\Delta\phi - \n^2 \ddot{\phi} -\nabla^{m}\phi \nabla_m\ln\mu \Big) \bC_1 + \Delta(\e_0^n\nabla_n\phi) - \n^2 \partial_t^2 (\e_0^n\nabla_n\phi) \nonumber\\
    &\qquad-(\nabla_m\ln\mu) \nabla^m( \e^n_{0}\nabla_n\phi) - \ii \e^n_1 \nabla_n[ \nabla\phi\cdot\nabla\phi] +\ii \nabla\phi\cdot\nabla\phi \e^{m}_{1} \nabla_m\ln \n^2 \nonumber\\
    &\qquad-\frac{\nabla_n\dot{\phi}}{\dot{\phi}} \Big[\e^n_{0}\Delta\phi - \n^2 \e^n_{0} \ddot{\phi} + \e^{m}_{0}\nabla^{n}\phi \nabla_m \ln \n^2 - \e^{n}_{0}\nabla^{m}\phi \nabla_m\ln\mu \nonumber\\
    &\qquad- 2 \ii \n^2 \dot{\phi}^2 \e_1^n -2 \n^2 \dot{\phi} (\partial_t+\dot{\gamma}^m\nabla_m) \e_0^n \Big]\Big\}\Big|_{\gamma},
\end{align}
where we used
\begin{subequations}
\begin{align}
    \ddot{\e}_0^n \nabla_n\phi + \e_0^n\nabla_n\ddot{\phi} &= \partial_t^2 (\e_0^n\nabla_n\phi) - 2\dot{\e}_0^n\nabla_n\dot{\phi} \nonumber\\
    &= \partial_t^2 (\e_0^n\nabla_n\phi) - 2 ( \partial_t + \dot{\gamma}^m \nabla_m) \e_0^n \nabla_n \dot{\phi} + 2\dot{\gamma}^m\nabla_m\e_0^n \nabla_n\dot{\phi}, \\
    (\Delta\e_0^n) \nabla_n\phi+\e_0^n\Delta\nabla_n\phi & =\Delta(\e_0^n\nabla_n\phi) - 2(\nabla^m\e_0^n) \nabla_m\nabla_n\phi.
\end{align}   
\end{subequations}
Since $\bC_0|_{\gamma}=0$ to degree 2, \cref{prop:timeder} gives $\partial_t^2 (\e_0^n\nabla_n\phi)|_{\gamma}=0$.\footnote{This is where we need to require that the $\e_0$-constraint and transport equations hold to degree $2$, which then requires the Eikonal to degree $3$.} Using these facts along with the Eikonal equation at degree $1$ and the $\e_0$-transport equation yields
\begin{align}
    \Big(\partial_t+\dot{\gamma}^k \nabla_k \Big) \bC_1\Big|_{\gamma} &= \frac{1}{2\n^2 \dot{\phi}} \Big( \Delta\phi-\mu\eps\ddot{\phi}- \nabla^{m}\phi \nabla_m \ln\mu \Big)\bC_1\Big|_{\gamma}.
\end{align}
Weighting with $\sqrt{\eps}$ gives
\begin{align}
    \Big(\partial_t + \dot{\gamma}^m\nabla_m \Big) (\sqrt{\eps} \bC_1) \Big|_{\gamma} &= -\nabla_m\bigg(\frac{\nabla^m\phi}{2\n^2\dot{\phi}}\bigg) \sqrt{\eps}\bC_1\bigg|_{\gamma}.
\end{align}
By ODE uniqueness, $\bC_1|_{\gamma}=0$ for all $t$. The general result with derivatives now proceeds by induction.
\end{proof}

We now want to show that if $(\phi,\e_0,\e_1,\e_2)$ satisfies \cref{def:EikonalandTransporteA} to some degree, then we may define $h_0, h_1, h_2$ such that  assumptions 4-6 of \cref{prop:order_2_from spacetime_functions} are satisfied.

\begin{proposition}\label{prop:2sttranstoMax}
Suppose that $\phi$ satisfies the Eikonal equation~\eqref{eq:Eikonal} to degree $j_{\phi}=6$, the $(\e_0,\e_1)$-constraint equations~\eqref{eq:eAconstraint} are satisfied to degrees $(c_0,c_1)=(5,3)$ at $t=0$ and the $\e_2$-constraint to degree $c_2=1$ along $\gamma$. Suppose that the $\e_0$ and $\e_1$-transport equations~\eqref{eq:eAtransport} are satisfied to degrees $(j_0,j_1)=(5,3)$.
Define
\begin{subequations} \label{eq:h012def}
\begin{align}
    D^{\alpha}\h^i_{0} \big|_\gamma &:= D^{\alpha}\bigg(-\frac{1}{\mu \dot{\phi}}{\epsilon^{ij}}_k\e^k_{0}\nabla_j\phi\bigg)\bigg|_{\gamma} &\forall |\alpha|\leq 5, \label{eq:h0def} \\
    D^{\alpha}\h^i_{1} \big|_\gamma &:= D^{\alpha}\bigg(-\frac{1}{\mu \dot{\phi}}{\epsilon^{ij}}_k\e^k_{1}\nabla_j\phi+\frac{\ii}{\mu \dot{\phi}}{\epsilon^{ij}}_k\nabla_j\e^k_{0}+\frac{\ii}{\dot{\phi}} \dot{\h}_0^i\bigg)\bigg|_{\gamma} \qquad&\forall |\alpha|\leq 3, \label{eq:h1def}\\
    D^{\alpha}\h^i_{2} \big|_\gamma &:= D^{\alpha}\bigg(-\frac{1}{\mu \dot{\phi}} {\epsilon^{ij}}_k\e^k_{2}\nabla_j\phi+\frac{\ii}{\mu \dot{\phi}}{\epsilon^{ij}}_k\nabla_j\e^k_{1} + \frac{\ii}{\dot{\phi}} \dot{\h}_1^i\bigg)\bigg|_{\gamma} \qquad&\forall |\alpha|\leq 1, \label{eq:h2def}
\end{align}   
\end{subequations}
where $\dot{\h}_A$ and its spatial derivatives are computed from the tangential derivative to $\gamma$ and spatial derivatives as 
\begin{align}
D^\alpha\dot{\h}_A^i=\Big(\dot{\gamma}^{\nu}\partial_{\nu}D^\alpha\h^i_{A}-\dot{\gamma}^{k}\nabla_{k}D^\alpha\h^i_{A}\Big).
\end{align}
Note that \cref{eq:h012def} corresponds to $F_0, F_1, F_2$ vanishing along $\gamma$ to degree $5,3,1$, respectively. Then
\begin{enumerate}
    \item $\bG_0|_{\gamma}=0=\bK_0|_{\gamma}$ to degree $5$, \label{item_G0}
    \item $\bG_1|_{\gamma}=0=\bK_1|_{\gamma}$ to degree $3$,
     \item $\bG_2|_{\gamma}=0=\bK_2|_{\gamma}$ to degree $1$.
\end{enumerate}
\end{proposition}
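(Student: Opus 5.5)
The plan is to derive algebraic identities expressing $\bG_A$ and $\bK_A$ in terms of $\bF_A$, $\bC_A$, the eikonal defect $\Phi:=\nabla\phi\cdot\nabla\phi-\n^2\dot\phi^2$, and the transport defects of $\e_A$. Then we differentiate these identities $D^\alpha$ times and evaluate on $\gamma$. Since $\bF_A$ vanishes to the required degree by construction \eqref{eq:h012def}, every term on the right-hand side vanishes to the required degree.

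\textbf{Step 1 (order zero).} Write $\h_0=-\frac{1}{\mu\dot\phi}\nabla\phi\times\e_0+\frac{1}{\ii\mu\dot\phi}\bF_0$. Inserting this into $\bG_0=\ii(\nabla\phi\times\h_0-\eps\dot\phi\,\e_0)$ and using $\nabla\phi\times(\nabla\phi\times\e_0)=\nabla\phi\,(\e_0\cdot\nabla\phi)-\e_0\,|\nabla\phi|^2$ gives
\[
\bG_0=-\frac{\ii}{\mu\dot\phi}\Big[\nabla\phi\,\bC_0-\Phi\,\e_0\Big]+\frac{1}{\mu\dot\phi}\nabla\phi\times\bF_0 .
\]
Similarly, $\bK_0=\h_0\cdot\nabla\phi=\frac{1}{\ii\mu\dot\phi}\bF_0\cdot\nabla\phi$. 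Note that $\dot\phi\neq0$ near $\gamma$. Hence $D^\alpha\bG_0|_\gamma=D^\alpha\bK_0|_\gamma=0$ for $|\alpha|\le5$: use $\Phi$ to degree $6\ge5$, $\bF_0$ to degree $5$, and $\bC_0$ to degree $5$ for all $t$, the last coming from \cref{lem:ConstraintPropagation} with $j_\phi=6$ and $j_0=5$.

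\textbf{Step 2 (orders one and two).} Proceed the same way for $\bG_1$, but now $\h_1$ contains $\frac{\ii}{\mu\dot\phi}\nabla\times\e_0+\frac{\ii}{\dot\phi}\dot\h_0$ and $\bG_1$ contains $\nabla\times\h_0-\eps\dot\e_0$. Substitute the $\h_0$ expression from Step 1, including the $\bF_0$ remainder, and expand $\nabla\times(\nabla\phi\times\e_0/\mu\dot\phi)$ and $\partial_t(\nabla\phi\times\e_0/\mu\dot\phi)$. After collecting terms, the zero-order part should reorganise, as in the identity $\Delta\e-\n^2\ddot\e+\dots$ behind the wave equation in the footnote, into
\[
\frac{-1}{\mu\dot\phi}\Big[\nabla\phi\,\bC_1-\ii\nabla\bC_0-\dots\Big]+\frac{2\ii\n^2\dot\phi}{\mu\dot\phi}\,\mathcal T_0+(\text{terms}\propto\Phi,\nabla\Phi,\bF_0,\nabla\bF_0,\dot\bF_0,\bF_1).
\]
Here $\mathcal T_0$ is the $\e_0$-transport defect, i.e.\ the difference of the two sides of \eqref{eq:eAtransport} without the commutator sum, written with $-\nabla\phi/(\n^2\dot\phi)$ in place of $\dot\gamma$. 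This is exactly the decomposition that \cref{lem:decompGF} is referenced for, and I would invoke it (or redo it) at this point. The $\bK_1$ case follows by taking the divergence of the $\h_0$ expression: one gets $\bK_1=\div\h_0+\ii\h_1\cdot\nabla\phi+\h_0\cdot\nabla\ln\mu$, which combines into $\frac{\ii}{\mu\dot\phi}\nabla\phi\cdot\bF_1$ plus $\div(\bF_0/\ii\mu\dot\phi)$-type terms. The $\nabla\times\e_0$ contributions cancel against $\nabla\phi\cdot\nabla\times$ by the antisymmetry identity $\div(\nabla\phi\times\e)=-\nabla\phi\cdot\nabla\times\e$. The order-two case is identical with indices shifted. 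For it we use that the $\e_1$-constraint holds to degree $3$ by \cref{lem:ConstraintPropagation}, that the $\e_1$-transport holds to degree $3$, and that the $\e_2$-constraint holds to degree $1$ by hypothesis.

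\textbf{Step 3 (degree counting).} To show $D^\alpha\bG_1|_\gamma=0$ for $|\alpha|\le3$ we need $\Phi$ to degree $4$, $\bF_0$ to degree $4$ together with its $t$-derivatives, $\bC_0$ to degree $4$, $\mathcal T_0$ to degree $3$, and $\bF_1,\bC_1$ to degree $3$. Vanishing of $\dot\bF_0$ and $\partial_t\Phi$ along $\gamma$ to degree $k$ follows from vanishing of $D^\beta$ to degree $k+1$ together with tangential differentiation (the \cref{prop:timeder}-type argument). Converting the defect $\mathcal T_0$ to degree $3$ requires the commutator sum in \eqref{eq:eAtransport}, which is exactly why that sum was included. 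Order two similarly needs degree $1$, which is comfortably covered.

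The main obstacle I expect is Step 2. The cancellation for $\bG_1$ is a lengthy vector-calculus computation, and the delicate point is handling $\dot\h_0$, which is defined only through tangential derivatives along $\gamma$. One must check that replacing $\partial_t$ by $\dot\gamma^\nu\partial_\nu-\dot\gamma^k\nabla_k$ introduces errors vanishing to the right degree, which is again a consequence of $\bF_0$ vanishing to degree $5$.
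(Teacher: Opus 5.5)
Your proposal is correct and is essentially the paper's proof. The paper likewise applies the algebraic identities of Lemma~\ref{lem:decompGF}, of which your Step~1 is the $A=0$ case. It then uses Lemma~\ref{lem:ConstraintPropagation} to get $\bC_0,\bC_1$ to degrees $5,3$ for all $t$, and Proposition~\ref{prop:timeder} for the $\partial_t\bF_{A-1}$ and $\partial_t\bK_{A-1}$ terms, before counting degrees exactly as you do.

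Two small remarks. First, the $\bK_A$ identity \eqref{EqUseForK} contains $\ii\mu\,\partial_t\bK_{A-1}$, so $\bK_2$ needs $\bK_1$ first: this is an induction in $A$, as the paper does, not a pure index shift. Second, the $\dot\h_0$ issue you flag is not an obstacle. For any smooth extension of the jets, the tangential formula equals $\partial_t D^\alpha\h_0|_\gamma$ exactly by the chain rule.
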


\begin{proof}
We now use Lemma \ref{lem:decompGF} and start with \eqref{EqUseForK}, which gives
\begin{equation}
     \mu \dot{\phi} K_A = F_A^i \nabla_i \phi + \ii \div F_{A-1} + \ii \mu\rd_t K_{A-1} \;.
\end{equation}
Setting $A=0$, it follows directly from $F_0$ vanishing along $\gamma$ to degree $5$ that $K_0$ vanishes along $\gamma$ to degree $5$. For $A=1$ we proceed similarly, now using that $\div F_0$ vanishes to degree $4$ along $\gamma$ and that $\rd_t K_0$ vanishes to degree $4$ along $\gamma$ by Proposition \ref{prop:timeder}. Finally, the case $A=2$ proceeds in exactly the same way.

To show the vanishing of $G_A$, we use \eqref{EqUseForG}, which  gives
\begin{equation}
\begin{split}
    \mu\dot{\phi} \bG_{A}^n &= (\star\bF_{A})^{mn}\nabla_m\phi+\bC_{A}\nabla^n\phi- \ii\nabla^n\bC_{A-1}-\mu\div\Big(\frac{\ii}{\mu}\star\bF_{A-1}^{n}\Big)+\ii\partial_t \bF_{A-1}^n \nonumber \\
   &\qquad +2\n^2 \dot{\phi}\Big((\e_{A-1}^n-\mathrm{transport})[0]\Big)-\ii\e_{A}^n\Big(\mathrm{Eikonal}[0]\Big) \;. \label{EqUseUseG}
   \end{split}
\end{equation}
Furthermore, we recall that by \cref{lem:ConstraintPropagation}, the $\e_0$ and $\e_1$ constraints are satisfied to degree $c_0=5$ and $c_1=3$ for all $t$ along $\gamma$. We now start with $A=0$ in \eqref{EqUseUseG}. Since $F_0, C_0$ and $\mathrm{Eikonal}[0]$ vanish to degree $5$ along $\gamma$, it follows that $G_0$ vanishes to degree $5$ along $\gamma$.\footnote{Na\"ively, it seems as though to produce $\bG_0|_{\gamma}=0$ to degree $5$ one requires that the Eikonal equation~\eqref{eq:Eikonal} be satisfied merely to degree $5$. However, to propagate the constraint $\bC_0$ to degree $5$ along $\gamma$, one requires an additional degree for the Eikonal equation.} The cases $A=1,2$ again follow similarly using Proposition \ref{prop:timeder} -- and for $A=2$ we also use our assumption of the proposition that $C_2$ vanishes to degree $1$ along $\gamma$.
\end{proof}

\subsubsection{Construction of the phase function}

In this subsection, we provide the existence result for the eikonal equation~\eqref{eq:Eikonal}.
\begin{proposition}\label{prop:construction_phase_function}
Let $\x_0\in \R^3$ and $j_{\phi}\geq 2$. Consider initial data of $D^{\alpha}\phi|_{(0,\x_0)}=D^{\alpha}\phu|_{\x_0}$ for $0\leq |\alpha|\leq j_{\phi}$, such that 
\begin{enumerate}
\item for $|\alpha|\leq 1$, $D^{\alpha}\phu|_{\x_0}\in \R$ and there is some $\alpha $, with $|\alpha|=1$, such that $D^{\alpha}\phu|_{\x_0}\neq 0$,
    \item for $|\alpha|=2$, the bilinear form $\Im\big(D^{\alpha}\phu|_{\x_0}\big)$ is positive definite.
\end{enumerate}
Let $\gamma$ be the future-directed null geodesic (with respect to $g$) starting at $(0,\x_0)$ with initial tangent 
    \begin{align}
    \dot{\gamma}(0)=\bigg(1,\frac{1}{\n\big|\nabla\phu\big|} \nabla\phu \bigg|_{\x_0}\bigg).
    \end{align} 
Then, there exists a (unique)\footnote{By this we mean that the formal Taylor expansion to degree $j_\phi$ of $\phi$ along $\gamma$ is uniquely determined.} solution of the Eikonal equation~\eqref{eq:Eikonal} to degree $j_{\phi}$ along all of $\gamma$ which attains the prescribed initial data above and for $p=0,1$ and $|\alpha|\leq j_{\phi}-p$ satisfies
    \begin{align}
      \Big[ (\partial_t)^p D^{\alpha}\dot{\phi}\Big](0,\x_0)=\Big[ (\partial_t)^p D^{\alpha}\Big(-\frac{1}{\n}\sqrt{\nabla^i \phu\nabla_i \phu}\Big)\Big](0,\x_0).
    \end{align}
Moreover, the  bilinear form $\Im\big(D^{\alpha}\phi|_{\gamma(t)}\big)$ with $|\alpha| = 2$ is positive-definite for all $t$.
\end{proposition}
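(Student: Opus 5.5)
We would follow the standard Gaussian beam construction (as in \cite{ralston1982,Sbierski2013}), adapted to the Hamiltonian $\mathcal{H}(t,x,\tau,\xi)= -\tfrac12\tau^2 + \tfrac{1}{2\n^2}|\xi|^2$, i.e.\ the eikonal equation written as $\n^2\dot\phi^2 - \nabla\phi\cdot\nabla\phi=0$, and solve it order by order in the Taylor expansion of $\phi$ along $\gamma$.

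\emph{Orders zero and one.} Choose the root $\dot\phi = -\n^{-1}\sqrt{\nabla_i\phi\nabla^i\phi}$, which is smooth near $\gamma$ because $\nabla\phi|_\gamma$ is real and non-zero; this fixes the time derivatives at $(0,\x_0)$ stated in the proposition. Setting $\phi|_\gamma$ constant along $\gamma$ (since $\dot\gamma^\nu\partial_\nu\phi = \dot\phi - \nabla^m\phi\nabla_m\phi/(\n^2\dot\phi)=0$ on the null set) and $(\dot\phi,\nabla\phi)|_{\gamma(t)}$ equal to the cotangent lift of the null geodesic, we see from the discussion following \cref{def:EikonalandTransporteA} that degrees $0$ and $1$ of \eqref{eq:Eikonal} are exactly the null condition and the geodesic equations \eqref{EqHamiltonianEq}. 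These have global solutions on $[0,T]$, because $\n$ is smooth and bounded between positive constants, so $\underline g$ is complete and $\dot p$ is bounded. All of these quantities are real.

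\emph{Order two.} Differentiating \eqref{eq:Eikonal} twice and restricting to $\gamma$, the matrix $M_{ij}(t)=\nabla_i\nabla_j\phi|_{\gamma(t)}$ satisfies a matrix Riccati equation of the form $\dot M + MLM + NM + MN^T + R=0$, with coefficients built from $\n$ and the geodesic data; this is the equation displayed before \cref{MainThm}. The mixed derivatives $\partial_t\nabla\phi$ and $\partial_t^2\phi$ on $\gamma$ are then determined algebraically. The main obstacle is global existence for this Riccati equation together with preservation of $\Im M>0$. We would use the standard linearisation $M = YX^{-1}$, where $(X,Y)$ solves the linear Hamiltonian (Jacobi) system $\dot X = LY + N^TX$-type, $\dot Y = -RX - NY$, with $X(0)=I$ and $Y(0)=M(0)$. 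This system is linear and hence exists globally. Its flow is symplectic, because $L$ and $R$ are symmetric. Consequently $X^*Y - Y^*X$ is conserved, and equals $2\ii\,\Im M(0)$ at $t=0$.
\begin{itemize}
\item If $Xv=0$, then $v^*(X^*Y-Y^*X)v=0$, which forces $v=0$. Hence $X$ is invertible for all $t$ and $M$ exists globally.
\item The identity $\Im M = (X^{-1})^*\,\Im M(0)\,X^{-1}$ then shows that positive definiteness persists.
\end{itemize}
The delicate part is checking that the actual Riccati coefficients (including the terms coming from $\dot\phi$) assemble into a symplectic Hamiltonian system. For this we would write everything through the Hessian of $\mathcal{H}$ restricted to $\tau$ eliminated by $\dot\phi=-\n^{-1}|\xi|$.

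\emph{Orders three and higher.} For $3\le|\alpha|\le j_\phi$, applying $D^\alpha$ to \eqref{eq:Eikonal} on $\gamma$ yields linear inhomogeneous ODEs of the form $(\partial_t + \dot\gamma^m\nabla_m)D^\alpha\phi + (\text{terms linear in } D^\alpha\phi \text{ with coefficients involving } M) = (\text{lower order})$. These have global solutions with the prescribed initial data, and the time derivatives are read off from the equation. Uniqueness of the formal Taylor expansion follows from ODE uniqueness at each order, and a Borel-type extension produces a smooth $\phi$ with the given jets.
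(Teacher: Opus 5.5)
Your proposal is correct and follows essentially the same route as the paper. The cotangent lift of the null geodesic handles degrees $0$ and $1$. The Riccati equation for $M=\nabla\nabla\phi|_{\gamma}$ is linearised as $M=YX^{-1}$ (the paper's $VJ^{-1}$), and the conserved form $X^*Y-Y^*X$ gives both invertibility of $X$ and persistence of $\Im M>0$. The higher jets then solve linear ODEs, and Borel's lemma extends everything to a smooth $\phi$.

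The differences are minor. You impose the root $\dot\phi=-\n^{-1}\sqrt{\nabla\phi\cdot\nabla\phi}$ from the outset, which yields the stated time derivatives at $(0,x_0)$ directly; the paper recovers them at the end via \cref{prop:timeder} and the chain rule. Your reduced-Hamiltonian check of the symplectic structure does go through, since $h(x,\xi)=\n^{-1}|\xi|$ gives exactly $L=h_{\xi\xi}$, $N=h_{x\xi}$, $R=h_{xx}$. Finally, global existence of $\gamma$ follows from completeness of $\underline{g}$, not from a global bound on $\nabla\ln\n$, which is not assumed.
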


\begin{remark}
    The existence of an approximate solution to the eikonal equation to degree $j_\phi$ can also be directly inferred from the spacetime construction in \cite{Sbierski2013} with the Lorentzian metric $g$ on $\R^{1+3}$ or from \cite{ralston1982}. For the convenience of the reader and to keep the paper self-contained, we however give a proof below. Moreover, the method of proof chosen here is naturally adapted to the canonical $1+3$-splitting of $\R^{1+3}$ and the Riemannian geometry of $(\R^3, \underline{g})$ and slightly differs from those in \cite{ralston1982}, \cite{Sbierski2013}.
\end{remark}

\begin{proof}
 It is here that using the conformally rescaled optical metric $g$ on $\R^{3+1}$ given in \cref{eq:optical_metric} becomes convenient. As shown in \cref{SecOpticalGeom}, $t \mapsto \gamma(t) = \big(t, \underline{\gamma}(t)\big)$ is a null geodesic in $(\R^{1+3}, g)$ if and only if $t \mapsto \underline{\gamma}(t)$ is a  geodesic in $(\R^3, \underline{g})$ parametrised by $\underline{g}$-arclength. Recall that the geodesic equation in $(\R^3, \underline{g})$ is given by \cref{EqHamiltonianEq}. 
By standard ODE existence and uniqueness, we obtain a solution of the above geodesic equation \eqref{EqHamiltonianEq} with initial point $\gammu^i(0) = \x_0^i$ and initial tangent
\begin{align}
    \frac{d\gammu^i(0)}{dt} = \dot{\gammu}^i(0) = \frac{1}{\n\big|\nabla\phu\big|} \nabla^i \phu \bigg|_{\x_0}.
\end{align}
From the spacetime perspective, we obtain a future-directed null geodesic $\gamma(t)=(t,\gammu^i(t))$. We now construct our solution to the eikonal equation~\eqref{eq:Eikonal} by defining
\begin{align} \label{eq:phase_construction}
    \dot{\phi} \big|_{\upgamma} = - \frac{|\nabla \phu|}{\n} \bigg|_{\x_0} , \qquad \nabla^k \phi \big|_{\upgamma} = - \n^2 \dot{\phi} \dot{\gammu}^k \big|_\gamma = \frac{|\nabla \phu|}{\n}\Big|_{\x_0} \n^2\dot{\gammu}^k|_{\upgamma}.
\end{align}
We compute
\begin{align}
    \n^2\dot{\phi}^2-\nabla\phi\cdot\nabla\phi|_{\gamma}&=\n^2\Big|\frac{1}{\n}\nabla\phu\Big|_{\x_0}\Big|^2-\Big|\frac{1}{\n}\nabla\phu\Big|_{\x_0}\Big|^2 \n^4\delta_{km}\dot{\gammu}^k\dot{\gammu}^m\Big|_{\upgamma}=\n^2\Big|\frac{1}{\n}\nabla\phu\Big|_{\x_0}\Big|^2-\Big|\frac{1}{\n}\nabla\phu\Big|_{\x_0}\Big|^2 \n^4\frac{1}{\n^2}\Big|_{\upgamma}=0,
\end{align}
since $|\dot{\gammu}(t)|^2=\frac{1}{\n^2}$ from the nullity of $\gamma$. This completes the construction for degree $0$. 
We note four observations before moving onto the degree $1$ construction:
\begin{itemize}
\item First, the Eikonal equation also implies 
\begin{equation}
    \frac{|\nabla \phi|^2}{\n^2} \bigg|_\gamma = \dot{\phi}^2 \big|_\gamma = \frac{|\nabla \phu|^2}{\n^2} \bigg|_{\x_0}.
\end{equation}
\item Second, this equation and the Eikonal equation imply that
\begin{align}\label{eq:phi_const_along_gamma}
    \Big( \partial_t + \dot{\gamma}^j \nabla_j \Big) \phi \Big|_{\gamma} = 0 \qquad\implies\qquad \phi \big|_{\gamma} = \phu \big|_{\x_0}.
\end{align}
    \item Third, for all $t$, we have
\begin{align}\label{eq:nabphireal}
    \dot{\gammu}^k = \dot{\gamma}^k = -\frac{1}{\n^2\dot{\phi} }\nabla^k \phi\Big|_{\upgamma}=\frac{1}{\n|\nabla\phi| }\nabla^k \phi\Big|_{\upgamma}.
\end{align}

\item Fourth, the following equations are equivalent by the definition of $\dot{\phi}|_{\gamma}$:
\begin{equation} \label{eq:dt_eikonal}
    \frac{d}{d t} \Big( \dot{\phi}\big|_\gamma \Big) =  \Big( \partial_t + \dot{\gamma}^j \nabla_j \Big) \dot{\phi}\Big|_\gamma = 0 \qquad \Longleftrightarrow \qquad \partial_t \big( \nabla\phi\cdot\nabla\phi - \n^2\dot{\phi}^2 \big) \big|_\gamma = 0.
\end{equation}
\end{itemize}

We now move to degree $1$, where we will show that $\nabla_a (\nabla\phi\cdot\nabla\phi - \n^2\dot{\phi}^2) |_{\gamma} = 0$ is satisfied by the construction given in \cref{eq:phase_construction}. We start by writing
\begin{align}
    \nabla_a \big( \nabla\phi\cdot\nabla\phi - \n^2\dot{\phi}^2 \big) \big|_{\gamma} = 2 \big( \nabla^i \phi\nabla_a \nabla_i \phi - \n^2 \dot{\phi} \nabla_a \dot{\phi} - \n^2 \dot{\phi}^2 \nabla_a \ln \n \big) \big|_\gamma,
\end{align}
which is equivalent to 
\begin{equation} \label{eq:deg1_intermetiate}
    \bigg( \partial_t - \frac{\nabla^i \phi}{\n^2 \dot{\phi}} \nabla_i \bigg) \nabla_a \phi \bigg|_\gamma = \Big( \partial_t + \dot{\gamma}^i \nabla_i \Big) \nabla_a \phi \Big|_\gamma = - \dot{\phi} \nabla_a \ln \n \Big|_\gamma.
\end{equation}
On the other hand, from \cref{eq:phase_construction} and the geodesic equation \cref{EqHamiltonianEq}, we obtain
\begin{equation}
    \Big( \partial_t + \dot{\gamma}^i \nabla_i \Big) \nabla^a \phi \Big|_\gamma = \frac{d}{d t} \Big( \nabla^a \phi \big|_\gamma \Big) = \frac{d}{d t} \Big( - \n^2 \dot{\phi} \dot{\gamma}^a \big|_\gamma \Big) = -\dot{\phi} \frac{d}{dt} p_a \big|_{\gamma} = - \dot{\phi} \nabla_a \ln \n \big|_{\gamma}\;.
\end{equation}
Thus, the eikonal equation is satisfied to degree $1$ by the construction in \cref{eq:phase_construction}. 

We now consider the degree $2$ and compute
\begin{equation}
    \begin{split}
        \frac{1}{2} \nabla_b \nabla_a \big( \nabla\phi\cdot\nabla\phi - \n^2\dot{\phi}^2 \big) \big|_{\gamma} &= \nabla_b \nabla^i \phi \nabla_a \nabla_i \phi + \nabla^i \phi \nabla_a \nabla_b \nabla_i \phi - 2 \n^2 \nabla_b \ln \n \cdot \dot{\phi} \nabla_a \dot{\phi} - \n^2 \nabla_b \dot{\phi} \nabla_a \dot{\phi}  \\ &\quad- \n^2 \dot{\phi} \nabla_a \nabla_b \dot{\phi} - 2 \n^2 \nabla_a \ln \n \nabla_b \ln \n \cdot \dot{\phi}^2 - 2 \n^2 \nabla_b \dot{\phi} \cdot \dot{\phi} \nabla_a \ln \n - \n^2 \dot{\phi}^2 \nabla_a \nabla_b \ln \n  \\
        & \overset{!}{=} 0. \label{EqMiddle}
    \end{split}
\end{equation}
When we evaluate \eqref{EqMiddle} on $\gamma$ and use \cref{eq:deg1_intermetiate} to write
\begin{equation}
    \nabla_a \dot{\phi} \Big|_\gamma = \frac{\nabla^i \phi}{\n^2 \dot{\phi}} \nabla_i \nabla_a \phi - \dot{\phi} \nabla_a \ln \n \Big|_\gamma
\end{equation}
we obtain after division by $-\n^2 \dot{\phi}|_\gamma$:
\begin{align}
   &\dot{\gamma}^\nu\partial_{\nu}(\nabla_a\nabla_b\phi)+\frac{1}{\n^2\dot{\phi}}\nabla_b(\ln\n)\delta^{kj}\nabla_k\phi\nabla_j\nabla_a\phi+\frac{1}{\n^2\dot{\phi}}\nabla_a(\ln\n)\delta^{kj}\nabla_k\phi\nabla_j\nabla_b\phi \nonumber \\
   &\qquad-\nabla_a(\ln\n)\nabla_b(\ln\n)\dot{\phi}+\nabla_a\nabla_b[\ln\n]\dot{\phi} +\frac{1}{\n^2\dot{\phi}}\bigg(\frac{\nabla_p\phi\delta^{kp}\nabla_q\phi\delta^{jq}}{\n^2\dot{\phi}^2}-\delta^{kj}\bigg)\nabla_b\nabla_j\phi\nabla_k\nabla_a\phi=0 \;. \label{EqNTER}
\end{align}
Defining
\begin{subequations}
\begin{align}
    M_{a b} &= \nabla_a \nabla_b \phi \big|_\gamma, \\
    L_{a b} &= \frac{1}{\n^4 \dot{\phi}^3} \Big[ \big( \nabla_a \phi \big) \big( \nabla_b \phi \big) - \n^2 \dot{\phi}^2 \delta_{a b} \Big] \Big|_\gamma, \\
    N_{a b} &= \frac{1}{\n^2\dot{\phi}} \big( \nabla_a \ln\n \big) \big( \nabla_b \phi \big) \Big|_\gamma,\\
    R_{a b} &= \dot{\phi} \Big[ \nabla_a \nabla_b \ln\n - \big(\nabla_a \ln\n \big) \big( \nabla_b \ln\n \big) \Big] \Big|_\gamma\;,
\end{align}
\end{subequations}
\eqref{EqNTER} takes the form of a Riccati equation along $\gamma$ for the second spatial derivatives of $\phi$:
\begin{align} \label{eq:Riccati}
    \frac{d}{dt}M+M\cdot L\cdot M+N\cdot M+M\cdot N^T+R=0\;.
\end{align}

Let $J$ and $V$ be $3\times 3$ matrices that satisfy the linear ODE system
\begin{align}
    \frac{d}{dt}J=N^TJ+LV,\qquad \frac{d}{dt}V=-NV-RJ.
\end{align}
Since this is linear, we have the existence of a global solution. Moreover, if $J$ is invertible, then $M=VJ^{-1}$ solves the Riccati equation.  

We now show that $J$ is invertible by constructing a conserved quantity from the symplectic form on the cotangent bundle 
\begin{align}
    \omega=dx^k\wedge dp_{k},
\end{align}
and then arguing by contradiction. To this end, let $v\in \C^3$ and define 
\begin{align}
    X_v=(Jv)^k\partial_{x^{k}}+(Vv)^{k}\partial_{p_{k}}.
\end{align}
We compute 
\begin{align}
    \omega(X,\overline{X})(t)=[J(t)v]\cdot[\overline{V(t)v}]- [V(t)v]\cdot[\overline{J(t)v}],
\end{align}
and 
\begin{align}
    \frac{d}{dt}\omega(X_v,\overline{X}_v)(t)&= [\dot{J}(t)v]\cdot[\overline{V(t)v}]+ [J(t)v]\cdot[\overline{\dot{V}(t)v}]- [\dot{V}(t)v]\cdot[\overline{J(t)v}]- [V(t)v]\cdot[\overline{\dot{J}(t)v}] \nonumber\\
    &= [N^TJv+LVv]\cdot[\overline{V(t)v}]- [V(t)v]\cdot[\overline{N^TJv+LVv}] \nonumber\\
    &\qquad+ [J(t)v]\cdot[\overline{-NVv-RJv}]-[ -NVv-RJv]\cdot[\overline{J(t)v}] \nonumber\\
    &=0,
\end{align}
since $L$ and $R$ are symmetric and all of $L,N,R$ are real.

Suppose $J$ is not invertible, then there is a $t_0>0$ and $v\in \C^3$ where $J(t_0)v=0$. If we take the initial data $J(0)=\mathbbm{1}_3$ and $V(0)=M(0)$, then we can use the conservation of $\omega(X_v,\overline{X}_v)(t)$ to show
\begin{align}
    0=\omega(X_v,\overline{X}_v)(t_0)=\omega(X_v,\overline{X}_v)(0)= v\cdot[\overline{M(0)}v]- [M(0)v]\cdot v=-\ii \{\Im[M(0)]v\}\cdot v,
\end{align}
which is a contradiction to the positivity of $\Im[M(0)]$. 

To show that $\Im[M(t)]>0$ for all time, we note that $M(t):= V(t)J^{-1}(t)$, so $V(t)=M(t)J(t)$. We then compute
\begin{align}
   \omega(X_v,\overline{X}_v)(t)&= [J(t)v]\cdot[\overline{V(t)v}]-[ V(t)v]\cdot[\overline{J(t)v}]= [J(t)v]\cdot[\overline{M(t)J(t)v}]-[ M(t)J(t)v]\cdot[\overline{J(t)v}] \nonumber\\
    &=-2\ii\{\Im[M(t)]J(t)v\}\cdot[\overline{J(t)v}].
\end{align}
Conservation of $\omega(X_v,\overline{X}_v)(t)$ then gives
\begin{align}
    \{\Im[M(t)]J(t)v\}\cdot[\overline{J(t)v}] = \{\Im[M(0)]J(0)v\}\cdot[\overline{J(0)v}] = \{\Im[M(0)]v\}\cdot\overline{v}>0.
\end{align}
Since $J$ is an invertible linear map, it is an isomorphism. Therefore, for any $w\in \C^3$, $\exists v\in \C^3$ such that $w=Jv$. Hence, we have
\begin{align}
    \{\Im[M(t)]w\}\cdot\overline{w}>0,\qquad \forall w\neq 0,\forall t\geq 0.  
\end{align}

If $j_{\phi}=2$, then the above completes the construction. However, if $j_{\phi}>2$, then at degree $2 < q \leq j_{\phi}$, we find that 
\begin{align}
    \nabla_{k_1}...\nabla_{k_q} \big(\nabla\phi\cdot\nabla\phi - \n^2\dot{\phi}^2 \big) \big|_{\gamma} = 0
\end{align}
gives a linear ODE for $\nabla_{k_1}...\nabla_{k_q} \phi |_\gamma$, which we can solve for all time with standard ODE existence results. 

We can now extend to a spacetime function via Borel's lemma~\ref{lem:Borel}:
\begin{align}\label{eq:phi_borel}
    \phi(t,\x)=a(t)+b_k(t)[\x^k-\gammu^k(t)]+\sum_{2\leq|\alpha|\leq j_{\phi}}\frac{1}{\alpha!}D^{\alpha}\phi|_{\gamma}[\x-\gammu(t)]^{\alpha},
\end{align}
with
\begin{align}
    a(t)=\phi|_{\gamma}=\phu|_{x_0},\qquad b_k(t)=\nabla_k\phi|_{\gamma}=\frac{|\nabla \phu|}{\n}\Big|_{\x_0}  \n^2\dot{\gammu}^k|_{\upgamma}.
\end{align}

Finally, we now address the initial values of $\dot{\phi}$ and $\ddot{\phi}$. First, by definition, we have
\begin{align}
    \dot{\phi} \big|_{(0,\x_0)} = - \frac{|\nabla \phu|}{\n} \bigg|_{\x_0}.
\end{align}
We then use that, by construction and Proposition \ref{prop:timeder}, we have
\begin{equation}
   \rd_t^p D^\alpha \dot{\phi}^2|_{(0,\x_0)}  = \rd_t^p D^\alpha \big(\frac{1}{\n^2} \nabla \phi \cdot \nabla \phi\big) |_{(0,\x_0)}
\end{equation}
for $p = 0,1$ and $|\alpha| \leq j_\phi - p$. Since the complex square root is a smooth function in a neighbourhood of $\dot{\phi}^2|_{\x_0}$, we obtain by the chain rule and induction  that 
\begin{equation}
    \rd_t^p D^\alpha \dot{\phi}|_{(0,\x_0)} = \rd_t^p D^\alpha \big[ -\frac{1}{\n} \sqrt{\nabla \phi \cdot \nabla \phi}\big]|_{(0,\x_0)} \;.
\end{equation}

\end{proof}

\subsubsection{Construction theorem}

In this subsection we provide the existence result for the approximate solutions to the Maxwell equations in an inhomogeneous medium. This makes use of~\cref{prop:construction_phase_function} above. 

\begin{theorem}\label{thm:construction_theorem}
    Let $\x_0\in \R^3$ and $\rho>0$ be given. Suppose that we are given initial data of $D^{\alpha}\phi|_{(0,\x_0)}=D^{\alpha}\phu|_{\x_0}$ for $0\leq |\alpha|\leq 7$, such that 
\begin{enumerate}
\item for $|\alpha|\leq 1$, $D^{\alpha}\phu|_{\x_0}\in \R$ and there is some $\alpha $, with $|\alpha|=1$, such that $D^{\alpha}\phu|_{\x_0}\neq 0$, 
    \item for $|\alpha|=2$, the bilinear form $\Im\big(D^{\alpha}\phu|_{\x_0}\big)$ is positive definite.
\end{enumerate}
This represents initial data for the Eikonal equation~\eqref{eq:Eikonal} to degree $7$ along the future-directed null geodesic starting at $(0,\x_0)$ with initial tangent
    \begin{align}
    \dot{\gamma}(0)=\bigg(1,\frac{1}{\n\big|\nabla\phu\big|} \nabla^i\phu \bigg|_{\x_0}\bigg).
    \end{align} 
    Let $\phi$ be the solution as given in \cref{prop:construction_phase_function}.  Furthermore, suppose that we are also given initial data
    \begin{enumerate}
    \item $D^{\alpha}\eu_0|_{\x_0}$ such that $\eu_0|_{\x_0}\neq 0$ and $D^{\alpha}( \eu_0^k \nabla_k\phi )|_{\x_0}=0$ for $|\alpha|\leq 5$,
    \item $D^{\alpha} \eu_1|_{\x_0}$ such that $D^{\alpha}( \eu_1^k \nabla_k\phi -\ii\div\eu_0-\ii\eu_0^k\nabla_k\ln\eps)|_{\x_0}=0$ for $|\alpha|\leq 3$.
    \end{enumerate}
    This represents (constrained) initial data for the $\e_A$-transport equations~\eqref{eq:eAtransport} to degrees $5-2A$ along $\gamma$, for $A=0,1$. 
    
    Then, there exist smooth $\hat{\E}(t,\x)$ and $\hat{\H}(t,\x)$ of the form
    \begin{align} \label{EqehApprox}
        &\hat{\bE} = \omega^{\nicefrac{3}{4}} \Re \big[(\be_0 + \omega^{-1} \be_1  + \omega^{-2} \be_{2} )e^{\ii \omega \phi} \big], \qquad\hat{\bH} = \omega^{\nicefrac{3}{4}} \Re \big[(\h_0 + \omega^{-1} \h_1  + \omega^{-2} \h_{2} )e^{\ii \omega \phi} \big],
    \end{align}
    such that:
    \begin{enumerate}
        \item at each $t$, $\hat{\bE}(t,\x)$ and $\hat{\bH}(t,\x)$ are supported in $B_\rho(\gammu(t))$,
        \item $\Eh$ and $\Hh$ satisfy Maxwell's equations to order 2:
        \begin{align} \label{EqEstimatesApproxSol}
    \sup_{t\in[0,T]}\|\div(\eps\Eh)\|_{L^2( \R^3)}&\leq C(T)\omega^{-2},\qquad
   \sup_{t\in[0,T]}\|\underset{=\hat{\mathscr{F}}}{\underbrace{\nabla\times\Eh+\mu\partial_t\Hh}}\|_{L^2( \R^3)}\leq C(T)\omega^{-2}, \nonumber\\
     \sup_{t\in[0,T]}\|\div(\mu\Hh)\|_{L^2(\R^3)}&\leq C(T)\omega^{-2},\qquad
     \sup_{t\in[0,T]}\|\underset{=\hat{\mathscr{G}}}{\underbrace{\nabla\times\Hh-\eps\partial_t\Eh}}\|_{L^2(\R^3)}\leq C(T)\omega^{-2},
    \end{align}
    \item for $A=0,1$, we have
    \begin{align}
        D^{\alpha}\e_A|_{(0,\x_0)}&=D^{\alpha}\eu_A|_{\x_0},
    \end{align}
    for all $|\alpha|\leq 5-2A$,
    \item for $A=0,1,2$ and $|\alpha|\leq 5-2A$, $D^{\alpha}_{\x}\h_A|_{\gamma}$ satisfies \cref{eq:h012def}. In particular, for $A=0,1$, $\h_A$ satisfies the $\h_A$-transport equations~\eqref{eq:hAtransport} to degree $5-2A$ along $\gamma$ and has induced initial data
    \begin{align}
         D^{\alpha}{\h}^i_{A}|_{(0,\x_0)}=D^{\alpha}{\hu}^i_{A}|_{\x_0},
    \end{align}
    for all $|\alpha|\leq 5-2A$, where $D^{\alpha}{\hu}^i_{A}|_{\x_0}$ is defined in \cref{eq:hdefID},
    \item for all $t \geq 0$ we have $\Im(\nabla\phu)(t, x)\neq 0$ for $x \in \overline{\mathrm{supp}\big(\hat{E}(t, \cdot) \big) \cup \mathrm{supp}\big(\hat{H}(t, \cdot) \big)} \setminus\{\gammu(t)\}$. \label{LastPoint}
    \end{enumerate}
\end{theorem}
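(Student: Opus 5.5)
We build the Taylor data of $\phi$, $\e_A$, $\h_A$ along $\gamma$ degree by degree, extend them to spacetime functions via Borel's lemma, cut off near $\gamma$, and then invoke \cref{prop:order_2_from spacetime_functions}.

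\textbf{Phase.} Apply \cref{prop:construction_phase_function} with $j_\phi=7$. This gives the null geodesic $\gamma$ with the prescribed initial tangent, and the formal Taylor expansion of $\phi$ to degree $7$ along $\gamma$. It also gives $D^\alpha\phi|_\gamma$ real for $|\alpha|\le 1$ and $\Im D^2\phi|_\gamma$ positive definite for all $t$. We fix $\phi$ as the Borel extension \eqref{eq:phi_borel}. Degree $7$ is more than the degree $6$ needed in \cref{prop:2sttranstoMax}; the spare degree will be used for the time derivatives of $\phi$ that enter the transport coefficients ($\ddot\phi$ and its spatial derivatives).

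\textbf{Amplitudes $\e_A$.} For $A=0$ the transport equation \eqref{eq:eAtransport} is, for each $|\alpha|\le 5$, a linear ODE along $\gamma$ for $D^\alpha\e_0|_\gamma$. Its coefficients involve $D^\beta\phi$ with $|\beta|\le 7$ and lower-order derivatives of $\e_0$. We solve it inductively in $|\alpha|$ for all $t\in[0,T]$ with initial data $D^\alpha\eu_0|_{\x_0}$; global existence is automatic by linearity. We then Borel-extend to a spacetime $\e_0$, and the time derivatives $\dot\e_0,\ddot\e_0$ needed for $A=1$ are read off from the constructed jets via \cref{prop:timeder}. The same procedure gives $\e_1$ to degree $3$ with data $D^\alpha\eu_1|_{\x_0}$. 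By the assumed initial constraints and \cref{lem:ConstraintPropagation} (with $j_\phi\ge 6$), $\bC_0$ and $\bC_1$ vanish along $\gamma$ to degrees $5$ and $3$ for all $t$.

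For $\e_2$ no transport equation is imposed. We only need $\bC_2$ to vanish to degree $1$, which means $\ii\e_2\cdot\nabla\phi=-(\div\e_1+\e_1\cdot\nabla\ln\eps)$ and its first spatial derivatives on $\gamma$. Since $\nabla\phi|_\gamma\ne 0$, we can choose $\e_2$ proportional to $\nabla\phi/|\nabla\phi|^2$ with a suitably chosen first-order jet, then Borel-extend.

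\textbf{Amplitudes $\h_A$.} We define the jets of $\h_0,\h_1,\h_2$ by \eqref{eq:h012def}, computing the time derivatives $\dot\h_A$ from tangential derivatives as stated there, and Borel-extend. At $t=0$ these jets agree with \eqref{eq:hdefID}. For $\h_0$ this is immediate. For $\h_1$ one uses the initial values of $\dot\phi,\ddot\phi$ from \cref{prop:construction_phase_function} together with the $\h_0$-transport equation, which expresses $\dot\h_0$ through spatial data; this is the routine but lengthy identity behind \eqref{eq:h1defID}. \cref{prop:2sttranstoMax} then gives vanishing of $\bG_A$ and $\bK_A$ to degrees $5,3,1$, while $\bF_A$ vanishes by construction. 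Hence conditions 1--6 of \cref{prop:order_2_from spacetime_functions} hold.

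\textbf{Cutoff and the non-degeneracy point.} \cref{prop:order_2_from spacetime_functions} supplies $\chi_\rho$, and the estimates \eqref{EqEstimatesApproxSol} follow for $\Eh\chi_\rho,\Hh\chi_\rho$. The cutoff commutator terms are supported where $\Im\phi\ge c\check\rho^2$, so they are $\mathcal{O}(e^{-c\omega})$ and can be absorbed. Note also that $\div(\eps\Eh)=\eps(\div\Eh+\Eh\cdot\nabla\ln\eps)$, which is controlled by the $\bC$ estimate.

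Point \ref{LastPoint} requires shrinking $\rho$. Near $\gamma(t)$, $\nabla\Im\phi(t,x)=\Im D^2\phi|_{\gamma(t)}(x-\gammu(t))+\mathcal{O}(|x-\gammu|^2)$, and the Hessian is positive definite uniformly on $[0,T]$ by compactness. So there is $\rho'>0$ with $\nabla\Im\phi\ne 0$ on $0<|x-\gammu(t)|\le\rho'$, and we take the cutoff radius $\min(\rho,\rho')$.

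I expect the main obstacle to be the bookkeeping of derivative degrees. One has to check that the transport ODEs for $\e_1$ to degree $3$, which involve $\Delta\e_0$ and $\ddot\e_0$, only need jets of $\e_0$ to degree $5$ and of $\phi$ to degree $7$. One also has to check the consistency between \eqref{eq:h1def} and the initial formula \eqref{eq:h1defID}.
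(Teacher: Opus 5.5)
Your construction follows the paper's proof step for step: the phase from \cref{prop:construction_phase_function} with $j_\phi=7$, linear transport ODEs for $\e_0,\e_1$ to degrees $5,3$, constraint propagation, an $\e_2$ chosen only to make $\bC_2$ vanish to degree $1$, $\h_A$ defined by \eqref{eq:h012def}, \cref{prop:2sttranstoMax}, Borel extension, the cutoff of \cref{prop:order_2_from spacetime_functions}, and a smaller cutoff radius for point~\ref{LastPoint}. The gap is the $\h_A$-transport assertion in point~4 of the theorem. You never prove it, yet you use it. When you identify $D^\alpha\h_1|_{(0,\x_0)}$ with \eqref{eq:h1defID}, you rewrite $\dot\h_0$ using the $\h_0$-transport equation. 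Nothing earlier in your argument supplies that equation. \eqref{eq:h012def} only prescribes the jets of $\h_A$ along $\gamma$, with $\dot\h_0$ the tangential derivative of the constructed jet. The transport equations \eqref{eq:hAtransport} are an extra property that has to be derived.

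The paper derives it from the third identity of \cref{lem:decompGF}. That identity writes $2\n^2\dot\phi\,(\h_{A-1}\text{-transport})[0]$ in terms of $\bF_A,\bG_A,\bK_A$, $\nabla\bK_{A-1}$, $\div\star\bG_{A-1}$, $\rd_t\bF_{A-1}$ and the eikonal defect. So this step belongs after \cref{prop:2sttranstoMax}, and your matching of \eqref{eq:h1defID} after it.

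Keep track of degrees here. With what you actually have ($\bF_1,\bG_1,\bK_1$ to degree $3$; $\bF_0,\bG_0,\bK_0$ to degree $5$), the identity gives the $\h_0$-transport only to degree $3$. That is exactly what your matching for $|\alpha|\le 3$ needs.

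For the full degree $5$ in point~4, differentiate $-\frac{1}{\mu\dot\phi}\nabla\phi\times\e_0$ along $\gamma$ directly. This function agrees with $\h_0$ to degree $5$, which is all the transport equation sees. Use the $\e_0$-transport equation, $\frac{d}{dt}\nabla\phi|_\gamma=-\dot\phi\nabla\ln\n$ and $\frac{d}{dt}\dot\phi|_\gamma=0$. The $\h_0$-transport defect then equals $\frac{\e_0\cdot\nabla\phi}{\mu\n^2\dot\phi^2}\,\nabla\ln\n\times\nabla\phi$, up to terms containing the $\e_0$-transport and eikonal defects. Hence the constraint to degree $5$, the $\e_0$-transport to degree $5$ and the eikonal equation to degree $6$ suffice.

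The $\h_1$-transport to degree $3$ also needs its own check. With $\bF_2,\bG_2,\bK_2$ vanishing only to degree $1$, the identity delivers it only to degree $1$.
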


\begin{proof}
We would like to appeal to \cref{prop:order_2_from spacetime_functions}. Recall from \cref{prop:2sttranstoMax} that the conditions \ref{cond4}--\ref{cond6} of Proposition~\ref{prop:order_2_from spacetime_functions} can be satisfied if we can construct a solution to the Eikonal equation along $\gamma$ to degree $6$ and solutions to the $\e_0$ and $\e_1$-transport equations to degree $(5,3)$ along $\gamma$ (provided the constraints are satisfied initially to degrees (5,3) respectively) and such that $\e_2$ satisfies the $\e_2$-constraint along $\gamma$. This last point can be done trivially after one constructs $\e_1$.

We can use \cref{prop:construction_phase_function} to construct a solution to the Eikonal equation along $\gamma$ to degree $7$. The transport ODEs governing $\e_0$ and $\e_1$ are linear and, therefore, global existence follows from the standard ODE theory. Pick $\e_2$ such that
\begin{align}
    D^{\alpha} \big( {\e}_2^k \nabla_k\phi \big) \big|_\gamma = D^{\alpha} \big( \ii \div{\e}_1+ \ii{\e}_1^k\nabla_k\ln\eps \big) \big|_{\gamma}\qquad \forall |\alpha|\leq 1.
\end{align}
Now we can define $(\h_0,\h_1,\h_2)$ via \cref{eq:h012def}. This then completes our construction along $\gamma$ by \cref{prop:2sttranstoMax}. Using~\cref{lem:Borel}, we can  build smooth spacetime functions $(\phi,\e_0,\e_1,\e_2)$  whose derivatives along $\gamma$ agree with those constructed. 

We are now in the setting of \cref{prop:order_2_from spacetime_functions}, which completes the construction. The fact that the $h_A$-transport equations are satisfied follows directly from \cref{lem:decompGF}. The last point \ref{LastPoint} in the above theorem can be ensured by virtue of $\nabla \nabla \Im\phi (t, \gammu(t))$ being positive definite and choosing the bump function $\chi_\rho$ in \cref{prop:order_2_from spacetime_functions} to have even smaller support around $\gamma$.
\end{proof}

\begin{remark}
Note that in \cref{prop:2sttranstoMax} we require the Eikonal equation to be satisfied only to degree $6$ -- which, by  \cref{prop:construction_phase_function} determines $6$ derivatives of $\phi$ along $\gamma$ uniquely. However, when constructing $5$ derivatives  of $\e_0$ and $3$ derivatives of $\e_1$ from their transport equations, $7$ derivatives of $\phi$ along $\gamma$ enter. The reason that in the above theorem we have provided initial data for $\phi$ up to and including $7$ derivatives and required the Eikonal equation to be satisfied to degree $7$ is that in this way five derivatives of $e_0$ and three derivatives of $e_1$ along $\gamma$ are uniquely fixed by our choice of initial data. However, this is not needed in the remainder of the paper and the above theorem remains true as stated if one only prescribes six derivatives of $\phi$ and constructs a solution to the Eikonal equation to degree six.
\end{remark}

\subsection{Conservation laws}

In this section, we present the leading-order conservation laws for the approximate solutions defined above. These follow from the transport equations satisfied by $\be_0$ and $\bh_0$, and represent energy conservation at leading order, as well as a conservation of the state of polarisation.

\begin{proposition}[Conservation laws] \label{Prop:conservation} Consider the approximate solution defined in \cref{thm:construction_theorem}. Then, the following conservation laws hold:
\begin{subequations}
\begin{align}
    \left( \partial_t + \dot{\gamma}^j \nabla_j \right) \frac{\varepsilon \be_0 \cdot \overline{\be}_0}{\sqrt{\det\left( \frac{1}{2 \pi \ii} \mathcal{A} \right)}} \Bigg|_{\gamma} &= 0, \label{eq:cons_energy0} \\
    \left( \partial_t + \dot{\gamma}^j \nabla_j \right) \frac{\mu \bh_0 \cdot \overline{\bh}_0}{\sqrt{\det\left( \frac{1}{2 \pi \ii} \mathcal{A} \right)}} \Bigg|_{\gamma} &= 0, \\
    \left( \partial_t + \dot{\gamma}^j \nabla_j \right) \frac{\n \be_0 \cdot \overline{\bh}_0}{\sqrt{\det\left( \frac{1}{2 \pi \ii} \mathcal{A} \right)}} \Bigg|_{\gamma} &= 0,
\end{align}    
\end{subequations}
where $\mathcal{A}_{a b}(t,x) = 2 \ii \nabla_a \nabla_b \Im \phi(t,x)$ and $A_{a b} (t)= \mathcal{A}|_{\gamma(t)}$. 
\end{proposition}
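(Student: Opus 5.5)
We differentiate along $\gamma$ with the transport operator $\mathcal{D} := \partial_t + \dot{\gamma}^j\nabla_j$ and split each quantity into two factors: the bilinear expression in $(\be_0,\bh_0)$ and the normalising factor $\det\big(\tfrac{1}{2\pi\ii}\mathcal{A}\big)^{-1/2}$. The claim is that their logarithmic derivatives along $\gamma$ cancel.

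\emph{Step 1: the normalising factor.} On $\gamma$ we have $\mathcal{A} = 2\ii\,\Im M$, where $M$ solves the Riccati equation \eqref{eq:Riccati}. Taking imaginary parts of \eqref{eq:Riccati}, and using that $L,N,R$ are real, $\tfrac{d}{dt}\Im M$ is obtained by linearising $M\cdot L\cdot M+N\cdot M+M\cdot N^T$. Jacobi's formula then gives
\[
\mathcal{D}\ln\det \mathcal{A}\big|_\gamma = -\mathrm{tr}\big((\Im M)^{-1}\tfrac{d}{dt}\Im M\big) = -2\,\mathrm{tr}(L\,\Re M) - 2\,\mathrm{tr} N .
\]
A cleaner route to the same identity uses the Jacobi fields from the proof of \cref{prop:construction_phase_function}. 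The conserved symplectic form gives $J^{T}\,\Im M\,\overline{J}$ constant, so $\det\Im M\,|\det J|^2$ is constant and hence $\mathcal{D}\ln\det\Im M = -2\Re\,\mathrm{tr}(\dot J J^{-1}) = -2\Re\,\mathrm{tr}(N^T + LM)$. Either way,
\[
\mathcal{D}\ln\det(\cdot)^{-1/2} = \Re\,\mathrm{tr}(LM) + \mathrm{tr} N .
\]
Finally, I would identify $\mathrm{tr}(LM)$ at $\gamma$ with $-\tfrac{1}{\n^2\dot\phi}\big(\Delta\phi - \n^2\ddot\phi\big)$ plus $\nabla\ln\n$ terms. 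To do this, write $\ddot\phi$ on $\gamma$ via the degree-1 and degree-2 eikonal relations, that is, differentiate $\dot\phi^2 = \n^{-2}\nabla\phi\cdot\nabla\phi$ in $t$ and use \eqref{eq:deg1_intermetiate}. This is the one computational step I expect to be fiddly: the second-derivative terms must combine into exactly $\Re$ of the transport coefficient $\tfrac{1}{2\n^2\dot\phi}(\Delta\phi-\n^2\ddot\phi)$ appearing in \eqref{eq:e0transport}.

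\emph{Step 2: the bilinear forms.} For $\varepsilon\,\be_0\cdot\overline{\be}_0 = |\sqrt{\eps}\be_0|^2$, apply \eqref{eq:e0transport}, which is the degree-0 transport equation weighted by $\sqrt{\eps}$. The antisymmetric term $\e_0^{[n}\nabla^{m]}\phi\,\nabla_m\ln\n^2$, contracted with $\overline{\be}_0$, contributes only a multiple of $\be_0\cdot\nabla\phi$ and of $\overline{\be}_0\cdot\nabla\phi$. Both vanish on $\gamma$ by the constraint $\bC_0|_\gamma=0$, which holds for all $t$ by \cref{lem:ConstraintPropagation}. Note that $\nabla\phi$ and $\dot\phi$ are real on $\gamma$. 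What remains is
\[
\mathcal{D}\ln(\eps|\be_0|^2) = 2\Re\Big[\tfrac{1}{2\n^2\dot\phi}(\Delta\phi - \n^2\ddot\phi)\Big] + (\text{real } \nabla\ln\n\text{ term}).
\]
Combining this with Step 1, the second-derivative contributions cancel. The remaining first-order terms in $\nabla\ln\n$ cancel with $\mathrm{tr}N = -\dot{\gammu}^j\nabla_j\ln\n$ and with the term $\tfrac{1}{2}\mathcal{D}\ln\eps$ coming from the weight; I would verify this by a short bookkeeping calculation. This proves \eqref{eq:cons_energy0}.

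\emph{Step 3: the remaining identities.} On $\gamma$, \eqref{eq:h0def} gives $\bh_0 = -\tfrac{1}{\mu\dot\phi}\nabla\phi\times\be_0$. Since $\be_0\perp\nabla\phi$ on $\gamma$ and $|\nabla\phi| = -\n\dot\phi$ there, we get
\[
\mu\,\bh_0\cdot\overline{\bh}_0 = \tfrac{|\nabla\phi|^2}{\mu\dot\phi^2}|\be_0|^2 = \varepsilon\,\be_0\cdot\overline{\be}_0 ,
\]
so the second law follows from the first. Similarly,
\[
\n\,\be_0\cdot\overline{\bh}_0 = -\tfrac{\n}{\mu\dot\phi}\,\be_0\cdot(\nabla\phi\times\overline{\be}_0) = \tfrac{\eps}{|\nabla\phi|}\,\nabla\phi\cdot(\be_0\times\overline{\be}_0)
\]
up to sign. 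It therefore suffices to show that $\mathcal{D}$ of $\nabla\phi\cdot(\sqrt\eps\be_0\times\sqrt\eps\overline{\be}_0)/|\nabla\phi|$ matches the Step 2 rate. Using \eqref{eq:e0transport}, the term $\e_0^{[n}\nabla^{m]}\phi$ produces contributions proportional to $\nabla\phi$ in the transport of $\be_0$, and these are killed by $\nabla\phi\cdot(\nabla\phi\times\cdot)=0$. The transport of $\nabla\phi/|\nabla\phi|$ given by \eqref{eq:deg1_intermetiate} is proportional to $\nabla\ln\n$, and its component orthogonal to $\nabla\phi$ pairs with $\be_0\times\overline{\be}_0$. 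Since $\be_0$ and $\overline{\be}_0$ both lie in $\nabla\phi^\perp$, the vector $\be_0\times\overline{\be}_0$ is parallel to $\nabla\phi$, so that pairing vanishes. Hence the rate of change equals the same scalar factor as in Step 2, and the third law follows.
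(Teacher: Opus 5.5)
Your structure is the paper's: differentiate the ratio along $\gamma$, use \eqref{eq:e0transport} for the numerator and the imaginary part of \eqref{eq:Riccati} for the determinant, and show the two cancel. Your Jacobi-field derivation of $\tfrac{d}{dt}\ln\det\Im M=-2\,\mathrm{tr}(L\Re M)-2\,\mathrm{tr}N$ is correct. So is your algebraic reduction of the second and third laws to $\be_0$ via \eqref{eq:h0def}, and it is simpler than the paper's appeal to the $\bh_0$ transport law. (A minor slip: Jacobi's formula gives $+\mathrm{tr}\big((\Im M)^{-1}\tfrac{d}{dt}\Im M\big)$, not $-$, although your final value is right.)

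\textbf{The gap is in Step 2: the antisymmetric term does not drop out.} Contracting $\be_0^{[n}\nabla^{m]}\phi$ with $\overline{\be}_{0n}$ gives $\tfrac12\big(|\be_0|^2\nabla^m\phi-\be_0^m\,\overline{\be}_0\cdot\nabla\phi\big)$. Only the second piece vanishes by the constraint. The first piece contributes
\[
-\frac{\eps|\be_0|^2}{\n^2\dot\phi}\,\nabla^m\phi\,\nabla_m\ln\n^2 \;=\; 2\,\dot{\gammu}^m\nabla_m\ln\n\;\eps|\be_0|^2 \;=\; -2\,\mathrm{tr}N\;\eps|\be_0|^2
\]
to your $\mathcal{D}(\eps|\be_0|^2)$. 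This is the term $-(\nabla^i\phi)\nabla_i\ln\n^2$ in the paper's display.

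Without this term the computation does not close. By \cref{prop:2ndderphi}, $\n^2\ddot\phi|_\gamma=\frac{\nabla^i\phi\nabla^j\phi\nabla_i\nabla_j\phi}{\n^2\dot\phi^2}-\nabla^i\phi\nabla_i\ln\n$. Hence $\frac{1}{\n^2\dot\phi}\Re(\Delta\phi-\n^2\ddot\phi)=-\mathrm{tr}(L\Re M)+\mathrm{tr}N$, and adding Step 1 leaves $2\,\mathrm{tr}N=-2\dot{\gammu}^j\nabla_j\ln\n\neq0$.

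Your proposed remedy, a $\tfrac12\mathcal{D}\ln\eps$ term coming from the weight, does not exist. \eqref{eq:e0transport} is already the equation for $\sqrt{\eps}\be_0$, so adding that term again double counts. In any case, a pure $\ln\eps$ term cannot cancel an $\ln\n$ term when $\eps$ and $\mu$ are independent.

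If you keep the missing term, you get $\mathcal{D}\ln(\eps|\be_0|^2)=-\mathrm{tr}(L\Re M)-\mathrm{tr}N$, which cancels Step 1 exactly. You seem to have mixed the weighted and unweighted forms of the transport equation. In the unweighted \eqref{eq:eAtransport}, the non-antisymmetrised term $\be_0^m\nabla^n\phi\nabla_m\ln\n^2$ does vanish by the constraint. There, the $\nabla\ln\mu$ term together with $\mathcal{D}\ln\eps$ supplies $\dot{\gammu}\cdot\nabla\ln(\eps\mu)=-2\,\mathrm{tr}N$.

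The same slip recurs in Step 3. There the antisymmetric term also contributes a real multiple of $\sqrt{\eps}\be_0$, not only a multiple of $\nabla\phi$. That multiple is the same as in the corrected Step 2, so your conclusion that the two rates agree still holds.
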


\begin{proof}
We focus on the first conservation law in \cref{eq:cons_energy0}. We have
\begin{align} \label{eq:d_energy}
    \Big( \partial_t + \dot{\gamma}^j \nabla_j \Big) \frac{\varepsilon \be_0 \cdot \overline{\be}_0}{\sqrt{\det\left( \frac{1}{2 \pi \ii} \mathcal{A} \right)}} \Bigg|_{\gamma} &= \frac{1}{\sqrt{\det\left( \frac{1}{2 \pi \ii} A \right)}} \bigg[ \Big( \partial_t + \dot{\gamma}^j \nabla_j \Big) \big( \varepsilon \be_0 \cdot \bar{\be}_0 \big) - \frac{\varepsilon \be_0 \cdot \bar{\be}_0}{2} (A^{-1})^{i j} \Big( \partial_t + \dot{\gamma}^j \nabla_j \Big) \mathcal{A}_{i j} \bigg] \bigg|_{\gamma}.
\end{align}
Using the transport equation \eqref{eq:e0transport}, the first term on the right-hand side of the above equation is
\begin{align}
    \Big( \partial_t + \dot{\gamma}^j \nabla_j \Big) \big( \varepsilon \be_0 \cdot \bar{\be}_0 \big) \Big|_{\gamma} &= \frac{1}{\n^2 \dot{\phi}} \Big[ \Re\big( \Delta \phi - \n^2 \ddot{\phi} \big) - (\nabla^i \phi) \nabla_i \ln \n^2 \Big] \varepsilon \be_0 \cdot \overline{\be}_0 \Big|_{\gamma} \nonumber \\
    &= \frac{1}{\n^2 \dot{\phi}} \bigg\{ \Re \bigg[ \Delta \phi - \frac{1}{\n^2 \dot{\phi}^2} (\nabla^i \phi) (\nabla^j \phi) \nabla_i \nabla_j \phi \bigg] - (\nabla^i \phi) \nabla_i \ln \n \bigg\} \varepsilon \be_0 \cdot \overline{\be}_0 \bigg|_{\gamma}.
\end{align}
The second equality in the above equation follows by replacing $\ddot{\phi}|_\gamma = \frac{\nabla^i \phi}{\n^2 \dot{\phi}} \nabla_i \dot{\phi} |_\gamma$, which comes from \cref{eq:dt_eikonal}, and by using \cref{eq:deg1_intermetiate} to replace $\nabla_i \dot{\phi} |_\gamma$. 

The second term on the right-hand side of \cref{eq:d_energy} can be calculated by taking the imaginary part of the Riccati equation \eqref{eq:Riccati}. Note that we have
\begin{equation}
    \Big( \partial_t + \dot{\gamma}^j \nabla_j \Big) \mathcal{A}_{i j} \Big|_{\gamma(t)} = \frac{d}{dt} A_{i j}(t). 
\end{equation}
We immediately see that the two terms on the right-hand side of \cref{eq:d_energy} cancel, and we obtain the conservation law in \cref{eq:cons_energy0}. The proofs of the other conservation laws follow identically if we also use the corresponding transport law for $\bh_0$ given in \cref{eq:hAtransport}.
\end{proof}

\subsection{The stationary phase approximation for the approximate solutions}

In this section, we show how the stationary phase approximation can be used to expand the integrals that define the total energy, energy centroid, total linear momentum, total angular momentum, and quadrupole moment corresponding to the approximate Gaussian beam solutions constructed in \cref{thm:construction_theorem}.

Consider the approximate solutions $\Eh$ and $\Hh$ constructed in \cref{EqehApprox}. Then, the corresponding energy density and Poynting vector are
\begin{subequations} \label{eq:approx_density}
\begin{align}
    \hat{\mathcal{E}} &= \frac{\omega^{\nicefrac{3}{2}}}{4} \left( \varepsilon \be \cdot \Bar{\be} + \mu \bh \cdot \Bar{\bh} \right) e^{-2 \omega \Im \phi} + \frac{\omega^{\nicefrac{3}{2}}}{4} \Re \left[ \left( \varepsilon \be \cdot \be + \mu \bh \cdot \bh \right) e^{2 \ii  \omega \Re \phi} \right] e^{-2 \omega \Im \phi}, \\
    \hat{\bS} &= \frac{\omega^{\nicefrac{3}{2}}}{2} \n^2 \Re \left( \be \times \Bar{\bh} \right) e^{-2\omega  \Im \phi} + \frac{\omega^{\nicefrac{3}{2}}}{2} \n^2 \Re \left[ \left( \be \times \bh \right) e^{2 \ii \omega \Re \phi} \right] e^{-2\omega  \Im \phi},
\end{align}    
\end{subequations}
where $\be = \sum_{A=0}^2 \omega^{-A} \be_A$ and $\bh = \sum_{A=0}^2 \omega^{-A} \bh_A$.
The quantities
\begin{subequations}
    \begin{align}
    \hat{\bbE}(t)&:=\int_{\R^3}\hat{\mathcal{E}}(t,x)d^3x,\qquad
    &\hat{\bbX}^i(t)&:=\frac{1}{\hat{\bbE}}\int_{\R^3}x^i\hat{\mathcal{E}}(t,x)d^3x,\\
    \hat{\bbP}_i(t)&:=\int_{\R^3}\hat{\mathcal{S}}_i(t,x)d^3x,\qquad
    &\hat{\bbJ}_i(t)&:=\int_{\R^3}\eps_{ijk}\hat{r}^j(t,x)\hat{\mathcal{S}}^k(t,x)d^3x,\\
    &
    &\hat{\bbQ}^{ij}(t)&:=\int_{\R^3}\hat{r}^i(t,x)\hat{r}^j(t,x)\hat{\mathcal{E}}(t,x)d^3x,
\end{align}
\end{subequations}
where $\hat{r}^i(t,x):=x^i-\hat{\bbX}^i(t)$, are the energy, energy centroid, total linear momentum, total angular momentum,  and quadrupole moment of the approximate solutions. These can be approximated using the stationary phase method \cite[Sec. 7.7]{hormander1983analysis}, which is reviewed in \cref{app:Stationary}. In particular, by applying \cite[Th. 7.7.1]{hormander1983analysis}, it follows that the integrals of the above terms proportional to $e^{\pm 2 \ii \omega \Re \phi}$ decay to an arbitrarily high order in $\omega$. The integrals of the remaining terms can be approximated using \cref{th:stationary} \cite[Th. 7.7.5]{hormander1983analysis} and are of the form
\begin{equation}
    \int_{\R^3} u(x) e^{\ii \omega f(x)} \, d^3 x = \frac{e^{\ii \omega f(x_s)}}{\sqrt{\det\left( \frac{\omega}{2 \pi \ii} A \right)}} \sum_{j<k} \omega^{-j} L_j u (x_s) + \mathcal{O}(\omega^{-k}),
\end{equation}
where $A_{a b} = \nabla_a \nabla_b f(x_s)$, $f(x) = 2 \ii \Im \phi$, $x_s = \underline{\gamma}(t)$, and all the assumptions of \cref{th:stationary} are satisfied.

\begin{proposition} \label{prop:approx_average_quantities}
Consider the approximate solution given in \cref{EqehApprox}, together with the corresponding energy density and Poynting vector given in \cref{eq:approx_density}. Let $T>0$. Then, for $t \in [0,T]$ and up to error terms of order $\omega^{-2}$, the corresponding total energy, energy centroid, total linear momentum, total angular momentum, and quadrupole moment of the approximate solution are 
\begin{subequations}
\begin{align}
    \hat{\mathbb{E}}(t) &= \frac{1}{\sqrt{\det\left( \frac{1}{2 \pi \ii} A \right)}} \Big( u + \omega^{-1} L_1 u \Big) \Big|_{\gamma(t)} + \mathcal{O}(\omega^{-2}),\label{eq:E_hat}  \\ 
    \hat{\mathbb{X}}^i(t) &= \gamma^i(t) + \frac{\ii \omega^{-1}}{\hat{\mathbb{E}} \sqrt{\det\left( \frac{1}{2 \pi \ii} A \right)}} (A^{-1})^{i a} \Big[ \nabla_a u - \ii u (A^{-1})^{b c} \nabla_a \nabla_b \nabla_c \Im \phi \Big] \Big|_{\gamma(t)} + \mathcal{O}(\omega^{-2}), \label{eq:X_hat}\\
    \hat{\mathbb{P}}_i(t) &= \frac{1}{\sqrt{\det\left( \frac{1}{2 \pi \ii} A \right)}} \Big( v_i + \omega^{-1} L_1 v_i \Big) \Big|_{\gamma(t)} + \mathcal{O}(\omega^{-2}),\label{eq:P_hat} \\
    \hat{\mathbb{J}}_i (t) &= \epsilon_{i j k} \hat{r}^j \hat{\mathbb{P}}^k + \frac{\ii \omega^{-1}}{\sqrt{\det\left( \frac{1}{2 \pi \ii} A \right)}} \epsilon_{i j k} (A^{-1})^{j a} \Big[ \nabla_a v^k - \ii v^k (A^{-1})^{b c}  \nabla_a \nabla_b \nabla_c \Im \phi \Big] \Big|_{\gamma(t)} + \mathcal{O}(\omega^{-2}), \label{eq:hat_J}\\
    \hat{\mathbb{Q}}^{i j}(t) &= \ii \omega^{-1} \hat{\mathbb{E}} (A^{-1})^{i j} + \mathcal{O}(\omega^{-2}),\label{eq:Q_hat}
\end{align}
\end{subequations}
where $L_1$ is the differential operator defined in \cref{eq:L_def} with $f(x) = 2\ii \Im \phi$, $A_{i j} = A_{i j}(t) = 2\ii \nabla_i \nabla_j \Im \phi|_{\gamma(t)}$, $\hat{r}^j(t) = \hat{r}^j(t, \gammu(t)) = \gamma^j(t) - \hat{\mathbb{X}}^j(t)$ and  
\begin{subequations}
\begin{align}
    u &= \frac{1}{4} \big( \varepsilon \be \cdot \Bar{\be} + \mu \bh \cdot \Bar{\bh} \big) = \frac{1}{4} \big( \varepsilon \be_0 \cdot \Bar{\be}_0 + \mu \bh_0 \cdot \Bar{\bh}_0 \big) + \frac{\omega^{-1}}{2} \Re \big( \varepsilon \be_0 \cdot \Bar{\be}_1 + \mu \bh_0 \cdot \Bar{\bh}_1 \big) + \mathcal{O}_\infty(\omega^{-2}), \label{eq:u_def} \\
    v &= \frac{\n^2}{2} \Re \big( \be \times \Bar{\bh} \big) = \frac{\n^2}{2} \Re \big( \be_0 \times \Bar{\bh}_0 \big) + \frac{\omega^{-1} \n^2}{2} \Re \big( \be_0 \times \Bar{\bh}_1 + \be_1 \times \Bar{\bh}_0 \big) + \mathcal{O}_\infty(\omega^{-2}) \label{eq:v_def}.
\end{align}
\end{subequations}
The constants in the error terms depend on $T$ and on the approximate solution in \cref{EqehApprox}. The notation $\mathcal{O}_\infty$ indicates here that the error bounds also hold after taking finitely many derivatives of the expression, where the exact constant then also depends on the number of derivatives taken.
\end{proposition}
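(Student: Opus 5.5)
The approach is to substitute the explicit densities \eqref{eq:approx_density} into each defining integral and expand with the stationary phase theorem (\cref{th:stationary}) at the nondegenerate critical point $x_s=\gammu(t)$ of $f=2\ii\Im\phi$. First the oscillatory parts. The terms carrying $e^{\pm 2\ii\omega\Re\phi}e^{-2\omega\Im\phi}$ have total phase $\pm 2\Re\phi + 2\ii\Im\phi$. Its gradient is $2\nabla\Re\phi\neq 0$ at $\gammu(t)$, and away from $\gammu(t)$ the imaginary part has nonvanishing gradient by point \ref{LastPoint} of \cref{thm:construction_theorem}. So \cite[Th. 7.7.1]{hormander1983analysis} gives $\mathcal{O}(\omega^{-\infty})$ for these terms, uniformly in $t\in[0,T]$, since all profile functions are smooth and compactly supported in spacetime.

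Next, the non-oscillatory parts. After the prefactor $\omega^{3/2}$ cancels against $\det(\frac{\omega}{2\pi\ii}A)^{-1/2}$, each moment has the form $\frac{\omega^{3/2}}{\sqrt{\det(\frac{\omega}{2\pi\ii}A)}}\sum_{j<2}\omega^{-j}L_j w(x_s)$, with $w=u$, $x^iu$, $v_i$, $\epsilon_{ijk}\hat r^jv^k$ or $\hat r^i\hat r^ju$. Here $e^{\ii\omega f(x_s)}=1$, because $\Im\phi|_\gamma=0$.
\begin{itemize}
\item For $\hat{\mathbb{E}}$ and $\hat{\mathbb{P}}$, take $w=u$ and $w=v_i$. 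Since $u$ and $v$ themselves depend on $\omega$ through $\be_1,\bh_1$, one keeps only terms up to $\omega^{-1}$; the $\mathcal{O}_\infty$ control of the remainders justifies applying $L_1$ to the expansion.
\item For $\hat{\mathbb{X}}$, take $w=x^iu$. Then $L_0(x^iu)=\gamma^iu$, and $L_1(x^iu)=\gamma^iL_1u+(\text{terms where } L_1 \text{ differentiates } x^i)$. From the explicit form \eqref{eq:L_def} of $L_1$, only two kinds of terms survive the differentiation of $x^i$. The second-order piece $\frac{\ii}{2}(A^{-1})^{ab}\nabla_a\nabla_b$ gives $\ii(A^{-1})^{ia}\nabla_au$. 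The cubic Taylor term of $f$, with $f=2\ii\Im\phi$, gives $\ii(A^{-1})^{ia}(-\ii u)(A^{-1})^{bc}\nabla_a\nabla_b\nabla_c\Im\phi$; the exact constant has to be read off \eqref{eq:L_def}. Dividing by $\hat{\mathbb{E}}$ and cancelling the $\gamma^i$ contributions gives \eqref{eq:X_hat}.
\item For $\hat{\mathbb{J}}$, write $\hat r^j=(x^j-\gamma^j)+(\gamma^j-\hat{\mathbb{X}}^j)$. The constant part gives $\epsilon_{ijk}\hat r^j\hat{\mathbb{P}}^k$. The part $(x-\gamma)^j v^k$ vanishes at $x_s$, so $L_0$ kills it, and $L_1$ yields the bracket, exactly as in the computation for $\hat{\mathbb{X}}$.
\item For $\hat{\mathbb{Q}}$, expand $\hat r^i\hat r^j=(x-\gamma)^i(x-\gamma)^j+(\gamma-\hat{\mathbb{X}})\cdot(\ldots)$. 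Since $\gamma-\hat{\mathbb{X}}=\mathcal{O}(\omega^{-1})$, the cross terms are $\mathcal{O}(\omega^{-2})$ after integration; the linear one is of this size because $\int(x-\gamma)\hat{\mathcal E}=\mathcal{O}(\omega^{-1})$. The quadratic term vanishes to second order at $x_s$, so only the Hessian part of $L_1$ contributes, giving $\omega^{-1}\ii(A^{-1})^{ij}u(x_s)$. Replacing $u(x_s)/\sqrt{\det}$ by $\hat{\mathbb{E}}$ costs $\mathcal{O}(\omega^{-2})$.
\end{itemize}

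I expect the main obstacle to be bookkeeping rather than any conceptual point. It consists of identifying the correct coefficients in $L_1$ when it acts on the polynomial weights, including the cubic-phase correction term, and verifying that all constants are uniform on $[0,T]$. Uniformity follows from smoothness of $A(t)$, from the invertibility and positivity of $\Im M(t)$ shown in \cref{prop:construction_phase_function}, and from the compact support of the profile functions.
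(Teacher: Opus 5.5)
Your proposal is correct and follows the paper's argument. The $e^{\pm 2\ii\omega\Re\phi}$ terms are disposed of by non-stationary phase (\cite[Th.~7.7.1]{hormander1983analysis}), and the remaining terms by \cref{th:stationary} with $f=2\ii\Im\phi$ and $x_s=\gammu(t)$; your evaluation of $L_1$ on the weights $x^iu$, $(x-\gamma)^jv^k$ and $(x-\gamma)^i(x-\gamma)^ju$ is exactly the computation the paper writes out at $t=0$ in Appendix~\ref{sec:compute_initial_quantities_GB_ID}.

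One bookkeeping point, which is equally implicit in the paper: the remainder in \cref{th:stationary} is $C\omega^{-k}$ times derivative norms of $q$, with no extra $\omega^{-3/2}$ gain. So with $q=\omega^{3/2}w$ you need $k=4$ rather than $k=2$, and you then discard $\omega^{-2}L_2w$ and $\omega^{-3}L_3w$ as $\mathcal{O}(\omega^{-2})$.
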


\begin{proof}
 The integrals of the terms in \cref{eq:approx_density} proportional to $e^{\pm 2 \ii \omega \Re \phi}$ decay to arbitrary high order in $\omega$ by \cite[Th. 7.7.1]{hormander1983analysis}. For the integrals of the remaining terms in \cref{eq:approx_density}, we apply \cref{th:stationary}, which gives the above expressions.    
\end{proof}

In particular, we note that at leading order the linear momentum is
\begin{equation} \label{eq:P_leading_order}
    \hat{\bbP}_i = \hat{\bbE} \n \frac{\nabla_i \phi}{|\nabla \phi|} \bigg|_{\gamma(t)} + \mathcal{O}(\omega^{-1}) = - \frac{\hat{\bbE}}{\dot{\phi}} \nabla_i \phi \bigg|_{\gamma(t)} + \mathcal{O}(\omega^{-1}). 
\end{equation}
This follows by using \cref{eq:h0def} to express $\bh_0|_{\gamma}$ in terms of $\be_0|_{\gamma}$, and the eikonal equation \eqref{eq:Eikonal} with $j_\phi = 0$ which gives $\dot{\phi}|_{\gamma} = - \frac{|\nabla\phi|}{\n}|_{\gamma}$. Furthermore, note that $\dot{\phi}|_{\gamma}$ is constant by construction, as given in \cref{eq:phase_construction}.

Next, we analyse the relation between the state of polarisation and the angular momentum carried by the wave packet. To see this, we decompose the total angular momentum into components parallel and orthogonal to $\nabla_i \phi |_{\gamma}$.

\begin{proposition} \label{prop:J_decomposition}
In the setting of Proposition \ref{prop:approx_average_quantities} the total angular momentum $\hat{\bbJ}$ given in \cref{eq:hat_J} can be decomposed as
\begin{equation} \label{eq:J_split}
    \hat{\mathbb{J}}_i = (\hat{\mathbb{J}}_{\parallel}) \frac{\nabla_i \phi}{|\nabla \phi|} \bigg|_{\gamma} + \epsilon_{i a b} (\hat{\mathbb{J}}_{\perp})^a \frac{\nabla^b \phi}{|\nabla \phi|} \bigg|_{\gamma}\;,
\end{equation}
where
\begin{subequations}
\begin{align}
    (\hat{\mathbb{J}}_{\parallel}) &= \omega^{-1} \frac{\hat{\mathbb{E}}}{\dot{\phi}} \bigg[ s + \ii (A^{-1})^{j a} B\indices{_a^i} \epsilon_{i j k} \frac{\nabla^k \phi}{|\nabla \phi|} \bigg] \bigg|_{\gamma} + \mathcal{O}(\omega^{-2}), \\
    (\hat{\mathbb{J}}_{\perp})^a &= \omega^{-1} \frac{\ii \hat{\bbE}}{\dot{\phi}} \bigg[ \frac{\nabla^c \phi}{|\nabla \phi|} (A^{-1})\indices{_c^b} B\indices{_b^a} - |\nabla \phi| (A^{-1})^{a b}\nabla_b \ln \n \bigg] \bigg|_{\gamma} +  \mathcal{O}(\omega^{-2}), 
\end{align}
$B_{a b} = \nabla_a \nabla_b \Re \phi$ and
\begin{equation}
    s = \frac{\ii \n \be_0 \cdot \overline{\bh}_0}{\eps \be_0 \cdot \overline{\be}_0} \bigg|_{\gamma} = \mathrm{const.} \in [-1, 1].
\end{equation}
\end{subequations}

\end{proposition}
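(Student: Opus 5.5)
The plan is to start from formula \eqref{eq:hat_J} of \cref{prop:approx_average_quantities} and evaluate each term on $\gamma$ using only leading-order information about $u,v$.

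\textbf{Step 1: orbital term.} Since $\hat r^j=\gamma^j-\hat{\bbX}^j$, formula \eqref{eq:X_hat} shows it is $\mathcal{O}(\omega^{-1})$. We then insert \eqref{eq:X_hat} for $\hat r$ and \eqref{eq:P_leading_order} for $\hat{\bbP}$. This gives
\[
\epsilon_{ijk}\hat r^j\hat{\bbP}^k = -\frac{\ii\omega^{-1}}{\sqrt{\det(\frac{1}{2\pi\ii}A)}}\,\epsilon_{ijk}(A^{-1})^{ja}\Big[\nabla_a u-\ii u(A^{-1})^{bc}\nabla_a\nabla_b\nabla_c\Im\phi\Big]\frac{v^k}{u}+\mathcal{O}(\omega^{-2}).
\]
Here we use that at leading order $v = u\,\n\nabla\phi/|\nabla\phi|$ on $\gamma$, so that $\hat{\bbP}=\hat{\bbE}v/u$.

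Combining with the second term of \eqref{eq:hat_J}, the third-derivative terms cancel, because both are multiplied by $v^k$. What remains is
\[
\hat{\bbJ}_i=\frac{\ii\omega^{-1}}{\sqrt{\det}}\,\epsilon_{ijk}(A^{-1})^{ja}\Big(\nabla_a v^k-\frac{v^k}{u}\nabla_a u\Big)\Big|_\gamma+\mathcal{O}(\omega^{-2}),
\]
which equals $\frac{\ii\omega^{-1}}{\sqrt{\det}}\,\epsilon_{ijk}(A^{-1})^{ja}\,u\,\nabla_a(v^k/u)$. Only $\be_0,\bh_0$ enter at this order.

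\textbf{Step 2: computing $\nabla_a(v/u)$.} Away from $\gamma$ we express $\bh_0$ through \eqref{eq:h0def} to first order in derivatives, so that $\bh_0=-\frac{1}{\mu\dot\phi}\nabla\phi\times\be_0$ together with its first spatial derivative on $\gamma$. Then
\[
\be_0\times\overline{\bh}_0=-\frac{1}{\mu\overline{\dot\phi}}\Big[\overline{\nabla\phi}\,(\be_0\cdot\overline{\be}_0)-\overline{\be}_0\,(\be_0\cdot\overline{\nabla\phi})\Big].
\]
We differentiate once, using $\be_0\cdot\nabla\phi=0$ to first order (the $\bC_0$ constraint) and the split $\nabla_a\nabla_b\phi=B_{ab}+\tfrac12A_{ab}$.

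The resulting contributions are sorted as follows:
\begin{enumerate}
\item Terms where the derivative hits $|\nabla\phi|$, $\n$, $\mu$ or $\dot\phi$, giving $\nabla_a\ln\n$ and $B$ terms along $\nabla\phi$. With $\dot\phi$ we must also use \eqref{eq:deg1_intermetiate} for $\nabla_a\dot\phi$, which yields the $\nabla\ln\n$ piece of $\hat{\bbJ}_\perp$.
\item Terms where the derivative hits the direction $\nabla^k\phi$, giving $\nabla_a\nabla^k\phi$. The real part $B$ produces $(A^{-1})B$ contractions, while the imaginary part $A/2$ contracted with $A^{-1}$ gives the spin term.
\item Terms involving $\nabla\be_0$ that cancel in $\nabla(v/u)$, because the real part of the normalized transverse Hermitian form has its derivative determined by the constraint.
\end{enumerate}

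\textbf{Step 3: reassembly.} The spin term arises as $\frac{\ii}{2}\epsilon_{ijk}\delta^{jk'}\cdots$ from the antisymmetric part $\Im(\be_0\times\overline{\be}_0)$, which is proportional to $\ii\,\be_0\cdot\overline{\bh}_0\,\nabla\phi$ after simplification. This gives $s\,\hat{\bbE}/\dot\phi$ times the direction $\nabla\phi/|\nabla\phi|$.

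We then project onto $\nabla\phi/|\nabla\phi|$ and onto the orthogonal complement, writing the transverse vector as $\epsilon_{iab}(\hat{\bbJ}_\perp)^a\nabla^b\phi/|\nabla\phi|$, and read off the two coefficients. We identify $u|_\gamma/\sqrt{\det}=\hat{\bbE}$ and $-\nabla\phi/\dot\phi=\n\nabla\phi/|\nabla\phi|$. The facts that $s$ is constant and lies in $[-1,1]$ follow from \cref{Prop:conservation}, since its ratio of two conserved quantities is invariant along $\gamma$, and from Cauchy--Schwarz with $|\bh_0|=\n|\be_0|/\mu$ transversally. The sign of $s$ relative to the stated one needs care.

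\textbf{Main obstacle.} The hardest part is Step 2: tracking which derivatives of $\be_0$ survive and verifying that the $\nabla\be_0$ terms drop out or combine into the stated $A^{-1}B$ forms. If they do not cancel identically, we would first try using the $\bC_0$ constraint to degree $1$, $\nabla_a(\be_0\cdot\nabla\phi)=0$, to convert $\nabla_a\be_0\cdot\nabla\phi$ into $-\be_0^b\nabla_a\nabla_b\phi$, which introduces exactly $B$ and $A$.
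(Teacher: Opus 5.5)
Your proposal is correct and follows essentially the paper's route. Step~1 is exactly the paper's first line for $\hat{\mathbb{J}}_\perp$: the combination $\nabla_a v^k-\frac{v^k}{u}\nabla_a u$, with the $\nabla\nabla\nabla\Im\phi$ terms cancelling. Step~2 is the paper's expansion via \eqref{eq:h0def}, the constraint $\nabla_a(\be_0\cdot\nabla\phi)|_\gamma=0$, the split $\nabla\nabla\phi=B+\frac12A$ and \eqref{eq:deg1_intermetiate}, with constancy of $s$ from \cref{Prop:conservation}. Using a single expression $u\nabla_a(v^k/u)$ for both components and Cauchy--Schwarz for $|s|\le1$ are only cosmetic differences.

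One bookkeeping correction to your items~2 and~3. The imaginary part of $\nabla_a\nabla^k\phi$ in the $|\be_0|^2\nabla^k\overline{\phi}$ term contributes nothing: the real part removes it, and it would give $\epsilon_{ijk}\delta^{jk}=0$ anyway. The spin term comes instead from $\overline{\be}_0^k\nabla_a(\be_0\cdot\nabla\overline{\phi})|_\gamma=-\overline{\be}_0^k\be_0^jA_{aj}$, which is precisely the constraint conversion you list as your fallback. So not all of the $\nabla\be_0$ terms cancel.
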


\begin{proof}
The component of the angular momentum in the direction of $\nabla_i \phi |_{\gamma}$ can be obtained from \cref{eq:hat_J} as
\begin{align} \label{eq:J.P}
    (\hat{\mathbb{J}}_{\parallel}) = \hat{\mathbb{J}}_{i} \frac{\nabla^{i} \phi}{|\nabla \phi|} \bigg|_{\gamma}
    &= \frac{\ii \omega^{-1}}{|\nabla \phi| \sqrt{\det\left( \frac{1}{2 \pi \ii} A \right)}} \epsilon_{i j k} (\nabla^i \phi) (A^{-1})^{j a} \nabla_a v^k \bigg|_{\gamma} + \mathcal{O}(\omega^{-2}) \nonumber \\
    &= \frac{\ii \omega^{-1} \n^2}{2 |\nabla \phi| \sqrt{\det\left( \frac{1}{2 \pi \ii} A \right)}} (\nabla^i \phi) (A^{-1})^{j a} \Re \Big( \overline{\bh}_{0 j} \nabla_a \be_{0 i} - \be_{0 j} \nabla_a \overline{\bh}_{0 i} \Big) \bigg|_{\gamma} + \mathcal{O}(\omega^{-2}) \nonumber \\
    &= \omega^{-1} \frac{\hat{\mathbb{E}}}{\dot{\phi}} \Bigg[  \frac{\ii \n \be_0 \cdot \overline{\bh}_0}{ \eps \be_0 \cdot \overline{\be}_0 } + 2 \ii (A^{-1})^{j a} B\indices{_a^i} \frac{\Re \left( \n \be_{0 [i} \overline{\bh}_{0 j]} \right) }{ \eps \be_0 \cdot \overline{\be}_0 } \Bigg] \Bigg|_{\gamma} +  \mathcal{O}(\omega^{-2}) \nonumber \\
    &= \omega^{-1} \frac{\hat{\mathbb{E}}}{\dot{\phi}} \bigg[ s +  \ii (A^{-1})^{j a} B\indices{_a^i} \epsilon
    _{i j k} \frac{\nabla^{k} \phi}{|\nabla \phi|} \bigg] \bigg|_{\gamma} + \mathcal{O}(\omega^{-2}).
\end{align}
The first line follows from \cref{eq:hat_J}, together with the fact that $v_i|_\gamma \propto \nabla_i \phi |_\gamma + \mathcal{O}(\omega^{-1})$. The second line is obtained using the definition of $v$, together with the orthogonality relations $\be_0^i \nabla_i \phi |_\gamma = 0 = \bh_0^i \nabla_i \phi |_\gamma$. To obtain the third line, we used $\nabla_a (\be_0^i \nabla_i \phi)|_\gamma = 0 = \nabla_a (\bh_0^i \nabla_i \phi)|_\gamma$ by construction of the approximate solution, and we split $\nabla_i \nabla_j \phi = B_{i j} + \frac{1}{2} A_{i j}$. Finally, the fourth line follows by fixing an orthonormal frame $\Big( \frac{\nabla^i \phi}{|\nabla \phi|} \Big|_\gamma, X, Y \Big)$, where $X$ and $Y$ are real vectors. Then, to satisfy the constraint $e_0^i \nabla_i \phi |_\gamma = 0$, we can generally parametrise $\be_0$ as
\begin{equation} \label{eq:e0_basis}
    \be_0 \big|_{\gamma} = \mathfrak{a}(t) \big[ z_1(t) m(t) + z_2(t) \overline{m}(t) \big], 
\end{equation}
where $m = \frac{1}{\sqrt{2}}(X - \ii Y)$, $\mathfrak{a}(t)$ is a strictly positive real scalar function and $z_{1,2}(t)$ are complex scalar functions that satisfy $|z_1|^2 + |z_2|^2 = 1$. Then, using \cref{eq:h0def} to express $\bh_0|_\gamma$ in terms of $\be_0|_\gamma$, we obtain
\begin{subequations}
\begin{align}
    \frac{\ii \n \be_0 \cdot \overline{\bh}_0}{\eps \be_0 \cdot \overline{\be}_0} \Bigg|_{\gamma} &= |z_1|^2 - |z_2|^2 = s \in [-1, 1], \\
    \frac{\Re \left( \n \be_{0 [i} \overline{\bh}_{0 j]} \right) }{ \eps \be_0 \cdot \overline{\be}_0 } \Bigg|_{\gamma} &= \epsilon_{i j k} \frac{\nabla^k \phi}{|\nabla \phi|} \bigg|_{\gamma}.
\end{align}
\end{subequations}
The conservation of $s$ follows by applying \cref{Prop:conservation}:
\begin{equation}
    \dot{s} = \left( \partial_t + \dot{\gamma}^j \nabla_j \right) \frac{\ii \n \be_0 \cdot \overline{\bh}_0}{\eps \be_0 \cdot \overline{\be}_0} \bigg|_{\gamma} = \left( \partial_t + \dot{\gamma}^j \nabla_j \right) \frac{ \frac{\ii \n}{\sqrt{\det\left( \frac{1}{2 \pi \ii} A \right)}} \be_0 \cdot \overline{\bh}_0}{ \frac{\eps}{\sqrt{\det\left( \frac{1}{2 \pi \ii} A \right)}} \be_0 \cdot \overline{\be}_0 } \Bigg|_{\gamma} = 0.
\end{equation}

The component of angular momentum in directions orthogonal to $\nabla_i \phi |_\gamma$, called transverse angular momentum \cite{BLIOKH20151}, is determined by the vector
\begin{align}
    (\hat{\mathbb{J}}_{\perp})_i &= \epsilon_{i a b} \frac{\nabla^a \phi}{|\nabla \phi|} \hat{\mathbb{J}}^b \bigg|_{\gamma} \nonumber\\
    &= \frac{2 \ii \omega^{-1}}{|\nabla \phi| \sqrt{\det\left( \frac{1}{2 \pi \ii} A \right)} } \delta_i^{[c} (\nabla^{d]} \phi) (A^{-1})\indices{_c^a} \bigg( \nabla_a v_d - \frac{1}{u} v_d \nabla_a u \bigg) \bigg|_{\gamma} + \mathcal{O}(\omega^{-2}), \nonumber\\
    &= - \frac{2 \ii \omega^{-1} \hat{\bbE}}{|\nabla \phi| \dot{\phi}} \delta_i^{[c} (\nabla^{d]} \phi) (A^{-1})\indices{_c^a} \bigg[ B_{a d} - \frac{1}{|\nabla \phi|^2} (\nabla_d \phi) (\nabla^j \phi) B_{a j} + (\nabla_d \phi) \nabla_a \ln \n \bigg]\bigg|_{\gamma} + \mathcal{O}(\omega^{-2}) \nonumber \\
    &= \frac{\ii \omega^{-1} \hat{\bbE}}{|\nabla \phi| \dot{\phi}} \bigg[ (\nabla^b \phi) (A^{-1})\indices{_b^a} B_{a i} - |\nabla \phi|^2 (A^{-1})\indices{_i^a} \nabla_a \ln \n \bigg] \bigg|_{\gamma} \nonumber\\
    &\qquad \qquad - \frac{\ii \omega^{-1} \hat{\bbE}}{|\nabla \phi| \dot{\phi}} (\nabla_i \phi) (\nabla^c \phi) (A^{-1})\indices{_c^a} \bigg[ B\indices{_a^j} \frac{\nabla_j \phi}{|\nabla \phi|^2} - \nabla_a \ln \n \bigg] \bigg|_{\gamma} + \mathcal{O}(\omega^{-2}).
\end{align}
In the above equation, the first line follows from \cref{eq:X_hat,eq:hat_J}, the second line follows from expanding the derivatives of $v$ and $u$, using the constraints $\be_0^i \nabla_i \phi |_\gamma = 0 = \bh_0^i \nabla_i \phi |_\gamma$, and $\nabla_a (\be_0^i \nabla_i \phi)|_\gamma = 0 = \nabla_a (\bh_0^i \nabla_i \phi)|_\gamma$, as well as \cref{eq:h0def}. The final line follows by simply rearranging some of the previous terms. Note that the term in the last line is proportional to $\nabla_i \phi|_\gamma$, so we can drop it as it will not contribute to \cref{eq:J_split} due to the cross product. 
\end{proof}

The total angular momentum $\hat{\bbJ}_i$ consists of two longitudinal terms and a transverse term. The longitudinal term proportional to $s$ is called spin angular momentum and is determined by the state of polarisation of $\be_0|_\gamma$. In particular, we have $s = \pm 1$ for circular polarisation ($z_1 = 0$ or $z_2 = 0$ in \cref{eq:e0_basis}), $s = 0$ for linear polarisation, and $s \in (-1,1) \setminus \{0\}$ for elliptical polarisation \cite{born1980,kravtsov1990}. The other two terms are called the intrinsic longitudinal and transverse orbital angular momentum \cite{AM_Light,BLIOKH20151,PhysRevA.70.013809,Plachenov:23} and are determined by $\nabla_a \nabla_b \phi |_\gamma$.

\section{Construction of a one-parameter family of initial data} \label{SecConstructionID}

In this section, we construct compactly supported Gaussian beam initial data for Maxwell's equations by correcting the initial data for the approximate solution so that the constraint equations are satisfied exactly. Thus, the class of initial data introduced in \cref{DefGBID} is non-empty. We also show that two sets of Gaussian beam initial data with sufficiently matching phase and amplitude jets are equivalent up to $\mathcal{O}_{L^2(\R^3)}(\omega^{-2})$.

\begin{theorem}\label{ThmConstructionMaxwellID}
    Consider the setting of Theorem \ref{thm:construction_theorem}. Then, there exists a one-parameter family of smooth initial data $\Eu(x; \omega)$, $\Hu(x; \omega) $ for Maxwell's equations \eqref{EqMax} of the form
\begin{subequations} \label{eq:ansatz}
\begin{align} 
    \Eu(x; \omega) &= \omega^{\nicefrac{3}{4}} \Re \Big\{ \big[ \be_0 (0,x) + \omega^{-1} \be_1 (0,x) \big] e^{\ii \omega \phi (0,x)} \Big\} + \mathcal{O}_{L^2(\R^3)}(\omega^{-2}),  \\
    \Hu (x; \omega) &= \omega^{\nicefrac{3}{4}} \Re \Big\{ \big[ \bh_0 (0,x) + \omega^{-1} \bh_1 (0,x) \big]  e^{\ii \omega \phi (0,x)}  \Big\} + \mathcal{O}_{L^2(\R^3)}(\omega^{-2}),
\end{align}        
\end{subequations}
    where  $\be_A(0,x)$, $\bh_A(0,x)$ for $A  \in \{0,1\}$, and $\phi(0,x)$ are as in \cref{EqehApprox}, and such that $\mathrm{supp} \big(\Eu(\cdot; \omega)\big) \cup \mathrm{supp}\big( \Hu(\cdot; \omega)\big) \subseteq B_\rho(x_0)$ for all $\omega >1$, with $0< \rho <1$ as in \cref{thm:construction_theorem}. In particular, this constitutes $\mathcal{K}$-supported Gaussian beam initial data of order $2$ with $\mathcal{K} = B_\rho(x_0)$.
\end{theorem}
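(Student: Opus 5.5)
The plan is to start from the approximate solution $(\Eh,\Hh)$ of \cref{thm:construction_theorem} at $t=0$ and correct it by a small compactly supported vector field so that the constraints hold exactly. Set
\begin{equation*}
\Eh_0(x) := \Eh(0,x), \qquad \Hh_0(x) := \Hh(0,x).
\end{equation*}
These are supported in $B_\rho(x_0)$, since $\gammu(0)=x_0$. By \cref{EqEstimatesApproxSol} their constraint defects satisfy
\begin{equation*}
f := \div(\eps \Eh_0) = \mathcal{O}_{L^2(\R^3)}(\omega^{-2}), \qquad g := \div(\mu \Hh_0) = \mathcal{O}_{L^2(\R^3)}(\omega^{-2}).
\end{equation*}
I then look for corrections $\delta E$ and $\delta H$, supported in $B_\rho(x_0)$, with
\begin{equation*}
\div(\eps\,\delta E) = -f, \qquad \div(\mu\,\delta H) = -g,
\end{equation*}
and define $\Eu := \Eh_0 + \delta E$ and $\Hu := \Hh_0 + \delta H$. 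The terms $\omega^{-2}\be_2$ and $\omega^{-2}\bh_2$ contribute only $\mathcal{O}_{L^2(\R^3)}(\omega^{-2})$, because of the Gaussian factor ($\omega^{3/4}\cdot\omega^{-2}\cdot\omega^{-3/4}$). So the stated form \cref{eq:ansatz} follows once I show $\|\delta E\|_{L^2}+\|\delta H\|_{L^2} \lesssim \omega^{-2}$.

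To solve the divergence equations I will use Bogovskii's operator $\mathcal{B}$ on the ball $\Omega = B_\rho(x_0)$, a Lipschitz domain that is star-shaped with respect to a smaller ball. For $f\in C^\infty_0(\Omega)$ with $\int_\Omega f = 0$, $\mathcal{B}f \in C^\infty_0(\Omega)$ satisfies $\div \mathcal{B}f = f$ and $\|\mathcal{B}f\|_{W^{1,2}}\le C\|f\|_{L^2}$ (Bogovskii, Galdi). I take $\delta E := -\eps^{-1}\mathcal{B}f$, and similarly $\delta H := -\mu^{-1}\mathcal{B}g$. Smoothness and compact support of $f$ hold because $\Eh_0$ is smooth and supported in $B_\rho(x_0)$. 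Strictly speaking one should verify that the support of $f$ lies compactly inside $\Omega$. If it does not, I will shrink the cut-off support slightly, using the freedom in $\chi_\rho$ from \cref{prop:order_2_from spacetime_functions}, or apply Bogovskii on a slightly larger ball still contained in the target set.

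The mean-zero condition comes for free: $\int_\Omega f = \int_\Omega \div(\eps\Eh_0) = 0$ by the divergence theorem, since $\eps\Eh_0$ is compactly supported. The $L^2$ bound is then
\begin{equation*}
\|\delta E\|_{L^2} \le C_m^{-1}\,\|\mathcal{B}f\|_{L^2} \le C\,\|f\|_{L^2} = \mathcal{O}(\omega^{-2}),
\end{equation*}
where the constant depends only on $\rho$ and $\eps$ and not on $\omega$. The same holds for $\delta H$. By construction the constraints \eqref{EqMaxCE} and \eqref{EqMaxCH} hold identically. The remaining conditions of \cref{DefGBID}, namely the jet conditions \eqref{eq:ID_constraints} and \eqref{eq:hdefID} and the properties of $\phu$, are inherited from items 3–5 of \cref{thm:construction_theorem} together with \cref{prop:construction_phase_function}, with $\phu := \phi(0,\cdot)$, $\eu_A := \be_A(0,\cdot)$ and $\hu_A := \bh_A(0,\cdot)$.

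The main obstacle I expect is the precise handling of Bogovskii's operator: getting the support condition right, and, since $\Eu$ must be smooth, making sure $\mathcal{B}$ maps $C^\infty_0$ mean-zero functions to $C^\infty_0$ fields. The standard integral-kernel representation gives exactly this. A secondary point is that the constant must be uniform in $\omega$, which holds because the domain is fixed.
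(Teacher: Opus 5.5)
Your proposal is correct and follows the paper's proof step by step. Both correct $\hat{\bE}(0,\cdot)$ and $\hat{\bH}(0,\cdot)$ with Bogovskii's operator on $B_\rho(x_0)$ (Galdi, Lemma III.3.1), get the compatibility condition from the divergence theorem and the $\mathcal{O}(\omega^{-2})$ bound from \eqref{EqEstimatesApproxSol}, and divide by $\varepsilon,\mu$. You are slightly more careful than the paper in explicitly absorbing the $\omega^{-2}\be_2$ and $\omega^{-2}\bh_2$ terms into the $\mathcal{O}_{L^2(\R^3)}(\omega^{-2})$ error; one small slip is that the lower bound $\varepsilon \geq c_m$ gives the factor $c_m^{-1}$, not $C_m^{-1}$.
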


In particular, this theorem shows that the class of Gaussian beam initial data given in \cref{DefGBID} is non-empty.
 
\begin{proof}
The main point to prove is that one can perturb the initial data $\hat{\bE}(0,x)$ and $\hat{\bH}(0,x)$ induced by \cref{EqehApprox} by compactly supported functions $\bE_{cor}(x)$, $\bH_{cor}(x)$ in $\mathcal{O}_{L^2(\R^3)}(\omega^{-2})$ such that Maxwell's constraint equations \eqref{EqMaxCE} and \eqref{EqMaxCH} are satisfied by $\Eu(x) := \hat{\bE}(0,x) + \bE_{cor}(x)$ and $\Hu(x) := \hat{\bH}(0,x) + \bH_{cor}(x)$.

    In order for $\Eu$ and $\Hu$ to solve the constraint equations \eqref{EqMaxCE} and \eqref{EqMaxCH}, the correction terms must solve
    \begin{subequations} \label{EqSolvingConstraint}
        \begin{align} 
            \nabla \cdot ( \varepsilon \bE_{cor}) &= - \nabla \cdot ( \varepsilon \hat{\bE}|_{t = 0}), \\
            \nabla \cdot ( \mu \bH_{cor}) &= - \nabla \cdot ( \mu \hat{\bH}|_{t = 0}) .
        \end{align}
    \end{subequations}
     Recall that $\hat{\bE}(0,x)$ and $\hat{\bH}(0,x)$ are supported in $B_\rho(x_0)$. Equation \eqref{EqSolvingConstraint} can now be solved using Bogovskii's operator \cite{Bogovskii}. Specifically, we use Lemma III.3.1 in~\cite{Galdi} (note that the compatibility conditions $\int_{B_\rho(x_0)}\nabla \cdot ( \varepsilon \hat{\bE}|_{t = 0}) \, d^3x = 0 = \int_{B_\rho(x_0)}\nabla \cdot ( \mu \hat{\bH}|_{t = 0}) \, d^3x $ are trivially satisfied) to obtain $\varepsilon \bE_{cor}$, $\mu \bH_{cor} \in C^\infty_0(B_\rho(x_0))$ with 
     \begin{subequations} \label{EqEstCor}
     \begin{align} 
     \|\varepsilon \bE_{cor}\|_{H^1(B_\rho(x_0))} &\leq C_\rho \| \nabla \cdot ( \varepsilon \hat{\bE}|_{t = 0})\|_{L^2(B_\rho(x_0))} = \mathcal{O}(\omega^{-2}), \label{EqEstECor} \\
     \|\mu \bH_{cor}\|_{H^1(B_\rho(x_0))} &\leq C_\rho \| \nabla \cdot ( \mu \hat{\bH}|_{t = 0})\|_{L^2(B_\rho(x_0))} = \mathcal{O}(\omega^{-2}), \label{EqEstHCor}
     \end{align}
     \end{subequations}
     where we have used \cref{EqEstimatesApproxSol} and the constant $C_\rho >0$ depends only on $0<\rho<1$. Since $\varepsilon$ and $\mu$ are smooth positive functions, this allows us to divide by $\varepsilon$ and $\mu$ to  define smooth $\bE_{cor}$ and $\bH_{cor}$, and thus also $\Eu$ and $\Hu$. The bound $\mathcal{O}_{L^2(\R^3)}$ in \cref{eq:ansatz} on the correction terms $\bE_{cor}$, $\bH_{cor}$ now follows from \cref{EqEstCor}. Moreover, by construction, we have $\mathrm{supp} \big(\Eu(\cdot, \omega)\big) \cup \mathrm{supp}\big( \Hu(\cdot, \omega)\big) \subseteq B_\rho(x_0)$ for all $\omega >1$ and Maxwell's constraint equations \eqref{EqMaxCE} and \eqref{EqMaxCH} are satisfied.
 This in particular shows point \ref{ID_def_point3} in \cref{DefGBID}. Points \ref{ID_def_point1} and \ref{ID_def_point2} in \cref{DefGBID} follow directly from \cref{thm:construction_theorem}. 
\end{proof}

The following proposition states under what conditions the constructed initial data in \cref{ThmConstructionMaxwellID} is equivalent\footnote{By this, we mean that it `differs by a function $\mathcal{O}_{L^2(\R^3)}(\omega^{-2})$'.} to general Gaussian beam initial data of order $2$ as in \cref{DefGBID}.

\begin{proposition} \label{PropEquivGBData}
Let $\x_0 \in \R^3$ and  $\overset{\scriptscriptstyle (a)}{\mathcal{K}} \subseteq \R^3$ be precompact open neighbourhoods of $\x_0$ for $a \in \{0, 1\}$. Consider two one-parameter families of vector fields $\Re \overset{\scriptscriptstyle (a)}{\underline{\pmb{\mathfrak{E}}}}(x; \omega)$ with
\begin{align}
    \overset{\scriptscriptstyle (a)}{\underline{\pmb{\mathfrak{E}}}}(x; \omega) = \omega^{\nicefrac{3}{4}} \overset{\scriptscriptstyle (a)}{\eu}(x) e^{\ii \omega \overset{\scriptscriptstyle (a)}{\phu}(x)} =\omega^{\nicefrac{3}{4}} \Big[ \overset{\scriptscriptstyle (a)}{\eu}_0(x) + \omega^{-1} \overset{\scriptscriptstyle (a)}{\eu}_1(x)\Big] e^{\ii \omega \overset{\scriptscriptstyle (a)}{\phu}(x)} ,
\end{align}
for $a \in \{0, 1\}$, where
\begin{enumerate}
    \item  $\overset{\scriptscriptstyle (a)}{{\phu}} \in C^\infty (\R^3, \C)$, with $\Im \overset{\scriptscriptstyle (a)}{{\phu}} \geq 0$ and $\Im \overset{\scriptscriptstyle (a)}{{\phu}} \big|_{\x_0} = 0$, $\nabla_i \Im\overset{\scriptscriptstyle (a)}{{\phu}} \big|_{\x_0} = 0$ for $i = 1,2,3$, $ \Im \nabla_i \nabla_j \overset{\scriptscriptstyle (a)}{{\phu}} \big|_{\x_0}$ is a positive definite matrix, and $\nabla \Im  \overset{\scriptscriptstyle (a)}{\phu} \neq 0$ in $\cl(\overset{\scriptscriptstyle (a)}{\mathcal{K}}) \setminus \{\x_0\}$ for $a \in \{0, 1\}$. \label{matching_assumptions_1}
    \item $\overset{\scriptscriptstyle (a)}{\eu}_A \in C^\infty_0(\overset{\scriptscriptstyle (a)}{\mathcal{K}}, \C^3)$ for $a \in \{0, 1\}$ and $A \in \{0, 1\}$.
    \item $D^\alpha \overset{\scriptscriptstyle (0)}{\phu} \big|_{\x_0} = D^\alpha \overset{\scriptscriptstyle (1)}{\phu} \big|_{\x_0} $ for $0 \leq |\alpha| \leq 5$. \label{matching3}
    \item $D^\alpha \overset{\scriptscriptstyle (0)}{\eu}_0 \big|_{\x_0} = D^\alpha \overset{\scriptscriptstyle (1)}{\eu}_0 \big|_{\x_0} $ for $0 \leq |\alpha| \leq 3$.
    \item $D^\alpha \overset{\scriptscriptstyle (0)}{\eu}_1 \big|_{\x_0} = D^\alpha \overset{\scriptscriptstyle (1)}{\eu}_1 \big|_{\x_0} $ for $0 \leq |\alpha| \leq 1$. \label{matching5}
\end{enumerate}
Then, we have $|| \Re \overset{\scriptscriptstyle (0)}{\underline{\pmb{\mathfrak{E}}}}( \cdot; \omega) - \Re \overset{\scriptscriptstyle (1)}{\underline{\pmb{\mathfrak{E}}}}( \cdot; \omega)||_{L^2(\R^3)} = \mathcal{O}(\omega^{-2})$.
\end{proposition}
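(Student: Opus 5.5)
Write $\overset{\scriptscriptstyle (a)}{\phu}$, $\overset{\scriptscriptstyle (a)}{\eu}_A$ simply as $\phu^{(a)}$, $\eu^{(a)}_A$. Since $|\Re z - \Re w| \leq |z-w|$, it suffices to bound $\|\omega^{3/4}(\eu^{(0)} e^{\ii\omega\phu^{(0)}} - \eu^{(1)} e^{\ii\omega\phu^{(1)}})\|_{L^2}$. I split the difference as
\begin{equation*}
\omega^{\nicefrac34}\big(\eu^{(0)}-\eu^{(1)}\big)e^{\ii\omega\phu^{(0)}} + \omega^{\nicefrac34}\eu^{(1)}\big(e^{\ii\omega\phu^{(0)}}-e^{\ii\omega\phu^{(1)}}\big)
\end{equation*}
and estimate the two terms separately. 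Let $\mathcal{K} = \overset{\scriptscriptstyle (0)}{\mathcal{K}} \cup \overset{\scriptscriptstyle (1)}{\mathcal{K}}$. Both terms are supported in $\mathcal{K}$, which is precompact.

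\textbf{Gaussian weight.} The common ingredient is a lower bound $\Im\phu^{(a)}(x)\geq c|x-\x_0|^2$ on $\cl(\mathcal{K})$. Near $\x_0$ this follows from Taylor expansion, using $\Im\phu^{(a)}|_{\x_0}=0$, $\nabla\Im\phu^{(a)}|_{\x_0}=0$ and positive definiteness of the Hessian. Away from $\x_0$ I want strict positivity of $\Im\phu^{(a)}$ on the compact set $\cl(\overset{\scriptscriptstyle (a)}{\mathcal{K}})\setminus B_\delta(\x_0)$. This should follow from $\Im\phu^{(a)}\geq 0$ together with $\nabla\Im\phu^{(a)}\neq 0$ there: a zero of the nonnegative function $\Im\phu^{(a)}$ would be an interior minimum, hence a critical point. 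One should check that zeros cannot sit on the boundary of $\mathcal{K}$ in a problematic way; since $\eu^{(a)}_A$ is compactly supported inside $\overset{\scriptscriptstyle (a)}{\mathcal{K}}$, only points of the open set matter, so the bound is needed only on $\mathrm{supp}\,\eu^{(a)}_A$. The same reasoning applies to $\Im\phu^{(0)}$ on $\mathrm{supp}\,\eu^{(1)}$ if I choose to place the exponential of $\phu^{(0)}$ there. To be safe, I will write the second term with the mean value theorem between the two phases (below), so only points where one of the amplitudes is supported enter. Near $\x_0$ both imaginary parts are quadratic anyway because the jets agree.

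\textbf{First term.} The jets of $\eu^{(0)}_0-\eu^{(1)}_0$ vanish to order $3$, so this difference is $O(|x-\x_0|^4)$. The jets of $\omega^{-1}(\eu^{(0)}_1-\eu^{(1)}_1)$ vanish to order $1$, giving $O(\omega^{-1}|x-\x_0|^2)$. Combining these with $|e^{\ii\omega\phu^{(0)}}|\leq e^{-c\omega|x-\x_0|^2}$, the computation of \cref{lem:degreetoorder} (rescaling $x-\x_0=\omega^{-1/2}y$) gives the squared $L^2$ norms $\omega^{3/2}\omega^{-4}\omega^{-3/2}=\omega^{-4}$ and $\omega^{3/2}\omega^{-2}\omega^{-2}\omega^{-3/2}=\omega^{-4}$. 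Hence the first term is $\mathcal{O}(\omega^{-2})$.

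\textbf{Second term (main obstacle).} Set $\psi=\phu^{(0)}-\phu^{(1)}=O(|x-\x_0|^6)$. Then
\begin{equation*}
|e^{\ii\omega\phu^{(0)}}-e^{\ii\omega\phu^{(1)}}|=|e^{\ii\omega\phu^{(1)}}|\,|e^{\ii\omega\psi}-1|\leq \omega|\psi|\sup_{\tau\in[0,1]}e^{-\omega\Im(\phu^{(1)}+\tau\psi)}.
\end{equation*}
The delicate point is lower-bounding $\Im(\phu^{(1)}+\tau\psi)$, which is a convex combination of $\Im\phu^{(0)}$ and $\Im\phu^{(1)}$ and hence $\geq c|x-\x_0|^2$ on $\mathrm{supp}\,\eu^{(1)}$, provided the Gaussian bound holds for both phases there. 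If it does not hold globally, I will use a cutoff: in a small ball $B_\delta(\x_0)$ the bound holds for both phases by the Taylor argument. Outside this ball, each of the two exponentials is individually $O(e^{-c'\omega})$ on the support of its own amplitude. Since $\eu^{(1)}$ multiplies both exponentials, I need $\Im\phu^{(0)}>0$ on $\mathrm{supp}\,\eu^{(1)}\setminus B_\delta$; if this is not available, I will instead re-split the difference as $\eu^{(0)}e^{\ii\omega\phu^{(0)}}-\eu^{(1)}e^{\ii\omega\phu^{(1)}}$ outside the ball, where each piece alone is exponentially small. Inside the ball, the bound $\omega|x-\x_0|^6 e^{-c\omega|x-\x_0|^2}$ has squared $L^2$ norm $\omega^{3/2}\omega^2\omega^{-6}\omega^{-3/2}=\omega^{-4}$, so this term is also $\mathcal{O}(\omega^{-2})$, which completes the proof.
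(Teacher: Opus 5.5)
Your proposal is correct and follows essentially the same route as the paper: the split $\delta\eu\, e^{\ii\omega\phu^{(0)}} + \eu^{(1)}(e^{\ii\omega\phu^{(0)}}-e^{\ii\omega\phu^{(1)}})$, Taylor bounds of size $r^4$, $\omega^{-1}r^2$ and $\omega r^6$ against a Gaussian weight in a small ball, the integral (mean-value) representation of the exponential difference, and, outside the ball, the triangle inequality with each piece exponentially small on the support of its own amplitude. As in the paper, the two-term split (including your first term) should be used only inside $B_\delta(\x_0)$, where both imaginary parts are bounded below quadratically, because the hypotheses do not give positivity of $\Im\phu^{(0)}$ on $\mathrm{supp}\,\eu^{(1)}$ away from $\x_0$.
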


\begin{proof}
We note that $|| \Re \overset{\scriptscriptstyle (0)}{\underline{\pmb{\mathfrak{E}}}}( \cdot; \omega) - \Re \overset{\scriptscriptstyle (1)}{\underline{\pmb{\mathfrak{E}}}}( \cdot; \omega)||_{L^2(\R^3)} \leq || \overset{\scriptscriptstyle (0)}{\underline{\pmb{\mathfrak{E}}}}( \cdot; \omega) - \overset{\scriptscriptstyle (1)}{\underline{\pmb{\mathfrak{E}}}}( \cdot; \omega)||_{L^2(\R^3)}$. Therefore, it is enough to show that 
\begin{equation}
    || \overset{\scriptscriptstyle (0)}{\underline{\pmb{\mathfrak{E}}}}( \cdot; \omega) - \overset{\scriptscriptstyle (1)}{\underline{\pmb{\mathfrak{E}}}}( \cdot; \omega)||_{L^2(\R^3)}^2 = \mathcal{O}(\omega^{-4}).
\end{equation}
Similarly to \cref{eq:imphi_quadratic_lower_bound,eq:monotonicity_est}, based on the assumptions on the phase functions given in point \ref{matching_assumptions_1}, there exist constants $c > 0$ and $\rho > 0$ such that
\begin{align} \label{eq:exp_est}
    \Im \overset{\scriptscriptstyle (a)}{{\phu}}(x) \geq \frac{c}{2}|x - x_0|^2, \qquad
    e^{-2 \omega \Im \overset{\scriptscriptstyle (a)}{{\phu}}(x)} \leq e^{- \omega c |x - x_0|^2} \qquad \forall x \in B_\rho(x_0).
\end{align}
Furthermore, we can also define the strictly positive constants\footnote{The fact that they are strictly positive follows from $\Im \overset{\scriptscriptstyle (a)}{{\phu}} \geq 0$ together with $\nabla \Im  \overset{\scriptscriptstyle (a)}{\phu} \neq 0$ in $\cl(\overset{\scriptscriptstyle (a)}{\mathcal{K}}) \setminus \{\x_0\}$.}
\begin{equation}
    \overset{\scriptscriptstyle (a)}{{m}} = \inf_{x \in \overset{\scriptscriptstyle (a)}{\mathcal{K}}\setminus B_\rho(x_0)} \Im \overset{\scriptscriptstyle (a)}{{\phu}},
\end{equation}
so that we have
\begin{equation}
    |e^{\ii \omega \overset{\scriptscriptstyle (a)}{{\phu}}}| = e^{-\omega \Im \overset{\scriptscriptstyle (a)}{{\phu}}} \leq e^{-\omega \overset{\scriptscriptstyle (a)}{{m}}} \qquad \forall x \in \overset{\scriptscriptstyle (a)}{\mathcal{K}}\setminus B_\rho(x_0).
\end{equation}
Next, we introduce the following notation:
\begin{align} 
    \delta \phu = \overset{\scriptscriptstyle (0)}{{\phu}} - \overset{\scriptscriptstyle (1)}{{\phu}}, \qquad \delta \eu_0 = \overset{\scriptscriptstyle (0)}{\eu}_0 - \overset{\scriptscriptstyle (1)}{\eu}_0 , \qquad \delta\eu_1 = \overset{\scriptscriptstyle (0)}{\eu}_1 - \overset{\scriptscriptstyle (1)}{\eu}_1, \qquad \delta \eu =  \overset{\scriptscriptstyle (0)}{\eu} - \overset{\scriptscriptstyle (1)}{\eu} = \delta \eu_0 + \omega^{-1} \delta \eu_1.
\end{align}
Then, using assumptions \ref{matching3} to \ref{matching5} and Taylor's theorem, we have
\begin{equation} \label{eq:Taylor_est}
    |\delta \phu(x)| \leq C_{\phu} r^6, \qquad |\delta \eu_0(x)| \leq C_0 r^4, \qquad |\delta \eu_1(x)| \leq C_1 r^2,
\end{equation}
where $r = |x - x_0|$, $C_{\phu}$, $C_0$, and $C_1$ are constants, and $x \in B_\rho(x_0)$. We can also use these relations to write
\begin{equation}
    |\delta \eu(x)| \leq C_2 (r^4 + \omega^{-1} r^2),
\end{equation}
where $C_2 = \max(C_0, C_1)$ and $x \in B_\rho(x_0)$.

Based on this, we can perform the following pointwise estimates for $x \in B_\rho(x_0)$. We have
\begin{align}
    \overset{\scriptscriptstyle (0)}{\underline{\pmb{\mathfrak{E}}}} - \overset{\scriptscriptstyle (1)}{\underline{\pmb{\mathfrak{E}}}} = \omega^{\nicefrac{3}{4}} \big[ \delta \eu e^{\ii \omega \overset{\scriptscriptstyle (0)}{{\phu}}} + \overset{\scriptscriptstyle (1)}{\eu} \big( e^{\ii \omega \overset{\scriptscriptstyle (0)}{{\phu}}} - e^{\ii \omega \overset{\scriptscriptstyle (1)}{{\phu}}} \big) \big].
\end{align}
For the first term in the above equation, we can use \cref{eq:exp_est,eq:Taylor_est} to write
\begin{equation}
    \omega^{\nicefrac{3}{4}} |\delta \eu| |e^{\ii \omega \overset{\scriptscriptstyle (0)}{{\phu}}}| \leq C_2 \omega^{\nicefrac{3}{4}} (r^4 + \omega^{-1} r^2) e^{-\frac{c \omega r^2}{2}}.
\end{equation}
In the second term, we have $\overset{\scriptscriptstyle (1)}{\eu}$, which is uniformly bounded, and we can write $|\overset{\scriptscriptstyle (1)}{\eu}| \leq C_{\eu}$ for all $x \in \R^3$. For the difference of exponentials, we can write
\begin{equation}
    e^{\ii \omega \overset{\scriptscriptstyle (0)}{{\phu}}} - e^{\ii \omega \overset{\scriptscriptstyle (1)}{{\phu}}} = \int_0^1 \frac{d}{d s} e^{\ii \omega [s \overset{\scriptscriptstyle (0)}{{\phu}} + (1-s) \overset{\scriptscriptstyle (1)}{{\phu}}]} d s = \ii \omega \delta \phu \int_0^1  e^{\ii \omega [s \overset{\scriptscriptstyle (0)}{{\phu}} + (1-s) \overset{\scriptscriptstyle (1)}{{\phu}}]} d s.
\end{equation}
Taking the absolute value and using \cref{eq:exp_est} to get $s \Im \overset{\scriptscriptstyle (0)}{{\phu}} + (1-s) \Im \overset{\scriptscriptstyle (1)}{{\phu}} \geq \frac{c r^2}{2} $, we obtain
\begin{equation}
    |e^{\ii \omega \overset{\scriptscriptstyle (0)}{{\phu}}} - e^{\ii \omega \overset{\scriptscriptstyle (1)}{{\phu}}}| \leq \omega |\delta \phu| \int_0^1  e^{- \omega [s \Im \overset{\scriptscriptstyle (0)}{{\phu}} + (1-s) \Im \overset{\scriptscriptstyle (1)}{{\phu}}]} d s \leq \omega |\delta \phu| e^{- \frac{\omega c r^2}{2}} \leq \omega C_{\phu} r^6 e^{- \frac{\omega c r^2}{2}}\qquad \forall x \in B_{\rho}(x_0).
\end{equation}
Bringing all terms together, we get for some constant $C>0$ 
\begin{equation}
    |\overset{\scriptscriptstyle (0)}{\underline{\pmb{\mathfrak{E}}}} - \overset{\scriptscriptstyle (1)}{\underline{\pmb{\mathfrak{E}}}}| \leq C (\omega^{\nicefrac{3}{4}} r^4 + \omega^{-\nicefrac{1}{4}} r^2 + \omega^{\nicefrac{7}{4}}r^6) e^{-\frac{c \omega r^2}{2}} \qquad \forall x \in B_{\rho}(x_0).
\end{equation}
or equivalently (for some different constant $C$)
\begin{equation} \label{eq:pw_est}
    |\overset{\scriptscriptstyle (0)}{\underline{\pmb{\mathfrak{E}}}} - \overset{\scriptscriptstyle (1)}{\underline{\pmb{\mathfrak{E}}}}|^2 \leq C (\omega^{\nicefrac{3}{2}} r^8 + \omega^{-\nicefrac{1}{2}} r^4 + \omega^{\nicefrac{7}{2}} r^{12}) e^{-c \omega r^2} \qquad \forall x \in B_{\rho}(x_0).
\end{equation}
We can now estimate the integral by splitting it as follows:
\begin{equation}
    || \overset{\scriptscriptstyle (0)}{\underline{\pmb{\mathfrak{E}}}}( \cdot; \omega) - \overset{\scriptscriptstyle (1)}{\underline{\pmb{\mathfrak{E}}}}( \cdot; \omega)||_{L^2(\R^3)}^2 = \int_{B_\rho(x_0)} |\overset{\scriptscriptstyle (0)}{\underline{\pmb{\mathfrak{E}}}} - \overset{\scriptscriptstyle (1)}{\underline{\pmb{\mathfrak{E}}}}|^2 d^3 x + \int_{\R^3 \setminus B_\rho(x_0)} |\overset{\scriptscriptstyle (0)}{\underline{\pmb{\mathfrak{E}}}} - \overset{\scriptscriptstyle (1)}{\underline{\pmb{\mathfrak{E}}}}|^2 d^3 x.
\end{equation}
The first integral can be estimated using \cref{eq:pw_est} and the following bound
\begin{equation}
    \int_{B_\rho(x_0)} r^p e^{-c \omega r^2} d^3 x\leq \int_{\R^3} r^p e^{-c \omega r^2} d^3 x= \underbrace{4 \pi \int_0^\infty (r')^{p+2} e^{-c(r')^2} \, dr'}_{<\infty\text{ if } p>-3} \cdot \omega^{-\frac{p+3}{2}}\,,
\end{equation}
where we have used the substitution $r' = \sqrt{\omega} r$. We obtain
\begin{equation}
    \int_{B_\rho(x_0)} |\overset{\scriptscriptstyle (0)}{\underline{\pmb{\mathfrak{E}}}} - \overset{\scriptscriptstyle (1)}{\underline{\pmb{\mathfrak{E}}}}|^2 d^3 x \leq C  \int_{B_\rho(x_0)} \left(\omega^{\nicefrac{3}{2}} r^8 + \omega^{-\nicefrac{1}{2}} r^4 + \omega^{\nicefrac{7}{2}} r^{12} \right) e^{-c \omega r^2} d^3 x = \mathcal{O}(\omega^{-4}).
\end{equation}
For the second integral, we can write
\begin{equation}
   \int_{\R^3 \setminus B_\rho(x_0)} |\overset{\scriptscriptstyle (0)}{\underline{\pmb{\mathfrak{E}}}} - \overset{\scriptscriptstyle (1)}{\underline{\pmb{\mathfrak{E}}}}|^2 d^3 x \leq 2 \sum_{a=0}^1 \int_{\R^3 \setminus B_\rho(x_0)} |\overset{\scriptscriptstyle (a)}{\underline{\pmb{\mathfrak{E}}}}|^2 d^3 x.
\end{equation}
But we have
\begin{equation}
    |\overset{\scriptscriptstyle (a)}{\underline{\pmb{\mathfrak{E}}}}|^2 = \omega^{\nicefrac{3}{2}} |\overset{\scriptscriptstyle (a)}{\eu}|^2 e^{-2\omega \Im \overset{\scriptscriptstyle (a)}{{\phu}}} \leq \omega^{\nicefrac{3}{2}} C_{\eu} e^{-2 \omega \overset{\scriptscriptstyle (a)}{{m}}} \qquad \forall x \in \R^3 \setminus B_\rho(x_0).
\end{equation}
Thus, we obtain (for some constant C)
\begin{equation}
   \int_{\R^3 \setminus B_\rho(x_0)} |\overset{\scriptscriptstyle (0)}{\underline{\pmb{\mathfrak{E}}}} - \overset{\scriptscriptstyle (1)}{\underline{\pmb{\mathfrak{E}}}}|^2 d^3 x \leq C \omega^{\nicefrac{3}{2}} e^{-2 \omega m},
\end{equation}
where $m = \min(\overset{\scriptscriptstyle (0)}{{m}}, \overset{\scriptscriptstyle (1)}{{m}}) > 0$. Thus, this term is exponentially small in $\omega$. Thus, we obtain the final result
\begin{equation}
    || \overset{\scriptscriptstyle (0)}{\underline{\pmb{\mathfrak{E}}}}( \cdot; \omega) - \overset{\scriptscriptstyle (1)}{\underline{\pmb{\mathfrak{E}}}}( \cdot; \omega)||_{L^2(\R^3)}^2 = \mathcal{O}(\omega^{-4}) + \mathcal{O}(\omega^{\nicefrac{3}{2}} e^{-2 \omega m}) = \mathcal{O}(\omega^{-4}).
\end{equation}
\end{proof}

\section{The energy estimate} \label{SecEnergyEst}

In this section, we establish the basic energy estimate for Maxwell's equations in an inhomogeneous medium. It will be relevant to obtain a bound on the quality of the Gaussian beam approximation.

\begin{proposition} \label{PropDifferenceFieldsEst}
Let $\tilde{\bE}$, $\tilde{\bH} \in C^\infty([0, \infty) \times \R^3, \R^3)$ be such that for each $t \geq 0$ we have $\tilde{\bE}(t, \cdot)$ and $\tilde{\bH}(t, \cdot)$ compactly supported in $\R^3$. Moreover, we define 
\begin{subequations}
    \begin{align}
        \nabla \times \tilde{\bE} + \mu \dot{\tilde{\bH}} &=: -\tilde{\mathscr{F}}, \\
        \nabla \times \tilde{\bH} - \varepsilon \dot{\tilde{\bE}} &=: -\tilde{\mathscr{G}}, \
    \end{align}
\end{subequations}
and we use the notation $\tilde{\mathcal{E}} := \frac{1}{2}( \varepsilon |\tilde{\bE}|^2 + \mu |\tilde{\bH}|^2)$ and $\tilde{\mathbb{E}}(t):= \int_{\R^3} \tilde{\mathcal{E}}(t,x) \, d^3x$.
Then the following energy estimate holds:
    \begin{equation} \label{EqPropEnergyEstimate}
    \tilde{\mathbb{E}}^{\nicefrac{1}{2}}(t) \leq \tilde{\mathbb{E}}^{\nicefrac{1}{2}}(0) + \frac{1}{\sqrt{2c_m}} \int_0^t \Bigg[ \int_{\R^3} \Big( |\tilde{\mathscr{F}}|^2 + |\tilde{\mathscr{G}}|^2 \Big) \, d^3x \Bigg]^{\nicefrac{1}{2}} \, dt ,
\end{equation}
where the constant $c_m$ is defined below \cref{eq:constitutive}. Moreover, if $\tilde{\mathscr{F}}$ and $\tilde{\mathscr{G}}$ vanish, then $\tilde{\mathbb{E}}$ is independent of time.
\end{proposition}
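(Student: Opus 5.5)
The plan is the standard energy identity for a forced Maxwell system, followed by a Grönwall-type argument applied to the square root of the energy. Since $\tilde{\bE}(t,\cdot)$ and $\tilde{\bH}(t,\cdot)$ are smooth and compactly supported for each $t$, we may differentiate under the integral sign and integrate by parts without boundary terms. The coefficients $\varepsilon,\mu$ are time independent, so
\begin{equation*}
\frac{d}{dt}\tilde{\mathbb{E}}(t) = \int_{\R^3} \big( \varepsilon \tilde{\bE}\cdot \dot{\tilde{\bE}} + \mu \tilde{\bH}\cdot\dot{\tilde{\bH}} \big)\, d^3x .
\end{equation*}
Substituting the defining relations gives $\varepsilon\dot{\tilde{\bE}} = \nabla\times\tilde{\bH} + \tilde{\mathscr{G}}$ and $\mu\dot{\tilde{\bH}} = -\nabla\times\tilde{\bE} - \tilde{\mathscr{F}}$. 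The curl terms combine into
\begin{equation*}
\tilde{\bE}\cdot(\nabla\times\tilde{\bH}) - \tilde{\bH}\cdot(\nabla\times\tilde{\bE}) = -\nabla\cdot(\tilde{\bE}\times\tilde{\bH}),
\end{equation*}
which integrates to zero by compact support. What remains is
\begin{equation*}
\frac{d}{dt}\tilde{\mathbb{E}} = \int_{\R^3}\big(\tilde{\bE}\cdot\tilde{\mathscr{G}} - \tilde{\bH}\cdot\tilde{\mathscr{F}}\big)\,d^3x .
\end{equation*}
In particular, if $\tilde{\mathscr{F}}$ and $\tilde{\mathscr{G}}$ vanish, $\tilde{\mathbb{E}}$ is constant, which gives the last claim.

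Next we bound the right-hand side. By Cauchy--Schwarz, first pointwise and then in $L^2$,
\begin{equation*}
\Big|\frac{d}{dt}\tilde{\mathbb{E}}\Big| \le \Big(\int (|\tilde{\bE}|^2+|\tilde{\bH}|^2)\Big)^{1/2}\Big(\int (|\tilde{\mathscr{F}}|^2+|\tilde{\mathscr{G}}|^2)\Big)^{1/2}.
\end{equation*}
Using $\varepsilon,\mu\ge c_m$, we have $\int(|\tilde{\bE}|^2+|\tilde{\bH}|^2) \le \frac{2}{c_m}\tilde{\mathbb{E}}$. Hence
\begin{equation*}
\Big|\frac{d}{dt}\tilde{\mathbb{E}}\Big| \le \sqrt{\tfrac{2}{c_m}}\,\tilde{\mathbb{E}}^{1/2}\,\Phi(t), \qquad \Phi(t):=\Big(\int (|\tilde{\mathscr{F}}|^2+|\tilde{\mathscr{G}}|^2)\,d^3x\Big)^{1/2}.
\end{equation*}

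The final step is to pass to $\tilde{\mathbb{E}}^{1/2}$. Where $\tilde{\mathbb{E}}>0$ we have $\frac{d}{dt}\tilde{\mathbb{E}}^{1/2} = \frac{\dot{\tilde{\mathbb{E}}}}{2\tilde{\mathbb{E}}^{1/2}} \le \frac{1}{\sqrt{2c_m}}\Phi$. Integrating from $0$ to $t$ then gives \eqref{EqPropEnergyEstimate}. The only subtlety is that $\tilde{\mathbb{E}}$ may vanish somewhere, where the square root is not differentiable. To handle this, we apply the same computation to $(\tilde{\mathbb{E}}+\delta)^{1/2}$ with $\delta>0$, whose derivative is bounded by $\frac{1}{\sqrt{2c_m}}\Phi\,\tilde{\mathbb{E}}^{1/2}(\tilde{\mathbb{E}}+\delta)^{-1/2}\le \frac{1}{\sqrt{2c_m}}\Phi$. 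Integrating and letting $\delta\to0$ yields the estimate. I expect no real obstacle here; the only points needing care are this regularisation and the justification of differentiation under the integral, and the latter is immediate from smoothness and compact support.
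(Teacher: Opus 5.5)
Your proof is correct and follows essentially the same route as the paper: the curl terms combine into a divergence that integrates to zero by compact support, then Cauchy--Schwarz with the lower bound $c_m$ gives $\dot{\tilde{\mathbb{E}}}\le\sqrt{2/c_m}\,\tilde{\mathbb{E}}^{1/2}\Phi$, and integrating the resulting bound for $\tilde{\mathbb{E}}^{1/2}$ gives the estimate with the same constant. Your $\delta$-regularisation, which handles times where $\tilde{\mathbb{E}}$ vanishes, is a small extra bit of rigour; the paper skips this point and simply divides by $\tilde{\mathbb{E}}^{1/2}$.
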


\begin{proof}
We compute
\begin{align}
    \frac{d}{dt} \tilde{\mathbb{E}}(t) &=  \int_{\R^3} \Big( \varepsilon \tilde{\bE} \cdot \dot{\tilde{\bE}} + \mu \tilde{\bH} \cdot \dot{\tilde{\bH}} \Big) \, d^3x \nonumber\\
    &= \int_{\R^3} \Big[ \tilde{\bE} \cdot \big(\nabla \times \tilde{\bH} \big) - \tilde{\bH} \cdot \big(\nabla \times \tilde{\bE} \big) \Big] \, d^3x + \int_{\R^3} \Big( \tilde{\bE} \cdot \tilde{\mathscr{G}} - \tilde{\bH} \cdot \tilde{\mathscr{F}} \Big) \, d^3x \nonumber\\
    &= \int_{\R^3} \nabla \cdot \big( \tilde{\bH} \times \tilde{\bE} \big) \, d^3x + \int_{\R^3}\big( \tilde{\bE} \cdot \tilde{\mathscr{G}} - \tilde{\bH} \cdot \tilde{\mathscr{F}} \big) \, d^3x .
\end{align}
The first term on the right-hand side vanishes due to the assumption of compact spatial support for each fixed time. If $\tilde{\mathscr{F}}$ and $\tilde{\mathscr{G}}$ vanish, then this shows that $\tilde{\mathbb{E}}$ is independent of time. In full generality, to estimate the second term, we compute
\begin{align}
    \bigg|\int_{\R^3} \big( \tilde{\bE} \cdot \tilde{\mathscr{G}} - \tilde{\bH} \cdot \tilde{\mathscr{F}} \big) \, d^3x \bigg| &\leq \frac{1}{\sqrt{c_m}} \int_{\R^3} \big( \sqrt{\varepsilon}|\tilde{\bE} \cdot \tilde{\mathscr{G}}| + \sqrt{\mu}|\tilde{\bH} \cdot \tilde{\mathscr{F}}| \big) \, d^3x \nonumber\\
    &\leq \frac{1}{\sqrt{c_m}} \bigg[ \int_{\R^3} \big( \varepsilon \tilde{\bE} \cdot \tilde{\bE} + \mu \tilde{\bH} \cdot \tilde{\bH} \big) \, d^3x \bigg]^{\nicefrac{1}{2}} \bigg[ \int_{\R^3} \big( |\tilde{\mathscr{F}}|^2 + |\tilde{\mathscr{G}}|^2 \big) \, d^3x \bigg]^{\nicefrac{1}{2}},
\end{align}
where the constant $c_m$ is defined below \cref{eq:constitutive}, and the second inequality follows from Cauchy--Schwarz. Thus, we obtain
\begin{equation}
    \frac{d}{dt} \tilde{\mathbb{E}} \leq \sqrt{\frac{2}{c_m}} \tilde{\mathbb{E}}^{\nicefrac{1}{2}}  \bigg[ \int_{\R^3} \big(|\tilde{\mathscr{F}}|^2 + |\tilde{\mathscr{G}}|^2 \big) \, d^3x \bigg]^{\nicefrac{1}{2}} .
\end{equation}
This gives
\begin{equation}
    \frac{d}{dt}\tilde{\mathbb{E}}^{\nicefrac{1}{2}} \leq \frac{1}{\sqrt{2c_m}} \bigg[ \int_{\R^3} \big( |\tilde{\mathscr{F}}|^2 + |\tilde{\mathscr{G}}|^2) \, d^3x\bigg]^{\nicefrac{1}{2}},
\end{equation}
from which \cref{EqPropEnergyEstimate} follows by integration.
\end{proof}

\section{Approximation of exact solutions and proof of main results}\label{sec:approx_exact_and_main_results}

In this section, we give the proofs of \cref{MainThm} and \cref{PropJQLateTime}. Thus, we start by assuming $\mathcal{K}$-supported Gaussian beam initial data of order $2$, as in \cref{DefGBID}. Let $(\bE, \bH)$ denote the corresponding solution. Let $\rho_0>0$ be large enough that $\mathcal{K} \subseteq B_{\rho_0}(\x_0)$. By finite speed of propagation, we then have 
\begin{equation} \label{EqComSupActualSol}
\mathrm{supp}(\bE(t, \cdot)) \cup \mathrm{supp}(\bH(t, \cdot)) \subseteq B_{\rho_0 + t}(\x_0) \qquad \forall t \geq 0.
\end{equation}

We now construct the approximate Gaussian beam solution. Consider the induced jets $D^\alpha \phu|_{\x_0}$ for $0 \leq |\alpha| \leq 7$, $D^\alpha \eu_0|_{\x_0}$ for $0 \leq |\alpha| \leq 5$, and $D^\alpha \eu_1|_{\x_0}$ for $0 \leq |\alpha| \leq 3$. By the properties of these jets according to \cref{DefGBID}, the assumptions of \cref{thm:construction_theorem} are satisfied, and we obtain the approximate solutions
\begin{align} 
        &\hat{\bE} = \omega^{\nicefrac{3}{4}} \Re \Big[ \big(\be_0 + \omega^{-1} \be_1  + \omega^{-2} \be_{2} \big) e^{\ii \omega \phi} \Big], \qquad\hat{\bH} = \omega^{\nicefrac{3}{4}} \Re \Big[ \big(\bh_0 + \omega^{-1} \bh_1  + \omega^{-2} \bh_{2} \big) e^{\ii \omega \phi} \Big]
    \end{align}
of \cref{EqehApprox}. Without loss of generality, we can assume that $\rho>0$ in \cref{thm:construction_theorem} was chosen small enough that \cref{EqComSupActualSol} also holds for $\hat{\bE}$ and $\hat{\bH}$.
We set
\begin{equation}
    \tilde{\bE} := \bE - \hat{\bE}, \qquad \tilde{\bH} := \bH - \hat{\bH},
\end{equation}
which implies $\tilde{\mathscr{F}} = \hat{\mathscr{F}}$ and $\tilde{\mathscr{G}} = \hat{\mathscr{G}}$, where $\hat{\mathscr{F}}$ and $\hat{\mathscr{G}}$ are as in \cref{eq:FGhat}.

The following lemma shows that the Gaussian beam initial data we started with and the induced initial data of the approximate solution are sufficiently close.
\begin{lemma}
    We have
    \begin{equation} \label{EqClosenessOfID}
    \tilde{\mathbb{E}}(0) := \frac{1}{2} \int_{t=0} \big( \varepsilon | \tilde{\bE}|^2 + \mu| \tilde{\bH}|^2 \big) \, d^3x = \mathcal{O}(\omega^{-4}) .
\end{equation}
\end{lemma}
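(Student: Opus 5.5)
The plan is to compare the given data $(\Eu,\Hu)$ with the induced data $(\hat{\bE}(0,\cdot),\hat{\bH}(0,\cdot))$ of the approximate solution. Since $\varepsilon,\mu\le C_m$, it suffices to prove $\|\tilde{\bE}(0,\cdot)\|_{L^2}+\|\tilde{\bH}(0,\cdot)\|_{L^2}=\mathcal{O}(\omega^{-2})$. By \cref{DefGBID}, $\Eu$ equals $\omega^{\nicefrac34}\Re\{(\eu_0+\omega^{-1}\eu_1)e^{\ii\omega\phu}\}$ up to $\mathcal{O}_{L^2(\R^3)}(\omega^{-2})$. The function $\hat{\bE}(0,\cdot)$ equals $\omega^{\nicefrac34}\Re\{(\be_0(0,\cdot)+\omega^{-1}\be_1(0,\cdot)+\omega^{-2}\be_2(0,\cdot))e^{\ii\omega\phi(0,\cdot)}\}$, multiplied by the cutoff $\chi_\rho$ (which we absorb into the profiles). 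The same holds for $\bH$. By the triangle inequality, three pieces remain to be estimated.

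\textbf{Step 1: the $\be_2$ term.} This piece is $\omega^{-2}\cdot\omega^{\nicefrac34}\Re\{\be_2 e^{\ii\omega\phi}\}$. Its squared $L^2$ norm is bounded by $C\omega^{-4}\omega^{\nicefrac32}\int|\be_2|^2e^{-c\omega|x-x_0|^2}\,d^3x\lesssim\omega^{-4}\omega^{\nicefrac32}\omega^{-\nicefrac32}$, using \cref{lem:degreetoorder} with $S=-1$ (equivalently, the plain Gaussian integral). Hence it is $\mathcal{O}(\omega^{-2})$ in $L^2$.

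\textbf{Step 2: comparing the two leading-order families.} Here I apply \cref{PropEquivGBData} with $\overset{\scriptscriptstyle(0)}{\phu}=\phu$, $\overset{\scriptscriptstyle(1)}{\phu}=\phi(0,\cdot)$, $\overset{\scriptscriptstyle(0)}{\eu}_A=\eu_A$, and $\overset{\scriptscriptstyle(1)}{\eu}_A=\chi_\rho\be_A(0,\cdot)$. The jet-matching hypotheses are items 3--5 of that proposition. They follow from \cref{thm:construction_theorem}: the construction uses the induced jets $D^\alpha\phu|_{\x_0}$ for $|\alpha|\le7$, $D^\alpha\eu_0|_{\x_0}$ for $|\alpha|\le5$, and $D^\alpha\eu_1|_{\x_0}$ for $|\alpha|\le3$, and these are more than the required orders $5,3,1$. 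Moreover, $\chi_\rho\equiv1$ near $\x_0$, so the cutoff does not alter the jets. The magnetic field is handled the same way. By \cref{eq:hdefID} and item 4 of \cref{thm:construction_theorem}, the jets of $\hu_A$ and $\bh_A(0,\cdot)$ at $\x_0$ agree to orders $5$ and $3$. The formulas for $\dot\phi,\ddot\phi$ at $(0,\x_0)$ from \cref{prop:construction_phase_function} guarantee that the quantities $\dot{\phu},\ddot{\phu}$ appearing in \cref{eq:hdefID} coincide with those used in \cref{eq:h012def}. Note that \cref{eq:h1defID} is \cref{eq:h1def} with $\dot{\h}_0$ rewritten through the $\h_0$-transport equation; I would verify this rewriting explicitly.

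\textbf{Step 3: the hypotheses of \cref{PropEquivGBData} on the phases.} I must check that $\Im\phu\ge0$ and $\Im\phi(0,\cdot)\ge0$, with nonvanishing gradient away from $\x_0$ on the relevant compact sets. For $\phu$ this is item~\ref{ID_def_point1} of \cref{DefGBID}. For $\phi(0,\cdot)$ it is point~\ref{LastPoint} of \cref{thm:construction_theorem} together with the positive-definite Hessian. Nonnegativity away from $\x_0$ then follows on a small enough ball around $\x_0$, possibly after shrinking $\rho$.

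I expect this to be the main obstacle. The matching in Step 2 is bookkeeping, but Step 3 needs a uniform lower bound of the form $\Im\phi(0,\cdot)\ge m>0$ away from $B_\rho(\x_0)$ on the support. I would obtain it by taking the cutoff support small enough that the quadratic lower bound \cref{eq:imphi_quadratic_lower_bound} holds on all of it.

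With Steps 1--3 in place, squaring and integrating gives $\tilde{\mathbb{E}}(0)=\mathcal{O}(\omega^{-4})$.
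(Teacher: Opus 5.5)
Your proposal is correct and follows the paper's proof. You split off the $\omega^{-2}\be_2$ term using \cref{lem:degreetoorder}, and you control the remaining difference with \cref{PropEquivGBData}, using the jet agreement at $\x_0$ supplied by \cref{thm:construction_theorem} and \cref{eq:hdefID}. Your Step 3 checks the phase hypotheses of \cref{PropEquivGBData} for $\phi(0,\cdot)$ by confining the cutoff support to the region where \cref{eq:imphi_quadratic_lower_bound} holds, which makes explicit a point the paper leaves implicit.
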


\begin{proof}
Since $\varepsilon$ and $\mu$ are uniformly bounded, this follows from showing $||\tilde{\bE}(0, \cdot)||_{L^2(\R^3)} = \mathcal{O}(\omega^{-2})$ and $||\tilde{\bH}(0, \cdot)||_{L^2(\R^3)} = \mathcal{O}(\omega^{-2})$. We first look at
\begin{align}
    \tilde{\bE}(0,x) &= \omega^{\nicefrac{3}{4}} \Re \Big\{ \big[ \eu_0(x) + \omega^{-1} \eu_1(x) \big] e^{\ii \omega \phu(x)} \Big\}   -  \omega^{\nicefrac{3}{4}} \Re \Big\{ \big[\be_0(0,x) + \omega^{-1} \be_1 (0,x) \big] e^{\ii \omega \phi (0,x)} \Big\} \nonumber\\
    &\qquad - \underbrace{\omega^{\nicefrac{3}{4}} \Re \Big[ \omega^{-2} \be_{2}(0,x) e^{\ii \omega \phi(0,x)} \Big]}_{= \mathcal{O}_{L^2(\R^3)}(\omega^{-2})}
    + \mathcal{O}_{L^2(\R^3)}(\omega^{-2})\,
\end{align}
where we use Lemma \ref{lem:degreetoorder} for the underbraced term.
Since at $\x_0$ the jets of the structure functions agree to a sufficiently high order, $||\tilde{\bE}(0, \cdot)||_{L^2(\R^3)} = \mathcal{O}(\omega^{-2})$ follows from \cref{PropEquivGBData}. We proceed in a similar manner for $||\tilde{\bH}(0, \cdot)||_{L^2(\R^3)} = \mathcal{O}(\omega^{-2})$, noting that the relations in \cref{eq:hdefID} ensure the agreement of the jets at $x_0$ to a sufficiently high order.
\end{proof}

Given the closeness of the initial data, we can now infer the closeness of the actual solution to the approximate solution up to a finite time.
\begin{lemma}
    Let $T>0$ be given. Then there exists a constant $C >0$ (dependent on $T$) such that 
\begin{equation} \label{EqClosenessSolution}
    \tilde{\mathbb{E}}^{\nicefrac{1}{2}}(t) \leq C \omega^{-2} \qquad \textnormal{ for } 0 \leq t \leq T  .
\end{equation}   
\end{lemma}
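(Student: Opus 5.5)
We apply the energy estimate of \cref{PropDifferenceFieldsEst} to the difference fields $\tilde{\bE} = \bE - \hat{\bE}$ and $\tilde{\bH} = \bH - \hat{\bH}$.

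First, we check that its hypotheses hold. Both $\tilde{\bE}$ and $\tilde{\bH}$ are smooth. For each $t$ they are compactly supported: the exact solution is supported in $B_{\rho_0+t}(\x_0)$ by \cref{EqComSupActualSol}, and $\hat{\bE}$, $\hat{\bH}$ are supported in $B_\rho(\gammu(t))$ by \cref{thm:construction_theorem}. Since $(\bE,\bH)$ solves \eqref{EqMaxH} and \eqref{EqMaxE} exactly, linearity gives
\begin{equation*}
\nabla\times\tilde{\bE}+\mu\dot{\tilde{\bH}} = -(\nabla\times\hat{\bE}+\mu\dot{\hat{\bH}}) = -\hat{\mathscr{F}}, \qquad \nabla\times\tilde{\bH}-\varepsilon\dot{\tilde{\bE}} = -\hat{\mathscr{G}}.
\end{equation*}
So the forcing terms in \cref{PropDifferenceFieldsEst} are $\tilde{\mathscr{F}}=\hat{\mathscr{F}}$ and $\tilde{\mathscr{G}}=\hat{\mathscr{G}}$, as already noted before the lemma. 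Note that the constraint equations play no role in this estimate; only the evolution equations enter.

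Second, \cref{EqPropEnergyEstimate} gives, for $0\le t\le T$,
\begin{equation*}
\tilde{\mathbb{E}}^{\nicefrac{1}{2}}(t) \le \tilde{\mathbb{E}}^{\nicefrac{1}{2}}(0) + \frac{1}{\sqrt{2c_m}}\int_0^t \Big(\|\hat{\mathscr{F}}(s)\|_{L^2}^2+\|\hat{\mathscr{G}}(s)\|_{L^2}^2\Big)^{\nicefrac{1}{2}}\,ds .
\end{equation*}
We bound the two terms on the right-hand side separately:
\begin{itemize}
\item By \cref{EqClosenessOfID}, $\tilde{\mathbb{E}}(0)=\mathcal{O}(\omega^{-4})$, so $\tilde{\mathbb{E}}^{\nicefrac{1}{2}}(0)=\mathcal{O}(\omega^{-2})$.
\item By \cref{EqEstimatesApproxSol} in \cref{thm:construction_theorem}, $\sup_{s\in[0,T]}\|\hat{\mathscr{F}}(s)\|_{L^2}$ and $\sup_{s\in[0,T]}\|\hat{\mathscr{G}}(s)\|_{L^2}$ are both bounded by $C(T)\omega^{-2}$. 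The time integral is therefore at most $\sqrt{2}\,T\,C(T)\,\omega^{-2}$.
\end{itemize}
Adding the two bounds gives $\tilde{\mathbb{E}}^{\nicefrac{1}{2}}(t)\le C\omega^{-2}$ for $0 \le t \le T$, with $C$ depending on $T$, $c_m$, the constant in \cref{EqClosenessOfID}, and $C(T)$.

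No step here is a serious obstacle. The only points needing care are:
\begin{itemize}
\item \cref{PropDifferenceFieldsEst} requires $\tilde{\bE}$, $\tilde{\bH}$ to be defined on $[0,\infty)$. The approximate solution is constructed globally in time, so this holds, and we then restrict attention to $[0,T]$.
\item The $L^2$ bounds in \cref{EqEstimatesApproxSol} must be uniform in $t\in[0,T]$. This is exactly how they are stated there.
\end{itemize}
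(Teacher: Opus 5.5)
Your proposal is correct and is exactly the paper's argument: compact support of $\tilde{\bE},\tilde{\bH}$ from \cref{EqComSupActualSol} and \cref{thm:construction_theorem}, then the energy estimate \eqref{EqPropEnergyEstimate} with $\tilde{\mathscr{F}}=\hat{\mathscr{F}}$ and $\tilde{\mathscr{G}}=\hat{\mathscr{G}}$, combined with \cref{EqClosenessOfID} and \cref{EqEstimatesApproxSol}. You have simply written out the bookkeeping that the paper leaves implicit, namely the bound $\sqrt{2}\,T\,C(T)\,\omega^{-2}$ on the time integral.
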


\begin{proof}
By \cref{EqComSupActualSol} and \cref{thm:construction_theorem}, $\tilde{\bE}(t, \cdot)$ and $\tilde{\bH}(t, \cdot)$ are compactly supported for each $t \geq 0$. Thus, we can invoke the energy estimate \eqref{EqPropEnergyEstimate} from \cref{PropDifferenceFieldsEst} and use \cref{EqClosenessOfID,EqEstimatesApproxSol}.
\end{proof}

\begin{proposition} \label{PropGoOverToHat}
Let $f \in C^0([0, \infty) \times \R^3)$ and $\mathcal{D} \in \{ \cE, \bS^i, \varepsilon \bE \cdot \bE, \mu \bH \cdot \bH \}$. Let $\hat{\mathcal{D}}$ be the respective quantity with respect to the approximate Gaussian beam solution. Then, there exists $C>0$ such that for all $0 \leq t \leq T$ we have
\begin{equation}
    \int_{\R^3} f(t,x) \mathcal{D}(t,x) \, d^3x = \int_{\R^3} f(t,x) \hat{\mathcal{D}}(t,x) \, d^3x + ||f(t, \cdot)||_{L^\infty(B_{\rho_0 +t}(\x_0))} C \omega^{-2} .
\end{equation}
\end{proposition}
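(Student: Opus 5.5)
Each density $\mathcal{D}$ is a quadratic form in $(\bE,\bH)$ with bounded coefficients (built from $\varepsilon$, $\mu$, $\n^2$). Write $\bE = \hat{\bE} + \tilde{\bE}$ and $\bH = \hat{\bH} + \tilde{\bH}$ and expand $\mathcal{D}$ bilinearly. This gives
\[
\mathcal{D} - \hat{\mathcal{D}} = \mathcal{B}(\hat{\cdot},\tilde{\cdot}) + \mathcal{B}(\tilde{\cdot},\hat{\cdot}) + \mathcal{B}(\tilde{\cdot},\tilde{\cdot}),
\]
where $\mathcal{B}$ is the symmetric bilinear form associated to $\mathcal{D}$. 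For instance, for $\bS^i = \n^2 \epsilon^{ijk}\bE_j\bH_k$ one gets
\[
\bS - \hat{\bS} = \n^2\big(\hat{\bE}\times\tilde{\bH} + \tilde{\bE}\times\hat{\bH} + \tilde{\bE}\times\tilde{\bH}\big).
\]
The cases $\cE$, $\varepsilon\bE\cdot\bE$ and $\mu\bH\cdot\bH$ work the same way.

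By \eqref{EqComSupActualSol}, and since $\rho$ was chosen so that the approximate solution has the same support property, all fields involved are supported in $B_{\rho_0+t}(\x_0)$. We pull out $\|f(t,\cdot)\|_{L^\infty(B_{\rho_0+t}(\x_0))}$ and the sup of the bounded coefficients; the coefficient bound $\n^2 = \varepsilon\mu \le C_m^2$ holds on all of $\R^3$. We then apply Cauchy--Schwarz to each term. The cross terms are then bounded by
\[
C\big(\|\hat{\bE}\|_{L^2} + \|\hat{\bH}\|_{L^2}\big)\big(\|\tilde{\bE}\|_{L^2} + \|\tilde{\bH}\|_{L^2}\big),
\]
and the quadratic term by $C(\|\tilde{\bE}\|_{L^2}^2 + \|\tilde{\bH}\|_{L^2}^2)$.

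By the lower bound $\varepsilon,\mu \ge c_m$ and \eqref{EqClosenessSolution}, we have $\|\tilde{\bE}(t)\|_{L^2} + \|\tilde{\bH}(t)\|_{L^2} \le C\tilde{\mathbb{E}}^{1/2}(t) \le C\omega^{-2}$ for $0\le t\le T$. It remains to show that $\|\hat{\bE}(t)\|_{L^2}$ and $\|\hat{\bH}(t)\|_{L^2}$ are $\mathcal{O}(1)$ uniformly for $t \in [0,T]$. This is the only step needing care, since the prefactor $\omega^{3/4}$ is large. From the form \eqref{EqehApprox}, $|\hat{\bE}|^2 \le C\omega^{3/2}\sum_A|\be_A|^2 e^{-2\omega\Im\phi}$. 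On the support we have \eqref{eq:monotonicity_est}, so the bound becomes $C\omega^{3/2}e^{-c\omega|x-\gammu(t)|^2}$. Integrating over $\R^3$ gives $C\omega^{3/2}\omega^{-3/2} = \mathcal{O}(1)$; this is \cref{lem:degreetoorder} with $S=-1$, or a direct rescaling $\hat x = \sqrt{\omega}(x-\gammu)$. The constant is uniform in $t\in[0,T]$ because the $\be_A$ are smooth and compactly supported on $[0,T]\times\R^3$. The same holds for $\hat{\bH}$. Alternatively, the approximate energy $\hat{\mathbb{E}}(t)$ is $\mathcal{O}(1)$ by \eqref{eq:E_hat}, which gives the same conclusion.

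Combining, the cross terms are $\mathcal{O}(1)\cdot\mathcal{O}(\omega^{-2})$ and the quadratic term is $\mathcal{O}(\omega^{-4})$. Hence
\[
\Big|\int f(\mathcal{D}-\hat{\mathcal{D}})\,d^3x\Big| \le \|f(t,\cdot)\|_{L^\infty(B_{\rho_0+t}(\x_0))}\, C\omega^{-2},
\]
with $C$ depending on $T$, the profile functions and $\varepsilon,\mu$. This is the claim.
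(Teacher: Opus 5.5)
Your proof is correct and is essentially the paper's argument: expand the quadratic density, use the common support in $B_{\rho_0+t}(\x_0)$ to pull out $\|f(t,\cdot)\|_{L^\infty}$, apply H\"older/Cauchy--Schwarz, and invoke $\tilde{\mathbb{E}}^{1/2}\le C\omega^{-2}$ from \eqref{EqClosenessSolution}. The only difference is the expansion point. The paper expands around the exact field, $\bE\cdot\bE-\hat{\bE}\cdot\hat{\bE}=2\tilde{\bE}\cdot\bE-\tilde{\bE}\cdot\tilde{\bE}$, and bounds $\|\bE\|_{L^2}$ by the conserved energy $\mathbb{E}$. You expand around $\hat{\bE}$ and bound $\|\hat{\bE}(t)\|_{L^2}$ directly from the Gaussian envelope, which makes the $\omega$-uniformity of that factor explicit. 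The two bounds are interchangeable by the triangle inequality, since $\|\tilde{\bE}\|_{L^2}=\mathcal{O}(\omega^{-2})$.
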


\begin{proof}
We give the proof for $\mathcal{D} = \varepsilon \bE \cdot \bE$. The other cases follow analogously.
\begin{align}
    \bigg| \int_{\R^3} f \varepsilon \bE \cdot \bE \, d^3x &- \int_{\R^3} f \varepsilon \hat{\bE} \cdot \hat{\bE} \, d^3x \bigg| \nonumber\\
     &\leq ||f(t, \cdot)||_{L^\infty(B_{\rho_0 +t}(\x_0))} C_m || 2 \tilde{\bE} \cdot \bE - \tilde{\bE} \cdot \tilde{\bE} ||_{L^1(\R^3)} \nonumber\\
    &\leq ||f(t, \cdot)||_{L^\infty(B_{\rho_0 + t}(\x_0))} C_m ||\tilde{\bE}||_{L^2(\R^3)} \Big[ 2 ||\bE||_{L^2(\R^3)} + ||\tilde{\bE}||_{L^2(\R^3)} \Big] ,
\end{align}
where we have used \cref{EqComSupActualSol}, H\"older's inequality, and the Cauchy--Schwarz inequality. We now use the boundedness of the energy $\mathbb{E}$ and \cref{EqClosenessSolution}.
\end{proof}

The next corollary is a direct consequence of \cref{PropGoOverToHat} and the compact support \eqref{EqComSupActualSol} of the exact and approximate solutions. It shows that, up to time $T$, the following integrated quantities of the exact and approximate solutions are close. Note that once \cref{eq:XtohatX} is established, it is used for \cref{eq:JtohatJ,eq:QtohatQ}.
\begin{corollary} \label{cor:approx2exact}
We have for all $0 \leq t \leq T$ 
\begin{subequations}
 \begin{align}
    \mathbb{E}(t) &= \hat{\mathbb{E}}(t) + \mathcal{O}(\omega^{-2}), \\
    \mathbb{X}^i(t) &= \hat{\mathbb{X}}^i(t) + \mathcal{O}(\omega^{-2}),\label{eq:XtohatX} \\
    \mathbb{P}_i(t) &= \hat{\mathbb{P}}_i(t) + \mathcal{O}(\omega^{-2}), \\
    \mathbb{J}_i(t) &= \hat{\mathbb{J}}_i(t) + \mathcal{O}(\omega^{-2}),\label{eq:JtohatJ} \\
    \mathbb{\bbQ}^{i j}(t) &= \hat{\mathbb{\bbQ}}^{i j}(t) + \mathcal{O}(\omega^{-2}).\label{eq:QtohatQ}
\end{align}     
\end{subequations}  
\end{corollary}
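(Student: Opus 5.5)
The plan is to apply \cref{PropGoOverToHat} to each quantity in turn, with a suitable weight $f$, and then control the normalisations and the shift of the centre $\mathbb{X}\mapsto\hat{\mathbb{X}}$.

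\textbf{Energy and momentum.} For the energy, take $f\equiv 1$ and $\mathcal{D}=\cE$. This gives $\mathbb{E}(t)=\hat{\mathbb{E}}(t)+C\omega^{-2}$ directly; exact conservation of $\mathbb{E}$ is not even needed. The momentum works the same way with $f\equiv1$ and $\mathcal{D}=\bS_i$.

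\textbf{Centroid.} Take $f=x^i$. Its $L^\infty$ norm on $B_{\rho_0+t}(\x_0)$ is bounded by $|\x_0|+\rho_0+T$, so
\[
\int x^i\cE\,d^3x=\int x^i\hat\cE\,d^3x+\mathcal{O}(\omega^{-2}).
\]
To divide by the energies we need them bounded below uniformly in $\omega$. By \cref{eq:E_hat}, $\hat{\mathbb{E}}$ is $u|_{\gamma}/\sqrt{\det(A/2\pi\ii)}+\mathcal{O}(\omega^{-1})$. The leading term is strictly positive by $\be_0|_{\gamma}\neq0$ (propagated from $\eu_0|_{\x_0}\neq0$, or via the conservation law \cref{eq:cons_energy0}). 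Hence both $\mathbb{E}$ and $\hat{\mathbb{E}}$ lie in a compact subinterval of $(0,\infty)$ for $\omega$ large. Also $\int x^i\hat\cE = \mathcal{O}(1)$. Writing
\[
\mathbb{X}^i-\hat{\mathbb{X}}^i=\frac{1}{\mathbb{E}}\Big(\int x^i\cE-\int x^i\hat\cE\Big)+\Big(\frac1{\mathbb{E}}-\frac1{\hat{\mathbb{E}}}\Big)\int x^i\hat\cE,
\]
both terms are $\mathcal{O}(\omega^{-2})$, which proves \eqref{eq:XtohatX}.

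\textbf{Angular momentum and quadrupole.} For $\mathbb{J}$, write $r^j=\hat r^j-(\mathbb{X}^j-\hat{\mathbb{X}}^j)$. Then
\[
\mathbb{J}_i=\int\epsilon_{ijk}\hat r^j\bS^k\,d^3x-\epsilon_{ijk}(\mathbb{X}^j-\hat{\mathbb{X}}^j)\mathbb{P}^k.
\]
The second term is $\mathcal{O}(\omega^{-2})$ by \eqref{eq:XtohatX} and the boundedness of $\mathbb{P}$. The boundedness follows, for instance, from $|\bS|\le C\cE$ and bounded energy. For the first term, apply \cref{PropGoOverToHat} with $f=\epsilon_{ijk}\hat r^j(t,x)$; this $f$ is continuous and uniformly bounded on the ball because $\hat{\mathbb{X}}$ is bounded. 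The result is $\hat{\mathbb{J}}_i+\mathcal{O}(\omega^{-2})$.

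For $\mathbb{Q}$, similarly expand
\[
r^ir^j=\hat r^i\hat r^j-\delta^i\hat r^j-\hat r^i\delta^j+\delta^i\delta^j,\qquad \delta:=\mathbb{X}-\hat{\mathbb{X}}.
\]
The main term is handled by \cref{PropGoOverToHat} with $f=\hat r^i\hat r^j$. The cross terms are $\delta^i\int\hat r^j\cE\,d^3x$. Here I use that the exact dipole about $\mathbb{X}$ vanishes, so $\int\hat r^j\cE\,d^3x=\int(r^j+\delta^j)\cE\,d^3x=\delta^j\mathbb{E}$. Thus all remaining terms combine to $-\delta^i\delta^j\mathbb{E}=\mathcal{O}(\omega^{-4})$. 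Even a cruder bound $|\delta|\cdot\mathcal{O}(1)$ would suffice.

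\textbf{Main obstacle.} The only delicate point is the uniform lower bound on the energies, which is needed for the division in the centroid step. It holds for $\omega$ large since the leading-order energy is a fixed positive constant. Everything else is routine bookkeeping of uniform $L^\infty$ bounds of the weights on $B_{\rho_0+T}(\x_0)$.
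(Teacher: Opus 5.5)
Your proposal is correct and takes essentially the same route as the paper. The paper also derives the corollary directly from \cref{PropGoOverToHat}, using weights that are uniformly bounded on the compact supports $B_{\rho_0+t}(\x_0)$, and then feeds \eqref{eq:XtohatX} into the $\mathbb{J}$ and $\mathbb{Q}$ estimates. You additionally spell out the steps the paper leaves implicit: the uniform positive lower bound on the energies needed to divide, the bound $|\mathbb{P}|\le C\,\mathbb{E}$, and the recentring $r=\hat r-\delta$ combined with the vanishing exact dipole.
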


We can now prove \cref{PropJQLateTime}.

\begin{proof}[Proof of \cref{PropJQLateTime}]
The proof of \cref{eq:J_Q} follows from \cref{prop:approx_average_quantities,prop:J_decomposition}, together with \cref{cor:approx2exact}. We also use $\dot{\phi}|_{\gamma(t)} = \mathrm{const.} = \dot{\phu}|_{\x_0}$, as given in \cref{eq:phase_construction}, as well as the leading order form of $\hat{\mathbb{P}}$ given in \cref{eq:P_leading_order}. The fact that $s$ is a constant in the interval $[-1, 1]$ follows from \cref{prop:J_decomposition}.
\end{proof}

We now prove a first estimate on the energy centroid.

\begin{proposition}\label{PropFirstEstimateCentroid}
   There exists $C>0$ such that $|\mathbb{X}^i(t) - \gamma^i(t)| \leq C \omega^{-1}$ for all $0 \leq t \leq T$.
\end{proposition}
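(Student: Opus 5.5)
The plan is to reduce the estimate to the approximate solution and then read it off from the stationary phase expansion. By \cref{cor:approx2exact}, specifically \cref{eq:XtohatX}, we have $\mathbb{X}^i(t) = \hat{\mathbb{X}}^i(t) + \mathcal{O}(\omega^{-2})$ uniformly for $0 \leq t \leq T$. So it suffices to show $|\hat{\mathbb{X}}^i(t) - \gamma^i(t)| \leq C\omega^{-1}$ on $[0,T]$.

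For this I would use \cref{eq:X_hat} from \cref{prop:approx_average_quantities}. It gives $\hat{\mathbb{X}}^i(t) - \gamma^i(t)$ as $\omega^{-1}$ times
\[
\frac{\ii}{\hat{\mathbb{E}} \sqrt{\det\left( \frac{1}{2 \pi \ii} A \right)}} (A^{-1})^{i a} \Big[ \nabla_a u - \ii u (A^{-1})^{b c} \nabla_a \nabla_b \nabla_c \Im \phi \Big] \Big|_{\gamma(t)},
\]
plus $\mathcal{O}(\omega^{-2})$. It remains to show that this prefactor is bounded uniformly in $t \in [0,T]$ and in $\omega > 1$.

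\textbf{Bounding the numerator.} The quantities $u$, $\nabla u$ and $\nabla^3 \Im\phi$ along $\gamma$ are smooth functions of $t$ on the compact interval $[0,T]$. They are built from the profile functions $\be_A$, $\bh_A$, $\phi$ constructed in \cref{thm:construction_theorem}. The $\omega^{-1}$-dependent pieces of $u$ are $\mathcal{O}_\infty(1)$, so they are bounded uniformly for $\omega > 1$.

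\textbf{Bounding $A^{-1}$ and the determinant.} By \cref{prop:construction_phase_function}, $\Im M(t)$ is positive definite for every $t$. By continuity and compactness of $[0,T]$, its smallest eigenvalue is bounded below by some $\lambda > 0$. Hence $A^{-1}(t)$ is bounded, and $|\det(\frac{1}{2\pi\ii}A)|$ is bounded away from zero on $[0,T]$.

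\textbf{Bounding $1/\hat{\mathbb{E}}$.} This is the main point requiring care: we need $\hat{\mathbb{E}}$ bounded below uniformly. By \cref{cor:approx2exact}, $\hat{\mathbb{E}}(t) = \mathbb{E} + \mathcal{O}(\omega^{-2})$, and $\mathbb{E}$ is conserved. Via \cref{eq:E_hat} at $t=0$, $\mathbb{E}$ equals
\[
\frac{1}{\sqrt{\det\left( \frac{1}{2 \pi \ii} A(0) \right)}}\, \tfrac14 \big(\varepsilon |\eu_0|^2 + \mu|\hu_0|^2\big)\big|_{\x_0} + \mathcal{O}(\omega^{-1}).
\]
The leading term is strictly positive because $\eu_0|_{\x_0} \neq 0$. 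Thus for $\omega$ larger than some $\omega_0$, $\hat{\mathbb{E}}(t) \geq c > 0$ on $[0,T]$. For $1 < \omega \leq \omega_0$ the claim is trivial after enlarging $C$, since both $\mathbb{X}$ and $\gamma$ stay in the bounded set $B_{\rho_0+T}(\x_0)$ by \cref{EqComSupActualSol}.

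Combining these three bounds yields $|\mathbb{X}^i(t) - \gamma^i(t)| \leq C\omega^{-1}$ for $0 \leq t \leq T$.
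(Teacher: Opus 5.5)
Your proof is correct and follows essentially the same route as the paper: you pass to the approximate solution via \cref{cor:approx2exact} and then read off the $\mathcal{O}(\omega^{-1})$ deviation from the stationary phase expansion. The only difference is that the paper applies \cref{th:stationary} directly to $\int_{\R^3}(x^i-\gamma^i(t))\hat{\mathcal{E}}\,d^3x$, whose leading term vanishes at $x_s=\underline{\gamma}(t)$, whereas you use the already-computed \cref{eq:X_hat}; your explicit uniform lower bounds on $\hat{\mathbb{E}}$ and $\det(\frac{1}{2\pi\ii}A)$, and your treatment of bounded $\omega$, make precise points the paper leaves implicit.
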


\begin{proof} 
Using \cref{cor:approx2exact}, we compute
\begin{equation} 
\mathbb{E} [ \mathbb{X}^i(t) - \gamma^i(t) ] =\hat{\mathbb{E}} [ \hat{\mathbb{X}}^i(t) - \gamma^i(t) ] +\mathcal{O}(\omega^{-2}).
\end{equation}
Then, by the stationary phase expansion in \cref{th:stationary} with $x_s=\gamma(t)$, it follows that 
\begin{align}
    \mathbb{E} [ \mathbb{X}^i(t) - \gamma^i(t) ] = \int_{\R^3} [x^i - \gamma^i(t)] \hat{\mathcal{E}}(t,x) \, d^3x +\mathcal{O}(\omega^{-2})=\mathcal{O}(\omega^{-1}) .
\end{align}
\end{proof}

\begin{proposition} \label{PropErrorVanishingHigherOrder}
Let $\hat{\mathcal{D}} \in \{\hat{\cE}, \hat{\bS}^i, \varepsilon \hat{\bE}^2, \mu \hat{\bH}^2\}$ and let $p \in C^\infty([0, \infty) \times \R^3)$ be a function such that $D^\alpha p(t, \bbX(t)) = 0$ for all multi-indices $0 \leq |\alpha| \leq 2$. Then, there exists a constant $C>0$ such that for all $0 \leq t \leq T$ we have 
\begin{equation}
        \bigg|\int_{\R^3} p(t,x) \hat{\mathcal{D}} \, d^3x \bigg| \leq C \cdot \omega^{-2} .
\end{equation}
\end{proposition}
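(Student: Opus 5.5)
Since $p(t,\cdot)$ vanishes to second order at $\bbX(t)$, Taylor's theorem with remainder gives, on the compact set $B_{\rho_0+T}(\x_0)$,
\begin{equation*}
|p(t,x)| \leq C |x-\bbX(t)|^3 ,
\end{equation*}
with $C$ depending on third derivatives of $p$ on $[0,T]\times B_{\rho_0+T}(\x_0)$. The plan is to compare the integral against the stationary point $\gammu(t)$ of $\Im\phi$ rather than against $\bbX(t)$. By \cref{PropFirstEstimateCentroid}, $\delta(t):=\bbX(t)-\gammu(t)$ satisfies $|\delta|\le C\omega^{-1}$. Writing $x-\bbX = (x-\gammu)-\delta$ and using $|a-b|^3\le 4(|a|^3+|b|^3)$, we get
\begin{equation*}
|p(t,x)| \le C\big(|x-\gammu(t)|^3 + \omega^{-3}\big).
\end{equation*}

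Next I bound $|\hat{\mathcal D}|$ pointwise. From \cref{eq:approx_density}, each $\hat{\mathcal D}$ satisfies $|\hat{\mathcal D}|\le C\omega^{3/2} e^{-2\omega\Im\phi}$. The oscillatory terms $e^{\pm 2\ii\omega\Re\phi}$ are simply bounded in absolute value, so no cancellation is needed. The amplitudes $\be_A,\bh_A$ are bounded and supported in $B_\rho(\gammu(t))$. By \cref{eq:imphi_quadratic_lower_bound}, and shrinking $\rho$ as in \cref{thm:construction_theorem} if necessary, we have $e^{-2\omega\Im\phi}\le e^{-c\omega|x-\gammu(t)|^2}$ on that support. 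Hence
\begin{equation*}
\bigg|\int_{\R^3} p\,\hat{\mathcal D}\,d^3x\bigg| \le C\omega^{3/2}\int_{\R^3}\big(|x-\gammu|^3+\omega^{-3}\big)e^{-c\omega|x-\gammu|^2}\,d^3x .
\end{equation*}

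The substitution $y=\sqrt\omega(x-\gammu)$, exactly as in \cref{lem:degreetoorder} or the radial computation in \cref{PropEquivGBData}, gives $\int r^3 e^{-c\omega r^2}d^3x = C\omega^{-3}$ and $\int e^{-c\omega r^2}d^3x=C\omega^{-3/2}$. The total bound is therefore
\begin{equation*}
C\omega^{3/2}\big(\omega^{-3}+\omega^{-9/2}\big)=\mathcal O(\omega^{-3/2}).
\end{equation*}
This is only $\omega^{-3/2}$, not $\omega^{-2}$, so the crude absolute-value bound on the cubic term is insufficient. This is the main obstacle.

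To gain the missing $\omega^{-1/2}$, I would Taylor expand $p$ to one higher order around $\gammu(t)$ instead. Write $p(t,x)=\sum_{|\alpha|\le 3}\frac{1}{\alpha!}D^\alpha p(t,\gammu)(x-\gammu)^\alpha + R_4$ with $|R_4|\le C|x-\gammu|^4$. The $R_4$ term contributes $\omega^{3/2}\omega^{-7/2}=\omega^{-2}$ by the same scaling.

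For the polynomial terms, Taylor-expand $D^\alpha p$ at $\bbX$ and use $|\delta|\le C\omega^{-1}$:
\begin{itemize}
\item For $|\alpha|\le 2$, since $D^\alpha p(\bbX)=0$, we get $|D^\alpha p(\gammu)|\le C|\delta|^{3-|\alpha|}\le C\omega^{-(3-|\alpha|)}$.
\item For $|\alpha|=3$, $D^\alpha p(\gammu)=\mathcal O(1)$.
\end{itemize}
Each remaining term is a moment $\int (x-\gammu)^\alpha \hat{\mathcal D}\,d^3x$. I would evaluate these with the stationary phase expansion \cref{th:stationary} centred at $x_s=\gammu(t)$, as in \cref{prop:approx_average_quantities}; the oscillatory parts are $\mathcal O(\omega^{-\infty})$ by \cite[Th.~7.7.1]{hormander1983analysis}. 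The moment of order $k$ is $\mathcal O(\omega^{-\lceil k/2\rceil})$, since odd moments vanish at leading order. Multiplying coefficient by moment for $k=0,1,2,3$:
\begin{itemize}
\item $k=0$: $\omega^{-3}$.
\item $k=1$: $\omega^{-2}\cdot\omega^{-1}$.
\item $k=2$: $\omega^{-1}\cdot\omega^{-1}$.
\item $k=3$: $1\cdot\omega^{-2}$, using that the cubic moment has no $\omega^{-1}$ term by parity of the Gaussian.
\end{itemize}
All contributions are $\mathcal O(\omega^{-2})$, uniformly in $t\in[0,T]$ because all the data depend smoothly on $t$.
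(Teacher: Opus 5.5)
Your proof is correct. It reaches the bound by different bookkeeping from the paper's.

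\textbf{The paper's route.} The paper Taylor-expands $p$ only around $\bbX(t)$, writing $p=\sum_{|\alpha|=3}(\ldots)\,r^\alpha$. It then applies \cref{th:stationary} once, at $x_s=\gammu(t)$, to the whole amplitude $q=p\,u$. Because $q$ vanishes to third order at a point $\mathcal{O}(\omega^{-1})$ from $\gammu(t)$, one gets $L_0q=q(\gammu)=\mathcal{O}(\omega^{-3})$. One also gets $\omega^{-1}L_1q=\mathcal{O}(\omega^{-2})$, since $L_1$ only involves derivatives of $q$ of order at most two at $\gammu$, and these are $\mathcal{O}(\omega^{-1})$.

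\textbf{Your route.} You instead re-centre the Taylor expansion at $\gammu(t)$ and carry it one order further. The quartic remainder is disposed of by a pure absolute-value Gaussian bound, with no cancellation needed since $\omega^{3/2}\cdot\omega^{-7/2}=\omega^{-2}$. Stationary phase is used only for the explicit moments $\int(x-\gammu)^\alpha\hat{\mathcal D}\,d^3x$. The smallness coming from $\bbX(t)$ is transferred into the coefficients via $D^\alpha p(\gammu)=\mathcal{O}(\omega^{-(3-|\alpha|)})$.

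\textbf{Comparison.} Your route costs a few more lines. In exchange, the stationary phase theorem is applied only to $p$-independent polynomial weights, and the remainder never has to be differentiated. The paper's Peano-form remainder $h_\alpha$ must tacitly be replaced by the smooth integral form before it can be fed into \cref{th:stationary}. The paper's route is shorter and makes the mechanism more transparent: low-order derivatives of a cubically vanishing amplitude, evaluated at a nearby point.

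\textbf{One small correction.} The absence of an $\omega^{-1}$ term in the cubic moment is not really a parity effect, because the function $g$ in \cref{th:stationary} contains cubic terms. The correct reason is a derivative count. In $L_j$, at least $3\mu$ of the $2\nu$ derivatives must fall on $g^\mu$, so at most $2\nu-3\mu=2j-\mu\le 2j$ derivatives reach $q$. If $q$ vanishes to order $k$ at $\gammu$, this forces $j\ge\lceil k/2\rceil$, which gives exactly your claimed bound $\mathcal{O}(\omega^{-\lceil k/2\rceil})$ for the $k$-th moment.
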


\begin{proof}
We do this for $\hat{\mathcal{E}}$ but the other cases are analogous.

We define $r^{i}(t,x):=x^i-\bbX(t)$ and, for each $t\in [0,T]$, we Taylor expand $p$ around $\bbX(t)$ to write 
\begin{align}
    \int_{\R^3} p(t,x) \hat{\mathcal{E}}\, d^3x&=\int_{\R^3}\sum_{|\alpha| = 3} \Big[\frac{D^\alpha p(t,\bbX(t)) }{\alpha !} + h_\alpha(t,x)\Big]r^\alpha\hat{\mathcal{E}}\, d^3x,
    \end{align}
    with $\lim_{x \to \bbX(t)} h_\alpha (t, x) = 0 $. Using \cref{eq:approx_density} and noting, from \cite[Th. 7.7.1]{hormander1983analysis}, that the integrals of that appear in the above terms proportional to $e^{\pm 2 \ii \omega \Re \phi}$ decay to an arbitrarily high order in $\omega$ gives
    \begin{align}
    \int_{\R^3} p(t,x) \hat{\mathcal{E}}\, d^3x&=\int_{\R^3}\sum_{|\alpha| = 3} \Big[\frac{D^\alpha p(t,\bbX(t)) }{\alpha !} + h_\alpha(t,x)\Big]{r}^\alpha u \omega^{\nicefrac{3}{2}} e^{-2 \omega \Im \phi}\, d^3x,
\end{align}
where $u$ is given in \cref{eq:u_def}. 
We now use the stationary phase approximation in \cref{th:stationary} with $x_s=\gammu(t)$, $f=2\ii\Im\phi$ and
\begin{align}
    q=\sum_{|\alpha| = 3} \Big[\frac{D^\alpha p(t,\bbX(t)) }{\alpha !} + h_\alpha(t,x)\Big]{r}^\alpha u.
\end{align}
Using~\cref{PropFirstEstimateCentroid} to give $r(t,\gammu(t))=\mathcal{O}(\omega^{-1})$ and $\Im\phi|_{\gamma}=\Im\phi|_{x_0}=0$ by \cref{eq:phi_const_along_gamma}, we obtain
\begin{align}
   \int_{\R^3} p(t,x) \hat{\mathcal{E}}\, d^3x&=\mathcal{O}(\omega^{-2}),
\end{align}
since $L_0u|_{\gammu(t)}=\mathcal{O}(\omega^{-3})$ and $\omega^{-1}L_1u|_{\gammu(t)}=\mathcal{O}(\omega^{-2})$.
\end{proof}

We are now in a position to prove the ODE system \eqref{EqMainThm} in \cref{MainThm}.

\subsection{Proof of \cref{MainThm}}

\begin{proof}[Proof of \cref{MainThm}] 

We begin by defining
$ r^i(t, x) := x^i - \mathbb{X}^i(t)$.
Note that, by \cref{PropFirstEstimateCentroid}, we have $r^i(t, x) =   x^i - \gamma^i(t) + \mathcal{O}(\omega^{-1})$. We now prove the evolution equations \eqref{EqMainThm} one by one.

\textbf{Step 1: proof of \cref{EqMainThmXDot}.} We start from \cref{EqXDot} and Taylor-expand $\n^{-2}$ for each $0 \leq t \leq T$ around $\mathbb{X}(t)$:
\begin{equation}
\n^{-2}(x) = \sum_{|\alpha| \leq 2} \frac{D^\alpha \n^{-2}(\bbX(t))}{\alpha!}r^\alpha + \underbrace{\sum_{|\alpha| = 3} \bigg[ \frac{D^\alpha \n^{-2}(\bbX(t))}{\alpha!} + h_\alpha(x) \bigg] r^\alpha}_{=:R(t,x)},
\end{equation}
where $h_\alpha(x) \to 0$ for $x \to \bbX(t)$. Thus, we obtain
\begin{align}
  \mathbb{E} \cdot \dot{\mathbb{X}}^i (t) = \int_{\R^3} \n^{-2} \bS^i \, d^3x &= \n^{-2} \Big|_{\mathbb{X}(t)} \mathbb{P}^i + \nabla_j \n^{-2} \Big|_{\mathbb{X}(t)} \int_{\R^3} r^j \bS^i \, d^3x \nonumber\\
  &\qquad + \frac{1}{2} \nabla_j \nabla_k \n^{-2} \Big|_{\mathbb{X}(t)} \int_{\R^3} r^j r^k \bS^i \, d^3x + \int_{\R^3} R(t,x) \bS^i \, d^3x ,
\end{align}
where we used the definition \eqref{DefP} of $\mathbb{P}$ in the first term on the right-hand side. The second term can be rewritten as
\begin{align} \label{EqExpressJSym}
    \int_{\R^3} r^j \bS^i d^3x = \int_{\R^3} \left( r^{[j} \bS^{i]} + r^{(j} \bS^{i)} \right) \, d^3x = - \frac{1}{2} \epsilon^{i j k} \mathbb{J}_k + \int_{\R^3} r^{(j} \bS^{i)} \, d^3x,
\end{align}
where we used the definition \eqref{DefJ} of $\mathbb{J}$. The symmetric term in the above equation can be related to the time derivative of the quadrupole moment. Using \cref{eq:dotQ} and the Taylor expansion of $\n^{-2}$, we obtain
\begin{align}
    \frac{1}{2}\dot{\bbQ}^{i j} = \int_{\mathbb{R}^3} \n^{-2} r^{(i} \bS^{j)} \, d^3x 
    &= \n^{-2} \Big|_{\bbX(t)} \int_{\R^3} r^{(i} \bS^{j)} \, d^3x +  \nabla_k \n^{-2} \Big|_{\mathbb{X}(t)} \int_{\R^3}  r^k r^{(i} \bS^{j)}  \, d^3x \nonumber\\
    &\qquad + \int_{\R^3} \left[ \frac{1}{2} \nabla_a \nabla_b \n^{-2} \Big|_{\bbX(t)} r^a r^b + R(t,x) \right] r^{(i} \bS^{j)} \, d^3x.
\end{align}
We use \cref{PropGoOverToHat} for all the remaining integrals, and putting everything together yields
\begin{align}
    \mathbb{E} \cdot \dot{\mathbb{X}}^i (t) &= \n^{-2} \Big|_{\mathbb{X}(t)} \mathbb{P}^i - \frac{1}{2}  \nabla_j \n^{-2} \Big|_{\mathbb{X}(t)} \epsilon^{i j k}  \mathbb{J}_k + \frac{1}{2} \n^2 \nabla_j \n^{-2} \Big|_{\mathbb{X}(t)} \dot{\bbQ}^{i j} \nonumber\\
     &- \n^2 (\nabla_j \n^{-2}) (\nabla_k \n^{-2}) \Big|_{\mathbb{X}(t)} \int_{\R^3}  r^k r^{(i} \hat{\bS}^{j)}  \, d^3x  + \frac{1}{2} \nabla_j \nabla_k \n^{-2} \Big|_{\mathbb{X}(t)} \int_{\R^3} r^j r^k \hat{\bS}^i \, d^3x \nonumber \\
     &- \n^2 \nabla_j \n^{-2} \Big|_{\mathbb{X}(t)} \int_{\R^3} \left[ \frac{1}{2} \nabla_a \nabla_b \n^{-2} \Big|_{\bbX(t)} r^a r^b + R(t,x) \right] r^{(i} \hat{\bS}^{j)} \, d^3x + \int_{\R^3} R(t,x) \hat{\bS}^i \, d^3x + \mathcal{O}(\omega^{-2}).
\end{align}
In the above equation, the terms on the last line are $\mathcal{O}(\omega^{-2})$ by \cref{PropErrorVanishingHigherOrder}. The integrals on the second line can be evaluated using \cref{th:stationary}, which gives
\begin{align}
    \int_{\R^3} r^k r^i \hat{\bS}^j \, d^3 x &= \frac{1}{\sqrt{\det\left( \frac{1}{2 \pi \ii} A \right)}} \Big[ r^k r^i v^j + \omega^{-1} L_1 \left( r^k r^i v^j \right) \Big] \Big|_{\gamma(t)} + \mathcal{O}(\omega^{-2}) \nonumber \\
    &= \frac{\ii \omega^{-1}}{\sqrt{\det\left( \frac{1}{2 \pi \ii} A \right)}} (A^{-1})^{k i} v^j \Big|_{\gamma(t)} + \mathcal{O}(\omega^{-2}) \nonumber \\
    &= \ii \omega^{-1} (A^{-1})^{k i} \hat{\mathbb{P}}^j + \mathcal{O}(\omega^{-2}) = \frac{1}{\hat{\mathbb{E}}} \hat{\bbQ}^{k i} \hat{\mathbb{P}}^j + \mathcal{O}(\omega^{-2}) = \frac{1}{\mathbb{E}} \bbQ^{k i} \mathbb{P}^j + \mathcal{O}(\omega^{-2}),
\end{align}
where $v$ is defined as in \cref{eq:v_def}. To obtain the second line, we used \cref{PropFirstEstimateCentroid}, which gives $r^i|_{\gamma(t)} = \mathcal{O}(\omega^{-1})$. In the last line of equalities, we first used \cref{eq:P_hat}, then \cref{eq:Q_hat}, and finally \cref{cor:approx2exact}.

The evolution equation for the energy centroid can now be written as
\begin{align}
    \dot{\mathbb{X}}^i &= \frac{1}{\mathbb{E} \n^2} \mathbb{P}^i - \frac{1}{\mathbb{E} \n^2} \epsilon^{i j k} \mathbb{J}_j \nabla_k \ln \n - \frac{1}{\mathbb{E}} \dot{\mathbb{Q}}^{i j} \nabla_j \ln \n \nonumber \\
    &\qquad- \frac{1}{\mathbb{E}^2 \n^2} \Big[ \mathbb{P}^i \mathbb{Q}^{j k} \nabla_j \nabla_k \ln \n + 2 \mathbb{P}^j \mathbb{Q}^{i k} (\nabla_j \ln \n) (\nabla_k \ln \n)  \Big] + \mathcal{O}(\omega^{-2}),
\end{align}
where $\n$ and its derivatives are evaluated at $\bbX(t)$.

\textbf{Step 2: proof of \cref{EqMainThmPDot}.} We start from \cref{EqPDot} and Taylor-expand $\nabla_i \ln \varepsilon$ and $\nabla_i \ln \mu$ for each $0 \leq t \leq T$ around $\mathbb{X}(t)$:
\begin{subequations}
\begin{align}
    (\nabla_i \ln \varepsilon)(x) &= \sum_{|\alpha| \leq 2} \frac{D^\alpha \nabla_i \ln \varepsilon (\bbX(t))}{\alpha!}r^\alpha + \underbrace{\sum_{|\alpha| = 3} \bigg[ \frac{D^\alpha \nabla_i \ln \varepsilon (\bbX(t))}{\alpha!} + h^\varepsilon_\alpha(x) \bigg] r^\alpha}_{=:R^\varepsilon_i(t,x)} , \\
    (\nabla_i \ln \mu)(x) &= \sum_{|\alpha| \leq 2} \frac{D^\alpha \nabla_i \ln \mu(\bbX(t))}{\alpha!}r^\alpha + \underbrace{\sum_{|\alpha| = 3} \bigg[ \frac{D^\alpha \nabla_i \ln \mu(\bbX(t))}{\alpha!} + h^\mu_{\alpha}(x) \bigg] r^\alpha}_{=:R^\mu_i(t,x)} ,
\end{align}
\end{subequations}
where $h^{\varepsilon}_\alpha(x) \to 0$ and $h^{\mu}_\alpha(x) \to 0$ for $x \to \bbX(t)$. Thus, we obtain
\begin{align}
    \dot{\mathbb{P}}_i &= \frac{1}{2} \int_{\R^3} \big( \varepsilon \bE \cdot \bE \nabla_i \ln \varepsilon + \mu \bH \cdot \bH \nabla_i \ln \mu \big)  \, d^3x \nonumber\\
    &= \nabla_i \ln \varepsilon \Big|_{\mathbb{X}^i(t)} \int_{\R^3} \frac{\varepsilon}{2} \bE \cdot \bE \, d^3x + \nabla_i \ln \mu \Big|_{\mathbb{X}^i(t)} \int_{\R^3} \frac{\mu}{2} \bH \cdot \bH \, d^3x \nonumber\\
    &\qquad +\nabla_j \nabla_i \ln \varepsilon \Big|_{\mathbb{X}^i(t)} \int_{\R^3} r^j \frac{\varepsilon}{2} \bE \cdot \bE \, d^3x + \nabla_j \nabla_i \ln \mu \Big|_{\mathbb{X}^i(t)} \int_{\R^3} r^j\frac{\mu}{2} \bH \cdot \bH \, d^3x \nonumber\\
    &\qquad + \nabla_k \nabla_j \nabla_i \ln \varepsilon \Big|_{\mathbb{X}^i(t)} \int_{\R^3} r^j r^k \frac{\varepsilon}{2} \bE \cdot \bE \, d^3x + \nabla_k \nabla_j \nabla_i \ln \mu \Big|_{\mathbb{X}^i(t)} \cdot \int_{\R^3} r^j r^k \frac{\mu}{2} \bH \cdot \bH \, d^3x \nonumber\\
    &\qquad + \int_{\R^3} \bigg[ R^\varepsilon_i(t,x) \frac{\varepsilon}{2} \bE \cdot \bE + R^\mu_i(t,x) \frac{\mu}{2} \bH \cdot \bH \bigg] \, d^3 x .
\end{align}
Since the dipole moment of the energy density with respect to the energy centroid vanishes by definition, we have
\begin{equation} 
    \mathbb{D}^j = 0 \quad \Leftrightarrow \quad \int_{\R^3} r^j \frac{\varepsilon}{2} \bE \cdot \bE \, d^3x = - \int_{\R^3} r^j \frac{\mu}{2} \bH \cdot \bH \, d^3x.
\end{equation}
Using this relation and \cref{PropGoOverToHat}, we obtain
\begin{align}
    \dot{\mathbb{P}}_i &= \nabla_i \ln \varepsilon \Big|_{\mathbb{X}^i(t)} \int_{\R^3} \frac{\varepsilon}{2} \hat{\bE} \cdot \hat{\bE} \, d^3x + \nabla_i \ln \mu \Big|_{\mathbb{X}^i(t)} \int_{\R^3} \frac{\mu}{2} \hat{\bH} \cdot \hat{\bH} \, d^3x  + \nabla_j \nabla_i \ln \frac{\varepsilon}{\mu}  \Big|_{\mathbb{X}^i(t)} \int_{\R^3} r^j \frac{\varepsilon}{2} \hat{\bE} \cdot \hat{\bE} \, d^3x  \nonumber\\
    &\qquad + \nabla_k \nabla_j \nabla_i \ln \varepsilon \Big|_{\mathbb{X}^i(t)} \int_{\R^3} r^j r^k \frac{\varepsilon}{2} \hat{\bE} \cdot \hat{\bE} \, d^3x + \nabla_k \nabla_j \nabla_i \ln \mu \Big|_{\mathbb{X}^i(t)} \int_{\R^3} r^j r^k \frac{\mu}{2} \hat{\bH} \cdot \hat{\bH} \, d^3x \nonumber\\
    &\qquad + \int_{\R^3} \bigg[ R^\varepsilon_i(t,x) \frac{\varepsilon}{2} \hat{\bE} \cdot \hat{\bE} + R^\mu_i(t,x) \frac{\mu}{2} \hat{\bH} \cdot \hat{\bH} \bigg] \, d^3 x  + \mathcal{O}(\omega^{-2}).
\end{align}
The last term on the right-hand side is $\mathcal{O}(\omega^{-2})$ by \cref{PropErrorVanishingHigherOrder}, and all remaining integrals can be evaluated using \cref{th:stationary} with $x_s=\gammu(t)$. We have
\begin{subequations}
\begin{align}
    \int_{\R^3} \frac{\varepsilon}{2} \hat{\bE} \cdot \hat{\bE} \, d^3x &= \frac{1}{4\sqrt{\det\left( \frac{1}{2 \pi \ii} A \right)}} \Big[ \varepsilon \be \cdot \overline{\be} + \omega^{-1} L_1 \left( \varepsilon \be \cdot \overline{\be} \right) \Big] \Big|_{\gamma(t)} + \mathcal{O}(\omega^{-2}) \nonumber \\
    &= \frac{1}{2} \hat{\mathbb{E}} + \mathcal{O}(\omega^{-2}) = \frac{1}{2} \mathbb{E} + \mathcal{O}(\omega^{-2}), \\
    \int_{\R^3} \frac{\mu}{2} \hat{\bH} \cdot \hat{\bH} \, d^3x &= \frac{1}{4\sqrt{\det\left( \frac{1}{2 \pi \ii} A \right)}} \Big[ \mu \bh \cdot \overline{\bh} + \omega^{-1} L_1 \left( \mu \bh \cdot \overline{\bh} \right) \Big] \Big|_{\gamma(t)} + \mathcal{O}(\omega^{-2}) \nonumber \\
    &= \frac{1}{2} \hat{\mathbb{E}} + \mathcal{O}(\omega^{-2}) = \frac{1}{2} \mathbb{E} + \mathcal{O}(\omega^{-2}).
\end{align}
\end{subequations}
In the equations above, the first lines follow from the stationary phase approximation given in \cref{th:stationary}. The second lines follow from combining \cref{Lemma_E/2} with \cref{eq:E_hat}. Then, for the final equality, we invoke \cref{PropGoOverToHat}. We continue with the evaluation of the dipole term
\begin{align}
    \int_{\R^3} r^j \frac{\varepsilon}{2} \hat{\bE} \cdot \hat{\bE} \, d^3x &= \frac{1}{4\sqrt{\det\left( \frac{1}{2 \pi \ii} A \right)}} \Big[ r^j \varepsilon \be \cdot \overline{\be} + \omega^{-1} L_1 \left( r^j \varepsilon \be \cdot \overline{\be} \right) \Big] \Big|_{\gamma(t)} + \mathcal{O}(\omega^{-2}) \nonumber \\
    &= \frac{1}{4\sqrt{\det\left( \frac{1}{2 \pi \ii} A \right)}} \Big\{ \big[\gamma^j(t) - \hat{\mathbb{X}}^j(t) \big] \varepsilon \be \cdot \overline{\be} \nonumber\\ 
    &\qquad+ \ii \omega^{-1} (A^{-1})^{j a} \big[ \nabla_a (\varepsilon \be \cdot \overline{\be}) - \ii \varepsilon \be \cdot \overline{\be} (A^{-1})^{b c} \nabla_a \nabla_b \nabla_c \Im \phi \big] \Big\} \Big|_{\gamma(t)} + \mathcal{O}(\omega^{-2}) \nonumber\\
    &= \mathcal{O}(\omega^{-2}).
\end{align}
We used \cref{PropGoOverToHat} in the second equality to replace $\bbX^j = \hat{\bbX}^j + \mathcal{O}(\omega^{-2})$, and the last equality follows after replacing $\hat{\mathbb{X}}^j(t)$ with the expression given in \cref{eq:X_hat}. 
Finally, the quadrupole terms are
\begin{align}
    \int_{\R^3} r^j r^k \frac{\varepsilon}{2} \hat{\bE} \cdot \hat{\bE} \, d^3x &= \frac{1}{4\sqrt{\det\left( \frac{1}{2 \pi \ii} A \right)}} \Big[ r^j r^k \varepsilon \be \cdot \overline{\be} + \omega^{-1} L_1 \left( r^j r^k \varepsilon \be \cdot \overline{\be} \right) \Big] \Big|_{\gamma(t)} + \mathcal{O}(\omega^{-2}) \nonumber \\
    &= \frac{\ii \omega^{-1}}{4\sqrt{\det\left( \frac{1}{2 \pi \ii} A \right)}} \varepsilon \be_0 \cdot \overline{\be}_0 (A^{-1})^{j k} \Big|_{\gamma(t)} + \mathcal{O}(\omega^{-2}) \nonumber \\
    &= \frac{1}{2} \hat{\mathbb{Q}}^{j k}(t) + \mathcal{O}(\omega^{-2}) = \frac{1}{2} \bbQ^{j k}(t) + \mathcal{O}(\omega^{-2}), 
    \end{align} 
    and
    \begin{align}  
    \int_{\R^3} r^j r^k \frac{\mu}{2} \hat{\bH} \cdot \hat{\bH} \, d^3x &= \frac{1}{4\sqrt{\det\left( \frac{1}{2 \pi \ii} A \right)}} \Big[ r^j r^k \mu \bh \cdot \overline{\bh} + \omega^{-1} L_1 \left( r^j r^k \mu \bh \cdot \overline{\bh} \right) \Big] \Big|_{\gamma(t)} + \mathcal{O}(\omega^{-2}) \nonumber \\
    &= \frac{\ii \omega^{-1}}{4\sqrt{\det\left( \frac{1}{2 \pi \ii} A \right)}} \mu \bh_0 \cdot \overline{\bh}_0 (A^{-1})^{j k} \Big|_{\gamma(t)} + \mathcal{O}(\omega^{-2}) \nonumber \\
    &= \frac{\ii \omega^{-1}}{4\sqrt{\det\left( \frac{1}{2 \pi \ii} A \right)}} \varepsilon \be_0 \cdot \overline{\be}_0 (A^{-1})^{j k} \Big|_{\gamma(t)} + \mathcal{O}(\omega^{-2}) \nonumber \\
    &= \frac{1}{2} \hat{\mathbb{Q}}^{j k}(t) + \mathcal{O}(\omega^{-2}) = \frac{1}{2} \bbQ^{j k}(t) + \mathcal{O}(\omega^{-2}).
\end{align}    
In the above, we have used a combination of \cref{Lemma_E/2} with \cref{eq:E_hat}, then \cref{eq:Q_hat}, and for the final equalities, we have invoked \cref{PropGoOverToHat}.

Bringing everything together, the evolution equation for the total linear momentum becomes
\begin{equation}
\dot{\mathbb{P}}_i = \mathbb{E} \nabla_i \ln \n + \bbQ^{j k} \nabla_i \nabla_j \nabla_k \ln \n + \mathcal{O}(\omega^{-2}),    
\end{equation}
where the derivatives of $\n$ are evaluated at $\bbX(t)$.

\textbf{Step 3: proof of \cref{EqMainThmJDot}.} We start from \cref{EqPDot} and use the same Taylor expansion for $\nabla_i \ln \varepsilon$ and $\nabla_i \ln \mu$ as above to obtain
\begin{align}
    \dot{\mathbb{J}}_i &= \epsilon_{i j k} \mathbb{P}^j \dot{\mathbb{X}}^k + \frac{1}{2} \int_{\R^3} \epsilon_{ijk} r^j \big( \varepsilon \bE \cdot \bE \nabla^k \ln \varepsilon + \mu \bH \cdot \bH \nabla^k \ln \mu \big) \, d^3x \nonumber\\
    &= \epsilon_{i j k} \mathbb{P}^j \dot{\mathbb{X}}^k + \epsilon_{i j k} \bigg[ \nabla^k \ln \varepsilon \Big|_{\mathbb{X}^i(t)} \int_{\R^3} r^j \frac{\varepsilon}{2} \bE \cdot \bE \, d^3x + \nabla^k \ln \mu \Big|_{\mathbb{X}^i(t)} \int_{\R^3} r^j \frac{\mu}{2} \bH \cdot \bH \, d^3x \bigg] \nonumber\\
    &\qquad + \epsilon_{i j k} \bigg[ \nabla_l \nabla^k \ln \varepsilon \Big|_{\mathbb{X}^i(t)} \int_{\R^3} r^j r^l \frac{\varepsilon}{2} \bE \cdot \bE \, d^3x + \nabla_l \nabla^k \ln \mu \Big|_{\mathbb{X}^i(t)} \int_{\R^3} r^j r^l \frac{\mu}{2} \bH \cdot \bH \, d^3x \bigg] \nonumber\\
    &\qquad + \epsilon\indices{_{i j}^k} \int_{\R^3} r^j \bigg[ R^{\varepsilon}_k \frac{\varepsilon}{2} \bE \cdot \bE + R^{\mu}_k \frac{\mu}{2} \bH \cdot \bH \bigg] \, d^3 x .
\end{align}
Using \cref{PropGoOverToHat} and the vanishing of the dipole moment, we obtain
\begin{align}
    \dot{\mathbb{J}}_i &= \epsilon_{i j k} \mathbb{P}^j \dot{\mathbb{X}}^k + \epsilon_{i j k} \nabla^k \ln \frac{\varepsilon}{\mu} \Big|_{\mathbb{X}^i(t)} \int_{\R^3} r^j \frac{\varepsilon}{2} \hat{\bE} \cdot \hat{\bE} \, d^3x  \nonumber\\
    &\qquad + \epsilon_{i j k} \bigg[ \nabla_l \nabla^k \ln \varepsilon \Big|_{\mathbb{X}^i(t)} \int_{\R^3} r^j r^l \frac{\varepsilon}{2} \hat{\bE} \cdot \hat{\bE} \, d^3x + \nabla_l \nabla^k \ln \mu \Big|_{\mathbb{X}^i(t)} \int_{\R^3} r^j r^l \frac{\mu}{2} \hat{\bH} \cdot \hat{\bH} \, d^3x \bigg] \nonumber\\
    &\qquad + \epsilon\indices{_{i j}^k} \int_{\R^3} r^j \bigg[ R^{\varepsilon}_k \frac{\varepsilon}{2} \hat{\bE} \cdot \hat{\bE} + R^{\mu}_k \frac{\mu}{2} \hat{\bH} \cdot \hat{\bH} \bigg] \, d^3 x + \mathcal{O}(\omega^{-2}).
\end{align}
The last term on the right-hand side is $\mathcal{O}(\omega^{-2})$ by \cref{PropErrorVanishingHigherOrder}, and all remaining integrals have been evaluated above in Step 2. The evolution equation for the total angular momentum is 
\begin{equation}
    \dot{\mathbb{J}}_i = \epsilon_{i j k} \Big( \mathbb{P}^{j} \dot{\mathbb{X}}^k + \bbQ^{j l} \nabla_l \nabla^k \ln \n \Big) + \mathcal{O}(\omega^{-2}),
\end{equation}
where the derivatives of $\n$ are evaluated at $\bbX(t)$. 

\textbf{Step 4: proof of \cref{EqMainThmQDot}.} We start from \cref{eq:dotQ} and use \cref{PropGoOverToHat} and~\cref{cor:approx2exact} (to replace $r^i = x^i - \bbX^i = x^i - \hat{\bbX}^i + \mathcal{O}(\omega^{-2}) = \hat{r}^j + \mathcal{O}(\omega^{-2})$) to obtain
\begin{equation}
    \dot{\bbQ}^{i j} = 2 \int_{\R^3} \n^{-2} r^{(i} \bS^{j)}  \, d^3x = 2 \int_{\R^3} \n^{-2} \hat{r}^{(i} \hat{\bS}^{j)}  \, d^3x + \mathcal{O}(\omega^{-2})
\end{equation}
Next, we compute
\begin{align} 
    \frac{d}{dt} \hat{\bbQ}^{ij}(t) &=- \underbrace{2\dot{\hat{\bbX}}^{(i}(t) \int_{\R^3}  \hat{r}^{j)} \hat{\cE}(t,x) \, d^3x }_{=: I}  + \underbrace{\int_{\R^3} \hat{r}^i \hat{r}^j \big( \varepsilon \dot{\hat{\bE}} \cdot \hat{\bE} + \mu \dot{\hat{\bH}} \cdot \hat{\bH} \big) \, d^3x}_{=:II}
\end{align}
Let us first evaluate the first term on the right-hand side. We compute
\begin{align}
    \int_{\R^3}  \hat{r}^{j} \hat{\cE}(t,x) \, d^3x =\int_{\R^3}  x^j\hat{\cE}(t,x) \, d^3x-\hat{\bbX}^j\int_{\R^3}  \hat{\cE}(t,x) \, d^3x=\hat{\bbE}\hat{\bbX}^j-\hat{\bbX}^j\hat{\bbE}=0. 
\end{align}
This shows $I = 0$. We now continue with $II$:
\begin{align}
    II &= \int_{\R^3} \hat{r}^i \hat{r}^j \Big[ (\nabla \times \hat{\bH}) \cdot \hat{\bE} - (\nabla \times \hat{\bE}) \cdot \hat{\bH} \Big] \, d^3x + \int_{\R^3} \hat{r}^i \hat{r}^j \big( \hat{\mathscr{G}} \cdot \hat{\bE} - \hat{\mathscr{F}} \cdot \hat{\bH} \big) \, d^3x \nonumber\\
    &= \int_{\R^3} \hat{r}^i \hat{r}^j \nabla \cdot ( \hat{\bH} \times \hat{\bE}) \, d^3x + \mathcal{O}(\omega^{-2})
    \end{align}
    where we use~\cref{EqEstimatesApproxSol} and the compact support of $\hat{E}$ and $\hat{H}$. So, 
    \begin{align}
    II&= \int_{\R^3} \Big[ \hat{r}^i (\hat{\bE} \times \hat{\bH})^j + \hat{r}^j ( \hat{\bE} \times \hat{\bH})^i \Big] \, d^3x + \mathcal{O}(\omega^{-2}) \nonumber\\
    &= 2 \int_{\R^3} \n^{-2} \hat{r}\indices{^{(i}} \hat{\bS}{}\indices{^{j)}} \, d^3x + \mathcal{O}(\omega^{-2}) .
\end{align}
Thus, we have
\begin{equation}
    \dot{\bbQ}^{i j} = \int_{\R^3} \hat{r}^i \hat{r}^j \partial_t \hat{\cE}(t,x) \, d^3x + \mathcal{O}(\omega^{-2}).
\end{equation}
Next, we can write
\begin{equation}
    \partial_t \hat{\mathcal{E}} = \omega^{\nicefrac{3}{2}} \big( \partial_t u - 2 \omega u \Im \dot{\phi} \big) e^{-2 \omega \Im \phi} + \omega^{\nicefrac{3}{2}} \partial_t \Big[ \Re \big( u e^{2 \ii  \omega \Re \phi} e^{-2 \omega \Im \phi} \big) \Big], 
\end{equation}
with $u$ defined in \cref{eq:u_def}. Using \cite[Th. 7.7.1]{hormander1983analysis}, the integrals of the above terms proportional to $e^{\pm 2 \ii \omega \Re \phi}$ decay to an arbitrarily high order in $\omega$. For the remaining terms, we apply the stationary phase approximation given in \cref{th:stationary} with $x_s=\gammu(t)$, $f=2\ii\Im\phi$, $A_{ab}=2\ii\nabla_a\nabla_a\Im\phi |_{\gamma(t)}$, and
    \begin{align}
    q&=\omega^{\nicefrac{3}{2}}\partial_tu\qquad\text{or}\qquad q=-2\omega^{\nicefrac{3}{2}}\omega \hat{r}^i\hat{r}^j\Im\dot{\phi}u  
\end{align}
from which we obtain
\begin{align}
    \dot{\bbQ}^{i j} = \frac{1}{\sqrt{\det \left( \frac{1}{2 \pi \ii} A \right)}} \Big[ \hat{r}^i \hat{r}^j \partial_t u &+ \omega^{-1} L_1 \big( \hat{r}^i \hat{r}^j \partial_t u \big) - 2 \omega \hat{r}^i \hat{r}^j u \Im \dot{\phi}\nonumber\\
    &- 2 L_1 \big( \hat{r}^i \hat{r}^j u \Im \dot{\phi} \big) - 2 \omega^{-1} L_2 \big( \hat{r}^i \hat{r}^j u \Im \dot{\phi} \big) \Big] \Big|_{\gamma(t)} + \mathcal{O}(\omega^{-2}) .
\end{align}
We analyse all of the above terms individually. Recall that $\Im \dot{\phi}|_{\gamma(t)} = 0$ and $\hat{r}^i|_{\gamma(t)} = \mathcal{O}(\omega^{-1})$. Furthermore, taking the imaginary part of the Eikonal equation~\eqref{eq:Eikonal} to degree $j_{\phi}=1$ gives $\nabla_a \Im \dot{\phi} \big|_\gamma = - \frac{\ii}{2 \n^2 \dot{\phi}} A_{a b} \nabla^b \phi \big|_\gamma = \frac{\ii}{2} A_{a b} \dot{\gamma}^b$. The first three terms are
\begin{subequations}
\begin{align}
    \hat{r}^i \hat{r}^j \partial_t u \Big|_{\gamma(t)} &= \mathcal{O}(\omega^{-2}), \\
    \omega^{-1} L_1 \big( \hat{r}^i \hat{r}^j \partial_t u \big) \Big|_{\gamma(t)} &= \frac{\ii \omega^{-1}}{2} (A^{-1})^{a b} \nabla_a \nabla_b  \left( \hat{r}^i \hat{r}^j \partial_t u \right) \Big|_{\gamma(t)} + \mathcal{O}(\omega^{-2}) \nonumber\\
    &= \ii \omega^{-1} (A^{-1})^{i j} \partial_t u \Big|_{\gamma(t)}  + \mathcal{O}(\omega^{-2}), \\
    - 2 \omega \hat{r}^i \hat{r}^j u \Im \dot{\phi} \Big|_{\gamma(t)} &= 0.
\end{align}
\end{subequations}
The fourth term is
\begin{align}
    - 2 L_1 \big( \hat{r}^i \hat{r}^j u \Im \dot{\phi} \big) \Big|_{\gamma(t)} &= - \ii (A^{-1})^{a b} \nabla_a \nabla_b \big( \hat{r}^i \hat{r}^j u \Im \dot{\phi} \big) \Big|_{\gamma(t)} + \mathcal{O}(\omega^{-2}) \nonumber \\
    &= 2 u \dot{\gamma}^{(i} \hat{r}^{j)} \Big|_{\gamma(t)} + \mathcal{O}(\omega^{-2}) \nonumber \\
    &= -2 \ii \omega^{-1} \dot{\gamma}^{(i} (A^{-1})^{j) a} \Big[ \nabla_a u - \ii u (A^{-1})^{b c} \nabla_a \nabla_b \nabla_c \Im \phi \Big] \Big|_{\gamma(t)} + \mathcal{O}(\omega^{-2}) ,
\end{align}
where the last equality was obtained using \cref{eq:X_hat} to evaluate $\hat{r}^j |_\gamma = \gamma^j - \hat{\bbX}^j$. The fifth term is
\begin{align} \label{eq:L2_terms}
    - 2 \omega^{-1} L_2 &\big( \hat{r}^i \hat{r}^j u \Im \dot{\phi} \big) \Big|_{\gamma(t)} = \frac{\omega^{-1}}{4} (A^{-1})^{a b}(A^{-1})^{cd}\nabla_{a}\nabla_b\nabla_c\nabla_d\big( \hat{r}^i \hat{r}^j u \Im \dot{\phi} \big) \Big|_{\gamma(t)} \nonumber\\
    &\qquad-\frac{\omega^{-1}}{24} (A^{-1})^{a b}(A^{-1})^{cd}(A^{-1})^{ef}\nabla_a\nabla_b\nabla_c\nabla_d\nabla_e\nabla_f\Big[ g\hat{r}^i \hat{r}^j u (\Im \dot{\phi}) \Big]\Big|_{\gamma(t)}+ \mathcal{O}(\omega^{-2}),
    \end{align}
    where 
    \begin{align}
        g(t,x)=2 \ii \Im \phi(t,x) - 2 \ii \Im \phi[t,\gammu(t)]-\frac{1}{2}A_{ab}(t)[x-\gammu(t)]^a [x-\gammu(t)]^b.
    \end{align}
    To obtain the above equation, we used \cref{eq:L2} and note that only the first two terms in \cref{eq:L2} have relevant contributions from \cref{rem:g} in combination with $\Im\phi|_{\gamma}=0=\Im\dot{\phi}|_{\gamma}$ and $\hat{r}=\mathcal{O}(\omega^{-1})$. 
    Note that a derivative must hit each $\hat{r}$, two derivatives must hit $\Im\phi$ since $\nabla\Im\phi|_{\gamma}=0$ by~\cref{eq:nabphireal}, and three derivatives must hit $g$ to give non-trivial contributions. Applying these same facts gives
    \begin{subequations}
    \begin{align}
    &\nabla_{a}\nabla_b\nabla_c\nabla_d\big( \hat{r}^i \hat{r}^j u \Im \dot{\phi} \big) \Big|_{\gamma(t)} 
    =-6\ii u\delta^i_{(b}\delta^j_{c}\partial_t\nabla_a\nabla_{d)}(2\ii\Im\phi)\Big|_{\gamma(t)} +12\ii\delta^i_{(b}\delta^j_{c}\nabla_{a}u A_{d)e}\dot{\gamma}^e\Big|_{\gamma(t)} + \mathcal{O}(\omega^{-1}),\\
    &\nabla_a\nabla_b\nabla_c\nabla_d\nabla_e\nabla_f\Big[ 2 \ii (\Im \phi) \hat{r}^i \hat{r}^j u (\Im \dot{\phi}) \Big]\Big|_{\gamma(t)} 
    =\frac{6!\ii}{2\cdot 3!} \nabla_{(a}\nabla_b\nabla_c(2 \ii\Im \phi) \delta_d^i \delta_e^j  A_{f)k}\dot{\gamma}^ku\Big|_{\gamma(t)} + \mathcal{O}(\omega^{-1}).
    \end{align}
    \end{subequations} 
    Thus, we can rewrite \cref{eq:L2_terms} as 
    \begin{align}
    - 2 \omega^{-1} L_2 &\big( \hat{r}^i \hat{r}^j u \Im \dot{\phi} \big) \Big|_{\gamma(t)} =\frac{\ii\omega^{-1}}{4} (A^{-1})^{(a d}(A^{-1})^{ef)}\delta^i_{a}\delta^j_{d}\Big[-6u\partial_t\nabla_{e}\nabla_f(2\ii\Im\phi)+12\nabla_{e}u A_{fk}\dot{\gamma}^k\Big]
         \Big|_{\gamma(t)}  \nonumber\\
    &\qquad - \frac{5\ii\omega^{-1}}{2}(A^{-1})^{(a b}(A^{-1})^{cd}(A^{-1})^{ef)} \nabla_{a}\nabla_b\nabla_c(2 \ii\Im \phi) \delta_d^i \delta_e^j  A_{fk}\dot{\gamma}^ku\Big|_{\gamma(t)}  + \mathcal{O}(\omega^{-2}),
    \end{align}
   where we used the fact that the symmetrisation extends to the contracted indices. Next, the identities
    \begin{subequations}
    \begin{align}
        (A^{-1})^{(a b}(A^{-1})^{cd)}&=\frac{1}{3} \Big[(A^{-1})^{a b}(A^{-1})^{cd}+(A^{-1})^{a c}(A^{-1})^{bd}+(A^{-1})^{a d}(A^{-1})^{bc} \Big],\\
       (A^{-1})^{(a b}(A^{-1})^{cd}(A^{-1})^{ef)} &=\frac{1}{5}
    \Big[3(A^{-1})^{a b}(A^{-1})^{(c d}(A^{-1})^{ef)}+2(A^{-1})^{cf}(A^{-1})^{ae}(A^{-1})^{bd}\Big],
    \end{align}
    \end{subequations}
    allow us to compute
     \begin{align}
    &- 2 \omega^{-1} L_2 \big( \hat{r}^i \hat{r}^j u \Im \dot{\phi} \big) \Big|_{\gamma(t)} = -\frac{\ii\omega^{-1}}{2} \Big[2(A^{-1})^{i a}(A^{-1})^{jb}+(A^{-1})^{ij }(A^{-1})^{ab}\Big] u \partial_t\nabla_{a}\nabla_b(2\ii\Im\phi)
         \Big|_{\gamma(t)} \nonumber\\
    &\qquad- \frac{\ii \omega^{-1}}{2} \Big\{(A^{-1})^{a b} \Big[2(A^{-1})^{c (i}\dot{\gamma}^{j)}+\dot{\gamma}^c(A^{-1})^{ij}\Big]+2\dot{\gamma}^c(A^{-1})^{bj}(A^{-1})^{ai}\Big\} u\nabla_{a}\nabla_b\nabla_{c}(2 \ii\Im \phi) \Big|_{\gamma(t)} \nonumber\\
    &\qquad+\ii\omega^{-1} \Big[(A^{-1})^{i a}\dot{\gamma}^j+(A^{-1})^{ja}\dot{\gamma}^i+(A^{-1})^{ij }\dot{\gamma}^a \Big]\nabla_{a}u 
         \Big|_{\gamma(t)}+ \mathcal{O}(\omega^{-2}).
    \end{align}
Putting everything together, we obtain
\begin{align}
    \dot{\bbQ}^{i j} = \frac{\ii \omega^{-1}}{\sqrt{\det \left( \frac{1}{2 \pi \ii} A \right)}} &\bigg\{ (A^{-1})^{i j} \left[ \left(\partial_t + \dot{\gamma}^a \nabla_a \right) u - \frac{u}{2} (A^{-1})^{b c} \left(\partial_t + \dot{\gamma}^a \nabla_a \right) \nabla_b \nabla_c (2\ii \Im \phi) \right] \nonumber\\
    &\qquad -u (A^{-1})^{i b} (A^{-1})^{j c} \left(\partial_t +  \dot{\gamma}^a \nabla_a \right) \nabla_b \nabla_c (2\ii \Im \phi) \bigg\}  \bigg|_{\gamma(t)} + \mathcal{O}(\omega^{-2}).
\end{align}
The term in square brackets is $\mathcal{O}(\omega^{-1})$ by \cref{Prop:conservation}, and we are left with
\begin{align}
    \dot{\bbQ}^{i j} &= - \frac{\ii \omega^{-1} u}{\sqrt{\det \left( \frac{1}{2 \pi \ii} A \right)}}  (A^{-1})^{i b} (A^{-1})^{j c} \left(\partial_t +  \dot{\gamma}^a \nabla_a \right) \nabla_b \nabla_c (2\ii \Im \phi) \Big|_{\gamma(t)} + \mathcal{O}(\omega^{-2}) \nonumber \\
    &=- \frac{\ii \omega^{-1} u}{\sqrt{\det \left( \frac{1}{2 \pi \ii} A \right)}}  (A^{-1})^{i b} (A^{-1})^{j c} \frac{d}{d t} A_{b c} \Big|_{\gamma(t)} + \mathcal{O}(\omega^{-2}) \nonumber \\
    &= \ii \omega^{-1} \hat{\mathbb{E}} \frac{d}{dt} (A^{-1})^{i j} + \mathcal{O}(\omega^{-2}).
\end{align}
where we use $\frac{d}{dt}A^{-1}=-A^{-1}\cdot \frac{d A}{dt} \cdot A^{-1} $ and \cref{eq:E_hat}. Finally, we apply \cref{PropGoOverToHat} to the term $\mathbb{E} = \hat{\mathbb{E}} + \mathcal{O}(\omega^{-2})$ to obtain
\begin{equation}
    \dot{\bbQ}^{i j} = \ii \omega^{-1} \mathbb{E} \frac{d}{dt} (A^{-1})^{i j} + \mathcal{O}(\omega^{-2}).
\end{equation}
\end{proof}

\appendix

\section{The stationary phase approximation}
\label{app:Stationary}

We collect here some general results regarding the stationary phase approximation, which are used in many parts of the paper. The proof of the following theorem can be found in \cite[Sec. 7.7]{hormander1983analysis}.

\begin{theorem}[Theorem 7.7.5 from \cite{hormander1983analysis}]
\label{th:stationary}

Let $K \subset \mathbb{R}^n$ be a compact set, $X$ an open neighbourhood of $K$ and $k$ positive integer. If $q \in C^{2k}_0(K)$, $f \in C^{3k+1}(X)$ and $\Im f \geq 0$ in $X$, $\Im f(x_s) = 0$, $\nabla_a f(x_s) = 0$, $\det \nabla_a \nabla_b f(x_s) \neq 0$, $\nabla_a f \neq 0$ in $K \setminus \{x_s\}$ then

\begin{equation}
    \bigg| \int_{\R^n} q(x) e^{\ii \omega f(x)} \, d^n x - \frac{e^{\ii \omega f(x_s)}}{ \sqrt{ \det \left( \frac{\omega}{2 \pi \ii} A \right) } } \sum_{j<k} \omega^{-j} L_j q  (x_s)\bigg| \leq C \omega^{-k} \sum_{|\alpha|\leq 2 k} \sup |\mathfrak{D}^\alpha q|, \qquad \omega > 0.
\end{equation}
In the above equation, $A_{a b} = \nabla_a \nabla_b f (x_s)$, $\mathfrak{D}_a = - \ii \nabla_a$, and
\begin{subequations} \label{eq:L_def}
\begin{align}
    L_j q (x_s) &= \sum_{\nu - \mu = j} \sum_{2\nu \geq 3 \mu} \frac{1}{\ii^j 2^\nu \mu! \nu!}  \left[ -(A^{-1})^{a b} \nabla_a \nabla_b \right]^\nu (g^\mu q) (x_s), \\ 
    g(x) &= f(x) - f(x_s) - \frac{1}{2}  \nabla_a \nabla_b f(x_s) (x - x_s)^a (x - x_s)^b. \label{eq:g_Def}
\end{align}    
\end{subequations}    

\end{theorem}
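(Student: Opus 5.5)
The plan is to reduce the integral to a Gaussian integral with a purely quadratic phase, evaluate that exactly by Fourier transform, and treat the cubic and higher part $g$ of the phase perturbatively. We may shift coordinates so that $x_s=0$ and replace $f$ by $f-f(x_s)$; this removes the prefactor $e^{\ii\omega f(x_s)}$.

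\textbf{Step 1 (localisation).} Choose a cut-off $\chi\in C^\infty_0$ equal to $1$ near $0$ with small support, and split $q=\chi q+(1-\chi)q$. On the support of $(1-\chi)q$ we have $\nabla f\neq 0$ and $\Im f\geq 0$. The non-stationary phase estimate of Hörmander's Theorem 7.7.1 then bounds $\int (1-\chi)q\,e^{\ii\omega f}$ by $C\omega^{-k}\sum_{|\alpha|\le k}\sup|\mathfrak D^\alpha q|$. Hörmander proves that estimate by integrating by parts with the operator ${}^t\!\big(\overline{\nabla f}\cdot\nabla/(\ii\omega(|\nabla f|^2+\Im f))\big)$, using the inequality $|\nabla f|^2\lesssim \Im f$ valid when $\Im f\ge 0$. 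It therefore suffices to treat $q$ supported in a small ball around $0$.

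\textbf{Step 2 (quadratic phase).} For $f_0(x)=\tfrac12 A_{ab}x^ax^b$ with $\Im A\ge 0$ and $\det A\ne 0$, prove that
\[
\int u\,e^{\ii\omega f_0}\,d^nx=\det\!\Big(\tfrac{\omega}{2\pi\ii}A\Big)^{-1/2}\sum_{j<k}\frac{\omega^{-j}}{j!}\Big(\tfrac{\ii}{2}(A^{-1})^{ab}\nabla_a\nabla_b\Big)^{j}u(0)+R_k,
\]
with $|R_k|\le C\omega^{-k}\sum_{|\alpha|\le 2k+n+1}\|\mathfrak D^\alpha u\|_{L^2}$. The argument is Parseval: the Fourier transform of $e^{\ii\omega f_0}$ is $\det(\omega A/2\pi\ii)^{-1/2}\,e^{-\ii\langle A^{-1}\xi,\xi\rangle/(2\omega)}$. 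One obtains this first for $\Im A>0$ and then passes to the limit; the square root uses the branch fixed in the conventions. Next Taylor-expand this exponential to order $k$. The remainder is controlled because $\Im(-A^{-1})\ge 0$ keeps the exponential bounded by $1$.

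\textbf{Step 3 (the cubic part).} Write $f=f_0+g$, where $g$ vanishes to third order at $0$, so $|g|\le C|x|^3$ near $0$. Expand
\[
e^{\ii\omega g}=\sum_{\mu<3k}\frac{(\ii\omega g)^\mu}{\mu!}+\rho_{3k},\qquad |\rho_{3k}|\le C(\omega|x|^3)^{3k}.
\]
Apply Step 2 to each $u=g^\mu q$. Since $g^\mu q$ vanishes to order $3\mu$, the term with $j$ derivative pairs contributes only if $2j\ge 3\mu$, and it carries the power $\omega^{\mu-j}$. Relabelling $\nu=j$ and keeping the terms with $\nu-\mu<k$ produces exactly $\sum_{j<k}\omega^{-j}L_j q(0)$ with the combinatorial factors of \eqref{eq:L_def}. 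The remaining terms are $O(\omega^{-k})$. To recover the stated derivative count $2k$ in place of $2k+n+1$, I would follow Hörmander: replace $L^2$ Sobolev norms by sup norms on the compact support, and interpolate using the vanishing orders of $g^\mu$.

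\textbf{Main obstacle.} The hard part is the Taylor remainder $\rho_{3k}$ in Step 3. One needs $\int q\,\rho_{3k}\,e^{\ii\omega f_0}$ to be $O(\omega^{-k})$. If $\Im A>0$, the Gaussian bound $|e^{\ii\omega f_0}|\le e^{-c\omega|x|^2}$ gives $\omega^{3k}\int|x|^{9k}e^{-c\omega|x|^2}\,d^nx=O(\omega^{-3k/2-n/2})$, which suffices. Only $\Im A\ge 0$ is assumed, however, and then there is no such decay. My first attempt is Hörmander's homotopy: set $f_t=f_0+t g$ for $t\in[0,1]$ and expand $I(t)=\int q\,e^{\ii\omega f_t}$ in $t$ up to order $3k$. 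Each $t$-derivative brings down a factor $\ii\omega g$. The remainder $\sup_t|I^{(3k)}(t)|$ would then be estimated by applying the first-order error bound of Step 2 uniformly in $t$ to $u=g^{3k}q$, after shrinking the support so that $\Im f_t\ge 0$ persists. The vanishing of $g^{3k}q$ to order $9k$ at $0$ supplies the required gain of $\omega^{-k}$.
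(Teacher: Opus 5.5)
The paper does not prove this statement; it quotes H\"ormander's Theorem~7.7.5. Your outline (localisation by Theorem~7.7.1, the quadratic case by Fourier transform, the homotopy $f_t=f_0+tg$) follows the architecture of that proof. The step you single out as the main obstacle, however, is not closed, and the argument you propose for it does not work. Step~2 is a statement about the purely quadratic phase $f_0$. By contrast, $I^{(m)}(t)=\int q\,(\ii\omega g)^m e^{\ii\omega f_t}\,d^nx$ has a non-quadratic phase for $t>0$, so Step~2 says nothing about it, uniformly in $t$ or otherwise. Even at $t=0$, the first-order bound from Step~2 only gives $|\int g^{3k}q\,e^{\ii\omega f_0}\,d^nx|\le C\omega^{-n/2-1}\|g^{3k}q\|_{H^{2+s}}$, in which the $9k$-fold vanishing plays no role. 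The missing tool is the weighted form of Theorem~7.7.1: for $F\in C^{p+1}$ with $\Im F\ge 0$,
\[
\omega^p\Bigl|\int v\,e^{\ii\omega F}\,d^nx\Bigr|\le C\sum_{|\alpha|\le p}\sup\Bigl(|\mathfrak{D}^\alpha v|\,\bigl(|\nabla F|^2+\Im F\bigr)^{|\alpha|/2-p}\Bigr),
\]
applied with $F=f_t$. Since $\nabla f_t(x)=A(x-x_s)+t\nabla g(x)$ with $\nabla g=O(|x-x_s|^2)$, one has $|\nabla f_t(x)|\ge c|x-x_s|$ on a small ball, uniformly in $t\in[0,1]$. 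That lower bound is what the shrinking is for; $\Im f_t=(1-t)\Im f_0+t\Im f\ge 0$ holds automatically because $\Im A\ge 0$. The weight is then at most $C|x-x_s|^{|\alpha|-2p}$. So for $v=q\,g^m$, which vanishes to order $3m$, you get $\omega^m|\int q\,g^m e^{\ii\omega f_t}\,d^nx|\le C\omega^{m-p}$ whenever $2p\le 3m$. Demanding $m-p\le -k$ forces $m\ge 2k$. The optimal choice $m=2k$, $p=3k$ uses exactly $f\in C^{3k+1}$, which is where that hypothesis comes from. Expanding to order $3k$ in $t$, as you propose, would require $p=4k$ and hence $f\in C^{4k+1}$.

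The derivative count is a second genuine gap, not a cosmetic one. Step~2 applied to $g^\mu q$ to the accuracy you need uses on the order of $2(\mu+k)$ derivatives of $g^\mu q$, and the remainder bound above uses $3k$ derivatives of $q\,g^{2k}$. Only $q\in C^{2k}$ and $g\in C^{3k+1}$ are available, and vanishing orders of $g^\mu$ cannot supply derivatives of $q$ that do not exist. The standard remedy is to write $q=q_1+q_2$, with $q_1$ a cut-off times the Taylor polynomial of $q$ at $x_s$ of degree $2k-1$. Then $q_2$ vanishes to order $2k$. The weighted estimate with $p=k$ and $F=f$ (using $|\nabla f(x)|\ge c|x-x_s|$ on $K$) gives $|\int q_2\,e^{\ii\omega f}\,d^nx|\le C\omega^{-k}\sum_{|\beta|\le 2k}\sup|\mathfrak{D}^\beta q|$. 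Moreover $L_jq_2(x_s)=0$ for $j<k$, because $L_j$ puts at most $2j-\mu\le 2k-2$ derivatives on $q$. For the smooth $q_1$ the homotopy runs without loss. The finite regularity of $g$ in the terms $\omega^\mu\int q_1g^\mu e^{\ii\omega f_0}\,d^nx$ is handled the same way: replace $g$ by its Taylor polynomial of degree $3k$ at $x_s$, and bound the difference, which vanishes to order $3\mu+3k-2$, by the weighted estimate with $p=\mu+k$. Two smaller points. First, the inequality behind Theorem~7.7.1 is $|\nabla\Im f|^2\lesssim\Im f$, not $|\nabla f|^2\lesssim\Im f$, which fails for real $f$. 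Second, Parseval gives the remainder $C\omega^{-n/2-k}$ in Step~2, not $C\omega^{-k}$. Relatedly, for $n\ge 3$ the stated estimate can only hold for $\omega$ bounded below: for $k=1$ the subtracted term has size comparable to $\omega^{-n/2}|q(x_s)|$ as $\omega\to 0$, which outgrows $C\omega^{-1}$ while the integral stays bounded. This is harmless here, since the paper only uses $\omega>1$.
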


Note that the Hessian of $f$ is denoted above as $\nabla \nabla f$, while in \cite{hormander1983analysis} this is denoted by $f''$. The first four terms in the above expansion are
\begin{subequations} \label{eq:stationary_expansion}
\begin{align}
    L_0 q (x_s) &= q(x_s), \\
    L_1 q (x_s) &= \frac{\ii}{2} (A^{-1})^{a b} \nabla_a \nabla_b q (x_s) - \frac{\ii}{8} (A^{-1})^{a b} (A^{-1})^{c d} \nabla_a \nabla_b \nabla_c \nabla_d (g q) (x_s)  \nonumber \\
    &\qquad+ \frac{\ii}{96} (A^{-1})^{a b} (A^{-1})^{c d} (A^{-1})^{i j} \nabla_a \nabla_b \nabla_c \nabla_d \nabla_i \nabla_j (g^2 q) (x_s), \label{eq:L1} \\
    L_2 q(x_s) &= - \frac{1}{2^2 0! 2!} \left[(A^{-1})^{a b} \nabla_a \nabla_b \right]^2 ( q) (x_s) +  \frac{1}{2^3 1! 3!} \left[(A^{-1})^{a b} \nabla_a \nabla_b \right]^3 (g q) (x_s) \nonumber\\
    &\qquad- \frac{1}{2^4 2! 4!} \left[(A^{-1})^{a b} \nabla_a \nabla_b \right]^4 (g^2 q) (x_s) + \frac{1}{2^5 3! 5!} \left[(A^{-1})^{a b} \nabla_a \nabla_b \right]^5 (g^3 q) (x_s) \nonumber \\
    &\qquad- \frac{1}{2^6 4! 6!} \left[(A^{-1})^{a b} \nabla_a \nabla_b \right]^6 (g^4 q) (x_s), \label{eq:L2} \\
    L_3 q(x_s) &= - \frac{\ii}{2^3 0! 3!} \left[(A^{-1})^{a b} \nabla_a \nabla_b \right]^3 ( q) (x_s) + \frac{\ii}{2^4 1! 4!} \left[(A^{-1})^{a b} \nabla_a \nabla_b \right]^4 (g q) (x_s) \nonumber\\
    &\qquad- \frac{\ii}{2^5 2! 5!} \left[(A^{-1})^{a b} \nabla_a \nabla_b \right]^5 (g^2 q) (x_s) + \frac{\ii}{2^6 3! 6!} \left[(A^{-1})^{a b} \nabla_a \nabla_b \right]^6 (g^3 q) (x_s) \nonumber \\
    &\qquad- \frac{\ii}{2^7 4! 7!} \left[(A^{-1})^{a b} \nabla_a \nabla_b \right]^7 (g^4 q) (x_s) + \frac{\ii}{2^8 5! 8!} \left[(A^{-1})^{a b} \nabla_a \nabla_b \right]^8 (g^5 q) (x_s) \nonumber\\
    &\qquad - \frac{\ii}{2^9 6! 9!} \left[(A^{-1})^{a b} \nabla_a \nabla_b \right]^9 (g^6 q) (x_s). \label{eq:L3}
\end{align}    
\end{subequations}
\begin{remark}\label{rem:g}
    
Note that at least $3$ derivatives need to hit $g$ to get a non-zero term when evaluated at $x = x_s$. In particular, based on this property and the symmetry of the matrix $(A^{-1})^{a b}$, we have
\begin{align}
    L_1 q (x_s) &= \frac{\ii}{2} (A^{-1})^{a b} \nabla_a \nabla_b q (x_s) - \frac{\ii}{2} (A^{-1})^{a b} (A^{-1})^{c d} (\nabla_a q )(x_s) ( \nabla_b \nabla_c \nabla_d g ) (x_s)  \nonumber \\
    &\qquad- \frac{\ii}{8} q(x_s) (A^{-1})^{a b} (A^{-1})^{c d} (\nabla_a  \nabla_b \nabla_c \nabla_d g ) (x_s)  \nonumber \\
    &\qquad+ \frac{\ii}{96} q(x_s) (A^{-1})^{a b} (A^{-1})^{c d} (A^{-1})^{i j} \nabla_a \nabla_b \nabla_c \nabla_d \nabla_i \nabla_j (g^2) (x_s) .
\end{align}
\end{remark}

\section{Additional results and useful relations} \label{AppAdditionalResults}

We gather here some useful results that are needed for the main part of the paper.

\begin{lemma} \label{Lemma_E/2}
Suppose $(\phi,\e_0,\e_1, \h_0,\h_1)$ satisfies the assumptions of \cref{prop:2sttranstoMax}. Then, the following identity holds:
\begin{align}
    \mu\h\cdot \overline{\h} + \omega^{-1} L_1 \big(\mu \h\cdot\overline{\h} \big) \big|_{\gamma} = \eps \e \cdot \overline{\e} + \omega^{-1} L_1 \big( \eps \e\cdot\overline{\e} \big) \big|_{\gamma} +\mathcal{O}(\omega^{-2}).
\end{align}
In particular, 
\begin{align}
    \mu \h_0\cdot \overline{\h}_0 \big|_{\gamma} &= \eps \e_0\cdot\overline{\e}_0 \big|_{\gamma}, \qquad
    \nabla_a \big( \mu \h_0\cdot \overline{\h}_0 \big) \big|_{\gamma}= \nabla_a \big( \eps \e_0\cdot\overline{\e}_0 \big) \big|_{\gamma}.
\end{align}
\end{lemma}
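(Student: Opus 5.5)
The plan is to derive the identity from Maxwell's equations at the level of integrals, rather than comparing the two integrands pointwise to second order. A pointwise comparison fails beyond first order: away from $\gamma$ the quantities $\nabla\phi$ and $\dot\phi$ are complex, so $\mu\h_0\cdot\overline{\h}_0-\eps\e_0\cdot\overline{\e}_0$ need not vanish to second order. The resulting defect must instead cancel against the $\omega^{-1}$ cross terms $\Re(\mu\h_0\cdot\overline{\h}_1-\eps\e_0\cdot\overline{\e}_1)$.

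\textbf{Step 1: the ``in particular'' statements.} These are pointwise and come first. On $\gamma$, $\nabla\phi$ and $\dot\phi$ are real. By \cref{eq:h0def} we have $\mu\h_0\cdot\overline{\h}_0=\frac{1}{\mu\dot\phi\overline{\dot\phi}}|\e_0\times\nabla\phi|^2$, and the Lagrange identity gives
\[
|\e_0\times\nabla\phi|^2=|\e_0|^2|\nabla\phi|^2-|\e_0\cdot\nabla\phi|^2 .
\]
The constraint $\bC_0|_\gamma=0$ kills the last term, and the Eikonal equation gives $|\nabla\phi|^2=\n^2\dot\phi^2$. Together these yield $\mu\h_0\cdot\overline{\h}_0=\eps\e_0\cdot\overline{\e}_0$ on $\gamma$.

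For the first derivative, differentiate the same expression. Every derivative landing on a conjugated factor produces $\Re$ of the corresponding complex expression, because all undifferentiated factors are real on $\gamma$. The claim then follows from:
\begin{itemize}
\item $\nabla_a(\e_0^k\nabla_k\phi)|_\gamma=0$, i.e.\ the constraint to degree $1$;
\item $\nabla_a(\nabla\phi\cdot\nabla\phi-\n^2\dot\phi^2)|_\gamma=0$, i.e.\ the Eikonal equation to degree $1$;
\item the identity $\nabla_a(\nabla\phi\cdot\overline{\nabla\phi})=\Re\,\nabla_a(\nabla\phi\cdot\nabla\phi)$ on $\gamma$, and its analogue for $\dot\phi$.
\end{itemize}

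\textbf{Step 2: the full identity via an integral identity.} Consider the complex fields $\mathfrak{E}=\omega^{3/4}\e\,e^{\ii\omega\phi}$ and $\mathfrak{H}=\omega^{3/4}\h\,e^{\ii\omega\phi}$, cut off as in \cref{thm:construction_theorem}. By \cref{th:stationary}, with $f=2\ii\Im\phi$, the two sides of the lemma, multiplied by $\big(\det(\tfrac{1}{2\pi\ii}A)\big)^{-1/2}$, are exactly the expansions to $\mathcal{O}(\omega^{-2})$ of
\[
\int\mu\,\mathfrak{H}\cdot\overline{\mathfrak{H}}\,d^3x
\qquad\text{and}\qquad
\int\eps\,\mathfrak{E}\cdot\overline{\mathfrak{E}}\,d^3x .
\]
Since the prefactor is nonzero, it suffices to show that these two integrals agree up to $\mathcal{O}(\omega^{-2})$.

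To do this, use $\bF,\bG=\mathcal{O}_{L^2}(\omega^{-2})$ from \cref{prop:order_2_from spacetime_functions} in the following form. The time derivative of the phase factor dominates, so
\[
\mu\,\mathfrak{H}=-\frac{1}{\ii\omega\dot\phi}\Big(\nabla\times\mathfrak{E}+\mu\,\omega^{3/4}\dot\h\,e^{\ii\omega\phi}\Big)+\text{error}.
\]
Insert this into $\int\mu\,\mathfrak{H}\cdot\overline{\mathfrak{H}}$ and integrate the curl by parts onto $\overline{\mathfrak{H}}/\dot\phi$. Then use the conjugate of the $\bG$-equation to replace $\nabla\times\overline{\mathfrak{H}}$ by an expression in $\eps\,\overline{\mathfrak{E}}$. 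The leading piece reproduces $\int\eps\,\mathfrak{E}\cdot\overline{\mathfrak{E}}$, because $\dot\phi\,\overline{\dot\phi}^{-1}$ combines with the $\omega$-factors to give $1$ at leading order on $\gamma$.

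\textbf{Step 3: the remainders (main obstacle).} Three kinds of remainder appear, each carrying an explicit $\omega^{-1}$:
\begin{itemize}
\item terms with $\nabla\dot\phi$, from differentiating $1/\dot\phi$ in the integration by parts;
\item terms with $\dot\h$ and $\dot\e$;
\item terms from $\Im\dot\phi$, i.e.\ from $\dot\phi$ differing from $\overline{\dot\phi}$ off $\gamma$.
\end{itemize}
By \cref{th:stationary} each such integral is $\omega^{-1}$ times its integrand at $\gamma(t)$, plus $\mathcal{O}(\omega^{-2})$. The hard part is verifying that these leading values cancel. I expect this to reduce to antisymmetry: a term of the form $\omega^{-1}\bigl(X-\overline X\bigr)$, where $X$ is built from $\e_0,\h_0$ and real geometric data on $\gamma$, has vanishing real part. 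To establish this I would use that $\dot\phi$ is real and constant along $\gamma$, that $\nabla_a\Im\dot\phi|_\gamma=\tfrac{\ii}{2}A_{ab}\dot\gamma^b$, and the orthogonality relations $\e_0\cdot\nabla\phi=\h_0\cdot\nabla\phi=0$ on $\gamma$. If this cancellation turns out to be messy, the fallback is a direct expansion: compute the ratio $\mu\h_0\cdot\overline{\h}_0/(\eps\e_0\cdot\overline{\e}_0)$ to second order using \cref{eq:h0def}, and compare its $L_1$-contribution with $\Re(\mu\h_0\cdot\overline{\h}_1-\eps\e_0\cdot\overline{\e}_1)|_\gamma$ computed from \cref{eq:h1def}.
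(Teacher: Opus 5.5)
Your Step~1 is correct and is essentially the paper's argument: Lagrange identity, $\bC_0|_\gamma=0$, and the Eikonal equation to degrees $0$ and $1$. (For complex vectors the subtracted term is $|\e_0\cdot\overline{\nabla\phi}|^2$, but this is harmless here.)

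\textbf{The gap is in Step~3.} Your Step~2 reduction is valid: the first two stationary-phase coefficients of $\int\mu|\mathfrak{H}|^2\,d^3x$ and $\int\eps|\mathfrak{E}|^2\,d^3x$ are exactly the two sides of the lemma. So the lemma is \emph{equivalent} to $\int(\mu|\mathfrak{H}|^2-\eps|\mathfrak{E}|^2)\,d^3x=\mathcal{O}(\omega^{-2})$. But this is only a reformulation. After your single integration by parts, the whole order-$\omega^{-1}$ content of the lemma sits in the Step~3 remainders, and you only state an expectation that they cancel. Your accounting of those remainders is also off:
\begin{itemize}
\item The term $\int\eps(\overline{\dot\phi}/\dot\phi-1)|\mathfrak{E}|^2\,d^3x$ carries no explicit $\omega^{-1}$, and its integrand vanishes on $\gamma$. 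So it is not ``$\omega^{-1}$ times its integrand at $\gamma(t)$''. Its contribution is $\omega^{-1}L_1$ of that integrand, which needs $\nabla\nabla\Im\dot\phi|_\gamma$, i.e.\ $\frac{d}{dt}A$ from the Riccati equation together with $\nabla^3\Im\phi$.
\item The $\dot\h$, $\dot\e$ terms contribute to the real part only through $\Im(\dot\e_0\cdot\overline{\e}_0)$ and $\Im(\dot\h_0\cdot\overline{\h}_0)$. These need the transport equations (whose coefficient $\Delta\phi-\n^2\ddot\phi$ is complex) and the time derivative of \eqref{eq:h0def}.
\end{itemize}
The real $\omega^{-1}$ parts come from several of these sources at once, so nothing supports a term-by-term $X-\overline{X}$ structure. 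As written, the main identity is not proved.

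\textbf{How the paper closes it.} The paper uses the direct expansion you list only as a fallback. The step you would need to find is a specific cancellation on $\gamma$. Set $P^{ij}=\delta^{ij}-\frac{\nabla^i\phi\nabla^j\phi}{|\nabla\phi|^2}-\frac{2\e_0^{(i}\overline{\e}_0^{j)}}{\e_0\cdot\overline{\e}_0}$. Using \eqref{eq:h1def}, one finds $2\Re(\mu\h_1\cdot\overline{\h}_0-\eps\e_1\cdot\overline{\e}_0)=\Re\big[\frac{\ii\eps\,\e_0\cdot\overline{\e}_0}{|\nabla\phi|^2}P^{ij}\nabla_i\nabla_j\phi\big]$. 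Using two derivatives of \eqref{eq:h0def}, one finds $\frac{\ii}{2}(A^{-1})^{ab}\nabla_a\nabla_b(\mu\h_0\cdot\overline{\h}_0-\eps\e_0\cdot\overline{\e}_0)=-\Re\big[\frac{\ii\eps\,\e_0\cdot\overline{\e}_0}{|\nabla\phi|^2}P^{ij}\nabla_i\nabla_j\phi\big]$. The second identity uses $\nabla\nabla(\overline{\phi}-\phi)=-2\ii\nabla\nabla\Im\phi$ together with $A=2\ii\nabla\nabla\Im\phi$. The remaining terms of $L_1$ involve only $q$ and $\nabla q$ on $\gamma$, which your Step~1 handles.

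\textbf{How to rescue your integral route.} The missing idea is to write $\mu\mathfrak{H}$ as an exact curl rather than as $-(\ii\omega\dot\phi)^{-1}\nabla\times\mathfrak{E}$ plus junk. Take (suitably cut off) $\mathcal{A}=\ii\omega^{-1/4}(a_0+\omega^{-1}a_1)e^{\ii\omega\phi}$ with $a_0=\e_0/\dot\phi$ and $a_1=(\e_1+\ii\partial_t a_0)/\dot\phi$.
\begin{itemize}
\item By construction, $\dot{\mathcal{A}}=-\mathfrak{E}+\mathcal{O}_{L^2}(\omega^{-2})$.
\item By \eqref{eq:h0def}--\eqref{eq:h1def}, $\nabla\times\mathcal{A}=\mu\mathfrak{H}+\mathcal{O}_{L^2}(\omega^{-2})$.
\item The $\bG$-estimate then gives $\int(\eps|\mathfrak{E}|^2-\mu|\mathfrak{H}|^2)\,d^3x=\frac{d}{dt}\Re\int\eps\overline{\mathcal{A}}\cdot\dot{\mathcal{A}}\,d^3x+\mathcal{O}(\omega^{-2})$.
\item The real part on the right is $\mathcal{O}(\omega^{-2})$. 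Its leading stationary-phase value is $\ii\omega^{-1}\eps|\e_0|^2/\big(\dot\phi\sqrt{\det(\frac{1}{2\pi\ii}A)}\big)$, which is purely imaginary because $\dot\phi$ is real on $\gamma$.
\end{itemize}
Integrating in $t$ then forces the $\omega^0$ and $\omega^{-1}$ coefficients of the left-hand side to vanish, which is the lemma.
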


\begin{proof}
First, recall the relations that we will use for this proof:
\begin{subequations}
\begin{align}
    \sqrt{\mu} \h_0^i \big|_\gamma &= - \frac{1}{\sqrt{\mu} \dot{\phi}} \epsilon^{i j k} \e_{0 k} \nabla_j \phi \bigg|_\gamma , \label{eq:dh0} \\
    \nabla_a \big( \sqrt{\mu} \h_0^i \big) \big|_\gamma &= - \frac{1}{\sqrt{\mu} \dot{\phi}} \epsilon^{i j k} \bigg[ \nabla_a \big( \e_{0 k} \nabla_j \phi \big) - \frac{1}{\sqrt{\mu} \dot{\phi}} ( \nabla_j \phi ) \e_{0 k} \nabla_a \big( \sqrt{\mu} \dot{\phi} \big) \bigg] \bigg|_\gamma, \\
    \nabla_a \nabla_b \big( \sqrt{\mu} \h_0^i \big) \big|_\gamma &= - \frac{1}{\sqrt{\mu} \dot{\phi}} \epsilon^{i j k} \bigg[ \nabla_a \nabla_b \big( \e_{0 k} \nabla_k \phi \big) - \frac{2}{\sqrt{\mu} \dot{\phi}} \nabla_{(a} \big( \sqrt{\mu} \dot{\phi} \big) \nabla_{b)} \big( \e_{0 k} \nabla_j \phi \big) \nonumber\\
    &\qquad+ \frac{2}{\mu \dot{\phi}^2} (\nabla_j \phi) \e_{0 k} \nabla_{(a} \big( \sqrt{\mu} \dot{\phi} \big) \nabla_{b)} \big( \sqrt{\mu} \dot{\phi} \big) - \frac{1}{\sqrt{\mu} \dot{\phi}} (\nabla_j \phi) \e_{0 k} \nabla_a \nabla_b \big( \sqrt{\mu} \dot{\phi} \big)  \bigg] \bigg|_\gamma, \\
    \nabla_a \big( \sqrt{\mu} \dot{\phi} \big) \big|_\gamma &= \frac{1}{\eps \sqrt{\mu} \dot{\phi}} (\nabla^i \phi) \nabla_a \nabla_i \phi - \frac{\sqrt{\mu} \dot{\phi}}{2} \nabla_a \ln \eps \bigg|_\gamma, \\
    \nabla_a \nabla_b \big( \sqrt{\mu} \dot{\phi} \big) \big|_\gamma &= \sqrt{\mu} \nabla_a \nabla_b \dot{\phi} + \frac{\dot{\phi}}{2 \sqrt{\mu}} \nabla_a \nabla_b \sqrt{\mu} - \frac{\sqrt{\mu} \dot{\phi}}{4} (\nabla_a \ln \mu) (\nabla_b \ln \mu) \nonumber\\
    &\qquad+ \frac{1}{\eps \dot{\phi}} \big( \nabla_{(a} \ln \mu \big) (\nabla^i \phi) \nabla_{b)} \nabla_i \phi - \frac{\mu \dot{\phi}}{2} \big( \nabla_{(a} \ln \mu \big) \nabla_{b)} \ln \eps \bigg|_\gamma. 
\end{align}    
\end{subequations}
We will also use the constraints
\begin{subequations}
\begin{align}
    \e_0^i \nabla_i \phi \big|_\gamma &= 0, \label{eq:e0_constraint}\\
    \nabla_a \big( \e_0^i \nabla_i \phi \big) \big|_\gamma &= 0 \quad \Rightarrow \quad (\nabla_i \phi) \nabla_a \e_0^i \big|_\gamma= - \e_0^i \nabla_a \nabla_i \phi \big|_\gamma.
\end{align}
\end{subequations}
The initial expression can be expanded as
\begin{align} \label{eq:h_expansion}
    \mu\h\cdot \overline{\h} + \omega^{-1} L_1 \big(\mu \h\cdot\overline{\h} \big) \big|_{\gamma} &= \mu\h_0 \cdot \overline{\h}_0 + 2 \omega^{-1} \Re \big( \mu\h_1 \cdot \overline{\h}_0 \big) + \frac{\ii \omega^{-1}}{2} (A^{-1})^{a b} \nabla_a \nabla_b \big( \mu\h_0 \cdot \overline{\h}_0 \big) \nonumber \\
    &\qquad - \frac{\ii \omega^{-1}}{2} (A^{-1})^{a b} (A^{-1})^{c d} \big[\nabla_a \big( \mu\h_0 \cdot \overline{\h}_0 \big) \big] \nabla_b \nabla_c \nabla_d g  \nonumber \\
    &\qquad + \frac{\ii \omega^{-1}}{96} \mu\h_0 \cdot \overline{\h}_0 (A^{-1})^{a b} (A^{-1})^{c d} (A^{-1})^{i j} \nabla_a \nabla_b \nabla_c \nabla_d \nabla_i \nabla_j g^2 \Big|_{\gamma}.
\end{align}
For the first and last terms of the above expansion, we can use \cref{eq:dh0} to obtain
\begin{align}
    \mu \h_0\cdot \overline{\h}_0 \big|_{\gamma} &= \frac{1}{\mu \dot{\phi}^2}{\epsilon^{ij}}_k{{\epsilon_i}^{m}}_n\e_0^k\overline{\e}_0^n (\nabla_j \phi) (\nabla_m \overline{\phi}) \big|_{\gamma} = \frac{1}{\mu \dot{\phi}^2} \big(\delta^{jm}\delta_{kn}-\delta^{j}_n\delta^{m}_k \big) \e_0^k \overline{\e}_0^n (\nabla_{j}\phi) (\nabla_{m}\overline{\phi}) \big|_{\gamma} \nonumber\\
    &=\frac{1}{\mu \dot{\phi}^2} \Big[ \e_0\cdot\overline{\e}_0 (\nabla_i\phi) (\nabla^i\overline{\phi}) -\e_0^k (\nabla_k\overline{\phi}) \overline{\e}_0^m (\nabla_m\phi) \Big]\Big|_{\gamma} = \eps \e_0\cdot\overline{\e}_0 \Big|_{\gamma}.\label{eq:h0h0btoe0e0b}
\end{align}
We used the fact that $\nabla_i \phi|_{\gamma}$ is $\R$-valued and, therefore, we can use the Eikonal equation~\eqref{eq:Eikonal} and the constraint~\eqref{eq:e0_constraint}.

In the fourth term in \cref{eq:h_expansion} we have
\begin{align}
    \nabla_a \big( \mu\h_0 \cdot \overline{\h}_0 \big) \big|_\gamma &= 2 \Re \big[ \sqrt{\mu} \overline{\h}_{0 i} \nabla_a \big(\sqrt{\mu} \h_0^i \big) \big] \big|_\gamma \nonumber \\
    &= \frac{2}{\mu \dot{\phi}^2} \Re \bigg\{ \epsilon_{i j k} \epsilon^{i m n} (\nabla^j \phi) \overline{\e}_{0}^k \bigg[ \nabla_a \big( \e_{0 k} \nabla_j \phi \big) - \frac{1}{\sqrt{\mu} \dot{\phi}} (\nabla_j \phi) \e_{0 k} \nabla_a \Big( \sqrt{\mu} \dot{\phi} \Big) \bigg]  \bigg\} \bigg|_\gamma \nonumber \\
    &= \frac{2}{\mu \dot{\phi}^2} \Re \bigg[ |\nabla \phi|^2 \overline{\e}_0^k \nabla_a \e_{0 k} + \overline{\e}_0^k \e_{0 k} (\nabla^j \phi) \nabla_a \nabla_j \phi \nonumber\\
    &\qquad\qquad\qquad- \frac{|\nabla \phi|^2 \overline{\e}_0^k \e_{0 k}}{\n^2 \dot{\phi}^2} (\nabla^j \phi) \nabla_a \nabla_j \phi + |\nabla \phi|^2 \overline{\e}_0^k \e_{0 k} \nabla_a \ln \eps \bigg] \bigg|_\gamma  \nonumber \\
    &=\nabla_a \big( \eps \e_0 \cdot \overline{\e}_0 \big) \big|_\gamma.
\end{align}
The remaining terms in \cref{eq:h_expansion} are
\begin{align}
    &2 \omega^{-1} \Re \big( \mu\h_1 \cdot \overline{\h}_0 \big) + \frac{\ii \omega^{-1}}{2} (A^{-1})^{a b} \nabla_a \nabla_b \big( \mu\h_0 \cdot \overline{\h}_0 \big) \big|_\gamma = \nonumber\\
    &\quad= 2 \omega^{-1} \Re \bigg\{ \mu \h_1 \cdot \overline{\h}_0 + \frac{1}{4} \left(\nabla \nabla \Im \phi ^{-1} \right)^{a b} \Big[ \nabla_a (\sqrt{\mu} h_{0 i}) \nabla_b (\sqrt{\mu} \overline{h}_0^i) + \sqrt{\mu} \overline{h}_0^i \nabla_a \nabla_b (\sqrt{\mu} h_{0 i}) \Big] \bigg\} \bigg|_\gamma.
\end{align}
We calculate the three terms in the above equation separately. For the first term, we have
\begin{align}
     \mu \h_1 \cdot \overline{\h}_0 \big|_\gamma &= - \frac{\ii}{\mu \dot{\phi}^2} \epsilon_{i j k} (\nabla^j \phi) \overline{\e}_0^k \Big[ \mu \dot{\h}_0^i + \epsilon^{i m n} \big( \nabla_m e_{0 n} + \ii \e_{1 n} \nabla_m \phi \big) \Big] \Big|_\gamma \nonumber\\
     &= \frac{\ii}{\mu \dot{\phi}^2} \bigg[ \frac{1}{\dot{\phi}} \e_0 \cdot \overline{\e}_0 (\nabla^a \phi) \nabla_a \dot{\phi} - \frac{\ddot{\phi} |\nabla \phi|^2 }{\dot{\phi}^2} \e_0 \cdot \overline{\e}_0 + \frac{|\nabla \phi|^2}{\dot{\phi}} \overline{\e} \cdot \dot{\e} \nonumber\\
     &\qquad\qquad- \overline{\e}_0^b (\nabla^a \phi) \nabla_a \e_{0 b} + \overline{\e}_0^a (\nabla^b \phi) \nabla_a \e_{0 b} - \ii |\nabla \phi|^2 \e_1 \cdot \overline{\e}_0 \bigg] \bigg|_\gamma \nonumber \\
     &= \eps \e_1 \cdot \overline{\e}_0 - \frac{\ii \eps}{|\nabla \phi|^2} \overline{\e}_0^a \e_0^b \nabla_a \nabla_b \phi - \frac{\ii \eps \n}{|\nabla \phi|} \overline{\e}_0^b \big(\partial_t + \dot{\gamma}^a \nabla_a \big) \e_{0 b} \bigg|_\gamma \nonumber \\
     &= \eps \e_1 \cdot \overline{\e}_0 + \frac{\ii \eps \e_0 \cdot \overline{\e}_0}{2 |\nabla \phi|^2} \bigg[ \delta^{i j} - \frac{(\nabla^i \phi) (\nabla^j \phi)}{|\nabla \phi|^2} - \frac{2 \e_0^{(i} \overline{\e}_0^{j)}}{\e_0 \cdot \overline{\e}_0} \bigg] \nabla_i \nabla_j \phi + \frac{\ii \eps \e_0 \cdot \overline{\e}_0}{2 |\nabla \phi|^2} (\nabla^i \phi) \left( \nabla_i \ln \n - \nabla_i \ln \mu \right) \bigg|_\gamma.
\end{align}
Note that the last term in the above equation will vanish when taking the real part. The second term is
\begin{align}
    &\nabla_a \big(\sqrt{\mu} \h_{0 i} \big) \nabla_b \big( \sqrt{\mu} \overline{\h}_0^i \big) \big|_\gamma = \frac{\epsilon_{i j k} \epsilon^{i m n} }{\mu \dot{\phi}^2} \bigg[ \nabla_a \big( \e_0^k \nabla^j \phi \big) - \frac{1}{\sqrt{\mu} \dot{\phi}} (\nabla^j \phi) \e_0^k \nabla_a \big( \sqrt{\mu} \dot{\phi} \big) \bigg] \nonumber\\
    &\qquad\qquad\qquad\qquad\qquad\qquad\qquad\qquad \times \bigg[ \nabla_a \big( \overline{\e}_{0 n} \nabla_m \overline{\phi} \big) - \frac{1}{\sqrt{\mu} \dot{\overline{\phi}}} (\nabla_m \overline{\phi} ) \overline{\e}_{0 n} \nabla_a \big( \sqrt{\mu} \dot{\phi} \big) \bigg] \bigg|_\gamma \nonumber \\
    &\qquad\qquad= \nabla_a \big(\sqrt{\eps} \e_{0 i} \big) \nabla_b \big(\sqrt{\eps} \overline{\e}_0^i \big) + \frac{\eps \e_0 \cdot \overline{\e}_0}{|\nabla \phi|^2} \bigg[ \delta^{i j} - \frac{(\nabla^i \phi) (\nabla^j \phi)}{|\nabla \phi|^2} - \frac{2 \e_0^{(i} \overline{\e}_0^{j)}}{\e_0 \cdot \overline{\e}_0} \bigg] (\nabla_a \nabla_i \phi) (\nabla_b \nabla_j \overline{\phi}) \bigg|_\gamma.
\end{align}
Similarly, for the third term, we obtain
\begin{align}
    \sqrt{\mu} \overline{\h}_0^i \nabla_a \nabla_b \big( \sqrt{\mu} \h_{0 i} \big) \big|_\gamma &= \sqrt{\eps} \overline{\e}_0^i \nabla_a \nabla_b \big( \sqrt{\eps} \e_{0 i} \big) - \frac{\eps \e_0 \cdot \overline{\e}_0}{|\nabla \phi|^2} \bigg[ \delta^{i j} - \frac{(\nabla^i \phi) (\nabla^j \phi)}{|\nabla \phi|^2} - \frac{2 \e_0^{(i} \overline{\e}_0^{j)}}{\e_0 \cdot \overline{\e}_0} \bigg] (\nabla_a \nabla_i \phi) (\nabla_b \nabla_j \phi) \bigg|_\gamma.
\end{align}
Bringing these three terms together, we obtain
\begin{align}
    &2 \omega^{-1} \Re \big( \mu\h_1 \cdot \overline{\h}_0 \big) + \frac{\ii \omega^{-1}}{2} (A^{-1})^{a b} \nabla_a \nabla_b \big( \mu\h_0 \cdot \overline{\h}_0 \big) \Big|_\gamma = 2 \omega^{-1} \Re \big( \eps\e_1 \cdot \overline{\e}_0 \big) + \frac{\ii \omega^{-1}}{2} (A^{-1})^{a b} \nabla_a \nabla_b \big( \eps \e_0 \cdot \overline{\e}_0 \big) \nonumber\\
    &\qquad +2 \omega^{-1} \Re \bigg\{ \frac{\ii \eps \e_0 \cdot \overline{\e}_0}{2 |\nabla \phi|^2} \bigg[ \delta^{i j} - \frac{(\nabla^i \phi) (\nabla^j \phi)}{|\nabla \phi|^2} - \frac{2 \e_0^{(i} \overline{\e}_0^{j)}}{\e_0 \cdot \overline{\e}_0} \bigg] \nabla_i \nabla_j \phi \nonumber\\
    &\qquad\qquad\qquad + \frac{\eps \e_0 \cdot \overline{\e}_0}{4 |\nabla \phi|^2} \bigg[ \delta^{i j} - \frac{(\nabla^i \phi) (\nabla^j \phi)}{|\nabla \phi|^2} - \frac{2 \e_0^{(i} \overline{\e}_0^{j)}}{\e_0 \cdot \overline{\e}_0} \bigg]\left(\nabla \nabla \Im \phi ^{-1} \right)^{a b}  (\nabla_a \nabla_i \phi) \nabla_b \nabla_j (\overline{\phi} - \phi) \bigg\} \bigg|_\gamma.
\end{align}
In the above equation, we have $\nabla_b \nabla_j (\overline{\phi} - \phi) = -2\ii \nabla_b \nabla_j \Im \phi$, and the term in the curly brackets vanishes. This completes the proof.
\end{proof}

\begin{proposition}\label{prop:timeder}
 Suppose that the following equality holds on $\gamma$ to some degree $j$:
    \begin{align} \label{EqAB}
        D^{\alpha}A \big|_{\gamma}=D^{\alpha}B \big|_{\gamma} \qquad \forall |\alpha| \leq j.
    \end{align}
    Then, for all $|\beta|\leq j$, we can compute single time derivatives of $A$ along $\gamma$ via
    \begin{align}\partial_t D^{\beta} A \big|_{\gamma} =
    \begin{cases}
        \partial_t D^{\beta}B \big|_\gamma \qquad  &|\beta|\leq j-1,\\
        \partial_t D^{\beta}B + \dot{\gamma}^i\nabla_i D^{\beta}(B-A) \big|_{\gamma}\qquad &|\beta|=j.
    \end{cases}
    \end{align}
    Suppose $j\geq 2$. Then 
    \begin{align}
        \partial_t^2A \big|_{\gamma}=\partial_t^2B \big|_{\gamma}.
    \end{align}
\end{proposition}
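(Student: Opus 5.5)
The idea is to reduce everything to the chain rule along the curve $\gamma(t)=(t,\gammu(t))$. The tangential derivative $\partial_t+\dot{\gamma}^i\nabla_i$ only sees values of a function on $\gamma$. Spatial derivatives up to order $j$ are available on $\gamma$ by hypothesis \eqref{EqAB}. The time derivative is then recovered as $\partial_t=(\partial_t+\dot{\gamma}^i\nabla_i)-\dot{\gamma}^i\nabla_i$.

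\textbf{First claim.} Fix $|\beta|\le j$ and differentiate the identity $D^\beta A(t,\gammu(t))=D^\beta B(t,\gammu(t))$, which holds for all $t$, with respect to $t$. This gives
\begin{equation*}
\big(\partial_t+\dot{\gamma}^i\nabla_i\big)D^\beta A\big|_\gamma=\big(\partial_t+\dot{\gamma}^i\nabla_i\big)D^\beta B\big|_\gamma .
\end{equation*}
Solving for $\partial_tD^\beta A|_\gamma$ yields
\begin{equation*}
\partial_tD^\beta A\big|_\gamma=\partial_tD^\beta B+\dot{\gamma}^i\nabla_iD^\beta(B-A)\big|_\gamma ,
\end{equation*}
which is exactly the case $|\beta|=j$. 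If $|\beta|\le j-1$, then $\nabla_iD^\beta$ has order at most $j$, so the correction term $\dot{\gamma}^i\nabla_iD^\beta(B-A)|_\gamma$ vanishes by \eqref{EqAB}. This gives the first case.

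\textbf{Second claim.} Now take $j\ge2$. Apply the first claim with $|\beta|\le j-1$, which includes $|\beta|\le1$. This shows that $\partial_tD^\beta A|_\gamma=\partial_tD^\beta B|_\gamma$ for all $|\beta|\le j-1$. In other words, the pair $(\partial_tA,\partial_tB)$ satisfies the hypothesis \eqref{EqAB} to degree $j-1\ge1$.

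Applying the first claim to this pair with $\beta=0$, and using $0\le (j-1)-1$, gives
\begin{equation*}
\partial_t(\partial_tA)\big|_\gamma=\partial_t(\partial_tB)\big|_\gamma ,
\end{equation*}
that is, $\partial_t^2A|_\gamma=\partial_t^2B|_\gamma$.

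The only point needing care is this degree bookkeeping: each time derivative costs one order of spatial agreement along $\gamma$. That is why $j\ge2$ is required for two time derivatives. Smoothness of $A$ and $B$ is implicit throughout and justifies all the differentiations.
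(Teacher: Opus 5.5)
Your proof is correct. The first claim is obtained exactly as in the paper. You split $\partial_t$ into the tangential derivative $\partial_t+\dot{\gamma}^i\nabla_i$, which only sees values on $\gamma$, minus $\dot{\gamma}^i\nabla_i$, and then use the hypothesis.

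For the second claim you take a different and more economical route. Using equality of mixed partials, you observe that the first claim means the pair $(\partial_t A,\partial_t B)$ again satisfies the hypothesis, now to degree $j-1$. You then reapply the first claim with $\beta=0$.

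The paper instead expands $\partial_t^2 A$ explicitly, using the off-curve extension $-\nabla^k\phi/(\n^2\dot{\phi})$ of $\dot{\gamma}$. It gets four terms and checks each on $\gamma$:
\begin{itemize}
\item an iterated tangential derivative;
\item the mixed term $-2\dot{\gamma}^k(\partial_t+\dot{\gamma}^m\nabla_m)\nabla_k A$;
\item the term $\dot{\gamma}^k\dot{\gamma}^m\nabla_k\nabla_m A$;
\item a term with $\nabla_k A$ multiplied by the tangential derivative of the vector field.
\end{itemize}

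Your bootstrap avoids that computation and needs no extension of $\dot{\gamma}$ off the curve. Iterating it also gives, at no extra cost, the general statement $\partial_t^p D^\beta A|_\gamma=\partial_t^p D^\beta B|_\gamma$ for $p+|\beta|\le j$. In exchange, the paper's explicit formula shows exactly which data along $\gamma$ determine $\partial_t^2A|_\gamma$. For instance, only the $\dot{\gamma}\dot{\gamma}$-component of the spatial Hessian enters.
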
 
\begin{proof}  This follows from a careful but elementary computation
\begin{align}
    \partial_tA=\bigg(\partial_t-\frac{\nabla^k\phi}{\n^2\dot{\phi}}\nabla_k\bigg)A-\bigg(-\frac{\nabla^k\phi}{\n^2\dot{\phi}}\bigg)\nabla_kA.\label{eq:t_der_A}
\end{align}
Evaluating on $\gamma$ and using \cref{EqAB} gives
    \begin{align}
        \partial_t A \big|_{\gamma}&= \big( \partial_t+\dot{\gamma}^i\nabla_i \big) A -\dot{\gamma}^i \nabla_i A \big|_{\gamma} =\big(\partial_t+\dot{\gamma}^i\nabla_i \big) B -\dot{\gamma}^i \nabla_i B \big|_{\gamma} = \rd_tB\big|_\gamma.
    \end{align}
For second time derivatives, we note the formula, 
\begin{align}
    \partial_t^2A&=\bigg(\partial_t-\frac{\nabla^m\phi}{\n^2\dot{\phi}}\nabla_m\bigg)\bigg[\bigg(\partial_t-\frac{\nabla^k\phi}{\n^2\dot{\phi}}\nabla_k\bigg)A-\bigg(-\frac{\nabla^k\phi}{\n^2\dot{\phi}}\bigg)\nabla_kA\bigg] -\bigg(-\frac{\nabla^m\phi}{\n^2\dot{\phi}}\bigg)\partial_t\nabla_mA \nonumber\\
    &=\bigg(\partial_t-\frac{\nabla^m\phi}{\n^2\dot{\phi}}\nabla_m\bigg)\big(\partial_t-\frac{\nabla^k\phi}{\n^2\dot{\phi}}\nabla_k\bigg)A-\nabla_kA\bigg(\partial_t-\frac{\nabla^m\phi}{\n^2\dot{\phi}}\nabla_m\bigg)\bigg(-\frac{\nabla^k\phi}{\n^2\dot{\phi}}\bigg) \nonumber\\
    &\qquad -2\bigg(-\frac{\nabla^k\phi}{\n^2\dot{\phi}}\bigg)\bigg(\partial_t-\frac{\nabla^m\phi}{\n^2\dot{\phi}}\nabla_m\bigg)\nabla_kA+\bigg(-\frac{\nabla^m\phi}{\n^2\dot{\phi}}\bigg)\bigg(-\frac{\nabla^k\phi}{\n^2\dot{\phi}}\bigg)\nabla_k\nabla_mA,
\end{align}
where we use~\cref{eq:t_der_A}. Evaluating on $\gamma$ and using $D^{\alpha}A=D^{\alpha}B$ for $|\alpha|\leq 2$ gives
    \begin{align}
        \partial_t^2A \big|_\gamma &= \big(\partial_t+\dot{\gamma}^m\nabla_m\big) \big(\partial_t+\dot{\gamma}^k\nabla_k\big) A -2\dot{\gamma}^k \big(\partial_t+\dot{\gamma}^m\nabla_m\big) \nabla_k A+\dot{\gamma}^k\dot{\gamma}^m\nabla_k \nabla_m A\nonumber\\
        &\qquad-\big( \nabla_k A \big)\big(\partial_t+\dot{\gamma}^m\nabla_m\big) \bigg(-\frac{\nabla^k\phi}{\n^2\dot{\phi}}\bigg) \bigg|_\gamma  \nonumber\\
        &= \big(\partial_t+\dot{\gamma}^m\nabla_m\big) \big(\partial_t+\dot{\gamma}^k\nabla_k\big) B -2\dot{\gamma}^k \big(\partial_t+\dot{\gamma}^m\nabla_m \big) \nabla_k B +\dot{\gamma}^k\dot{\gamma}^m\nabla_k\nabla_m B\nonumber\\
        &\qquad-\big( \nabla_k B \big) \big(\partial_t+\dot{\gamma}^m\nabla_m\big) \bigg(-\frac{\nabla^k\phi}{\n^2\dot{\phi}}\bigg) \bigg|_\gamma = \rd_t^2 B\big|_\gamma ,
    \end{align}
    where we have used \cref{EqAB}.  
\end{proof}

\begin{proposition}\label{prop:2ndderphi}
 Suppose that the eikonal equation holds to degree $1$ on $\gamma$. Then, we have the following identities for $2^{\mathrm{nd}}$-derivatives of $\phi$:
 \begin{subequations}
 \begin{align}
        \ddot{\phi} \big|_{\gamma} &=\frac{\nabla^i\phi}{\n^2 \dot{\phi}}\nabla_i\dot{\phi}\Big|_{\gamma}\\
        \nabla_i\dot{\phi} \big|_{\gamma} &= \frac{(\nabla^j\phi)(\nabla_i\nabla_j{\phi})}{\n^2\dot{\phi}} - \dot{\phi} \frac{\nabla_i \n^2}{2\n^2}\Big|_{\gamma},\\
        \big(\partial_t+\dot{\gamma}^i\nabla_i\big) \dot{\phi} \big|_{\gamma} &=0,\\
        \big(\partial_t+\dot{\gamma}^i\nabla_i\big)\nabla_j{\phi} \big|_{\gamma}&=- \dot{\phi} \frac{\nabla_{j} \n^2}{\n^2} \Big|_\gamma.
    \end{align}    
 \end{subequations}
\end{proposition}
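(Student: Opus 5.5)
The plan is to derive all four identities from the eikonal equation $\nabla\phi\cdot\nabla\phi-\n^2\dot\phi^2=0$ holding to degree $1$ on $\gamma$, together with the relation $\dot\gamma^i=-\frac{\nabla^i\phi}{\n^2\dot\phi}\big|_\gamma$ and the construction of \cref{prop:construction_phase_function}, in which $\dot\phi|_\gamma$ is constant.

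\textbf{Second identity.} Apply $\nabla_i$ to the eikonal expression and evaluate on $\gamma$. This gives
\begin{equation*}
2\nabla^j\phi\,\nabla_i\nabla_j\phi-2\n^2\dot\phi\,\nabla_i\dot\phi-\dot\phi^2\nabla_i\n^2=0 .
\end{equation*}
Since $\dot\phi|_\gamma\neq0$, we may solve for $\nabla_i\dot\phi|_\gamma$, which yields exactly the second identity.

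\textbf{First and third identities.} Applying $\partial_t$ instead gives
\begin{equation*}
2\nabla^j\phi\,\nabla_j\dot\phi-2\n^2\dot\phi\,\ddot\phi=0
\end{equation*}
on $\gamma$, because $\n$ is time-independent. Solving for $\ddot\phi$ gives the first identity. Substituting the definition of $\dot\gamma$, the first identity becomes $\ddot\phi=-\dot\gamma^i\nabla_i\dot\phi$ on $\gamma$, and this is precisely the third identity $(\partial_t+\dot\gamma^i\nabla_i)\dot\phi|_\gamma=0$. This is consistent with \cref{eq:dt_eikonal} and with the constancy of $\dot\phi|_\gamma$.

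\textbf{Fourth identity.} Write
\begin{equation*}
(\partial_t+\dot\gamma^i\nabla_i)\nabla_j\phi=\nabla_j\dot\phi-\frac{\nabla^i\phi}{\n^2\dot\phi}\nabla_i\nabla_j\phi ,
\end{equation*}
using that $\partial_t$ and $\nabla_j$ commute. Inserting the second identity, the Hessian terms cancel and leave $-\dot\phi\frac{\nabla_j\n^2}{2\n^2}$. This is the same as \cref{eq:deg1_intermetiate}, since $\nabla_j\ln\n=\frac{\nabla_j\n^2}{2\n^2}$.

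\textbf{Expected obstacle.} The only point to watch is a normalisation mismatch. The computation produces $-\dot\phi\frac{\nabla_j\n^2}{2\n^2}$, whereas the statement as displayed reads $-\dot\phi\frac{\nabla_j\n^2}{\n^2}$ with no factor of $\tfrac12$. I would check this against \cref{eq:deg1_intermetiate} and against \cref{eq:e0transport}'s companion identity (which reads $-\frac{\dot\phi}{2}\nabla_i\ln\n^2$). Both agree with the factor $\tfrac12$, so I would treat the displayed fourth identity as a typographical slip. Everything else is routine algebra.
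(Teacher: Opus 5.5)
Your algebra is correct, and your route is the paper's: differentiate $\mathcal{I}:=\nabla\phi\cdot\nabla\phi-\n^2\dot\phi^2$ once in space and once in time on $\gamma$, solve for $\nabla_i\dot\phi$ and $\ddot\phi$, and read off the last two identities. You are also right that the displayed fourth identity is missing a factor $\tfrac12$. The correct value $-\dot\phi\,\nabla_j\ln\n$ is the one in \eqref{eq:deg1_intermetiate} and in the proof of \cref{lem:ConstraintPropagation}.

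The step you do not justify is $\partial_t\mathcal{I}|_\gamma=0$. In \cref{def:EikonalandTransporteA}, ``degree $1$'' refers only to the spatial multi-indices $D^\alpha$. The hypothesis therefore gives $\mathcal{I}|_\gamma=0$ and $\nabla_i\mathcal{I}|_\gamma=0$, but nothing directly about $\partial_t\mathcal{I}$. Since $\mathcal{I}$ is only known to vanish along a curve, not on an open set, you cannot simply differentiate in $t$. The fix is one line. Because $\mathcal{I}(t,\gammu(t))=0$ for all $t$, and using $\nabla_i\mathcal{I}|_\gamma=0$,
\[
0=\frac{d}{dt}\big[\mathcal{I}(t,\gammu(t))\big]=\big(\partial_t\mathcal{I}+\dot{\gammu}^i\nabla_i\mathcal{I}\big)\big|_\gamma=\partial_t\mathcal{I}\big|_\gamma .
\]
This is \cref{prop:timeder} with $j=1$, $\beta=0$, and it is exactly what the paper's proof invokes \cref{prop:timeder} for. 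Do not instead get it from the constancy of $\dot\phi|_\gamma$ in \cref{prop:construction_phase_function} via \eqref{eq:dt_eikonal}, which you list as an input. That constancy is precisely your third identity, so the argument would be circular, and it is not a hypothesis of the proposition. With this line added, your proof is complete.
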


\begin{proof}
Using proposition~\ref{prop:timeder}, we can compute
\begin{subequations}
\begin{align}
    \partial_t \big(\nabla\phi\cdot\nabla\phi-\n^2\dot{\phi}^2\big) \big|_\gamma &= 0,\\
    \nabla_i \big(\nabla\phi\cdot\nabla\phi-\n^2\dot{\phi}^2 \big) \big|_\gamma &= 0,
\end{align}
\end{subequations}
which gives the first two results. The latter two are derived from the first two. 
\end{proof}

The classical version of Borel's lemma allows one to specify derivatives of a function at a point and extend it to a smooth function globally:
\begin{lemma}[Classical Borel's Lemma]
    Given, for each $n$-tuple $\alpha\in \N^n$, a constant $c_{\alpha}\in \R$. There exists a function $f\in C^{\infty}(\R^n)$ such that
    \begin{align}
        [D^{\alpha}f](0)=c_{\alpha}.
    \end{align}
\end{lemma}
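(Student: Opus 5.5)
We reduce to the one-dimensional case and then tensorise; the core of the argument is a single convergent series. First, in one variable, we fix $\psi \in C^\infty_c(\R)$ with $\psi \equiv 1$ on $[-\frac12,\frac12]$ and $\mathrm{supp}\,\psi \subseteq [-1,1]$. We then look for $f$ of the form
\begin{equation*}
f(x) = \sum_{k\geq 0} \frac{c_k}{k!}\, x^k \psi(\lambda_k x),
\end{equation*}
where $\lambda_k \geq 1$ is an increasing sequence still to be chosen. In $n$ variables the same ansatz is used, with the multi-index sum $\sum_{\alpha} \frac{c_\alpha}{\alpha!} x^\alpha \psi(\lambda_\alpha |x|)$ (or a product cutoff), where $\lambda_\alpha$ depends only on $|\alpha|$. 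Each summand is smooth and compactly supported. Because $\psi(\lambda x)\equiv 1$ near $0$, every derivative of the $\alpha$-th summand at $0$ vanishes except the $\alpha$-th one, which equals $c_\alpha$. Consequently, once the series may be differentiated termwise, we get $D^\alpha f(0)=c_\alpha$.

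The main step is to choose $\lambda_\alpha$ so that, for every fixed $m$, the series $\sum_\alpha D^\beta\big(\frac{c_\alpha}{\alpha!}x^\alpha\psi(\lambda_\alpha x)\big)$ converges uniformly for all $|\beta|\leq m$. We intend to prove the scaling estimate
\begin{equation*}
\sup_x \Big|D^\beta\big(x^\alpha\psi(\lambda x)\big)\Big| \leq C_{\alpha,\beta}\,\lambda^{|\beta|-|\alpha|} \qquad \text{for } |\beta|<|\alpha|,\ \lambda\geq 1.
\end{equation*}
We plan to obtain it by substituting $y=\lambda x$, which gives $x^\alpha\psi(\lambda x)=\lambda^{-|\alpha|}y^\alpha\psi(y)$; each $x$-derivative then contributes a factor $\lambda$. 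We then choose $\lambda_\alpha \geq 1+|c_\alpha|$ large enough that
\begin{equation*}
\frac{|c_\alpha|}{\alpha!}\, C_{\alpha,\beta}\,\lambda_\alpha^{|\beta|-|\alpha|}\leq 2^{-|\alpha|}\,(\#\{\alpha' : |\alpha'|=|\alpha|\})^{-1} \qquad \text{for all } |\beta|\leq |\alpha|-1.
\end{equation*}
This involves only finitely many conditions for each $\alpha$, so such a $\lambda_\alpha$ exists.

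With this choice, for each fixed $m$ the tail over $|\alpha|>m$ of the $D^\beta$-differentiated series with $|\beta|\leq m$ is dominated by $\sum_{j>m} 2^{-j}$, while the finitely many terms with $|\alpha|\leq m$ are harmless. This gives uniform convergence of all derivatives. The standard theorem on termwise differentiation then yields $f\in C^\infty(\R^n)$ and justifies the computation of $D^\alpha f(0)$ above.

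We expect the only delicate point to be making the constant bookkeeping in the scaling estimate uniform. In particular, $\lambda_\alpha$ has to dominate all $|\beta|<|\alpha|$ simultaneously, and we must check that the cutoff derivatives introduce no hidden $\alpha$-dependence beyond $C_{\alpha,\beta}$. Since $C_{\alpha,\beta}$ is allowed to depend on $\alpha$ and $\lambda_\alpha$ is chosen after it, this should pose no real difficulty.
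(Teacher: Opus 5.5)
Your argument is correct. It is the standard cutoff-and-rescale proof of Borel's lemma, and all of its steps go through:
\begin{itemize}
\item $D^\alpha f(0)=c_\alpha$ follows because $\psi(\lambda_\alpha\,\cdot)\equiv 1$ near $0$;
\item the Weierstrass M-test controls the tails over $|\alpha|>m$;
\item termwise differentiation then gives $f\in C^\infty(\R^n)$.
\end{itemize}

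The paper itself gives no proof of this classical statement. It only quotes it, and then proves the weaker finite-jet version, Lemma~\ref{lem:Borel}, by writing down the Taylor polynomial $\sum_{|\alpha|\le N}\frac{1}{\alpha!}c_\alpha(t)[x-\gammu(t)]^\alpha$; that is all the Gaussian beam construction uses. The comparison is still instructive. With finitely many prescribed derivatives, no cutoff and no convergence argument are needed, and the coefficients may even depend smoothly on $t$ along a curve. With the full infinite jet, the bare Taylor series can diverge at every $x\neq 0$ (e.g.\ $c_k=(k!)^2$ in one variable gives $\sum k!\,x^k$). Your shrinking cutoffs $\psi(\lambda_\alpha x)$ are exactly the device that repairs this.

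A few presentational points.
\begin{itemize}
\item You announce a reduction to one dimension followed by tensorisation, but you never carry it out, and it could not work. A tensor product of one-dimensional solutions only realises product jets $c_\alpha=c^{(1)}_{\alpha_1}\cdots c^{(n)}_{\alpha_n}$. The direct multi-index sum you actually write is what proves the statement.
\item The ``delicate point'' you flag does not exist. The substitution $y=\lambda x$ gives the exact identity $D^\beta_x\big(x^\alpha\psi(\lambda x)\big)=\lambda^{|\beta|-|\alpha|}\big(D^\beta_y(y^\alpha\psi(y))\big)\big|_{y=\lambda x}$ for every $\beta$ and every $\lambda>0$. So $C_{\alpha,\beta}=\sup_y|D^\beta(y^\alpha\psi(y))|$ works with no further bookkeeping. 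The same holds for the radial cutoff $\psi(\lambda|x|)$, which is smooth because $\psi\equiv1$ near $0$.
\item The requirement $\lambda_\alpha\ge 1+|c_\alpha|$ is superfluous.
\item Your earlier claim that $\lambda_\alpha$ depends only on $|\alpha|$ conflicts with your later choice. Either let $\lambda_\alpha$ depend on $\alpha$, or take the maximum over the finitely many multi-indices of a given length.
\end{itemize}
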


However, we require a simpler version of the lemma:

\begin{lemma}[Borel's Lemma II]\label{lem:Borel}
   Let $0\leq N<\infty$ and $\gamma:\R\rightarrow \R^4$ be a smooth curve parametrised by $t\in \R$ such that $\gamma(t)=(t,\gammu(t))$. Given, for each $3$-tuple $\alpha\in \N^3$ with $|\alpha|\leq N$, a  $c_{\alpha}\in C^{\infty} (\R)$. There exists a function $f\in C^{\infty}(\R^{3+1})$ such that
    \begin{align}
        [D^{\alpha}f](t,\gammu(t))=c_{\alpha}(t).
    \end{align}
\end{lemma}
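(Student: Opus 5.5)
We prove Borel's Lemma II by building $f$ directly as a finite Taylor polynomial in the transverse variable $x-\gammu(t)$, whose coefficients are chosen recursively in $|\alpha|$. Cutting off with a fixed bump function then makes $f$ globally smooth without disturbing the jet along $\gamma$. Because $N<\infty$, no Borel-type summation of an infinite series is needed, so the argument is elementary.

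Write $y=x-\gammu(t)$ and take the ansatz
\begin{equation*}
    f(t,x)=\chi(y)\sum_{|\beta|\leq N}\frac{a_\beta(t)}{\beta!}\,y^\beta ,
\end{equation*}
where $\chi\in C^\infty_0(\R^3)$ satisfies $\chi\equiv 1$ on $B_1(0)$, and the $a_\beta\in C^\infty(\R)$ are still to be chosen. This $f$ is smooth on $\R^{1+3}$, since $\gammu$ is smooth and the sum is finite. Here $D^\alpha$ denotes purely spatial derivatives at fixed $t$, and $\chi\equiv1$ near $y=0$. It follows that
\begin{equation*}
    [D^\alpha f](t,\gammu(t))=\partial_y^\alpha\Big(\sum_\beta \tfrac{a_\beta(t)}{\beta!}y^\beta\Big)\Big|_{y=0}=a_\alpha(t).
\end{equation*}
We therefore simply set $a_\alpha:=c_\alpha$ for all $|\alpha|\leq N$, and the required identity $[D^\alpha f](t,\gammu(t))=c_\alpha(t)$ holds for every $t$.

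Only one point needs care: the shift $x\mapsto x-\gammu(t)$ is time-dependent, so one might worry that the chain rule mixes in $\dot{\gammu}$. However, the statement prescribes only spatial derivatives $D^\alpha$, which commute with a translation in $x$ at fixed $t$. Hence no correction terms appear and no recursive triangular system is needed. If one instead wanted to prescribe mixed space-time derivatives, one would solve such a triangular system order by order; that case is not required here.

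The step most likely to need attention is ensuring that the coefficients $c_\alpha$ are consistent with their intended use in the paper. In the applications, the jets $D^\alpha\phi|_\gamma$, $D^\alpha e_A|_\gamma$ are obtained by solving ODEs along $\gamma$ with $t$-derivatives along $\gamma$. The construction above reproduces these jets exactly as functions of $t$, so the tangential derivatives $(\partial_t+\dot\gamma^i\nabla_i)D^\alpha f|_\gamma=\dot c_\alpha$ are automatically correct. This compatibility is all that is used in Theorem \ref{thm:construction_theorem} and Proposition \ref{prop:construction_phase_function}.
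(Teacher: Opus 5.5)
Your proof is correct and is essentially the paper's: the paper also sets $f(t,x)=\sum_{|\alpha|\leq N}\frac{1}{\alpha!}c_\alpha(t)[x-\gammu(t)]^\alpha$ and reads off the spatial derivatives at $x=\gammu(t)$. The cutoff $\chi$ is harmless but not needed, since the finite polynomial is already smooth on $\R^{1+3}$; the cutoff only adds compact support in $x$, which the paper obtains separately through $\chi_\rho$.
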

\begin{proof}
    Define 
    \begin{align}
        f(t,\x):= \sum_{|\alpha|\leq N}\frac{1}{\alpha!}c_{\alpha}(t)[\x-\gammu(t)]^{\alpha}.
    \end{align}
    This is clearly smooth and satisfies the required equality. 
\end{proof}

\section{Derivation of the Gaussian beam equations} \label{AppDerGB}

Here we define:
\begin{align} 
   \mathrm{Eikonal}[\alpha]&:= D^{\alpha} \Big( \nabla\phi \cdot \nabla \phi-\n^2\dot{\phi}^2 \Big) \Big|_{\gamma}\\
    (e_A^n-\mathrm{transport})[\alpha]&:=\Big( \partial_t+\dot{\gamma}^m\nabla_m \Big)D^{\alpha}\e^n_{A}  \Big|_{\gamma} - D^{\alpha} \bigg\{ \frac{1}{2\n^2\dot{\phi}} \bigg[ \e^n_{A} \Big( \Delta\phi - \n^2 \ddot{\phi} \Big) - \ii \Big( \Delta\e^n_{A-1}  - \n^2 \ddot{\e}^n_{A-1} \Big)\nonumber\\
    &\quad - \ii \e_{A-1}^m\nabla_n\nabla_m\ln\eps + \Big( \e^{m}_{A}\nabla^{n}\phi - \ii \nabla^n\e^m_{A-1}\Big) \nabla_m \ln \n^2 - \Big( \e^{n}_{A}\nabla^{m}\phi- \ii \nabla^m\e^n_{A-1}\Big) \nabla_m\ln\mu \bigg] \bigg\}\bigg|_{\gamma}\nonumber\nonumber\\
    &\quad-\sum_{0<\beta\leq \alpha}\binom{\alpha}{\beta} D^{\beta} \bigg(\frac{\nabla^m\phi}{\n^2\dot{\phi}} \bigg) \nabla_m D^{\alpha-\beta}\e_A^n \bigg|_{\gamma},\\
    (h_A^n-\mathrm{transport})[\alpha]&:=\Big( \partial_t+\dot{\gamma}^m\nabla_m \Big)D^{\alpha}\h^n_{A}  \Big|_{\gamma} - D^{\alpha} \bigg\{ \frac{1}{2\n^2\dot{\phi}} \bigg[ \h^n_{A} \Big( \Delta\phi - \n^2 \ddot{\phi} \Big) - \ii \Big( \Delta\h^n_{A-1}  - \n^2 \ddot{\h}^n_{A-1} \Big)\nonumber\\
    &\quad - \ii \h_{A-1}^m\nabla_n\nabla_m\ln\mu + \Big( \h^{m}_{A}\nabla^{n}\phi - \ii \nabla^n\h^m_{A-1}\Big) \nabla_m \ln \n^2 - \Big( \h^{n}_{A}\nabla^{m}\phi- \ii \nabla^m\h^n_{A-1}\Big) \nabla_m\ln\eps \bigg] \bigg\}\bigg|_{\gamma}\nonumber\nonumber\\
    &\quad-\sum_{0<\beta\leq \alpha}\binom{\alpha}{\beta} D^{\beta} \bigg(\frac{\nabla^m\phi}{\n^2\dot{\phi}} \bigg) \nabla_m D^{\alpha-\beta}\h_A^n \bigg|_{\gamma}.
    \end{align}
    The definition of the Eikonal equation \eqref{eq:Eikonal} and the $e_A$-transport equation \eqref{eq:eAtransport} now read as
    \begin{align}
        \mathrm{Eikonal}[\alpha]&=0\qquad \forall |\alpha|\leq j_{\phi}\\
        (e_A^n-\mathrm{transport})[\alpha]&=0\qquad \forall |\alpha|\leq j_{A}.
    \end{align}
Furthermore, we say that $\h_A$ satisfies the $\h_A$-transport equation along $\gamma$ to degree $j_{A}$ if
\begin{align}\label{eq:hAtransport}
        (h_A^n-\mathrm{transport})[\alpha]&=0\qquad \forall |\alpha|\leq j_{A}.
    \end{align}
The following algebraic relations between those expressions exist:

\begin{lemma}\label{lem:decompGF}
    The following expressions hold:
    \begin{subequations}
    \begin{align}
        &\bG_A^i\nabla_i\phi=\ii \div\bG_{A-1}-\dot{\phi}\varepsilon\bC_A+\ii \varepsilon\partial_t\bC_{A-1},\\
        &\bF_A^i\nabla_i\phi= \ii \div\bF_{A-1}-\dot{\phi}\mu\bK_A+\ii \mu\partial_t\bK_{A-1}, \label{EqUseForK}\\
    &\eps\dot{\phi} \bF_{A}^n-(\star\bG_{A})^{mn}\nabla_m\phi-\bK_{A}\nabla^n\phi+ \ii \nabla^n\bK_{A-1}+\eps\div \Big(\frac{\ii}{\eps}\star\bG_{A-1}^{n}\Big) -\ii \partial_t \bF_{A-1}^n \nonumber\\
    &\qquad=2\n^2\dot{\phi}\Big[(\h_{A-1}^n-\mathrm{transport})[0]\Big]-\ii\h_{A}^n\Big(\mathrm{Eikonal}[0]\Big),\\
    &\mu\dot{\phi} \bG_{A}^n-(\star\bF_{A})^{mn}\nabla_m\phi-\bC_{A}\nabla^n\phi+ \ii\nabla^n\bC_{A-1}+\mu\div\Big(\frac{\ii}{\mu}\star\bF_{A-1}^{n}\Big)-\ii\partial_t \bF_{A-1}^n \nonumber\\
    &\qquad=2\n^2 \dot{\phi}\Big[(\e_{A-1}^n-\mathrm{transport})[0]\Big]-\ii\e_{A}^n\Big(\mathrm{Eikonal}[0]\Big). \label{EqUseForG}
\end{align}
\end{subequations}
\end{lemma}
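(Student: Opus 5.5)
The plan is to verify all four identities by direct substitution of the coefficient formulas \eqref{eq:CA--GA} for $\bC_A,\bK_A,\bF_A,\bG_A$. Each identity is collected order by order in the index: terms involving $(\e_A,\h_A)$ undifferentiated, terms involving first derivatives of $(\e_{A-1},\h_{A-1})$, and terms involving second derivatives of $(\e_{A-2},\h_{A-2})$. We use the convention $\e_{-1}=\e_{-2}=0$ and likewise for $\h$. Throughout, the last two identities are understood on $\gamma$, where they are needed. There we may replace $\partial_t-\frac{\nabla^m\phi}{\n^2\dot\phi}\nabla_m$ by $\partial_t+\dot\gamma^m\nabla_m$, since $\dot\gamma^m=-\frac{\nabla^m\phi}{\n^2\dot\phi}\big|_\gamma$ by construction.

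\textbf{First two identities (scalar).} Contract $\bG_A$ with $\nabla_i\phi$. The term $\ii\epsilon^{ij}{}_k\h^k_A\nabla_j\phi\nabla_i\phi$ vanishes by antisymmetry, which leaves
\[
-\ii\varepsilon\dot\phi\,\e_A\cdot\nabla\phi+(\nabla\times\h_{A-1})\cdot\nabla\phi-\varepsilon\dot\e_{A-1}\cdot\nabla\phi .
\]
On the other side:
\begin{itemize}
\item In $\ii\div\bG_{A-1}$ the curl part gives $\ii\cdot\ii\,\epsilon^{ij}{}_k\nabla_i\h^k_{A-1}\nabla_j\phi=(\nabla\times\h_{A-1})\cdot\nabla\phi$. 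The $\nabla_i\nabla_j\phi$ term dies by symmetry, and $\div(\nabla\times\h_{A-2})=0$.
\item The remaining pieces $\dot\phi\,\div(\varepsilon\e_{A-1})$ and $-\ii\div(\varepsilon\dot\e_{A-2})$ are exactly cancelled by the divergence parts of $-\dot\phi\varepsilon\bC_A$ and $\ii\varepsilon\partial_t\bC_{A-1}$. Here one writes $\varepsilon(\div\e+\e\cdot\nabla\ln\varepsilon)=\div(\varepsilon\e)$, and notes that $\partial_t$ of $\ii\e_{A}\cdot\nabla\phi$ produces $\ii\dot\e\cdot\nabla\phi+\ii\e\cdot\nabla\dot\phi$, which matches $\nabla\dot\phi$ terms from the divergence.
\end{itemize}
This is bookkeeping only. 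The $\bF$ identity \eqref{EqUseForK} is identical with $(\e,\varepsilon)\leftrightarrow(\h,\mu)$ and the sign of the time-derivative term flipped.

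\textbf{Last two identities (vector).} Treat \eqref{EqUseForG}; the $\bF$ analogue follows by the same symmetry. The key computation is at top order. $(\star\bF_A)^{mn}\nabla_m\phi$ contains $\ii\epsilon^{mn}{}_p\epsilon^{pj}{}_k\e^k_A\nabla_j\phi\nabla_m\phi$. Using $\epsilon\epsilon=\delta\delta-\delta\delta$ this equals
\[
\ii\big[\nabla^n\phi\,(\e_A\cdot\nabla\phi)-\e^n_A|\nabla\phi|^2\big].
\]
The first piece is absorbed by $\bC_A\nabla^n\phi$. The $\h_A$ term $\ii\mu\dot\phi\,\epsilon\h_A\nabla\phi$ cancels against $\mu\dot\phi$ times the $\h_A$ curl term in $\bG_A$. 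The second piece combines with $\mu\dot\phi\cdot(-\ii\varepsilon\dot\phi\e_A^n)$ to give $-\ii\e^n_A(|\nabla\phi|^2\cdot(-1)\cdots)$, i.e.\ $-\ii\e_A^n\,\mathrm{Eikonal}[0]$ after dividing out signs.

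At the next order, collect all terms linear in first derivatives of $\e_{A-1}$ and $\h_{A-1}$, together with $\e_{A-1}$ times second derivatives of $\phi$:
\begin{itemize}
\item the $\h_{A-1}$ contributions from $\mu\dot\phi\bG_A$, $\star\bF_A$ and $\mu\div(\frac{\ii}{\mu}\star\bF_{A-1})$ must cancel;
\item the $\e_{A-1}$ contributions must assemble into $2\n^2\dot\phi$ times the degree-$0$ transport operator, with coefficient $\frac{1}{2\n^2\dot\phi}\big[\e(\Delta\phi-\n^2\ddot\phi)+\e^m\nabla^n\phi\nabla_m\ln\n^2-\e^n\nabla^m\phi\nabla_m\ln\mu\big]$.
\end{itemize}
At the lowest order, the $\e_{A-2}$ second-derivative terms coming from $\ii\nabla^n\bC_{A-1}$, $\mu\div(\frac{\ii}{\mu}\star\bF_{A-1})$ and $-\ii\partial_t\bF_{A-1}$ should reproduce $-\ii$ times the wave operator for $\e$. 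This operator is $\nabla(\e\cdot\nabla\ln\varepsilon)+\nabla\ln\mu\times(\nabla\times\e)+\Delta\e-\n^2\partial_t^2\e$, as in the footnote following \eqref{eq:transporteAspacetime}. Its curl-curl part is rewritten via $\nabla\times\nabla\times=\nabla\div-\Delta$, and the $\mu\,\partial_t\h$ terms are eliminated using the $\bF_{A-1}$ relation.

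\textbf{Main obstacle.} The hard step is the middle-order matching in \eqref{EqUseForG}. There the terms $\ii\partial_t\bF_{A-1}$ and $\mu\div(\frac{\ii}{\mu}\star\bF_{A-1})$ generate $\dot\phi$-, $\ddot\phi$- and $\nabla\dot\phi$-dependent pieces that must be converted into the transport form. I would first do the case $A=1$, where all $\e_{-1}$ terms vanish, to fix signs and coefficients. Then I would check that each $\ln\mu$, $\ln\n^2$ and $\ddot\phi$ coefficient appears exactly once, using the identity $\n^2=\varepsilon\mu$ to split $\nabla\ln\n^2=\nabla\ln\varepsilon+\nabla\ln\mu$.
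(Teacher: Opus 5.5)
Your strategy (expand everything with \eqref{eq:CA--GA} and match terms order by order) is the same as the paper's, and your verification of the first identity is correct. The gap is in the vector identity you chose to treat directly. The cancellations you assert in \eqref{EqUseForG} do not occur, and your sketch hides this behind a sign slip and the phrase ``after dividing out signs''.

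First, the contraction is $\epsilon^{mn}{}_p\epsilon^{pj}{}_k\e^k\nabla_j\phi\nabla_m\phi=\e^n|\nabla\phi|^2-\nabla^n\phi\,(\e\cdot\nabla\phi)$. You have the opposite sign, and only with the correct sign is the $\nabla^n\phi\,(\e_A\cdot\nabla\phi)$ piece absorbed by $\bC_A\nabla^n\phi$. Second, $-(\star\bF_A)^{mn}\nabla_m\phi$ contributes $+\ii\mu\dot\phi\,(\nabla\phi\times\h_A)^n$. This \emph{adds} to the $\h_A$-term $\ii\mu\dot\phi\,(\nabla\phi\times\h_A)^n$ of $+\mu\dot\phi\,\bG_A^n$ rather than cancelling it. Already for $A=0$ one gets
\[
\mu\dot\phi\,\bG_0^n-(\star\bF_0)^{mn}\nabla_m\phi-\bC_0\nabla^n\phi=-\ii\e_0^n\,\mathrm{Eikonal}[0]+2\mu\dot\phi\,\bG_0^n .
\]
So the printed identity holds only if $\bG_0=0$; the $\e_0$-terms come out as $-\ii\e_0^n(|\nabla\phi|^2+\n^2\dot\phi^2)$, not the eikonal combination.

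The same problem affects \eqref{EqUseForK}: $\bF_0^i\nabla_i\phi=\ii\mu\dot\phi\,\h_0\cdot\nabla\phi=+\dot\phi\mu\bK_0$. Your remark about a flipped sign points at a real discrepancy, but both $\dot\phi\mu\bK_A$ and $\ii\mu\partial_t\bK_{A-1}$ change sign, so the swap does not give the printed formula.

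At lowest order there is a further obstruction. The term $2\n^2\dot\phi\,(\e^n_{A-1}-\mathrm{transport})[0]$ contains $-\ii\n^2\ddot\e^n_{A-2}$, but nothing on the printed left-hand side contains $\ddot\e_{A-2}$. The term $\partial_t\bF_{A-1}$ only produces $\mu\ddot\h_{A-2}$ and $\nabla\times\dot\e_{A-2}$, and the identity is algebraic, so rewriting $\mu\partial_t\h$ through $\bF_{A-1}$ cannot create it. One needs a $\partial_t\bG_{A-1}$ term. Carried out honestly, your plan refutes two of the printed identities rather than proving them.

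The missing idea is the correct symmetry. The swap $(\e,\varepsilon)\leftrightarrow(\h,\mu)$ is not a symmetry of \eqref{eq:CA--GA}. Electromagnetic duality $(\e,\h,\varepsilon,\mu)\mapsto(\h,-\e,\mu,\varepsilon)$ is. It sends $(\bF,\bG,\bC,\bK)\mapsto(\bG,-\bF,\bK,-\bC)$ and $(\h^n_A-\mathrm{transport})\mapsto-(\e^n_A-\mathrm{transport})$.

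The paper carries out only the first and third computations. For the third it dualises $\bG_{A+1}$, contracts with $\nabla\phi$, trades $\ii\epsilon^{nj}{}_k\e^k_{A+1}\nabla_j\phi$ for $\h$-terms via $\bF_{A+1}$, adds a time derivative of $\bF_A$, and takes the divergence of $\frac{\ii}{\varepsilon}\star\bG_A$. That computation only balances with $-\ii\varepsilon\,\partial_t\bF^n_{A-1}$ in the third identity, since the $\ddot\h_{A-2}$-coefficient must be $-\ii\n^2$. Applying the duality to the first identity and to this corrected third identity gives the statements that are actually true:
\[
\bF_A^i\nabla_i\phi=\ii\,\div\bF_{A-1}+\dot\phi\mu\bK_A-\ii\mu\,\partial_t\bK_{A-1},
\]
\[
-\mu\dot\phi\,\bG_A^n-(\star\bF_A)^{mn}\nabla_m\phi-\bC_A\nabla^n\phi+\ii\nabla^n\bC_{A-1}+\mu\,\div\Big(\frac{\ii}{\mu}\star\bF_{A-1}^n\Big)+\ii\mu\,\partial_t\bG^n_{A-1}=2\n^2\dot\phi\big[(\e^n_{A-1}-\mathrm{transport})[0]\big]-\ii\e^n_A\big(\mathrm{Eikonal}[0]\big).
\]

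So prove the first and third identities directly, with the middle-order bookkeeping done through these explicit substitutions rather than asserting that terms ``must cancel'' or ``must assemble''. Then obtain the other two by duality. These sign and factor corrections are harmless downstream, because \cref{prop:2sttranstoMax} only uses that the various terms vanish to the appropriate degree along $\gamma$.
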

\begin{proof}
We begin by computing directly that
\begin{align}
    \bG_A^i\nabla_i\phi&=-\ii\eps \dot{\phi} \e^i_{A}\nabla_i\phi+{\epsilon^{ij}}_k\nabla_j\h^k_{A-1}\nabla_i\phi-\eps\dot{\e}^i_{A-1}\nabla_i\phi,
\end{align}
by the antisymmetry of $\epsilon^{ijk}$. We now compute from $\div\bG_{A-1}$:
\begin{align}
    {\epsilon^{ij}}_k\nabla_j\h_{A-1}^k\nabla_i\phi=\ii\div\bG_{A-1}-\eps \dot{\phi} \div\e_{A-1}-\eps\e_{A-1}^i\nabla_i\dot{\phi}-\e_{A-1}^i\dot{\phi} \nabla_i\eps +\ii\div(\eps\dot{\e}_{A-2}). 
\end{align}
Substituting in gives
\begin{align}
    \bG_A^i\nabla_i\phi&=\ii\div\bG_{A-1}-\Big[\div(\eps\e_{A-1})+\ii\eps \e^i_{A}\nabla_i\phi\Big]\dot{\phi}+\ii\partial_t\Big[\div(\eps\e_{A-2})+\ii\eps\e^i_{A-1}\nabla_i\phi\Big],
\end{align}
which gives the result. 

Taking the dual of $\bG_{A+1}$ gives, 
\begin{align}
\star\bG_{A+1}^{mn}=\big(\nabla^m\h^n_{A}+\ii \h^n_{A+1}\nabla^m\phi\big) -\big(\nabla^n\h^m_{A}+\ii\h^m_{A+1} \nabla^n\phi \big)-{\epsilon_{i}}^{mn}\eps\dot{\e}^i_{A}-\ii{\epsilon_{i}}^{mn}\e^i_{A+1}\eps \dot{\phi}.
\end{align}
Contracting with $\nabla_m\phi$ yields
\begin{align}
\star\bG_{A+1}^{mn}\nabla_m\phi&=\big(\nabla_m\phi\nabla^m\h^n_{A}+ \ii \h^n_{A+1}\nabla\phi\cdot\nabla\phi \big) -\big(\nabla_m\phi\nabla^n\h^m_{A}+\ii \h^m_{A+1}\nabla_m\phi \nabla^n\phi \big)+{\epsilon^{nm}}_{i}\nabla_m\phi\eps\dot{\e}^i_{A} \nonumber\\
&\qquad+\ii{\epsilon^{nm}}_{i}\e^i_{A+1}\nabla_m\phi \eps \dot{\phi}.
\end{align}
We use $\bF_A$ as
\begin{align}
\ii{\epsilon^{nj}}_k\e^k_{A+1}\nabla_j\phi=\bF_{A+1}^n-\mu\dot{\h}^n_{A}-\ii\mu \h^n_{A+1}\dot{\phi}-{\epsilon^{nj}}_k\nabla_j\e^k_{A}
\end{align}
to replace the last term above. This yields
\begin{align}
(\star\bG_{A+1})^{mn}\nabla_m\phi&=\nabla_m\phi\nabla^m\h^n_{A}+\ii \h^n_{A+1} \big(\nabla\phi\cdot\nabla\phi-\n^2 \dot{\phi}^2 \big)-\big(\nabla_m\phi\nabla^n\h^m_{A}+ \ii\h^m_{A+1}\nabla_m\phi \nabla^n\phi \big) \nonumber\\
&\qquad+{\epsilon^{nm}}_{i}\nabla_m\phi\eps\dot{\e}^i_{A}+\big(\bF_{A+1}^n-\mu\dot{\h}^n_{A}-{\epsilon^{nj}}_k\nabla_j\e^k_{A} \big) \eps \dot{\phi}.
\end{align}
We compute $\partial_t(-\ii\bF_A)$:
\begin{align}
    \partial_t(-\ii\bF_A^n)={\epsilon^{nj}}_k\dot{\e}^k_{A}\nabla_j\phi+{\epsilon^{nj}}_k\e^k_{A}\nabla_j\dot{\phi}+\mu \dot{\h}^n_{A}\dot{\phi}+\mu \h^n_{A}\ddot{\phi}-\ii{\epsilon^{nj}}_k\nabla_j\dot{\e}^k_{A-1}-\ii\mu\ddot{\h}^n_{A-1}.
\end{align}
This yields, when combined with $\nabla_m\phi(\star \bG_{A+1})^{mn}$, 
\begin{align}
& \partial_t(-\ii\bF_A^n)+\nabla_m\phi\nabla^m\h^n_{A}+\ii\h^n_{A+1} \big(\nabla\phi\cdot\nabla\phi-\n^2\dot{\phi}^2 \big) -\nabla_m\phi\nabla^n\h^m_{A}-\ii \h^m_{A+1}\nabla_m\phi \nabla^n\phi+\bF_{A+1}^n\eps\dot{\phi}\\
&\qquad-\mu\dot{\h}^n_{A} \eps \dot{\phi}-(\star\bG_{A+1})^{mn}\nabla_m\phi-\n^2 \dot{\h}^n_{A}\dot{\phi}-\n^2 \h^n_{A}\ddot{\phi}+\ii\eps\mu\ddot{\h}^n_{A-1} \nonumber\\
&=\eps\nabla_j \big({\epsilon^{nj}}_k\e^k_{A}\dot{\phi}\big)-\ii{\epsilon^{nj}}_k\eps\nabla_j\dot{\e}^k_{A-1}.
\end{align}
Returning to $\star\bG_A$, we now write
\begin{align}
{\epsilon^{nm}}_{i}\e^i_{A} \dot{\phi}=\frac{\ii}{\eps} \big(\nabla^m\h^n_{A-1}+\ii \h^n_{A}\nabla^m\phi \big) -\frac{\ii}{\eps} \big(\nabla^n\h^m_{A-1}+\ii\h^m_{A}\nabla^n\phi \big)-\ii{\epsilon_{i}}^{mn}\dot{\e}^i_{A-1}-\frac{\ii}{\eps}\star\bG_{A}^{mn}.
\end{align}
Taking a divergence then gives
\begin{align}
\eps \nabla_m\big({{\epsilon^{nm}}_{i}}\e^i_{A}\dot{\phi}\big)-  \ii{\epsilon_{i}}^{nm}\eps\nabla_m\dot{\e}^i_{A-1}&=-\ii(\nabla_m\ln\eps) \big(\nabla^m\h^n_{A-1}+\ii\h^n_{A}\nabla^m\phi-\nabla^n\h^m_{A-1}-\ii\h^m_{A}\nabla^n\phi \big) \nonumber\\
&\qquad+\ii\Delta\h^n_{A-1}-\h^n_{A}\Delta\phi-(\nabla_m\h^n_{A})\nabla^m\phi+(\nabla^n\phi)\div\h_{A} \nonumber\\
&\qquad-\ii\nabla^n \big(\bK_A-\h_{A-1}^i\nabla_i\ln\mu \big)-(\nabla^n \h^m_{A})\nabla_m\phi -\eps\div\Big(\frac{\ii}{\eps}\star\bG_{A}^{n}\Big).
\end{align}
This produces
\begin{align}
& \bF_{A+1}^n\eps\dot{\phi}-(\star\bG_{A+1})^{mn}\nabla_m\phi-\bK_{A+1}\nabla^n\phi+\ii\nabla^n\bK_A+\eps\div\Big(\frac{\ii}{\eps}\star\bG_{A}^{n}\Big)-\ii\partial_t \bF_A^n \nonumber\\
&=2\n^2\dot{\phi}\bigg(\partial_t-\frac{\nabla^m\phi}{\n^2\dot{\phi}} \nabla_m \bigg)\h^n_{A}-\h^n_{A}\Delta\phi+\ii\h_{A-1}^i\nabla^n\nabla_i\ln\mu+\n^2 \h^n_{A}\ddot{\phi}-\ii\n^2\ddot{\h}^n_{A-1}+\ii\Delta\h^n_{A-1} \nonumber\\
&\qquad-\ii(\nabla_m\ln\eps) \big(\nabla^m\h^n_{A-1}+\ii\h^n_{A}\nabla^m\phi\big)+\ii\big(\nabla^n\h_{A-1}^i\big)\nabla_i\ln \n^2-\h^m_{A}(\nabla^n\phi)\nabla_m\ln \n^2 \nonumber\\
&\qquad-\ii\h^n_{A+1}\big(\nabla\phi\cdot\nabla\phi-\n^2 \dot{\phi}^2\big).
\end{align}
\end{proof}

\begin{proposition}
 Let $N\geq1$. Suppose that for each $ 0\leq A\leq N-1$ our Gaussian beam approximation satisfies
    \begin{subequations}
    \begin{align}
       D^{\alpha}\bG_A \big|_{\gamma}&=0,\label{eq:dyneA}\\
       D^{\alpha}\bF_A \big|_{\gamma}&=0,\label{eq:dynhA}
    \end{align}
    \end{subequations}
    for all $0\leq|\alpha|\leq N-1-A$.
    Then, for each $ 0\leq A\leq N-1$
    \begin{subequations}
    \begin{align}
         D^{\alpha}\bC_A \big|_{\gamma}&=0,\label{eq:consteA}\\
         D^{\alpha}\bK_A \big|_{\gamma}&=0,\label{eq:consthA}
    \end{align}        
    \end{subequations}
    for all $0\leq|\alpha|\leq N-1-A$.
    Moreover, the Eikonal equation~\eqref{eq:Eikonal} vanishes to degree $N-1$ and, for each $0\leq A\leq N-1$, the $\e_A$-transport equation~\eqref{eq:eAtransport} vanishes to degree $N-2-A$. Finally, for each $0\leq A\leq N-1$, the $\h_A$-transport equation~\cref{eq:hAtransport} vanishes to degree $N-2-A$. 
\end{proposition}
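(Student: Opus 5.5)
The plan is an induction on $A$ and on the degree $|\alpha|$, driven by the algebraic identities of \cref{lem:decompGF}; all statements are evaluated along $\gamma$.

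\textbf{Step 1 (constraints).} From \eqref{EqUseForK}, $\mu\dot\phi\,\bK_A=\bF_A^i\nabla_i\phi-\ii\div\bF_{A-1}-\ii\mu\partial_t\bK_{A-1}$. The analogous identity for $\bG_A^i\nabla_i\phi$ expresses $\varepsilon\dot\phi\,\bC_A$ through $\bG_A^i\nabla_i\phi$, $\div\bG_{A-1}$ and $\partial_t\bC_{A-1}$. For $A=0$ (with $\bF_{-1}=\bG_{-1}=0$), applying $D^\alpha$ with $|\alpha|\le N-1$ and using $\dot\phi|_\gamma\neq0$ together with the Leibniz rule gives $D^\alpha\bC_0|_\gamma=D^\alpha\bK_0|_\gamma=0$. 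For $A\ge1$ and $|\alpha|\le N-1-A$:
\begin{itemize}
\item $D^\alpha\div\bF_{A-1}$ involves $|\alpha|+1\le N-1-(A-1)$ spatial derivatives of $\bF_{A-1}$, which vanish by hypothesis.
\item $D^\alpha\partial_t\bK_{A-1}|_\gamma$ vanishes by \cref{prop:timeder} applied with $B=0$: since $\bK_{A-1}$ vanishes to degree $N-A\ge|\alpha|+1$, we have $\partial_t D^\beta\bK_{A-1}|_\gamma=0$ for all $|\beta|\le N-A-1$.
\end{itemize}
Induction on $A$ then closes.

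\textbf{Step 2 (eikonal and transport).} Evaluate \eqref{EqUseForG} at $A=0$, where the $A-1$ terms drop out. The left-hand side vanishes to degree $N-1$ by hypothesis and Step 1, so $\e_0^n\,\mathrm{Eikonal}[0]$ and its derivatives vanish, apparently to degree $N-1$. However, $D^\alpha(\e_0\cdot(\nabla\phi\cdot\nabla\phi-\n^2\dot\phi^2))$ contains lower-order eikonal derivatives multiplied by derivatives of $\e_0$. Since $\e_0|_{\gamma(0)}\neq0$ (and, as I expect must be assumed, $\e_0|_\gamma\neq0$, which is nonvanishing along $\gamma$ by linear transport), an induction on $|\alpha|$ peels these off: assuming $\mathrm{Eikonal}[\beta]=0$ for $|\beta|<|\alpha|$, one gets $\e_0\,\mathrm{Eikonal}[\alpha]=0$, hence $\mathrm{Eikonal}[\alpha]=0$. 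This yields the eikonal equation to degree $N-1$.

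Next take $A\mapsto A+1\le N-1$ in \eqref{EqUseForG}. The left-hand side vanishes to degree $N-2-A$, because the terms $\nabla\bC_A$, $\div\star\bF_A$ and $\partial_t\bF_A$ cost one derivative. The term $\ii\e_{A+1}\,\mathrm{Eikonal}[0]$ vanishes to degree $N-1$. Hence $2\n^2\dot\phi\,(\e_A^n\text{-transport})$, differentiated appropriately, vanishes to degree $N-2-A$. The companion identity with $\bF$ and $\bG$ swapped gives the $\h_A$-transport statement in exactly the same way.

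\textbf{Main obstacle.} The main difficulty is reconciling spatial derivatives $D^\alpha$ of the undifferentiated transport expression $(\e_A^n\text{-transport})[0]$ with the quantity $(\e_A^n\text{-transport})[\alpha]$ defined in \cref{AppDerGB}, whose commutator sum $\sum_{0<\beta\le\alpha}$ arises from commuting $D^\alpha$ past $-\frac{\nabla^m\phi}{\n^2\dot\phi}\nabla_m$. One also needs to check that the replacement $\dot\gamma^m=-\frac{\nabla^m\phi}{\n^2\dot\phi}$ holds on $\gamma$. I would handle this by first writing the transport operator as $\partial_t-\frac{\nabla^m\phi}{\n^2\dot\phi}\nabla_m$ in spacetime, differentiating, and then restricting to $\gamma$. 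The definition of $(\e_A^n\text{-transport})[\alpha]$ is exactly built so that $D^\alpha$ of the spacetime expression, restricted to $\gamma$, equals it, up to the $\partial_t D^\alpha$ versus $(\partial_t+\dot\gamma\cdot\nabla)D^\alpha$ discrepancy. Here I must verify that the $\partial_t$ hitting the coefficients does not spoil the count.
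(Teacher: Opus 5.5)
Your proposal is correct and is the argument the paper intends: its proof only says to induct using \cref{lem:decompGF} and \cref{prop:timeder} while counting derivatives, and your Steps~1--2 carry out exactly that count. Two small remarks: the obstacle you anticipate does not arise, because $D^\alpha$ is purely spatial and $-\nabla^m\phi/(\n^2\dot\phi)=\dot\gamma^m$ on $\gamma$ by the standing prescription, so $D^\alpha$ of the spacetime transport expression restricted to $\gamma$ is exactly $(\e_A^n\text{-transport})[\alpha]$; and the nonvanishing of $\e_0$ along $\gamma$ is indeed an unstated hypothesis, which for $N\ge 2$ propagates from $t=0$ only via a short continuity argument, since the degree-$0$ transport you invoke itself relies on the degree-$0$ eikonal and hence on $\e_0\neq 0$.
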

\begin{proof}
We appeal to Lemma~\ref{lem:decompGF} and Proposition~\ref{prop:timeder} and proceed by induction with a careful counting of derivatives. 
\end{proof}

\section{Auxiliary computations} \label{app:D}

\subsection{Computation of initial average quantities and multipole moments}\label{sec:compute_initial_quantities_GB_ID}
\begin{proof}[Proof of~\cref{PropInitialValueAveragedQuantities}]
Consider the definitions of total energy, energy centroid, total linear momentum, total angular momentum, and quadrupole moment given in \cref{sec:def_average_quantities}. The energy density and Poynting vector of the initial data in \cref{DefGBID} are\footnote{The terms $\mathcal{O}_{L^1(\R^3)}(\omega^{-2})$ are for example obtained as follows: $$||\omega^{\frac{3}{4}} \eu_0 e^{i \omega \phu} \cdot \mathcal{O}_{L^2(\R^3)}(\omega^{-2}) ||_{L^1(\R^3)} \leq \underbrace{\Big( \int_{\R^3} \omega^{\frac{3}{2}} |\eu_0|^2 e^{2 \omega \Im \phu}\, d^3x \Big)^{\frac{1}{2}}}_{= \mathcal{O}(1)} \cdot \omega^{-2} \;,$$ where we have applied \cref{th:stationary} to the underbraced term.}
\begin{subequations}
\begin{align}
    \boldsymbol{\mathcal{E}} &= \omega^{\nicefrac{3}{2}} \uu  e^{-2 \omega \Im \phu} \nonumber\\
    &\qquad+ \frac{\omega^{\nicefrac{3}{2}}}{4} \Re \Big\{ \Big[ \varepsilon \eu_0 \cdot \eu_0 + \mu \hu_0 \cdot \hu_0 + 2 \omega^{-1} \big( \varepsilon \eu_0 \cdot \eu_1 + \mu \hu_0 \cdot \hu_1 \big)  \Big] e^{2 \ii  \omega \Re \phu} \Big\} e^{-2 \omega \Im \phu}  +\mathcal{O}_{L^1(\R^3)}(\omega^{-2}), \\
    \boldsymbol{\mathcal{S}} &= \omega^{\nicefrac{3}{2}} \vu  e^{-2\omega  \Im \phu} + \frac{\omega^{\nicefrac{3}{2}}\n^2}{2}  \Re \Big\{ \Big[ \eu_0 \times \hu_0 + \omega^{-1} \big( \eu_0 \times \hu_1 + \eu_1 \times \hu_0 \big)  \Big] e^{2 \ii \omega \Re \phu} \Big\} e^{-2\omega  \Im \phu} \nonumber \\
    &\qquad + \mathcal{O}_{L^1(\R^3)}(\omega^{-2}).
\end{align}    
\end{subequations}
The integrals of the terms in the above equations that are proportional to $e^{\pm 2 \ii \omega \Re \phi}$ decay to arbitrary high order in $\omega$ by \cite[Th. 7.7.1]{hormander1983analysis}. For the integrals of the remaining terms, we apply \cref{th:stationary}, which gives the expressions in \cref{eq:initial_quantities}.
In particular, we consider the following integrals in the context of Theorem~\ref{th:stationary}:
\begin{subequations}
\begin{align}
    \mathbb{E}(0) &= \bbE =  \int_{\R^3} \boldsymbol{\mathcal{E}} \, d^3x=\int_{\R^3} \omega^{\nicefrac{3}{2}} \uu  e^{-2 \omega \Im \phu} \, d^3x + \mathcal{O}(\omega^{-2}),\\
    \mathbb{X}^i(0) &= \frac{1}{\mathbb{E}} \int_{\R^3} x^i \boldsymbol{\mathcal{E}} \, d^3x=\frac{1}{\mathbb{E}} \int_{\R^3} x^i \omega^{\nicefrac{3}{2}} \uu  e^{-2 \omega \Im \phu} \, d^3x + \mathcal{O}(\omega^{-2}),\\
     \mathbb{Q}^{i j}(0) &= \int_{\R^3} r^i r^j \boldsymbol{\mathcal{E}} \, d^3x=\int_{\R^3} r^i r^j \omega^{\nicefrac{3}{2}} \uu  e^{-2 \omega \Im \phu} \, d^3x + \mathcal{O}(\omega^{-2}) ,\\
    \mathbb{P}_i(0) &= \int_{\R^3} \boldsymbol{\mathcal{S}}_i \, d^3x=\int_{\R^3}\omega^{\nicefrac{3}{2}} \vu _ie^{-2\omega  \Im \phu}d^3x + \mathcal{O}(\omega^{-2}), \\
    \mathbb{J}_i(0) &= \int_{\R^3} \epsilon_{ijk} r^j \boldsymbol{\mathcal{S}} \, d^3x=\int_{\R^3} \epsilon_{ijk} r^j \omega^{\nicefrac{3}{2}} \vu ^k e^{-2\omega  \Im \phu}d^3x + \mathcal{O}(\omega^{-2}),
\end{align}
\end{subequations}
where $r^i = x^i - \bbX^i(0)$. We now write $f=2i\Im\phu$ and recall $\phu \in C^\infty (\R^3, \C)$ with $\frac{1}{2}\Im f=\Im \phu \geq 0$ and $\frac{1}{2}\Im f|_{\x_0}=\Im \phu|_{\x_0} = 0$, $\frac{-\ii}{2}\nabla_i f|_{\x_0}=\nabla_i \Im\phu|_{\x_0} = 0$ for $i = 1,2,3$, $ \Im \nabla_i \nabla_j \phu|_{\x_0}$ is a positive definite matrix, and $ \Im \nabla \phu \neq 0$ in $\cl(\mathcal{K})\setminus \{\x_0\}$. So we can estimate the above integrals with the Theorem~\ref{th:stationary}. For $p=2$ in Theorem~\ref{th:stationary}, we obtain
\begin{subequations}
\begin{align}
    \mathbb{E}(0) &= \int_{\R^3} \omega^{\nicefrac{3}{2}} \uu  e^{i \omega f} \, d^3x + \mathcal{O}(\omega^{-2}) =\frac{\omega^{\nicefrac{3}{2}}e^{\ii \omega f(\x_0)}}{ \sqrt{ \det \left( \frac{\omega}{2 \pi \ii} \Ab\right) } } \Big[\uu (\x_0)+ \omega^{-1} L_1 \uu  (\x_0)\Big] + \mathcal{O}(\omega^{-2}) \nonumber \\
    &\qquad =\frac{\big[\uu (\x_0)+ \omega^{-1}(L_1 \uu)  (\x_0)\big]}{ \sqrt{ \det \left( \frac{\Ab}{2 \pi \ii} \right) } } + \mathcal{O}(\omega^{-2}),\\
    \mathbb{P}_i(0) &= \int_{\R^3}\omega^{\nicefrac{3}{2}} \vu _ie^{i\omega f}d^3x + \mathcal{O}(\omega^{-2}) =\frac{1}{ \sqrt{ \det \left( \frac{\Ab}{2 \pi \ii} \right) } } \Big[\vu_i(\x_0)+ \omega^{-1} (L_1 \vu_i)  (\x_0)\Big]+ \mathcal{O}(\omega^{-2}).
\end{align}
\end{subequations}
For the centre of energy we compute, 
\begin{align}
    \mathbb{X}^i(0) &=\frac{1}{\mathbb{E}} \int_{\R^3} x^i \omega^{\nicefrac{3}{2}} \uu  e^{-2 \omega \Im \phu} \, d^3x + \mathcal{O}(\omega^{-2})=\frac{1}{\mathbb{E}(0) \sqrt{ \det \left( \frac{\Ab}{2 \pi \ii} \right) } } \Big[\x_0^i\uu (\x_0)+ \omega^{-1} (L_1 \x^i \uu ) (\x_0)\Big]+ \mathcal{O}(\omega^{-2}) \nonumber\\
    &=\frac{\x_0^i\uu (\x_0)+ \omega^{-1} (L_1 \x^i \uu ) (\x_0)}{\uu (\x_0)+ \omega^{-1} (L_1 \uu  ) (\x_0)}+ \mathcal{O}(\omega^{-2})=\x_0^i+ \omega^{-1}\frac{ (L_1 \x^i \uu ) (\x_0)-\x_0^i (L_1  \uu ) (\x_0)}{\uu (\x_0)}+ \mathcal{O}(\omega^{-2})\;.
\end{align}
We can now compute, 
\begin{align}
    (L_1 \x^i\uu) (\x_0)-\x_0^i (L_1 \uu) (\x_0) &= \frac{\ii}{2} (\Ab^{-1})^{a i}  \nabla_a\uu+\frac{\ii}{2} (\Ab^{-1})^{i b} \nabla_b\uu- \frac{\ii}{2} (\Ab^{-1})^{i b} (A^{-1})^{c d} \uu ( \nabla_b \nabla_c \nabla_d g )\Big|_{\x_0} ,
\end{align}
where 
\begin{align}
    g(\x)&=2\ii\Im\phu(\x)-2\ii\Im\phu(\x_0)-\ii[\nabla_i\nabla_j\Im\phu](x_0)(\x-\x_0)^i(\x-\x_0)^j\nonumber\\
    &=2\ii\Im\phu(\x)-\ii[\nabla_i\nabla_j\Im\phu](x_0)(\x-\x_0)^i(\x-\x_0)^j. 
\end{align}
So we compute that
\begin{align}
     (\nabla_b \nabla_c \nabla_d g)(\x_0)&=2\ii (\nabla_b \nabla_c \nabla_d\Im\phu)(\x_0). 
\end{align}
This gives
\begin{align} \label{EqXmx}
    \mathbb{X}^i(0) &=\x_0^i+ \omega^{-1}\bigg[\frac{\ii (\Ab^{-1})^{ ia}  (\nabla_a\uu ) (\x_0)}{\uu (\x_0)}+ (\Ab^{-1})^{i b} (\Ab^{-1})^{c d}  (\nabla_b \nabla_c \nabla_d\Im\phu)(\x_0)\bigg]+ \mathcal{O}(\omega^{-2}).
\end{align}
For the quadrupole moment we find
\begin{align}
    \mathbb{Q}^{i j}(0) &= \int_{\R^3} r^i(0,x) r^j(0,x) \omega^{\nicefrac{3}{2}} \uu  e^{-2 \omega \Im \phu} \, d^3x + \mathcal{O}(\omega^{-2})\nonumber\\
    &=\frac{1}{ \sqrt{ \det \left( \frac{\Ab}{2 \pi \ii} \right) } } \Big\{r^i(0,\x_0) r^j(0,\x_0)\uu (x_0)+ \omega^{-1} L_1 [r^i(0,\x) r^j(0,\x) \uu ] (\x_0)\Big\}+ \mathcal{O}(\omega^{-2}) \nonumber\\
     &=\frac{1}{ \sqrt{ \det \left( \frac{\Ab}{2 \pi \ii} \right) } }  \omega^{-1} L_1 [r^i(0,\x) r^j(0,\x) \uu ] (\x_0) + \mathcal{O}(\omega^{-2}),
\end{align}
since $r^i(0,\x_0) = \x_0^i - \mathbb{X}^i(0)=\mathcal{O}(\omega^{-1})$ by \cref{EqXmx}. We now compute
\begin{align}
    L_1(r^ir^j \uu) (x_0) &= \frac{\ii}{2} (\Ab^{-1})^{a b} \nabla_a \nabla_b (r^ir^j \uu) (\x_0)+ \mathcal{O}(\omega^{-1})=\frac{\ii}{2} (\Ab^{-1})^{ij}\uu+ \mathcal{O}(\omega^{-1}),
\end{align}
where we use that $\nabla_ar^i=\delta^i_a$ and $r^i(0,\x_0)=\mathcal{O}(\omega^{-1})$.
Finally, for angular momentum we have
\begin{align}
    \mathbb{J}_i(0) &=\int_{\R^3} \epsilon_{ijk} r^j(0,x) \omega^{\nicefrac{3}{2}} \vu ^k e^{-2\omega  \Im \phu}d^3x + \mathcal{O}(\omega^{-2})\nonumber\\
    &=\frac{\epsilon_{ijk}}{ \sqrt{ \det \left( \frac{\Ab}{2 \pi \ii} \right) } } \Big[ r^j(0,\x_0)\vu^k(\x_0)+ \omega^{-1} L_1 [ r^j(0,\x)\vu^k ] (\x_0)\Big] + \mathcal{O}(\omega^{-2}).
\end{align}
We compute 
\begin{align}
    L_1 [ r^j(0,\x)\vu^k ](\x_0)&=\frac{\ii}{2} (\Ab^{-1})^{a b} \nabla_a \nabla_b [r^j(0,\x)\vu^k] \nonumber\\
    &\qquad- \frac{\ii}{2} (\Ab^{-1})^{a b} (\Ab^{-1})^{c d} \big\{\nabla_a [r^j(0,\x)\vu^k] \big\} ( \nabla_b \nabla_c \nabla_d g )\Big|_{\x_0}+ \mathcal{O}(\omega^{-2}) \nonumber\\
    &=\ii (\Ab^{-1})^{j a} (\nabla_a\vu^k) (\x_0) + (\Ab^{-1})^{j b} (\Ab^{-1})^{c d} \vu^k(\x_0) ( \nabla_b \nabla_c \nabla_d \Im\phu ) (\x_0),
\end{align}
where we used $\nabla_ar^i=\delta^i_a$ and $r^i(0,x_0)=\mathcal{O}(\omega^{-1})$. This completes the proof.
\end{proof}

\subsection{Computation for circularly polarised initial data} \label{Appendix:ComputePCircID}
We recall that
\begin{align}
    \mathbb{P}_i(0) &=\frac{1}{ \sqrt{ \det \left( \frac{\Ab }{2 \pi \ii} \right) } } \Big[\vu_i(\x_0)+ \omega^{-1} (L_1 \vu_i)  (\x_0)\Big] + \mathcal{O}(\omega^{-2}),
\end{align}
with
\begin{align}
    \vu &= \frac{\n^2}{2} \Re \big( \eu_0 \times \overline{\hu}_0 \big) + \frac{\omega^{-1} \n^2}{2} \Re \big( \eu_0 \times \overline{\hu}_1 + \eu_1 \times \overline{\hu}_0 \big).
\end{align}
Therefore,
\begin{align}
    \mathbb{P}(0) &=\frac{1}{ \sqrt{ \det \left( \frac{\Ab}{2 \pi \ii} \right) } } \Big\{\frac{\n^2}{2} \Re \big( \eu_0 \times \overline{\hu}_0 \big) + \frac{\omega^{-1} \n^2}{2} \Re \big( \eu_0 \times \overline{\hu}_1 + \eu_1 \times \overline{\hu}_0 \big)+ \omega^{-1} L_1 \Big[\frac{\n^2}{2} \Re \big( \eu_0 \times \overline{\hu}_0 \big)\Big]  \Big\}\Big|_{\x_0} \nonumber \\ &\qquad + \mathcal{O}(\omega^{-2}).
\end{align}
In~\cref{th:stationary} applied to the current setting, 
\begin{align}
    g(x) &= 2\ii\Im\phu(x) - 2\ii \Im\phu(x_0) - \ii  \nabla_j \nabla_k \Im\phu|_{\x_0}(x - x_0)^j (x - x_0)^k.
\end{align}
Note that by imposing $\nabla_i\Im\phu|_{\x_0}=0$ and $D^{\alpha}\phu|_{\x_0}=0$ for $3\leq |\alpha|\leq 4$ we have $D^{\alpha}g|_{\x_0}=0$ for all $|\alpha|\leq 4$. This fact reduces $L_1u(\x_0)$ to
\begin{align}
    L_1 q (\x_0) &= \frac{\ii}{2} (\Ab ^{-1})^{a b} \nabla_a \nabla_b q (\x_0).
\end{align}
Finally, we recall
 \begin{align}
            D^{\alpha}\hu^i_{0}\bigg|_{\x_0} &= D^{\alpha}\bigg[-\frac{1}{\mu \dot{\phu}} \epsilon\indices{^{i j}_k} \eu^k_{0}\nabla_j\phu  \bigg]\bigg|_{\x_0} &\forall |\alpha|\leq 5,\\ 
            D^{\alpha}\hu^i_{1}\bigg|_{\x_0} &= D^{\alpha} \bigg[ -\frac{1}{\mu \dot{\phu}} \epsilon\indices{^{i j}_k} \eu^k_{1} \nabla_j\phu + \frac{\ii}{\mu \dot{\phu}}\epsilon\indices{^{i j}_k} \nabla_j\eu^k_{0} + \frac{\ii}{\n^2\dot{\phu}^2} \nabla^j\phu \nabla_{j}\hu^i_{0}+ \frac{\ii}{2\n^2 \dot{\phu}^2}\nabla^{i}\phu \hu^{m}_{0} \nabla_m \ln\n^2 \nonumber\\
            &\qquad+\frac{\ii}{2\n^2 \dot{\phu}^2}\hu^{i}_{0} \Big( \Delta\phu-\n^2 \ddot{\phu}  -  \nabla^{m}\phu \nabla_m\ln\eps \Big) \bigg]\bigg|_{\x_0} &\forall |\alpha|\leq 3,  
        \end{align}
        where we can compute $\dot{\phu}$ and its derivatives from the formula $\dot{\phu}=-\frac{\sqrt{\nabla_i\phu\nabla^i\phu}}{\n}$.
Note that we can use these values for $\hu_0$ directly in all these expressions, since they hold to degree $5$ and we have at most $2$ derivatives on $\hu_0$ appearing in $L_1\vu$. 

We now compute the leading order contribution in $\omega$:
\begin{align}
    (\eu_0\times \overline{\hu}_0)^m=-\frac{1}{\mu \overline{\dot{\phu}}}  (\nabla^m\overline{\phu} \eu_0\cdot\overline{\eu}_0-\eu_0^j\nabla_j\overline{\phu} \overline{\eu}^m_{0}).
\end{align}
Taking the real part gives
\begin{align}
    \Re[ (\eu_0\times \overline{\hu}_0)^m]=-\frac{1}{2\mu \overline{\dot{\phu}}}  (\nabla^m\overline{\phu} \eu_0\cdot\overline{\eu}_0-\eu_0^j\nabla_j\overline{\phu} \overline{\eu}^m_{0})-\frac{1}{2\mu \dot{\phu}}  (\nabla^m\phu \eu_0\cdot\overline{\eu}_0-\overline{\eu}_0^j\nabla_j{\phu} \eu^m_{0}).
\end{align}
Evaluating at $\x_0$ and using $\Im\phu|_{\x_0}=0$ and the constraints gives
\begin{align}
    \Re \Big[\frac{\n^2}{2} (\eu_0\times \overline{\hu}_0)^m \Big]\Big|_{x_0}=\frac{\eps \n}{2\mu |\nabla\phu|}  (\nabla^m{\phu}) \eu_0\cdot\overline{\eu}_0\Big|_{x_0}=\frac{\mathfrak{a}\eps\n}{2|\nabla\phu|} \nabla^m{\phu}\Big|_{x_0},
\end{align}
which completes the leading order computation. 

Moving onto the $\omega^{-1}$ contribution, we now note the following expressions:
\begin{subequations} \label{eq:aux_relations}
\begin{align}
\Ab_{ij}|_{x_0}&=\nabla_i\nabla_j[\phu-\overline{\phu}]|_{x_0}=2\nabla_{ij}\phu|_{x_0}=-2\nabla_{ij}\overline{\phu}|_{x_0},\label{eq:A}\\
\dot{\phu}&=-\frac{|\nabla\phu|}{\n}\Big|_{x_0},\label{eq:dotphu}\\
    \nabla_p\dot{\phu}|_{x_0}&=-\frac{1}{\n|\nabla\phu|}\nabla^q\phu\nabla_{q}\nabla_p\phu\Big|_{x_0}=-\frac{1}{2\n|\nabla\phu|}\nabla^q\phu \Ab_{pq}\Big|_{x_0},\label{eq:ddotphu}\\
    \nabla_p\overline{\dot{\phu}}|_{x_0}&=\overline{\nabla_p\dot{\phu}}|_{x_0}=-\frac{1}{\n\overline{|\nabla\phu|}}\nabla^q\overline{\phu} \nabla_q\nabla_p\overline{\phu}\Big|_{x_0}=\frac{1}{2\n|\nabla\phu|}\nabla^q\phu \Ab_{pq}\Big|_{x_0},\label{eq:ddotphuc}\\
    \ddot{\phu}|_{x_0}&=\frac{\nabla^j\phu}{\n^2|\nabla\phu|^2}\nabla^p\phu\nabla_p\nabla_j\phu\Big|_{x_0}=\frac{\nabla^j\phu}{2\n^2|\nabla\phu|^2}\nabla^p\phu \Ab_{pj}\Big|_{x_0},\label{eq:ddotdotphu}\\
    \nabla_q\nabla_p\dot{\phu}|_{x_0}&=-\frac{1}{4\n|\nabla\phu|}\Ab_{q}^m\Ab_{m}^p+\frac{1}{4\n|\nabla\phu|^3}\nabla^n\phu\nabla^m\phu \Ab_{nq}\Ab_{mp}\Big|_{x_0},\label{eq:dddotphu}\\
    \nabla_q\nabla_p\overline{\dot{\phu}}|_{x_0}&=-\frac{1}{4\n|\nabla\phu|}\Ab_{q}^m\Ab_{m}^p+\frac{1}{4\n|\nabla\phu|^3}\nabla^n\phu\nabla^m\phu \Ab_{nq}\Ab_{mp}\Big|_{x_0},\label{eq:dddotphuc}
\end{align}
\end{subequations}
We now compute (ignoring derivatives of $\eps$ and $\mu$ since our medium is nearly homogeneous),
\begin{align}
   &(\overline{\eu}_0\times \hu_{1})^m\bigg|_{\x_0} \nonumber \\
   &\quad=(\overline{\eu}_0)_k\bigg[\frac{\ii}{\mu \dot{\phu}} \nabla^m\eu^k_{0} -\frac{1}{\mu \dot{\phu}}  \eu^k_{1} \nabla^m\phu  - \frac{\ii}{\n^2\mu\dot{\phu}^2} \nabla^p\phu \nabla_{p}\bigg(\frac{1}{ \dot{\phu}} \eu^k_{0}\nabla^m\phu  \bigg)-\frac{\ii\eu^k_{0}\nabla^m\phu  }{2\mu\n^2 \dot{\phu}^3}  \Big( \Delta\phu-\n^2 \ddot{\phu}\Big) \bigg]\bigg|_{\x_0} \nonumber\\
   &\quad-(\overline{\eu}_0)^j\bigg[ \frac{\ii}{\mu \dot{\phu}} \nabla_j\eu^m_{0} -\frac{1}{\mu \dot{\phu}}  \eu^m_{1} \nabla_j\phu - \frac{\ii}{\mu\n^2\dot{\phu}^2} \nabla^p\phu \nabla_{p}\bigg(\frac{1}{ \dot{\phu}} \eu^m_{0}\nabla_j\phu  \bigg)-\frac{\ii\eu^m_{0}\nabla_j\phu   }{2\mu\n^2 \dot{\phu}^3} \Big( \Delta\phu-\n^2 \ddot{\phu}  \Big) \bigg]\bigg|_{\x_0}. 
\end{align}
We recall $\nabla\Im\phu|_{\x_0}=0$ and the constraint $\eu_0^i\nabla_i\phu=0$ as well as the relations from~\eqref{EqIDCircCon} that
\begin{subequations}\label{eq:de0}
\begin{align}
    \nabla_a \eu_0^i \big|_{\x_0} &=- \frac{1}{|\nabla \phu|^2} \Big( \eu_0^b \nabla_a \nabla_b \phu \Big) \nabla^i \phu \Big|_{\x_0}= - \frac{1}{2|\nabla \phu|^2}  \eu_0^c \Ab_{ac}\nabla^i \phu \Big|_{\x_0},\\
    \nabla_a \overline{\eu}_0^i \big|_{\x_0} &=  \frac{1}{2|\nabla \phu|^2}  \overline{\eu}_0^c \Ab_{ac}\nabla^i \phu \Big|_{\x_0}
\end{align}    
\end{subequations}
to produce
\begin{align}
   &(\overline{\eu}_0\times \hu_{1}+\eu_0\times \overline{\hu}_{1})^m\bigg|_{\x_0} \nonumber\\
   &=\frac{\ii \overline{\eu}_0\cdot\eu_0}{\mu\n^2\dot{\phu}^2}\bigg\{ \nabla^p\phu \nabla_{p}\overline{\bigg(\frac{1}{ \dot{\phu}} \nabla^m\phu  \bigg)} - \nabla^p\phu \nabla_{p}\bigg(\frac{1}{ \dot{\phu}} \nabla^m\phu  \bigg)-\frac{1}{ 2\dot{\phu}} \nabla^m\phu   \Big[ \Delta(\phu-\overline{\phu})-\n^2 (\ddot{\phu}  -\ddot{\overline{\phu}}) \Big]\bigg\} \bigg|_{\x_0} \nonumber\\
   &+\frac{\ii\overline{\eu}_0^j\eu_0^b}{\mu\n^2\dot{\phu}^3}\nabla_j\nabla_b(\phu-\overline{\phu}) \nabla^m\phu-\frac{\ii}{\mu\n^2\dot{\phu}^3} \eu_0^j \overline{\eu}^m_{0} \nabla^p\phu \nabla_{p}\nabla_j\overline{\phu}+ \frac{\ii}{\mu\n^2\dot{\phu}^3} \eu_0^j \overline{\eu}^m_{0}\nabla^p\phu \nabla_{p}\nabla_j\phu \bigg|_{\x_0}. 
\end{align}
Using the prescribed value of $\eu_1$ from \cref{EqIDCircCon}, we now compute 
\begin{align}
    (\eu_1\times \overline{\hu}_0)^m|_{x_0}=\ii \div\eu_0\frac{1}{\mu \dot{\phu}}  \overline{\eu}^m_{0}\Big|_{x_0}= -\frac{\ii}{\mu \dot{\phu} |\nabla \phu|^2} \overline{\eu}^m_{0} \Big( \eu_0^b \nabla_a \nabla_b \phu \Big) \nabla^a \phu\Big|_{x_0}=-\frac{\ii}{\mu \n^2 \dot{\phu}^3}  \overline{\eu}^m_{0} \Big( \eu_0^b \nabla_a \nabla_b \phu \Big) \nabla^a \phu\Big|_{x_0}.
\end{align}
Combining these results gives
\begin{align}
    &2\Re(\eu_0\times \overline{\hu}_{1}+\eu_1\times \overline{\hu}_0)^m\bigg|_{\x_0}\nonumber \\ 
    &=\frac{\ii \overline{\eu}_0\cdot\eu_0}{\mu\n^2\dot{\phu}^2} \bigg\{ \frac{1}{ \dot{\phu}^2} \nabla^p\phu \nabla_{p}\dot{\phu}\nabla^m\phu-\frac{1}{ \dot{\phu}^2} \nabla^p\phu \nabla_{p}\overline{\dot{\phu}}\nabla^m\phu +\frac{1}{\dot{\phu}}\nabla^p\phu\nabla_p\nabla^m(\overline{\phu}-\phu) \nonumber\\
    &\qquad\qquad\qquad-\frac{1}{ 2\dot{\phu}} \nabla^m\phu   \Big[ \Delta[\phu-\overline{\phu}]-\n^2 (\ddot{\phu}  -\ddot{\overline{\phu}}) \Big]\bigg\} +\frac{\ii}{\mu \n^2 \dot{\phu}^3} \overline{\eu}_0^j\eu_0^b \nabla_j\nabla_b(\phu-\overline{\phu})\nabla^m\phu \bigg|_{\x_0}.
\end{align}
Using the relations \eqref{eq:aux_relations},
we obtain
\begin{align}
    2\Re(\eu_0\times \overline{\hu}_{1}+\eu_1\times \overline{\hu}_0)^m\bigg|_{\x_0} 
    &=\frac{\ii\n\overline{\eu}_0\cdot\eu_0}{\mu|\nabla\phu|^3}\bigg(\nabla^p\phu \Ab_p^m -\frac{3}{ 2|\nabla\phu|^2} \nabla^p\phu \nabla^q\phu\nabla^m\phu \Ab_{pq} +\frac{1}{ 2} \nabla^m\phu \Ab_p^p\bigg)\nonumber \\
    &\qquad-\frac{\ii\n(\overline{\eu}_0)^j\eu_0^i}{\mu |\nabla\phu|^3}\Ab_{ij}\nabla^m\phu \bigg|_{\x_0}.
\end{align}
We now compute $2$ derivatives of
\begin{align}
    \frac{1}{2}\n^2(\eu_0\times \overline{\hu}_0)^m=-\frac{\eps}{2 \overline{\dot{\phu}}}  (\nabla^m\overline{\phu} \eu_0\cdot\overline{\eu}_0-\eu_0^j\nabla_j\overline{\phu} \overline{\eu}^m_{0}).
\end{align}
Similarly, ignoring derivatives of $\eps$ gives
\begin{align}
   \nabla_a\Big[ \frac{1}{2}\n^2(\eu_0\times \overline{\hu}_0)^m\Big]&=\frac{\eps}{2 \overline{\dot{\phu}}^2}\nabla_a\overline{\dot{\phu}}  (\nabla^m\overline{\phu} \eu_0\cdot\overline{\eu}_0-\eu_0^j\nabla_j\overline{\phu} \overline{\eu}^m_{0}) -\frac{\eps}{2 \overline{\dot{\phu}}}  (\nabla_a\nabla^m\overline{\phu} \eu_0\cdot\overline{\eu}_0-\eu_0^j\nabla_a\nabla_j\overline{\phu} \overline{\eu}^m_{0}) \nonumber\\
   &-\frac{\eps}{2 \overline{\dot{\phu}}}  (\nabla^m\overline{\phu} \nabla_a\eu_0\cdot\overline{\eu}_0-\nabla_a\eu_0^j\nabla_j\overline{\phu} \overline{\eu}^m_{0})-\frac{\eps}{2 \overline{\dot{\phu}}}  (\nabla^m\overline{\phu} \eu_0\cdot\nabla_a\overline{\eu}_0-\eu_0^j\nabla_j\overline{\phu} \nabla_a\overline{\eu}^m_{0}) .
\end{align}
Taking another derivative and evaluating at $\x_0$, ignoring third derivatives of $\phu$ and derivatives of $\eps$ gives
\begin{align}
   \nabla_b\nabla_a&\Big[ \frac{1}{2}\n^2(\eu_0\times \overline{\hu}_0)^m\Big]\Big|_{x_0} =\frac{\eps}{2 \overline{\dot{\phu}}}  (\nabla_b\eu_0^j\nabla_a\overline{\eu}^m_{0}+\nabla_a\eu_0^j\nabla_b\overline{\eu}^m_{0})\nabla_j\overline{\phu} -\frac{\eps}{2 \overline{\dot{\phu}}}  ( \nabla_a\eu_0\cdot\nabla_b\overline{\eu}_0+\nabla_b\eu_0\cdot\nabla_a\overline{\eu}_0)\nabla^m\overline{\phu} \nonumber\\
   &+\frac{\eps}{2 \overline{\dot{\phu}}}  \eu_0^j\Big(\nabla_a\nabla_j\overline{\phu} \nabla_b\overline{\eu}^m_{0}+\nabla_b\nabla_j\overline{\phu} \nabla_a\overline{\eu}^m_{0}\Big) +\frac{\eps}{2 \overline{\dot{\phu}}}  \overline{\eu}^m_{0}\Big(\nabla_a\eu_0^j\nabla_b\nabla_j\overline{\phu} +\nabla_b\eu_0^j\nabla_a\nabla_j\overline{\phu} \Big)\nonumber \\
   &+\frac{\eps}{2 \overline{\dot{\phu}}^2}\nabla_b\overline{\dot{\phu}}\Big(  \nabla_a\nabla^m\overline{\phu} \eu_0\cdot\overline{\eu}_0-   \eu_0^j\nabla_a\nabla_j\overline{\phu} \overline{\eu}^m_{0}-  \nabla_a\eu_0^j\nabla_j\overline{\phu} \overline{\eu}^m_{0}\Big)\nonumber \\
   &+\frac{\eps}{2 \overline{\dot{\phu}}^2}\nabla_a\overline{\dot{\phu}}\Big(  \nabla_b\nabla^m\overline{\phu} \eu_0\cdot\overline{\eu}_0-   \eu_0^j\nabla_b\nabla_j\overline{\phu} \overline{\eu}^m_{0}-  \nabla_b\eu_0^j\nabla_j\overline{\phu} \overline{\eu}^m_{0}\Big)\nonumber \\
   &+\Big(\frac{\eps}{2 \overline{\dot{\phu}}^2}\nabla_b\nabla_a\overline{\dot{\phu}}-\frac{\eps}{ \overline{\dot{\phu}}^3}\nabla_b\overline{\dot{\phu}}\nabla_a\overline{\dot{\phu}} \Big) \nabla^m\overline{\phu} \eu_0\cdot\overline{\eu}_0+\frac{\eps}{2 \overline{\dot{\phu}}}  \nabla_b\nabla_a\eu_0^j\nabla_j\overline{\phu} \overline{\eu}^m_{0}\Big|_{x_0}. 
\end{align}
Tracing with $\Ab ^{-1}$ and using \cref{eq:A} gives
\begin{align}
   (\Ab ^{-1})^{ab}\nabla_b\nabla_a\Big[ \frac{1}{2}\n^2(\eu_0\times \overline{\hu}_0)^m\Big]\Big|_{x_0}&=\frac{\eps}{ \overline{\dot{\phu}}} (\Ab ^{-1})^{ab}\nabla_b\eu_0^j\nabla_a\overline{\eu}^m_{0}\nabla_j\overline{\phu} -\frac{\eps}{ \overline{\dot{\phu}}}(\Ab ^{-1})^{ab} \nabla_a\eu_0\cdot\nabla_b\overline{\eu}_0\nabla^m\overline{\phu} \nonumber\\
   &-\frac{\eps}{ 2\overline{\dot{\phu}}}  \eu_0^j\nabla_j\overline{\eu}^m_{0} -\frac{\eps}{ 2\overline{\dot{\phu}}}  \overline{\eu}^m_{0}\nabla_j\eu_0^j+\frac{\eps}{2 \overline{\dot{\phu}}}  (\Ab ^{-1})^{ab}\nabla_b\nabla_a\eu_0^j\nabla_j\overline{\phu} \overline{\eu}^m_{0}\nonumber \\
   &+\frac{\eps}{ \overline{\dot{\phu}}^2} \Big[  -\frac{1}{2}\nabla^m\overline{\dot{\phu}}\eu_0\cdot\overline{\eu}_0+  \frac{1}{2}\eu_0^j\nabla_j\overline{\dot{\phu}} \overline{\eu}^m_{0}-  (\Ab ^{-1})^{ab}\nabla_b\overline{\dot{\phu}}\nabla_a\eu_0^j\nabla_j\overline{\phu} \overline{\eu}^m_{0}\Big]\nonumber \\
   &+(\Ab ^{-1})^{ab}\Big(\frac{\eps}{2 \overline{\dot{\phu}}^2}\nabla_b\nabla_a\overline{\dot{\phu}}-\frac{\eps}{ \overline{\dot{\phu}}^3}\nabla_b\overline{\dot{\phu}}\nabla_a\overline{\dot{\phu}} \Big) \nabla^m\overline{\phu} \eu_0\cdot\overline{\eu}_0\Big|_{x_0}. 
\end{align}

Using the relations in \cref{eq:aux_relations,eq:de0} and
\begin{subequations}
\begin{align}
    \nabla_a \nabla_b \eu_0^i \big|_{\x_0} &= - \frac{1}{|\nabla \phu|^2} \Big[ (\nabla_{a} \eu_0^c \nabla_{b} +\nabla_{b} \eu_0^c \nabla_{a})\nabla_c \phu \Big] \nabla^i \phu \Big|_{\x_0}=  \frac{\eu_0^c}{4|\nabla \phu|^4} (\Ab_{ac} \Ab_{bj}+\Ab_{bc} \Ab_{aj})\nabla^j\phu \nabla^i \phu\Big|_{\x_0},\\
    \nabla_a \nabla_b \overline{\eu}_0^i \big|_{\x_0} &=  \frac{\overline{\eu}_0^c}{4|\nabla \phu|^4} (\Ab_{ac} \Ab_{bj}+\Ab_{bc} \Ab_{aj})\nabla^j\phu \nabla^i \phu\Big|_{\x_0},
\end{align}    
\end{subequations}
we obtain
\begin{align}
   (\Ab ^{-1})^{ab}\nabla_b\nabla_a\Big[ \frac{1}{2}\n^2(\eu_0\times \overline{\hu}_0)^m\Big]\Big|_{x_0}&=\frac{\eps\n}{ 4|\nabla \phu|}  \eu_0^j  \overline{\eu}_0^c \Ab_{jc}\nabla^m \phu -\frac{\eps\n}{ 4|\nabla\phu|^3}\nabla^q\phu \Ab_{q}^m\eu_0\cdot\overline{\eu}_0 \nonumber\\
   &\qquad+\frac{\eps\n}{8|\nabla\phu|^3}\Big(\frac{3}{| \nabla\phu|^2}\nabla^q\phu \Ab_{bq}\nabla^b\phu-\Ab_{p}^p\Big) \nabla^m{\phu} \eu_0\cdot\overline{\eu}_0\Big|_{x_0}. 
\end{align}
So, 
\begin{align}
   (\Ab ^{-1})^{ab}\nabla_b\nabla_a&\Big[ \frac{1}{2}\n^2(\overline{\eu}_0\times {\hu}_0)^m+\frac{1}{2}\n^2(\eu_0\times \overline{\hu}_0)^m\Big]\Big|_{x_0} =\frac{\eps\n\nabla^m \phu}{ 2|\nabla \phu|}  \eu_0^j  \overline{\eu}_0^c \Ab_{jc} -\frac{\eps\n}{ 2|\nabla\phu|^3}\nabla^q\phu \Ab_{q}^m\eu_0\cdot\overline{\eu}_0 \nonumber\\
   &\qquad+\frac{\eps\n}{4|\nabla\phu|^3}\Big(\frac{3}{| \nabla\phu|^2}\nabla^q\phu \Ab_{bq}\nabla^b\phu-\Ab_{p}^p\Big) \nabla^m{\phu} \eu_0\cdot\overline{\eu}_0\Big|_{x_0}. 
\end{align}
Combining the $\omega^{-1}$ terms gives
\begin{align}
    &\frac{\n^2}{2}\Re(\eu_0\times \overline{\hu}_{1}+\eu_1\times \overline{\hu}_0)^m+\frac{\ii}{2} (\Ab ^{-1})^{a b} \nabla_a \nabla_b \Big[\frac{\n^2}{2} \Re \big( \eu_0 \times \overline{\hu}_0 \big)\Big]\bigg|_{\x_0} \nonumber\\ 
    &\quad=\frac{\ii\eps\n \overline{\eu}_0\cdot\eu_0}{8|\nabla\phu|^3} \bigg(\frac{1}{ 2}  \Ab_p^p -\frac{3}{ 2|\nabla\phu|^2} \nabla^p\phu \nabla^q\phu \Ab_{pq} \bigg)\nabla^m\phu-\frac{\ii\eps\n \overline{\eu}_0^j\eu_0^i}{8 |\nabla\phu|^3}  \Ab_{ij}\nabla^m\phu+\frac{\ii\eps\n \overline{\eu}_0\cdot\eu_0}{8|\nabla\phu|^3}\nabla^p\phu \Ab_p^m \bigg|_{\x_0}. 
\end{align} 
Projecting onto the orthonormal frame,
and noting $\eu_0\cdot\overline{\eu}_0=\mathfrak{a}^2$, we can then write
\begin{align}
    &\frac{\n^2}{2}\Re(\eu_0\overline{\hu}_{1}+\eu_1\times \overline{\hu}_0)^m+\frac{\ii}{2} (\Ab ^{-1})^{a b} \nabla_a \nabla_b \Big[\frac{\n^2}{2} \Re \big( \eu_0 \times \overline{\hu}_0 \big)\Big]\bigg|_{\x_0}\nonumber \\
    &\qquad=\frac{\ii\eps\n\mathfrak{a}^2}{16|\nabla\phu|^2}\bigg[  \Ab_p^p -\frac{\nabla^p\phu}{|\nabla\phu|} \frac{\nabla^q\phu}{|\nabla\phu|} \Ab_{pq} -(X^iX^j+Y^iY^j)\Ab_{ij}\bigg]\frac{\nabla_m\phu}{|\nabla\phu|} \nonumber\\
    &\qquad\qquad+\frac{\ii\eps\n\mathfrak{a}^2}{8|\nabla\phu|^2}\frac{\nabla^p\phu}{|\nabla\phu|} \Ab_{pq}(Y^qY^m+X^qX^m) \bigg|_{\x_0}. 
\end{align}
Since the trace is basis independent, the term in the first line now vanishes. 

\pagebreak

\printbibliography

\end{document}